\newcommand{\field}[1]{\mathbb{#1}}
\DeclareMathOperator{\colim}{colim }
\DeclareMathOperator{\map}{Map}
\DeclareMathOperator{\mor}{Mor}
\DeclareMathOperator{\alg}{Alg}
\DeclareMathOperator{\e}{End}
\DeclareMathOperator{\lmod}{LMod}
\DeclareMathOperator{\ch}{Ch}
\DeclareMathOperator{\an}{\mathscr{S}}
\DeclareMathOperator{\fin}{\mathscr{F}in_{\ast}}
\DeclareMathOperator{\spec}{\textnormal{Spec}}
\newcommand{\lm}{\mathscr{LM}^{\otimes}}
\newcommand{\ass}{{\mathscr{A}ssoc}^{\otimes}}
\newcommand{\braces}{\mathbf{Braces}}
\newcommand{\ger}{\mathbf{Ger}}
\renewcommand{\hom}{\textnormal{Hom}}
\newcommand{\eone}{C_{\ast}(\mathbb{E}^T_1)}
\newcommand{\dgpsh}[1]{\textnormal{dgPSh}(#1)}
\newcommand{\dgsh}[1]{\textnormal{dgSh}(#1)}
\newcommand{\affpsh}[1]{\textnormal{dgPSh}^{\textnormal{aff}}(#1)}
\newcommand{\affsh}[1]{\textnormal{dgSh}^{\textnormal{aff}}(#1)}
\newcommand{\diag}{\textnormal{Diag}}
\newcounter{intro}
\newtheorem{theorem}{Theorem}
\newtheorem{Itheorem}[intro]{Theorem}
\newtheorem{lemma}[theorem]{Lemma}
\newtheorem{proposition}[theorem]{Proposition}
\newtheorem{corollary}[theorem]{Corollary}
\newtheorem{conjecture}[theorem]{Conjecture}
\theoremstyle{definition}
\newtheorem{definition}[theorem]{Definition}
\newtheorem{notation}[theorem]{Notation}
\newtheorem{example}[theorem]{Example}
\theoremstyle{remark}
\newtheorem{remark}[theorem]{Remark}
\newtheorem*{remark*}{Remark}
\title{$\mathbb{E}_2$-algebra structures on the derived center of an algebraic scheme}
\author[S. Farr]{Sonja M. Farr}
\address{Department of Mathematics \& Statistics, University of Nevada,  Reno.}
\email{sonjaf@unr.edu}
\begin{document}
\begin{abstract}
This paper provides an explicit interface between J. Lurie's work on higher centers, and the Hochschild cohomology of an algebraic $\field{k}$-scheme within the framework of deformation quantization. We first recover a canonical solution to Deligne's conjecture on Hochschild cochains in the affine and global cases, even for singular schemes, by exhibiting the Hochschild complex as an $\infty$-operadic center. We then prove that this universal $\mathbb{E}_2$-algebra structure precisely agrees with the classical Gerstenhaber bracket and cup product on cohomology in the affine and smooth cases. This last statement follows from our main technical result which allows us to extract the Gerstenhaber bracket of any $\mathbb{E}_2$-algebra obtained from a 2-algebra via Lurie's Dunn Additivity Theorem.
\end{abstract} 
\maketitle
\tableofcontents
\section{Introduction}
M. Kontsevich's Formality Theorem in deformation quantization states that the Hochschild-Kostant-Rosenberg map lifts to a quasi-isomorphism of homotopy Lie algebras between polyvector fields and polydifferential operators of a smooth manifold. In \cite{Tam2}, D. Tamarkin extended this to a quasi-isomorphism of homotopy Gerstenhaber algebras for affine spaces $\mathbb{A}^n_{\field{k}}$, where $\field{k}$ is any field of characteristic zero by showing that the operad of little 2-disks is formal and using Deligne's conjecture on Hochschild cochains. This proof also highlights that these formality morphisms for the smooth Hochschild cochains are non-canonical, since Tamarkin's little disks formality depends on the choice of a Drinfeld Associator.\par
In fact, using results from D. Bar-Natan \cite{BN}, Tamarkin later showed in \cite{Tam3} that his proof essentially identifies Drinfeld Associators with operad isomorphisms between the operad of parenthesized braids and the operad of parenthesized chord diagrams which are the identity on objects.\par
Based on Tamarkin's results, Kontsevich conjectured in \cite{K} that the Grothendieck-Teichmüller group (GT) should act on formality isomorphisms between $T_{\text{poly}}(X)$ and $D_{\text{poly}}(X)$ of a smooth complex variety, and that this action should be of motivic nature, arising as a consequence of the fact that the equations in the Knizhnik-Zamoldchikov Associator are periods. This conjecture was proved by V. Dolgushev, C. Rogers and T. Willwacher in \cite{DRW}. They were able to show that the Deligne-Drinfeld elements of the Grothendieck-Teichmüller group act by contraction with the odd components of the Chern character of the variety on the cohomology of the sheaf of polyvector fields. In particular, they were able to give examples for which this action is non-trivial.\par
Dolgushev-Rogers-Willwacher use a result by D. Calaque and M. Van den Bergh in \cite{CVdB} which shows that the Kontsevich Formality Theorem can be extended to non-affine cases by adding a correction term to the HKR map which depends on the Atiyah class of the variety. Astonishingly, this correction term has the form $J^{1/2}$ with $J = \det\left(q(\text{At}(X))\right)$ the Todd class of the variety and 
\begin{align*}
    q(x)= \frac{x}{1-e^{-x}}.
\end{align*}
This is clearly reminiscent of (Kontsevich's generalization of) the classical Duflo Isomorphism Theorem, which states that for any finite dimensional Lie algebra $\mathfrak{g}$, we get an algebra isomorphism
\begin{align*}
    \text{PBW}\circ \det(q^{-1}(\text{ad}))^{1/2}: S(\mathfrak{g})^{\mathfrak{g}} \xrightarrow{\cong} U(\mathfrak{g})^{\mathfrak{g}}.
\end{align*}
It is a result by A. Alekseev and C. Torossian \cite{AT} that the Grothendieck-Teichmüller group also acts on "classical" Duflo isomorphisms as above by changing the Dulfo element. Unfortunately, this action was shown in \cite{AA} to be trivial. \\\par
Note that the codomain $U(\mathfrak{g})^{\mathfrak{g}} = H^{0}(\mathfrak{g},U(\mathfrak{g}))$ is the center $Z(\mathfrak{g})$ of the Lie algebra, and can be computed as the 0th cohomology with coefficients in the universal enveloping algebra. A. Ramadoss \cite{R}, J. Roberts and S. Willerton \cite{RW} and N. Markarian \cite{M} showed that various versions of the Hochschild cochain complex of a smooth scheme $X$ satisfy the universal property of a universal enveloping algebra of the tangent Lie algebra $\mathcal{T}_X[-1]$ in the derived 1-category of $\mathcal{O}_X$-modules. Therefore, taking hypercohomology, the Hochschild cohomology of $X$ computes a "derived center" of $\mathcal{O}_X$. The definition of the universal enveloping algebra object in the derived category comes from \cite{HV} by V. Hinich and A. Vaintrob, where they also defined the center of an associative algebra object. However, this center does not satisfy any universal property, and it also is "external" in the sense that it is just a commutative algebra in the category of sets.\par 
In contrast to this, J. Lurie in \cite[Chapter 5]{HA} defines a higher center of an algebra $A$ over an $\infty$-operad $\mathcal{O}$ in some symmetric monoidal $\infty$-category as a \textit{universal object} in a suitable $\infty$-category of algebra actions on $A$. In particular, the center of $A$ is an associative algebra object in the category of $\mathcal{O}$-algebras, and in case of the little $k$-disks $\infty$-operads $\mathbb{E}_k^{\otimes}$, Lurie uses an $\infty$-categorical version of the Dunn Additivity Theorem to show that the higher center of an $\mathbb{E}_k$-algebra is indeed an $\mathbb{E}_{k+1}$-algebra. This may be viewed as a generalized Deligne Conjecture.\\\par

In this paper we will lay the groundwork for explaining the $\text{GT}$ action on Duflo isomorphisms within the theory of centers by connecting Lurie's work on higher centers with the classical results on the Hochschild cohomology of schemes. In particular, we will argue that the Hochschild complex of any quasi-compact separated scheme should be defined as the $\mathbb{E}_1$-operadic center of its structure sheaf, thereby equipping it with a canonical $\mathbb{E}_2$-algebra structure. \\\par
Given a symmetric monoidal $\infty$-category $\mathcal{C}$ and an $\infty$-operad $\mathcal{O}$, we let $\alg_{\mathcal{O}}(\mathcal{C})$ denote the $\infty$-category of $\mathcal{O}$-algebras in $\mathcal{C}$. We let $\mathcal{D}_{\infty}(\field{k})$ be the derived $\infty$-category of $\field{k}$-vector spaces. We denote the higher center of an $\mathcal{O}$-algebra object $A$ by\footnote{We will later drop the subscript "$\mathcal{O}$".} $\mathfrak{Z}_{\mathcal{O}}(A)$.\\\par
We will first consider the affine case, and we will prove that we recover a solution to the classical Deligne Conjecture. This is done in Section \ref{Recovering_the_Hochschild_complex_as_e1_center}. The main result is the following
\begin{Itheorem}[Theorem \ref{thm1}, Corollary \ref{cor9}]
    Let $A$ be an associative $\field{k}$-algebra. The Hochschild complex
    \begin{align*}
        C^{\ast}(A,A) = \hom_{\field{k}}(A^{\otimes \ast},A)
    \end{align*}
    together with the evaluation map is a center for $A\in \alg_{\mathbb{E}_1}(\mathcal{D}_{\infty}(\field{k}))$. In particular, it is an object of $\alg_{\mathbb{E}_1}(\alg_{\mathbb{E}_1}(\mathcal{D}_{\infty}(\field{k})))\simeq \alg_{\mathbb{E}_2}(\mathcal{D}_{\infty}(\field{k}))$. Its underlying Gerstenhaber bracket and cup product in cohomology agree with the classical Gerstenhaber algebra structure obtained from the $\braces$-algebra structure.
\end{Itheorem}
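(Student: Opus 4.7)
The plan is to split the theorem into two parts: first, identifying $C^{\ast}(A,A)$ with the $\mathbb{E}_1$-center via Lurie's universal characterization, and second, pinning down the Gerstenhaber structure on cohomology via the paper's main technical result.

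For the first part, I would invoke Lurie's theorem identifying the $\mathbb{E}_1$-center of an associative algebra object $A$ with the endomorphism object of $A$ in its $\infty$-category of bimodules, so that $\mathfrak{Z}_{\mathbb{E}_1}(A)$ is canonically equivalent to $\e_{A \otimes A^{\text{op}}}(A)$ with its tautological action. To realize this concretely in $\mathcal{D}_{\infty}(\field{k})$, I would use the (normalized) bar resolution $B_{\bullet}(A,A,A) = A \otimes A^{\otimes \bullet} \otimes A \twoheadrightarrow A$, which is a cofibrant replacement of $A$ as an $A^e$-module. Taking $\hom_{A^e}(B_{\bullet}(A,A,A), A)$ gives $\hom_{\field{k}}(A^{\otimes \bullet}, A) = C^{\ast}(A,A)$, with composition of bimodule endomorphisms reproducing the Yoneda/cup product. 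A direct check confirms that the universal action map $\mathfrak{Z}_{\mathbb{E}_1}(A) \otimes A \to A$ corresponds precisely to the evaluation map under this identification, so the first assertion follows from the uniqueness part of the universal property.

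For the second part, Dunn additivity promotes the center to an $\mathbb{E}_2$-algebra by equipping it with a second, compatible $\mathbb{E}_1$-structure. The inner product (composition in $\e_{A^e}(A)$) visibly descends on cohomology to the classical cup product on $HH^{\ast}(A,A)$. For the Gerstenhaber bracket, the key is the paper's main technical result, which extracts the bracket of any $\mathbb{E}_2$-algebra arising via Dunn additivity explicitly in terms of its two underlying $\mathbb{E}_1$-structures. Combining this with the Voronov--Gerstenhaber description of the Hochschild bracket via the brace operations, the comparison reduces to an identification of operations at the cochain level.

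The principal obstacle is this last step: matching the $\mathbb{E}_2$-bracket with the Voronov--Gerstenhaber bracket. Although the paper's main theorem bypasses the general abstractness of Dunn additivity, applying it here demands an explicit model for the outer $\mathbb{E}_1$-structure on $C^{\ast}(A,A)$ --- which records how $C^{\ast}(A,A)$ acts on itself through the universal property of its own center. This entails manipulating nested bar-type resolutions and verifying that the resulting formulas coincide with the Voronov braces; the calculation is delicate but should be tractable once the explicit bracket formula furnished by the paper's key theorem is in hand.
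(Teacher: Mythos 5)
Your proposal follows the same route as the paper: identify the center with the derived endomorphism object of $A$ in $A$-bimodules (realized via the bar resolution, after a rectification/enveloping-algebra comparison identifying $\mathbb{E}_1$-modules over the rectified algebra with $A\otimes A^{\textnormal{op}}$-modules), and then extract the bracket from the interchange square using the paper's Eckmann--Hilton result together with the circle-product/brace description of the classical bracket. The one point where you overestimate the difficulty is the final comparison: since the center is a final object, the space of fillings of the interchange square is contractible (Corollary \ref{cor7}), so no explicit model of the outer $\mathbb{E}_1$-structure or nested bar resolutions is needed --- exhibiting any single chain homotopy witnessing homotopy-commutativity of the cup product, namely the circle product $f\{g\}$, already computes the bracket, and this is exactly how the paper concludes.
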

In Section \ref{The_Hochschild_complex_of_a_scheme}, we recall a construction of the $\infty$-category $\text{Sh}_{\infty}(X)$ of dg sheaves over a  quasi-compact separated scheme $X$, and examine some basic properties of the center of the structure sheaf as an $\mathbb{E}_1$-algebra in this $\infty$-category. We define the  Hochschild cochain complex of $X$ as the center of $\mathcal{O}_X$ in the $\infty$-category of dg sheaves on $X$. We show that it has the desired local properties, even in the singular case. In particular, we show 
\begin{Itheorem}[Theorem \ref{thm5}]
Let $U = \textnormal{Spec}(A)\subseteq X$ be an affine open. The functor $\mathbb{R}\Gamma_U: \textnormal{Sh}_{\infty}(X) \rightarrow \mathcal{D}_{\infty}(\field{k})$ is lax symmetric monoidal and hence induces a functor $\mathbb{R}\Gamma_U: \alg_{\mathbb{E}_2}(\textnormal{Sh}_{\infty}(X)) \rightarrow \alg_{\mathbb{E}_2}(\mathcal{D}_{\infty}(\field{k}))$. We have
\begin{align*}
    \mathbb{R}\Gamma_U(\mathfrak{Z}_{\mathbb{E}_1}(\mathcal{O}_X)) \simeq \mathfrak{Z}_{\mathbb{E}_1}(A).
\end{align*}
\end{Itheorem}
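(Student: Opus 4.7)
The plan is to construct a canonical comparison map using the universal property of the center on the right-hand side, and then verify it is an equivalence by identifying both sides with a derived bimodule endomorphism object.

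First I would invoke the lax symmetric monoidal structure on $\mathbb{R}\Gamma_U$, which arises from its status as a right adjoint to a symmetric monoidal functor into $\textnormal{Sh}_{\infty}(X)$. This ensures that $\mathbb{R}\Gamma_U$ carries $\mathbb{E}_k$-algebras to $\mathbb{E}_k$-algebras and algebra actions to algebra actions, so the tautological action of $\mathfrak{Z}_{\mathbb{E}_1}(\mathcal{O}_X)$ on $\mathcal{O}_X$ is transported to an $\mathbb{E}_1$-action of $\mathbb{R}\Gamma_U(\mathfrak{Z}_{\mathbb{E}_1}(\mathcal{O}_X))$ on $\mathbb{R}\Gamma_U(\mathcal{O}_X)\simeq A$ in $\mathcal{D}_{\infty}(\field{k})$. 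The universal property of $\mathfrak{Z}_{\mathbb{E}_1}(A)$ as the terminal $\mathbb{E}_1$-algebra acting on $A$ then produces a canonical morphism of $\mathbb{E}_2$-algebras
\begin{align*}
\varphi: \mathbb{R}\Gamma_U(\mathfrak{Z}_{\mathbb{E}_1}(\mathcal{O}_X)) \longrightarrow \mathfrak{Z}_{\mathbb{E}_1}(A).
\end{align*}

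Next I would identify both sides as derived $\hom$ objects over the appropriate enveloping algebras. By Theorem \ref{thm1}, $\mathfrak{Z}_{\mathbb{E}_1}(A)$ is equivalent to the classical Hochschild complex $C^{\ast}(A,A)\simeq \textnormal{R}\hom_{A\otimes A^{\textnormal{op}}}(A,A)$. On the sheaf side, the analogous construction identifies $\mathfrak{Z}_{\mathbb{E}_1}(\mathcal{O}_X)$ with the dg sheaf $\ihom_{\mathcal{O}_X \otimes \mathcal{O}_X^{\textnormal{op}}}(\mathcal{O}_X, \mathcal{O}_X)$ computed in $\textnormal{Sh}_{\infty}(X)$. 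Restricting to $U$, where $\mathcal{O}_X|_U$ corresponds to $A$, this sheaf becomes the internal derived Hom of $A$ as an $A$-bimodule, whose global sections on the affine $U$ recover $\textnormal{R}\hom_{A\otimes A^{\textnormal{op}}}(A,A)$. Tracing through the definitions, one checks that $\varphi$ implements exactly this chain of equivalences.

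The main obstacle is verifying that formation of the $\mathbb{E}_1$-center genuinely commutes with restriction to an affine open, i.e.\ that the abstract comparison map $\varphi$ coincides with the explicit derived Hom identification. I would handle this by employing a bar-resolution model for $\mathcal{O}_X$ as a bimodule over itself: its restriction to $U$ agrees with the bar resolution of $A$ as an $A$-bimodule, and the fact that global sections over an affine scheme preserves $\textnormal{R}\hom$ between such objects forces $\varphi$ to be an equivalence. A secondary subtlety is establishing lax symmetric monoidality of $\mathbb{R}\Gamma_U$ with respect to the specific model of $\textnormal{Sh}_{\infty}(X)$ fixed in Section \ref{The_Hochschild_complex_of_a_scheme}, but this should be essentially formal once the adjoint pair defining the sheaf $\infty$-category has been spelled out.
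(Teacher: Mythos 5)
Your overall strategy is essentially the paper's: both reduce the statement to the claim that the derived bimodule endomorphism object $\mathbb{R}\mathcal{H}om_{\mathcal{O}_X\otimes\mathcal{O}_X}(\mathcal{O}_X,\mathcal{O}_X)$, evaluated on the affine open $U$, recovers $\mathbb{R}\hom_{A\otimes A^{\textnormal{op}}}(A,A)$. Your device of producing the comparison map $\varphi$ from terminality of $\mathfrak{Z}_{\mathbb{E}_1}(A)$ is a clean alternative to what the paper does (the paper instead exhibits a direct chain of weak equivalences of underlying complexes and then checks at the end that the evaluation maps match, which by the universal property of the morphism object pins down the $\mathbb{E}_2$-structure); either route is fine once the underlying equivalence is in hand. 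Your justification of lax symmetric monoidality via the adjunction with the constant-presheaf functor also matches the paper.

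The genuine gap is the sentence ``global sections over an affine scheme preserves $\mathbb{R}\hom$ between such objects forces $\varphi$ to be an equivalence.'' That assertion is not a formal fact here --- it is the entire technical content of the theorem, and for general complexes of presheaves in the local projective model structure it would fail. To make it work one needs, concretely: (i) that the internal Hom $Q\mathcal{H}om_{\mathcal{O}_X\otimes\mathcal{O}_X}(\mathcal{P},\mathcal{P})$ computed from a bifibrant resolution $\mathcal{P}$ is itself already fibrant in the local model structure, so that $\mathbb{R}\Gamma_U$ is literally $\Gamma_U$ and returns $\map_{\mathcal{O}_U\otimes\mathcal{O}_U}(\mathcal{P}|_U,\mathcal{P}|_U)$; (ii) a comparison between $\mathcal{P}|_U$ and the localization $\tilde{P}$ of a cofibrant $A^e$-resolution $P\to A$, which requires knowing that $\tilde{(-)}$ is left Quillen (using flatness of $A\to B$ for affine opens $\operatorname{Spec}(B)\subseteq U$) and passing through the diagonal site to identify $\mathcal{O}_U\otimes\mathcal{O}_U$-modules with $\mathcal{O}_{U\times_{\field{k}}U}$-modules; and (iii) the fibrancy bookkeeping forced by the fact that $\mathcal{O}_X$ is fibrant on the site of affine opens but \emph{not} on the site of all opens, which is why the paper proves Proposition \ref{prop4} before the main argument. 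Your bar-resolution suggestion does not resolve these points: the sheafified bar complex restricted to $U$ is not literally the bar complex of $A$ (its sections over $V=\operatorname{Spec}(B)\subseteq U$ are $B(A)\otimes_A B$, not $B(B)$), so one still has to argue it is a suitable cofibrant/flat replacement in the relevant presheaf model category and that the induced map on Hom complexes is a local weak equivalence. Without carrying out (i)--(iii), the proof is incomplete at its central step.
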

In the smooth case, Kontsevich defined the Hochschild cochains to be the sheaf $\mathcal{D}_{\text{poly}}(X)$ of polydifferential operators. This sheaf comes with the structure of a $\braces$-algebra inherited from the local Hochschild complexes, and therefore has the structure of a homotopy Gerstenhaber algebra by Tamarkin's results. In order to compare to the existing GT actions, we need to compare our newly obtained $\mathbb{E}_2$-algebra structure to this homotopy Gerstenhaber algebra structure. 
\begin{Itheorem}[Theorem \ref{thm6}, Corollary \ref{thm9}]
For a smooth quasi-compact separated scheme $X$ of finite type over $\field{k}$, the sheaf of polydifferential operators $\mathcal{D}_{\textnormal{poly}}(X)$ is a center of $\mathcal{O}_X$:
\begin{align*}
    \mathcal{D}_{\textnormal{poly}}(X) \simeq \mathfrak{Z}_{\mathbb{E}_1}(\mathcal{O}_X).
\end{align*}
This equips $\mathcal{D}_{\textnormal{poly}}(X)$ with the structure of an $\mathbb{E}_2$-algebra. The corresponding Gerstenhaber algebra in the $\field{k}$-linear derived 1-category agrees with the classical one coming from the $\braces$-algebra structure.
\end{Itheorem}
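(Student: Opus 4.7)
The plan is to reduce to the affine local case using Theorems~\ref{thm5} and \ref{thm1}, combined with the classical fact that for a smooth commutative algebra $A$ the polydifferential Hochschild cochains $\mathcal{D}_{\textnormal{poly}}(A)$ sit quasi-isomorphically inside the full Hochschild complex $C^*(A, A)$. First I would exhibit an equivalence $\mathcal{D}_{\textnormal{poly}}(X) \simeq \mathfrak{Z}_{\mathbb{E}_1}(\mathcal{O}_X)$ in $\textnormal{Sh}_\infty(X)$, which automatically transports the universal $\mathbb{E}_2$-structure on the center to $\mathcal{D}_{\textnormal{poly}}(X)$; then I would compare this $\mathbb{E}_2$-structure with the classical Gerstenhaber bracket and cup product on hypercohomology.

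For the equivalence, I would work on an affine cover of $X$ by opens $U = \textnormal{Spec}(A)$ with $A$ smooth of finite type. By construction $\Gamma(U, \mathcal{D}_{\textnormal{poly}}(X)) = \mathcal{D}_{\textnormal{poly}}(A) \subseteq C^*(A, A)$, and for smooth $A$ this inclusion is a quasi-isomorphism (a classical result of Yekutieli, or equivalently by applying HKR to both sides to obtain polyvector fields in cohomology compatibly). On the other hand, Theorem~\ref{thm5} gives $\mathbb{R}\Gamma_U(\mathfrak{Z}_{\mathbb{E}_1}(\mathcal{O}_X)) \simeq \mathfrak{Z}_{\mathbb{E}_1}(A)$, and Theorem~\ref{thm1} identifies the latter with $C^*(A, A)$. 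These local equivalences assemble on a basis of affines, and by descent (or by comparing stalks) produce a global equivalence in $\textnormal{Sh}_\infty(X)$.

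For the Gerstenhaber algebra comparison, Corollary~\ref{cor9} asserts that the bracket and cup product on $H^*(C^*(A, A))$ induced by the $\mathbb{E}_2$-structure on $\mathfrak{Z}_{\mathbb{E}_1}(A)$ coincide with the classical ones coming from the $\braces$-algebra structure. Since the $\braces$-structure on $\mathcal{D}_{\textnormal{poly}}(X)$ is by definition the restriction of the local $\braces$-structures on $C^*(A, A)$, and since $\mathbb{R}\Gamma_X$ is lax symmetric monoidal and hence sends $\mathbb{E}_2$-algebras to Gerstenhaber algebras on cohomology, the global comparison reduces via a \v{C}ech-type computation to the affine identifications of Corollary~\ref{cor9}.

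The main obstacle is promoting the local identifications $\mathcal{D}_{\textnormal{poly}}(A) \simeq C^*(A, A) \simeq \mathfrak{Z}_{\mathbb{E}_1}(A)$ from quasi-isomorphisms of chain complexes to a coherent morphism of sheaves in the $\infty$-categorical sense. This requires checking naturality of the centers $\mathfrak{Z}_{\mathbb{E}_1}(A_f)$ under localization maps $A \to A_f$ so that the various local equivalences glue unambiguously; once this functoriality is in place, the rest is a routine application of the affine results together with the local-to-global principle for sheaves.
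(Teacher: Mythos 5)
Your overall strategy—localize to affines, invoke Theorems \ref{thm5} and \ref{thm1} plus the classical quasi-isomorphism $\mathcal{D}_{\textnormal{poly}}(A)\hookrightarrow C^{\ast}(A,A)$, and then glue—is not the route the paper takes, and the step you flag as "the main obstacle" is in fact a genuine gap rather than a routine verification. The difficulty is that neither the full Hochschild complex nor the $\mathbb{E}_1$-center is functorial under localization: a $\field{k}$-multilinear map $A^{\otimes n}\rightarrow A$ does not induce one $A_f^{\otimes n}\rightarrow A_f$ (this failure is precisely why Kontsevich restricts to polydifferential operators to get a sheaf), and the center, being defined by a universal property as a final object, carries no evident restriction maps along $A\rightarrow A_f$ either. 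So there is no presheaf of affine centers to assemble, and "comparing stalks" presupposes a globally defined comparison morphism, which is exactly the thing that has to be produced. Theorem \ref{thm5} computes the sections of the already-global object $\mathfrak{Z}(\tilde{\mathcal{O}}_X)$ over each affine; it does not by itself manufacture a map from $\mathcal{D}_{\textnormal{poly}}(X)$ to it. The paper sidesteps this entirely by working with globally defined objects from the start: it identifies $\mathfrak{Z}(\tilde{\mathcal{O}}_X)(\mathfrak{a})$ with the presheaf morphism object $Q\mathcal{H}om_{\mathcal{O}_X\otimes\mathcal{O}_X}(\mathcal{P},\mathcal{P})$, passes through sheafification and the diagonal to $\mathbb{R}\mathcal{H}om_{\mathcal{O}_{X\times_{\field{k}}X}}(\Delta_{\ast}\mathcal{O}_X,\Delta_{\ast}\mathcal{O}_X)$ (Lemma \ref{lem6}), and then invokes Yekutieli's \emph{global} identification of this with $\Delta_{\ast}\mathcal{D}_{\textnormal{poly}}(X)$; the center structure is then exhibited via an explicit global evaluation map built from the bar-complex presheaf $\mathcal{B}(\mathcal{O}_X)$.

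A similar issue affects your Gerstenhaber comparison. Corollary \ref{cor9} is an affine statement, and the homotopies witnessing the comparison (the circle products, and the homotopy between convolution and cup product) must themselves glue to global homotopies of sheaf maps before any \v{C}ech argument can be run. The paper does this by hand: it uses the completed bar complex $\widehat{\mathcal{B}}(X)$, constructs a global diagonal from a glued section $s:\mathcal{O}_X\rightarrow\widehat{\mathcal{B}}_0(X)$, and checks in Lemma \ref{lem4} that the explicit local homotopy formulas are compatible with restriction so that they define global homotopies; the brace (circle-product) homotopies glue because they preserve polydifferential operators. If you want to salvage your approach, you would need to replace "naturality of $\mathfrak{Z}_{\mathbb{E}_1}(A_f)$ under localization" with an actual globally defined intermediary—which is, in effect, what the paper's $\mathbb{R}\mathcal{H}om_{\mathcal{O}_{X\times_{\field{k}}X}}(\Delta_{\ast}\mathcal{O}_X,\Delta_{\ast}\mathcal{O}_X)$ and $\widehat{\mathcal{B}}(X)$ provide.
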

In the course of proving these results, we also obtain a couple of technical results about higher centers. In particular, we show how to explicitly compute the Gerstenhaber bracket of a 2-algebra up to homotopy using Lurie's version of the Dunn Additivity Theorem (see Corollary \ref{cor1}). We also show that the 2-algebra structure on a center which is obtained as an endomorphism object indeed corresponds to the composition product (Yoneda product) and the convolution product (see Corollary \ref{cor7}). 
\\ \paragraph*{\bf Related work.} The connection between the Hochschild cochain complex of an associative algebra and its higher center in the form of the universal 2-algebra acting on it was already described by Kontsevich in \cite{K}. In fact, in Deligne's original letter outlining the conjecture, he already stated that he expected the little 2-disk algebra structure to come from the composition and convolution product via some type of Eckmann-Hilton argument. P. Hu, I. Kriz and A. Voronov in \cite{HKV} proved a simplicial version of Deligne's Conjecture using this idea and explicit models of the little disks operads. In particular, for their result they proved a version of the Dunn Additivity Theorem for these specific models of the little $k$-disks operads. \par
In 2010, D. Ben-Zvi, J. Francis and D. Nadler \cite{BZFN} defined the Hochschild cohomology of an associative algebra object $A$ in a presentable closed symmetric monoidal $\infty$-category as its derived center, which they set to be the internal endomorphism object of $A$ as module over $A\otimes A^{\text{op}}$. In particular, this yields an "internal Hochschild cohomology". They also mention that they obtain an $\mathbb{E}_2$-algebra structure on the Drinfeld center of certain categories by considering the connection of the derived center of topological field theories.   \par
In 2013, Francis \cite{FR} used Lurie's Dunn Additivity Theorem to examine centers of stable $\infty$-categories and to relate the center to the module of derivations. He also defines the Hochschild cohomology of an algebra over an $\infty$-operad $\mathcal{O}$ as the hom-set of $\mathcal{O}$-module maps over $A$ from $A$ to itself, which is closely related to Lurie's definition of the center. \par
In 2020, I. Iwanari \cite{I} used this definition of the Hochschild cochain complex to show that pair of Hochschild cohomology and homology of a linear category over some commutative ring spectrum yields an algebra over the KS operad, which is a generalization of the $\mathbb{E}_2$-operad.\par
In 2023, C. Brav and N. Rozenblyum \cite{BR} proved a cyclic version of Deligne's Conjecture (i.e. replacing the little 2-disk operad by the framed version) also using the above described techniques. In particular, their method also relies on Lurie's version of Dunn Additivity. However, none of the above results provide a direct comparison to the classical solutions of Deligne's Conjecture, or make any claim about the underlying Gerstenhaber algebra structure. Similarly, the author is not aware of any comparison of the center of a scheme to the classical sheaf of polydifferential operators. 
\\ \paragraph*{\bf Conventions.}
Throughout this paper, $\field{k}$ is a field of characteristic zero. Complexes are generally chain graded unless states otherwise, and we view non-negatively graded cochain complexes as non-positive chain complexes. The term "operad" is reserved for non-reduced unital symmetric operads. We denote $\infty$-operads by calligraphic letters (e.g. $\ass$) and dg operads by bold letters (e.g. $\textbf{Assoc}$). If $A$ is an associative algebra object in a monoidal 1-category $C$, we denote by $\lmod_A(C)$ the category of left modules over $A$. We try to use the term "$\infty$-category" for $\infty$-categories, but if nothing else is stated, "category" refers to $\infty$-category and "1-category" refers to ordinary categories. The $\infty$-category of spaces is denoted by $\an$.  If $C$ is a model category, we denote by $C^c$ the full subcategory of cofibrant objects, and by $C^{\circ}$ the full subcategory of fibrant-cofibrant objects. We denote by $R$ and $Q$ the fibrant and cofibrant replacement functor respectively. We denote the presheaf tensor product simply by "$\otimes$", and we decorate symbols with "$(-)^a$" to indicate sheafification.
\\ \paragraph*{\bf Acknowledgments.} I thank my thesis advisor, Chris Rogers, for proposing the problem and providing essential guidance. I would also like to thank Nick Rozenblyum for invaluable comments on J. Lurie's proof of the Dunn Additivity Theorem and Dennis Borisov for helpful discussions on dg sheaves. This work was supported by NSF Grant DMS-2305407.
\color{black}
\section{Preliminaries}
We follow Lurie's formalism for $\infty$-operads as developed in \cite{HA}. In particular, an $\infty$-operad is a morphism $p: \mathcal{O}^{\otimes} \rightarrow \fin$ of $\infty$-categories satisfying a list of conditions making $\mathcal{O}^{\otimes}$ into an "$\infty$-category of operators". We will freely use notation from \cite{HA} regarding $\infty$-operads, algebras and modules over these.
\subsection{Morphism objects and operadic centers}
We recall the definition of endomorphism objects, centers and centralizers, and we show how these notions are related. The definitions we use can be found in \cite[Section 4.2, 4.7 and 5.3]{HA}.\\ \par
Let $\lm$ be the 2-colored $\infty$-operad whose algebras are given by pairs of homotopy associative algebras and modules over them as defined in \cite[Definition 4.2.1.7]{HA}. Denote by $\mathfrak{a}$ and $\mathfrak{m}$ the "algebra" and "module" color respectively.
\begin{definition}
Let $q: \mathcal{C}^{\otimes} \rightarrow \lm$ be a coCartesian fibration of $\infty$-operads. We then say that $q$ exhibits the $\infty$-category $\mathcal{C}_{\mathfrak{m}}$ as \textbf{left tensored} over the monoidal $\infty$-category $\mathcal{C}_{\mathfrak{a}}^{\otimes} := \mathcal{C}^{\otimes} \times_{\lm} \ass$. 
\end{definition}
In particular, if $q$ exhibits $\mathcal{C}_{\mathfrak{m}}$ as left tensored over $\mathcal{C}^{\otimes}_{\mathfrak{a}}$, then it determines a tensoring
\begin{align*}
    \otimes: \mathcal{C}_{\mathfrak{a}} \times \mathcal{C}_{\mathfrak{m}} \rightarrow \mathcal{C}_{\mathfrak{m}},
\end{align*}
well-defined up to homotopy, that is compatible with the monoidal structure on $\mathcal{C}_{\mathfrak{a}}$ up to homotopy. This is obtained as a coCartesian lift of the map $\{\mathfrak{a},\mathfrak{m}\} \rightarrow \mathfrak{m}$ over the active map $\langle 2\rangle \rightarrow \langle 1 \rangle$ in $\lm$. Note that any monoidal $\infty$-category $\mathcal{C}^{\otimes}\rightarrow \ass$ is left tensored over itself by considering the fiber product $\mathcal{C}^{\otimes} \times_{\ass} \lm \rightarrow \lm$.
\begin{notation}
Let $q: \mathcal{C}^{\otimes} \rightarrow \lm$ exhibit $\mathcal{C}_{\mathfrak{m}}$ as left tensored over $\mathcal{C}_{\mathfrak{a}}^{\otimes}$. Then we denote by 
\begin{align*}
    \lmod(\mathcal{C}) := \alg_{/\lm}(\mathcal{C})
\end{align*}
the $\infty$-category of pairs of associative algebras in $\mathcal{C}_{\mathfrak{a}}$ and left modules over them.
\end{notation}
Recall that for ordinary categories, internal homs and more generally enrichments are right adjoint to a tensoring. Similarly, one makes the following definition.
\begin{definition}\label{def3}
    Let $\mathcal{C}^{\otimes}\rightarrow \lm$ be a coCartesian fibration of $\infty$-operads exhibiting $\mathcal{C}_{\mathfrak{m}}$ as left tensored over $\mathcal{C}^{\otimes}_{\mathfrak{a}}$. If $M,N\in \mathcal{C}_{\mathfrak{m}}$, a \textbf{morphism object} for $M$ and $N$ is an object $\mor_{\mathcal{C}_{\mathfrak{m}}}(M,N)\in \mathcal{C}_{\mathfrak{a}}$ together with a map $\alpha\in \map_{\mathcal{C}_{\mathfrak{m}}}(\mor_{\mathcal{C}_{\mathfrak{m}}}(M,N)\otimes M,N)$ such that for each $C\in \mathcal{C}_{\mathfrak{a}}$, post-composition with $\alpha$ induces a homotopy equivalence
    \begin{align}
        \map_{\mathcal{C}_{\mathfrak{a}}}(C, \mor_{\mathcal{C}_{\mathfrak{m}}}(M,N))\xrightarrow{\simeq} \map_{\mathcal{C}_{\mathfrak{m}}}(C\otimes M,N).
    \end{align}
    We call $\mathcal{C}_{\mathfrak{m}}$ \textbf{enriched} over $\mathcal{C}^{\otimes}_{\mathfrak{a}}$ if all the morphisms objects exist.
\end{definition}
The following result shows that we can think of a morphism object as the classifying object of maps $A\otimes M \rightarrow N$ in $\mathcal{C}_{\mathfrak{m}}$ with $A\in \mathcal{C}_{\mathfrak{a}}$. Denote by $\mathcal{C}_{\mathfrak{a}} \times_{\mathcal{C}_{\mathfrak{m}}} {\mathcal{C}_{\mathfrak{m}}}_{/N}$ the pullback of the forgetful functor ${\mathcal{C}_{\mathfrak{m}}}_{/N} \rightarrow \mathcal{C}_{\mathfrak{m}}$ and the map
\begin{align}\label{eq2}
    -\otimes M: \mathcal{C}_{\mathfrak{a}} \rightarrow \mathcal{C}_{\mathfrak{m}}.
\end{align}
\begin{proposition}\label{prop6}
    Let $q: \mathcal{C}^{\otimes} \rightarrow \lm$ be a coCartesian fibration of $\infty$-operads. Let $M,N\in\mathcal{C}_{\mathfrak{m}}$. Then an object $\mor_{\mathcal{C}_{\mathfrak{m}}}(M,N)\in \mathcal{C}_{\mathfrak{a}}$ together with a map $\alpha: \mor_{\mathcal{C}_{\mathfrak{m}}}(M,N)\otimes M \rightarrow N$ is a morphism object of $M$ and $N$ if and only if $(\mor_{\mathcal{C}_{\mathfrak{m}}}(M,N),\alpha)\in \mathcal{C}_{\mathfrak{a}}\times_{\mathcal{C}_{\mathfrak{m}}}{\mathcal{C}_{\mathfrak{m}}}_{/N}$ is final.
\end{proposition}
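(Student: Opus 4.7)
The plan is to reduce the statement to a general fact about final objects in comma $\infty$-categories applied to the functor $-\otimes M\colon \mathcal{C}_{\mathfrak{a}} \to \mathcal{C}_{\mathfrak{m}}$ of \eqref{eq2}. First I would unpack the pullback $\mathcal{E} := \mathcal{C}_{\mathfrak{a}}\times_{\mathcal{C}_{\mathfrak{m}}}{\mathcal{C}_{\mathfrak{m}}}_{/N}$: its objects are pairs $(C,f)$ with $C\in\mathcal{C}_{\mathfrak{a}}$ and $f\colon C\otimes M \to N$ in $\mathcal{C}_{\mathfrak{m}}$, and a morphism $(C,f)\to (C',f')$ is a morphism $g\colon C\to C'$ in $\mathcal{C}_{\mathfrak{a}}$ together with a homotopy witnessing $f\simeq f'\circ(g\otimes M)$. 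By definition an object of $\mathcal{E}$ is final if and only if the mapping space into it from every other object is contractible.

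The key computation is the following standard expression for the mapping space in $\mathcal{E}$. Since mapping spaces commute with pullbacks of $\infty$-categories and since $\map_{{\mathcal{C}_{\mathfrak{m}}}_{/N}}(f,f')$ is, by definition of the slice, the homotopy fiber of the post-composition map $\map_{\mathcal{C}_{\mathfrak{m}}}(C\otimes M, C'\otimes M)\to \map_{\mathcal{C}_{\mathfrak{m}}}(C\otimes M, N)$ over $f$, one obtains a natural equivalence
\begin{align*}
    \map_{\mathcal{E}}\bigl((C,f),(\mor_{\mathcal{C}_{\mathfrak{m}}}(M,N),\alpha)\bigr) \;\simeq\; \operatorname{fib}_f\!\Bigl(\map_{\mathcal{C}_{\mathfrak{a}}}(C,\mor_{\mathcal{C}_{\mathfrak{m}}}(M,N))\xrightarrow{\ \alpha\circ(-\otimes M)\ } \map_{\mathcal{C}_{\mathfrak{m}}}(C\otimes M, N)\Bigr).
\end{align*}
Here the displayed map is exactly the one appearing in Definition \ref{def3}.

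The proposition then follows by comparing universal properties: $(\mor_{\mathcal{C}_{\mathfrak{m}}}(M,N),\alpha)$ is final in $\mathcal{E}$ iff the above homotopy fiber is contractible for every $(C,f)$, iff the map of Kan complexes $\alpha\circ(-\otimes M)$ has contractible homotopy fibers over every point for every $C\in\mathcal{C}_{\mathfrak{a}}$, which is equivalent to its being a homotopy equivalence for every $C$, i.e.\ the defining condition for a morphism object. The main technical obstacle is justifying the displayed equivalence of mapping spaces, but this is a routine application of the facts that $\map_{\mathcal{D}}(-,-)$ preserves homotopy limits in $\mathcal{D}$ and that mapping spaces in a slice ${\mathcal{C}_{\mathfrak{m}}}_{/N}$ are computed as homotopy fibers; everything else is formal.
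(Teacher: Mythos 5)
Your argument is correct and amounts to the same reduction the paper makes: finality of $(\mor_{\mathcal{C}_{\mathfrak{m}}}(M,N),\alpha)$ in $\mathcal{C}_{\mathfrak{a}}\times_{\mathcal{C}_{\mathfrak{m}}}{\mathcal{C}_{\mathfrak{m}}}_{/N}$ is identified with representability of the functor $X\mapsto \map_{\mathcal{C}_{\mathfrak{m}}}(X\otimes M,N)$, which is precisely the defining property of the morphism object. The only difference is cosmetic: the paper straightens the right fibration $\mathcal{C}_{\mathfrak{a}}\times_{\mathcal{C}_{\mathfrak{m}}}{\mathcal{C}_{\mathfrak{m}}}_{/N}\to\mathcal{C}_{\mathfrak{a}}$ and invokes the representability criterion \cite[Proposition 4.4.4.5]{HTT}, whereas you compute the mapping spaces of the pullback directly as homotopy fibers (legitimate, since the slice projection is a right fibration and the strict pullback is a homotopy pullback); both routes are routine and land in the same place.
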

\begin{proof}
    Note that ${\mathcal{C}_{\mathfrak{m}}}_{/N}\rightarrow \mathcal{C}_{\mathfrak{m}}$ is a right fibration, and since these are stable under base change, so is $f:\mathcal{C}_{\mathfrak{a}}\times_{\mathcal{C}_{\mathfrak{m}}}{\mathcal{C}_{\mathfrak{m}}}_{/N}\rightarrow \mathcal{C}_{\mathfrak{a}}$. Consider the functor $F: \mathcal{C}_{\mathfrak{a}}^{\textnormal{op}}\rightarrow \an$ classifying $f$. Then by \cite[Lemma 2.2.2.4]{HTT}, its underlying functor $hF: h\mathcal{C}_{\mathfrak{a}}^{\textnormal{op}}\rightarrow h\an$ can be recovered as follows. On objects, $X\in \mathcal{C}_{\mathfrak{a}}$ is sent to its fiber
    \begin{align*}
(\mathcal{C}_{\mathfrak{a}}\times_{\mathcal{C}_{\mathfrak{m}}}{\mathcal{C}_{\mathfrak{m}}}_{/N})\times_{\mathcal{C}_{\mathfrak{a}}}\{X\} \simeq \{X\otimes M\}\times_{\mathcal{C}_{\mathfrak{m}}}{\mathcal{C}_{\mathfrak{m}}}_{/N} \simeq \map_{\mathcal{C}_{\mathfrak{m}}}(X\otimes M,N).
    \end{align*}
    Given a morphism $e: Y\rightarrow X$ in $h\mathcal{C}_{\mathfrak{a}}$, the induced map between the fibers comes from solving the lifting problem
    \begin{center}
        \begin{tikzcd}
{\{1\}\times \text{Map}_{\mathcal{C}_{\mathfrak{m}}}(X\otimes M,N)} \arrow[d, hook] \arrow[rr, hook]  &                           & \mathcal{C}_{\mathfrak{a}}\times_{\mathcal{C}_{\mathfrak{m}}}{\mathcal{C}_{\mathfrak{m}}}_{/N} \arrow[d,"f"] \\
{\Delta^1\times \text{Map}_{\mathcal{C}_{\mathfrak{m}}}(X\otimes M,N)} \arrow[rd] \arrow[rru, dashed] &                           & \mathcal{C}_{\mathfrak{a}}                                               \\
                                                                                      & \Delta^1 \arrow[ru, "e"'] &                                                                         
\end{tikzcd},
    \end{center}
    and restricting the lift to $\{0\}\times \map_{\mathcal{C}_{\mathfrak{m}}}(X\otimes M,N)$. Since $f$ is a pullback of the right fibration ${\mathcal{C}_{\mathfrak{m}}}_{/N}\rightarrow \mathcal{C}_{\mathfrak{m}}$, the lift above is induced by the solution to 
    \begin{center}
        \begin{tikzcd}
{\{1\}\times \map_{\mathcal{C}_{\mathfrak{m}}}(X\otimes M,N)} \arrow[r] \arrow[d]                                      & {\mathcal{C}_{\mathfrak{m}}}_{/N} \arrow[d] \\
{\Delta^1\times \map_{\mathcal{C}_{\mathfrak{m}}}(X\otimes M,N)} \arrow[r, "e\otimes \text{id}_M"'] \arrow[ru, dashed] & \mathcal{C}_{\mathfrak{m}}               
\end{tikzcd}.
    \end{center}
    But the restriction of this lift to $\{0\}\times \map_{\mathcal{C}_{\mathfrak{m}}}(X\otimes M,N)$ is given by pre composition with $e\otimes \text{id}_M$. Therefore, we see that $hF$ is given by the composition of $-\otimes M$ and $\map_{\mathcal{C}_{\mathfrak{m}}}(-,N)$. Recall that by \cite[Proposition 4.4.4.5]{HTT}, an object $(X,X\otimes M\xrightarrow{\eta} N)$ is final in $\mathcal{C}_{\mathfrak{a}}\times_{\mathcal{C}_{\mathfrak{m}}}{\mathcal{C}_{\mathfrak{m}}}_{/N}$ if and only if the pair $(X,\eta\in hF(X))$ represents $hF$. Then we are done after noting that by definition, $(\mor_{\mathcal{C}_{\mathfrak{m}}}(M,N),\alpha)$ is a morphism object exactly if it represents the functor $X\mapsto \map_{\mathcal{C}_{\mathfrak{m}}}(X\otimes M,N)$.
\end{proof}
Now put $N = M$. Then by the above proposition, $\text{End}(M):= \mor_{\mathcal{C}_{\mathfrak{m}}}(M,M)\in \mathcal{C}_{\mathfrak{a}}$ classifies maps $A \otimes M \rightarrow M$. Here, the object $A\in \mathcal{C}_{\mathfrak{a}}$ need not carry the structure of an algebra object. However, we do expect $\text{End}(M)$ to carry the structure of an associative algebra coming from composition, and $M$ to be a module over it. 
\begin{definition}\label{def1}
    Let $q: \mathcal{C}^{\otimes}\rightarrow \lm$ be a coCartesian fibration of $\infty$-operads. Define the \textbf{endomorphism} $\infty$\textbf{-category} of $M\in \mathcal{C}_{\mathfrak{m}}$ as
    \begin{align*}
        \mathcal{C}_{\mathfrak{a}}[M] := \mathcal{C}_{\mathfrak{a}} \times_{\mathcal{C}_{\mathfrak{m}}} {\mathcal{C}_{\mathfrak{m}}}_{/M}
    \end{align*}
    where the pullback is given by the forgetful functor and equation (\ref{eq2}).
\end{definition}
\begin{proposition}\label{prop9}
The $\infty$-category $\mathcal{C}_{\mathfrak{a}}[M]$ is the underlying $\infty$-category of a monoidal $\infty$-category with tensor product given up to homotopy by 
\begin{align*}
    (A, A\otimes M \xrightarrow{\alpha} M) \otimes (B, B\otimes M \xrightarrow{\beta} M) = (A \otimes B, A\otimes B \otimes M \xrightarrow{\text{id}\otimes \beta} A \otimes M \xrightarrow{\alpha} M).
\end{align*}
\end{proposition}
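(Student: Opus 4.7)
The plan is to construct a coCartesian fibration $p: \mathcal{C}_{\mathfrak{a}}[M]^\otimes \to \ass$ whose underlying $\infty$-category is $\mathcal{C}_{\mathfrak{a}}[M]$ and whose tensor product agrees with the stated formula. This is a specialization of Lurie's endomorphism operad construction from \cite[Section 4.7]{HA} to our setting.

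To build $\mathcal{C}_{\mathfrak{a}}[M]^\otimes$, I would first encode $n$-ary actions on $M$ operadically. The $\infty$-operad $\lm$ contains objects of the form $(\mathfrak{a}^n, \mathfrak{m})$ (an $n$-fold copy of the algebra color together with one module color), and active morphisms in $\lm$ from $(\mathfrak{a}^n, \mathfrak{m})$ to $\mathfrak{m}$ classify $n$-ary left actions on the module. Let $\mathcal{L} \subseteq \lm$ denote the full subcategory spanned by these objects; forgetting the $\mathfrak{m}$-coordinate gives a projection $\mathcal{L} \to \ass$. I then define
$$\mathcal{C}_{\mathfrak{a}}[M]^\otimes \;:=\; \bigl(\mathcal{C}^\otimes \times_{\lm} \mathcal{L}\bigr) \times_{\mathcal{C}_{\mathfrak{m}}} \{M\},$$
where the map to $\mathcal{C}_{\mathfrak{m}}$ is the projection onto the $\mathfrak{m}$-coordinate. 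Unwinding definitions, the fiber over $\langle n\rangle \in \ass$ is the $\infty$-category of tuples $(A_1, \ldots, A_n)$ in $\mathcal{C}_{\mathfrak{a}}$ together with an iterated action map $A_1 \otimes \cdots \otimes A_n \otimes M \to M$; in particular, the fiber over $\langle 1 \rangle$ recovers $\mathcal{C}_{\mathfrak{a}}[M]$ as in Definition \ref{def1}.

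Next, I would verify that $p: \mathcal{C}_{\mathfrak{a}}[M]^\otimes \to \ass$ is a coCartesian fibration. Inert coCartesian lifts follow from those of $\mathcal{C}^\otimes \to \lm$, noting that inserting the identity on the $\mathfrak{m}$-coordinate at $M$ is automatic. Active coCartesian lifts use the hypothesis that $q$ is an honest coCartesian fibration (not merely a morphism of $\infty$-operads), which gives lifts over all active morphisms of $\lm$. The Segal condition for $p$ is inherited from that of $\mathcal{C}^\otimes \to \lm$.

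Finally, to check the tensor product formula, I would compute the coCartesian lift of $((A,\alpha),(B,\beta))$ over the unique active morphism $\mu: \langle 2\rangle \to \langle 1\rangle$ in $\ass$. This morphism lifts to the active morphism $(\mathfrak{a}, \mathfrak{a}, \mathfrak{m}) \to \mathfrak{m}$ in $\lm$, which factors as $(\mathfrak{a}, \mathfrak{a}, \mathfrak{m}) \to (\mathfrak{a}, \mathfrak{m}) \to \mathfrak{m}$. Taking the corresponding coCartesian lifts in $\mathcal{C}^\otimes$ yields the tensor product $A \otimes B$ on the $\mathfrak{a}$-coordinates and the composite $(A \otimes B) \otimes M \xrightarrow{\text{id} \otimes \beta} A \otimes M \xrightarrow{\alpha} M$ on the $\mathfrak{m}$-coordinate, matching the formula. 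The main technical obstacle is the coCartesian lifting and Segal verification, which reduces to careful bookkeeping with the simplicial structure of $\mathcal{L}$ and its pullback against $\mathcal{C}^\otimes$; this is precisely what is handled in greater generality in \cite[Section 4.7.1]{HA}.
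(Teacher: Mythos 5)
Your overall strategy (realize $\mathcal{C}_{\mathfrak{a}}[M]$ as the underlying $\infty$-category of a coCartesian fibration over $\ass$ extracted from $q$) is in the spirit of what the paper does, namely identify $\mathcal{C}_{\mathfrak{a}}[M]$ with Lurie's endomorphism $\infty$-category from \cite[Definition 4.7.1.1]{HA} (the content of Appendix \ref{appendixA}) and then invoke \cite[Proposition 4.7.1.30]{HA} for the monoidal structure and the tensor product formula. However, your explicit construction has a genuine gap: the simplicial set $\bigl(\mathcal{C}^{\otimes}\times_{\lm}\mathcal{L}\bigr)\times_{\mathcal{C}_{\mathfrak{m}}}\{M\}$ does not contain the action data. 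By the Segal condition for $q$, an object of $\mathcal{C}^{\otimes}\times_{\lm}\mathcal{L}$ lying over $(\mathfrak{a}^{n},\mathfrak{m})$ is just a tuple $(A_{1},\dots,A_{n},N)$ with $A_{i}\in\mathcal{C}_{\mathfrak{a}}$ and $N\in\mathcal{C}_{\mathfrak{m}}$; imposing $N=M$ via the strict fiber over the $\mathfrak{m}$-projection leaves the tuple $(A_{1},\dots,A_{n})$ and nothing else. In particular the fiber over $\langle 1\rangle$ is $\mathcal{C}_{\mathfrak{a}}$, not $\mathcal{C}_{\mathfrak{a}}\times_{\mathcal{C}_{\mathfrak{m}}}{\mathcal{C}_{\mathfrak{m}}}_{/M}$: the morphism $\alpha\colon A\otimes M\to M$ is part of the \emph{objects} of $\mathcal{C}_{\mathfrak{a}}[M]$, and a fiber product over the point $\{M\}$ cannot produce it. So the ``unwinding definitions'' step fails, and the rest of the argument collapses with it.

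To record the action one must build in a lax slice over $M$; this is exactly what Lurie's category of enriched morphisms $\text{EnMor}(\mathscr{M}^{\circledast})\subseteq \text{Fun}_{\Delta^1\times N(\mathbb{\Delta})^{\text{op}}}(\Lambda_0^2,\mathscr{M}^{\circledast})$ accomplishes, and it is the reason his construction lives over $\Delta^1\times N(\mathbb{\Delta})^{\text{op}}$ rather than arising as a naive pullback of $q$. Note also that even after repairing the objects, describing the fiber over $\langle n\rangle$ as tuples equipped with a \emph{single} iterated action $A_1\otimes\cdots\otimes A_n\otimes M\to M$ cannot give a monoidal $\infty$-category: the Segal map to $\mathcal{C}_{\mathfrak{a}}[M]^{\times n}$ would have to recover the $n$ individual actions $\alpha_i\colon A_i\otimes M\to M$ from the composite, which is impossible; in Lurie's construction the fiber over $[n]$ carries $n$ separate pieces of action data, and the composite action appears only as the coCartesian pushforward along the active map. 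These are precisely the points where the ``careful bookkeeping'' you defer to \cite[Section 4.7.1]{HA} does all the work, so as written your argument reduces to citing the result it sets out to reprove. The paper's route --- prove once that $\mathcal{C}_{\mathfrak{a}}[M]\simeq \mathcal{C}[M]_{\text{Lurie}}$ and then quote \cite[Proposition 4.7.1.30]{HA} --- avoids these pitfalls.
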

\begin{proof}
We show in Appendix \ref{appendixA} that $\mathcal{C}_{\mathfrak{a}}[M]$ agrees with Lurie's endomorphism $\infty$-category as in \cite[Definition 4.7.1.1]{HA}. It is shown in \cite[Proposition 4.7.1.30]{HA} that this admits the structure of a monoidal $\infty$-category with the described underlying tensor product.
\end{proof}
Then by \cite[Corollary 3.2.2.5]{HA} with $K= \emptyset$ and $\mathcal{O}^{\otimes} = \ass$ we automatically get the following.
\begin{proposition}\label{cor6}
    Assume that the $\infty$-category $\mathcal{C}_{\mathfrak{a}}[M]$ has a final object $(\e(M),\alpha)$. Then the $\infty$-category $\alg_{\mathscr{A}ssoc}(\mathcal{C}_{\mathfrak{a}}[M])$ also admits a final object, and if $X$ is final in $\alg_{\mathscr{A}ssoc}(\mathcal{C}_{\mathfrak{a}}[M])$ then we have an equivalence $X(\mathfrak{a})\simeq (\e(M),\alpha)$.
\end{proposition}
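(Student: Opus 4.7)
The approach is purely formal, reducing to a direct invocation of Lurie's general result on limits of algebras in monoidal $\infty$-categories. The plan is to first cite Proposition \ref{prop9} to promote $\mathcal{C}_{\mathfrak{a}}[M]$ to a monoidal $\infty$-category $\mathcal{C}_{\mathfrak{a}}[M]^{\otimes} \to \ass$, i.e.\ a coCartesian fibration of $\infty$-operads. In this setting, \cite[Corollary 3.2.2.5]{HA} asserts that for any simplicial set $K$, the forgetful functor $\alg_{\ass}(\mathcal{C}_{\mathfrak{a}}[M]) \to \mathcal{C}_{\mathfrak{a}}[M]$ creates and preserves $K$-indexed limits, under a mild compatibility hypothesis on the tensor product with $K$-indexed limits in the fibers.

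Next, I would specialize to $K = \emptyset$. Here the hypothesis is vacuous, since any monoidal structure is trivially compatible with the empty limit: the tensor product of terminal objects is again terminal by the pentagon/unit axioms. From the assumed existence of the terminal object $(\e(M), \alpha) \in \mathcal{C}_{\mathfrak{a}}[M]$, preservation and creation of $\emptyset$-limits then immediately yields a terminal object $X$ of $\alg_{\ass}(\mathcal{C}_{\mathfrak{a}}[M])$, together with an equivalence $X(\mathfrak{a}) \simeq (\e(M),\alpha)$ in $\mathcal{C}_{\mathfrak{a}}[M]$, where $X(\mathfrak{a})$ is the image of $X$ under the forgetful functor (identifying the unique color of $\ass$ with the algebra color $\mathfrak{a}$ of $\lm$).

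There is essentially no obstacle in this argument beyond bookkeeping: the entire content is packaged into the cited corollary, so the only step that requires care is matching notation, namely verifying that the monoidal $\infty$-category produced by Proposition \ref{prop9} is precisely the sort of coCartesian fibration of $\infty$-operads to which \cite[Corollary 3.2.2.5]{HA} applies, and that the forgetful functor described in that corollary agrees with the evaluation $X \mapsto X(\mathfrak{a})$ used in the statement.
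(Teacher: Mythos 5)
Your proposal matches the paper's argument exactly: the paper likewise uses Proposition \ref{prop9} to equip $\mathcal{C}_{\mathfrak{a}}[M]$ with a monoidal structure and then invokes \cite[Corollary 3.2.2.5]{HA} with $K=\emptyset$ and $\mathcal{O}^{\otimes}=\ass$, so that the final object of $\alg_{\mathscr{A}ssoc}(\mathcal{C}_{\mathfrak{a}}[M])$ is created pointwise from the final object of $\mathcal{C}_{\mathfrak{a}}[M]$. (One minor note: that corollary imposes no compatibility condition between the tensor product and limits at all, so the hypothesis you worry about being "vacuous" is not even present.)
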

Roughly speaking, Proposition \ref{cor6} implies that $\e(M)$ can be "promoted" to an algebra object in an essentially unique way. By abuse of notation, from here on, we denote by $\e(M)$ a final object of $\alg_{\mathscr{A}ssoc}(\mathcal{C}_{\mathfrak{a}}[M])$ with underlying object $\e(M)\in \mathcal{C}_{\mathfrak{a}}[M]$. \\ \par
By Proposition \ref{prop9}, the tensor product of $\e(M)$ with itself in $\mathcal{C}_{\mathfrak{a}}[M]$ is given by
\begin{align*}
    (\e(M) \otimes \e(M))\otimes M \simeq \e(M)\otimes (\e(M)\otimes M) \xrightarrow{\text{id}_{\e(M)}\otimes \alpha} \e(M) \otimes M \xrightarrow{\alpha} M,
\end{align*}
and hence by the proof of \cite[Proposition 3.2.2.1]{HA}, the algebra structure on $\e(M)$ has a multiplication $\circ$ making the following diagram commute in $\mathcal{C}_{\mathfrak{m}}$
\begin{center}
    \begin{tikzcd}
                                                     & \e(M) \otimes M \arrow[rd, "\alpha"] &   \\
(\e(M)\otimes \e(M)) \otimes M \arrow[rr, "\alpha  (\text{id}\otimes \alpha)"'] \arrow[ru, "\circ \otimes \text{id}"] &                            & M
\end{tikzcd}.
\end{center}
The above diagram implies that $M$ is automatically a module over the algebra $\e(M)$, and this action $\e(M) \otimes M \xrightarrow{\alpha} M$ is universal among algebra actions making $M$ into a module. In particular, the fiber over $M$ of the functor $\lmod(\mathcal{C}) \rightarrow \mathcal{C}_{\mathfrak{m}}$ is non-trivial. Moreover, we have
\begin{proposition}
There is an equivalence of $\infty$-categories 
\begin{align*}
    \alg_{\mathscr{A}ssoc}(\mathcal{C}_{\mathfrak{a}}[M]) \xrightarrow{\simeq} \lmod(\mathcal{C})\times_{\mathcal{C}_{\mathfrak{m}}}\{M\}.
\end{align*}
\end{proposition}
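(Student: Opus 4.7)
The plan is to invoke Lurie's identification of algebra objects in an endomorphism monoidal $\infty$-category with module structures, once the relevant conventions are matched.

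First, by the identification established in Appendix \ref{appendixA} together with Proposition \ref{prop9}, the monoidal $\infty$-category $\mathcal{C}_{\mathfrak{a}}[M]$ agrees with the endomorphism monoidal $\infty$-category constructed by Lurie in \cite[Section 4.7.1]{HA}. Lurie sets up this construction precisely so that algebra objects in it correspond to module structures on $M$, and the claimed equivalence is then a direct application of this universal property (see \cite[Proposition 4.7.1.40]{HA}).

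To see concretely why this is plausible, I would unpack the definitions at the level of objects. An algebra in $\mathcal{C}_{\mathfrak{a}}[M]$ consists of an underlying object $(A, \alpha: A\otimes M \to M)$ together with a multiplication. By Proposition \ref{prop9}, the multiplication is a morphism
\begin{align*}
(A \otimes A,\; \alpha \circ (\mathrm{id}_A \otimes \alpha)) \longrightarrow (A, \alpha)
\end{align*}
in the pullback $\mathcal{C}_{\mathfrak{a}} \times_{\mathcal{C}_{\mathfrak{m}}} (\mathcal{C}_{\mathfrak{m}})_{/M}$, which unravels to the data of a multiplication $m: A \otimes A \to A$ in $\mathcal{C}_{\mathfrak{a}}$ together with a homotopy $\alpha \circ (m\otimes \mathrm{id}_M) \simeq \alpha \circ (\mathrm{id}_A \otimes \alpha)$ in $\mathcal{C}_{\mathfrak{m}}$, i.e.\ the associativity of an $A$-action on $M$. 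Higher associativity and unit simplices for $\ass$ encode in the same manner the remaining coherence data of an $\lm$-algebra structure with underlying object $M$, which is exactly what it means to specify an element of $\lmod(\mathcal{C}) \times_{\mathcal{C}_{\mathfrak{m}}} \{M\}$.

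The main obstacle is not the object-level unpacking above, but rather producing the comparison as a genuine functor of $\infty$-categories and verifying it is an equivalence. The cleanest route is to exhibit both sides as fibers of compatible coCartesian fibrations over $\alg_{\ass}(\mathcal{C}_{\mathfrak{a}})$: on the left via the monoidal forgetful functor $\mathcal{C}_{\mathfrak{a}}[M]\to \mathcal{C}_{\mathfrak{a}}$, and on the right via the evident projection from $\lmod(\mathcal{C})$. Once the monoidal structure from Appendix \ref{appendixA} is matched with Lurie's, these two fibrations are identified by construction, and the fiberwise equivalence follows from his universal property.
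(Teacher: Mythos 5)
Your proposal is correct and follows essentially the same route as the paper: identify $\mathcal{C}_{\mathfrak{a}}[M]$ with Lurie's endomorphism $\infty$-category via Appendix \ref{appendixA} and then invoke his result that algebra objects therein classify left module structures on $M$ (the paper cites \cite[Theorem 4.7.1.34]{HA} and \cite[Remark 4.7.1.35]{HA}). The one step you fold into ``matching conventions'' that the paper makes explicit is the passage from $\alg_{\mathbb{A}_{\infty}}(\mathcal{C}_{\mathfrak{a}}[M])$ to $\alg_{\mathscr{A}ssoc}(\mathcal{C}_{\mathfrak{a}}[M])$ via \cite[Proposition 4.1.3.19]{HA}, which is needed because Lurie's endomorphism category is planar, i.e.\ lives over $N(\mathbb{\Delta})^{\textnormal{op}}$ rather than over $\ass$.
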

\begin{proof}
By Appendix \ref{appendixA}, $\mathcal{C}_{\mathfrak{a}}[M]$ agrees with Lurie's endomorphism $\infty$-category. Then using \cite[Theorem 4.7.1.34]{HA} and \cite[Remark 4.7.1.35]{HA}, we get an equivalence of $\infty$-categories 
\begin{align*}
    \alg_{\mathbb{A}_{\infty}}(\mathcal{C}_{\mathfrak{a}}[M]) \xrightarrow{\simeq} \lmod(\mathcal{C})\times_{\mathcal{C}_{\mathfrak{m}}} \{M\}.
\end{align*}
Using \cite[Proposition 4.1.3.19]{HA}, we also have an equivalence of $\infty$-categories $\alg_{\mathscr{A}ssoc}(\mathcal{C}_{\mathfrak{a}}[M]) \xrightarrow{\simeq} \alg_{\mathbb{A}_{\infty}}(\mathcal{C}_{\mathfrak{a}}[M])$, and the statement follows by composing these.
\end{proof}
There are a variety of interesting situations in which such an (endo)morphism object fails to exist, in particular if we consider $\infty$-categories arising as categories of algebra objects. The archetypal example is the following.
\begin{example}\label{ex1}
    Let $\field{k}$ be a field and consider the (symmetric) monoidal category $\mathcal{C} = \alg_{\field{k}}$ as left tensored over itself. Let $M\in \mathcal{C}$ be a $\field{k}$-algebra. Then for any endomorphism $\varphi \in \hom_{\mathcal{C}}(M,M)$, the pair $(\field{k},\field{k}\otimes M \cong M \xrightarrow{\varphi}M)$ is an object of $\mathcal{C}[M]$. Therefore, if $(A,\alpha)\in \mathcal{C}[M]$ is a final object, then $\alpha\circ (u_A\otimes \text{id}_M) = \varphi$ for any endomorphism $\varphi: M\rightarrow M$, where $u_A$ is the unit of $A$. But this is only possible if $M$ only admits a single endomorphism. This difficulty was to be expected, since we know that the monoidal category of $\field{k}$-algebras is not closed.
\end{example}
Lurie's solution to this problem is to relax the expectations on the morphism object. In the above discussion, we start out requiring that $\mor_{\mathcal{C}_{\mathfrak{m}}}(M,N)$ classify all morphisms $C\otimes M\rightarrow N$ in $\mathcal{C}_{\mathfrak{m}}$, and then in the case $N=M$ get for free that $\e(M)$ also classifies algebra actions of algebras on $M$. Instead, we now consider objects that only classify the algebra actions. 
\begin{definition}\label{def5}
    Let $\mathcal{C}^{\otimes}\rightarrow \lm$ be a coCartesian fibration of $\infty$-operads, and let $M\in \mathcal{C}_{\mathfrak{m}}$. A \textbf{center} $\mathfrak{Z}(M)$ of $M$ is a final object of $\lmod(\mathcal{C})\times_{\mathcal{C}_{\mathfrak{m}}}\{M\}$. We generally identify $\mathfrak{Z}(M)$ with its image in $\alg_{\mathscr{A}ssoc}(\mathcal{C}_{\mathfrak{a}})$.
\end{definition}
Clearly if $M$ admits an endomorphism object, then this endomorphism object is also a center of $M$. The converse does not hold: The category $\alg_{\mathscr{A}ssoc}(\mathcal{C}_{\mathfrak{a}}[M])$ might have final objects although $\mathcal{C}_{\mathfrak{a}}[M]$ does not.
\begin{example}[Example \ref{ex1} continued]
The ordinary center $Z(A)$ of an associative $\field{k}$-algebra $A$ is indeed the universal algebra object acting on $A$. To see this, note first that the center is a commutative algebra, and therefore an algebra object in the category of associative algebras. It comes with a natural action on $A$ given by multiplication in $A$. Now suppose that $B$ is a commutative algebra with action $\beta: B\otimes A \rightarrow A$ making $A$ into a $B$-module. Then the restriction of $\beta$ to $A$ yields the identity on $A$ and $\beta$ must be an algebra morphism. Hence 
\begin{align*}
    \beta(b\otimes a) &= \beta(b\otimes 1)\cdot \beta(1\otimes a) = \beta(b\otimes 1) \cdot a \quad \text{and} \\
    \beta(b\otimes a) &= \beta(1\otimes a) \cdot \beta(b\otimes 1) = a\cdot \beta(b\otimes 1),
\end{align*}
showing that $\beta$ sends $B$ to $Z(A)$.
\end{example}
There also is a relative version of the center. 
\begin{definition}
Let $\mathcal{C}^{\otimes} \rightarrow \lm$ be a coCartesian fibration of $\infty$-operads, let $\mathbb{1}$ denote the monoidal unit of $\mathcal{C}_{\mathfrak{a}}^{\otimes}$, and let $f: M \rightarrow N$ be a morphism in $\mathcal{C}_{\mathfrak{m}}$. A \textbf{centralizer} $\mathfrak{Z}(f)$ of $f$ is a final object in
\begin{align*}
    \textnormal{Act}(f) := (\mathcal{C}_{\mathfrak{a}})_{\mathbb{1}/} \times_{{\mathcal{C}_{\mathfrak{m}}}_{M/}} ({\mathcal{C}_{\mathfrak{m}}}_{M/})_{/f}.
\end{align*}
We generally identify $\mathfrak{Z}(f)$ with its image in $\mathcal{C}_{\mathfrak{a}}$.
\end{definition}
The objects of this $\infty$-category are given by commuting triangles in $\mathcal{C}_{\mathfrak{m}}$
\begin{center}
    \begin{tikzcd}
                                          & C\otimes M \arrow[rd] &   \\
\mathbb{1}\otimes M \arrow[rr,"f"'] \arrow[ru] &                       & N
\end{tikzcd}.
\end{center}
In particular, the centralizer is equipped with an action $\mathfrak{Z}(f) \otimes M \rightarrow N$ making the above diagram commute. 
\begin{lemma}\label{lem5}
Let $\mathcal{C}^{\otimes} \rightarrow \lm$ be a coCartesian fibration of $\infty$-operads. Let $f: M\rightarrow N$ be a morphism in $\mathcal{C}_{\mathfrak{m}}$. Let $\overline{M}\in \lmod_{\mathbb{1}}(\mathcal{C}_{\mathfrak{m}})$ be a lift of $M$ as module over the trivial algebra. Let $\mathcal{C}^{\otimes}_{\overline{M}_{\mathscr{LM}/}}\rightarrow \lm$ be defined as in \cite[Notation 2.2.2.3]{HA}. Then centralizers of $f$ can be identified with morphism objects
\begin{align*}
    \mor_{{\mathcal{C}_{\mathfrak{m}}}_{M/}}(\textnormal{id}_M,f)\in (\mathcal{C}_{\mathfrak{a}})_{\mathbb{1}/}.
\end{align*}
\end{lemma}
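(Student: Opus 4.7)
The plan is to apply Proposition \ref{prop6} not to $\mathcal{C}^{\otimes} \to \lm$ itself but to the slice coCartesian fibration $\mathcal{C}^{\otimes}_{\overline{M}_{\mathscr{LM}/}} \to \lm$. By \cite[Notation 2.2.2.3]{HA} together with the accompanying results on operadic slice categories, this projection is again a coCartesian fibration of $\infty$-operads whose fiber over the algebra color is $(\mathcal{C}_{\mathfrak{a}})_{\mathbb{1}/}$ and whose fiber over the module color is ${\mathcal{C}_{\mathfrak{m}}}_{M/}$, since $\overline{M}$ records precisely the unit $\mathbb{1}$ and the object $M$ equipped with the trivial action.

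The key intermediate step will be to identify the tensoring
\[
\otimes:\, (\mathcal{C}_{\mathfrak{a}})_{\mathbb{1}/} \times {\mathcal{C}_{\mathfrak{m}}}_{M/} \to {\mathcal{C}_{\mathfrak{m}}}_{M/}
\]
induced by this new coCartesian fibration. Because coCartesian lifts in the slice operad are assembled from coCartesian lifts in $\mathcal{C}^{\otimes}$ together with the structure morphisms out of $\overline{M}$, this tensoring will send a pair $(u: \mathbb{1} \to C,\, g: M \to X)$ to the composite $u \otimes g: M \simeq \mathbb{1} \otimes M \to C \otimes X$, regarded as an object of ${\mathcal{C}_{\mathfrak{m}}}_{M/}$. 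In particular, plugging in the initial object $\textnormal{id}_M \in {\mathcal{C}_{\mathfrak{m}}}_{M/}$ recovers the expected functor $(u:\mathbb{1} \to C) \mapsto (M \to C \otimes M)$.

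Once this is in place, I will invoke Proposition \ref{prop6} inside the slice fibration. It says that an object $(\mor_{{\mathcal{C}_{\mathfrak{m}}}_{M/}}(\textnormal{id}_M, f), \alpha)$ of $(\mathcal{C}_{\mathfrak{a}})_{\mathbb{1}/}$ is a morphism object for the pair $(\textnormal{id}_M, f)$ if and only if it is final in
\[
(\mathcal{C}_{\mathfrak{a}})_{\mathbb{1}/} \times_{{\mathcal{C}_{\mathfrak{m}}}_{M/}} ({\mathcal{C}_{\mathfrak{m}}}_{M/})_{/f},
\]
where the left leg of the pullback is the tensoring $- \otimes \textnormal{id}_M$ computed above. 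By that computation this pullback is literally the $\infty$-category $\textnormal{Act}(f)$ appearing in the definition of the centralizer, so uniqueness of final objects identifies morphism objects of $(\textnormal{id}_M, f)$ with centralizers of $f$.

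The hard part will be the first step: verifying rigorously that the slice $\mathcal{C}^{\otimes}_{\overline{M}_{\mathscr{LM}/}} \to \lm$ really is a coCartesian fibration of $\infty$-operads with the stated fibers, and that its induced tensoring has the pointwise form claimed above. This amounts to unwinding \cite[Notation 2.2.2.3]{HA} and using that coCartesian edges over active maps in the slice are characterized by their image in $\mathcal{C}^{\otimes}$ together with commutativity (in the appropriate weak sense) with the structure morphisms out of $\overline{M}$; everything else is a direct instantiation of Proposition \ref{prop6}.
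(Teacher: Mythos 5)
Your proposal is correct and follows essentially the same route as the paper: form the slice operad $\mathcal{C}^{\otimes}_{\overline{M}_{\lm}}$, note that its algebra and module fibers are $(\mathcal{C}_{\mathfrak{a}})_{\mathbb{1}/}$ and ${\mathcal{C}_{\mathfrak{m}}}_{M/}$, and apply Proposition \ref{prop6} there to identify the fiber product with $\textnormal{Act}(f)$. You spell out the tensoring on the slice more explicitly than the paper does, but the argument is the same.
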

\begin{proof}
Let $\mathcal{C}'^{\otimes} := \mathcal{C}^{\otimes}_{\overline{M}_{\lm}}$. By Proposition \ref{prop6}, it suffices to show that we have an equivalence of $\infty$-categories $\text{Act}(f)\simeq (\mathcal{C}')_{\mathfrak{a}} \times_{\mathcal{C}_{\mathfrak{m}}'} {\mathcal{C}_{\mathfrak{m}}'}_{/f}$. But we have $(\mathcal{C}')_{\mathfrak{a}} \times_{\mathcal{C}_{\mathfrak{m}}'} {\mathcal{C}_{\mathfrak{m}}'}_{/f} \simeq (\mathcal{C}_{\mathfrak{a}})_{\mathbb{1}/} \times_{{\mathcal{C}_{\mathfrak{m}}}_{M/}} ({\mathcal{C}_{\mathfrak{m}}}_{M/})_{/f}$, so this is clear.
\end{proof}
We would like to see that these notions are compatible, in the sense that the centralizer of an identity morphism recovers the center. 
\begin{proposition}[Proposition 5.3.1.8 \cite{HA}]\label{prop7}
Let $M \in \mathcal{C}_{\mathfrak{m}}$, and suppose there exists a centralizer $\mathfrak{Z}(\textnormal{id}_M)\in \mathcal{C}_{\mathfrak{a}}$. Then there exists a center $\mathfrak{Z}(M)\in \alg_{\mathscr{A}ssoc}(\mathcal{C}_{\mathfrak{a}})$. Further, a lift of $M$ to a module over an algebra $A\in \alg_{\mathscr{A}ssoc}(\mathcal{C}_{\mathfrak{a}})$ exhibits $A$ as a center of $M$ if and only if the action map $A\otimes M \rightarrow M$ exhibits $A$ as a centralizer of $\textnormal{id}_M$.
\end{proposition}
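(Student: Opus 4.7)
The plan is to reduce the statement to Proposition \ref{cor6} by passing to the slice ∞-operad $\mathcal{C}'^{\otimes} := \mathcal{C}^{\otimes}_{\overline{M}_{\lm/}}$, whose underlying fibers are $(\mathcal{C}')_{\mathfrak{a}} = (\mathcal{C}_{\mathfrak{a}})_{\mathbb{1}/}$ and $(\mathcal{C}')_{\mathfrak{m}} = (\mathcal{C}_{\mathfrak{m}})_{M/}$. The idea is that the identity morphism $\textnormal{id}_M \in (\mathcal{C}')_{\mathfrak{m}}$ plays the role of ``$M$'' in the sliced setting, and endomorphism objects of $\textnormal{id}_M$ in $(\mathcal{C}')_{\mathfrak{a}}$ are nothing other than centralizers of $\textnormal{id}_M$ in the original data.

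First, by Lemma \ref{lem5} applied with $f = \textnormal{id}_M$, existence of the centralizer $\mathfrak{Z}(\textnormal{id}_M)$ is equivalent to existence of the morphism object $\mor_{(\mathcal{C}')_{\mathfrak{m}}}(\textnormal{id}_M,\textnormal{id}_M)$; by Proposition \ref{prop6} this is in turn equivalent to existence of a final object in $(\mathcal{C}')_{\mathfrak{a}}[\textnormal{id}_M]$, with underlying pair $(\mathfrak{Z}(\textnormal{id}_M),\alpha)$ where $\alpha: \mathfrak{Z}(\textnormal{id}_M) \otimes M \to M$ is the canonical centralizer action. Next, I would apply Proposition \ref{cor6} to the coCartesian fibration $\mathcal{C}'^{\otimes} \to \lm$ to promote this final object to a final object of $\alg_{\mathscr{A}ssoc}((\mathcal{C}')_{\mathfrak{a}}[\textnormal{id}_M])$, and then invoke the equivalence from the proposition preceding Example \ref{ex1} to conclude that
\begin{align*}
\lmod(\mathcal{C}') \times_{(\mathcal{C}')_{\mathfrak{m}}} \{\textnormal{id}_M\}
\end{align*}
admits a final object.

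The remaining content is an equivalence $\lmod(\mathcal{C}') \times_{(\mathcal{C}')_{\mathfrak{m}}} \{\textnormal{id}_M\} \simeq \lmod(\mathcal{C}) \times_{\mathcal{C}_{\mathfrak{m}}} \{M\}$ induced by the projection $\mathcal{C}'^{\otimes} \to \mathcal{C}^{\otimes}$. Objects on the left consist of a pointed algebra $(A,u:\mathbb{1}\to A)$ together with a lift of $\textnormal{id}_M$ to an $A$-module structure under $M$, while objects on the right are simply pairs $(A, M \in \lmod_A(\mathcal{C}_{\mathfrak{m}}))$. Since the unit of any associative algebra provides a contractible space of pointings $\mathbb{1} \to A$, and the required factorization $M \simeq \mathbb{1}\otimes M \to A\otimes M \to M$ of $\textnormal{id}_M$ is automatic from the unit axiom for $A$-modules, the forgetful functor is an equivalence. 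Transporting the final object across it yields a center $\mathfrak{Z}(M)$ of $M$ whose underlying algebra is $\mathfrak{Z}(\textnormal{id}_M)$ and whose action is $\alpha$, proving both the existence statement and, since the same chain of equivalences identifies $A$-module lifts of $M$ with commuting factorizations $M \to A\otimes M \to M$ exhibiting centralizer data, the biconditional.

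The main obstacle will be making the final equivalence of ∞-categories precise within Lurie's slice-∞-operad formalism \cite[Section 2.2.2]{HA}: one must verify that the coCartesian fibration $\mathcal{C}'^{\otimes} \to \lm$ interacts compatibly with the formation of $\alg_{\mathscr{A}ssoc}$ and $\lmod$, and that the fibers over $\{\textnormal{id}_M\}$ and $\{M\}$ match in a homotopy-coherent fashion. Everything else is a direct application of the results already recalled in this section.
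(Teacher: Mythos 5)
Your proposal is correct and follows essentially the same route as the paper: Lemma \ref{lem5} identifies the centralizer of $\textnormal{id}_M$ with a morphism object in the sliced operad, Proposition \ref{cor6} (together with the equivalence preceding Example \ref{ex1}) promotes it to a final object of $\lmod(\mathcal{C}')\times_{(\mathcal{C}')_{\mathfrak{m}}}\{\textnormal{id}_M\}$. The final comparison with $\lmod(\mathcal{C})\times_{\mathcal{C}_{\mathfrak{m}}}\{M\}$, which you flag as the main remaining obstacle, is exactly the step the paper delegates to \cite[Lemma 5.3.1.10]{HA} (preservation of final objects by the forgetful functor), so your sketch of that equivalence is the only point where you do more work than the paper does.
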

\begin{proof}[Proof]
By Lemma \ref{lem5}, the centralizer of the identity is a morphism object $\mor_{{\mathcal{C}_{\mathfrak{m}}}_{M/}}(\text{id}_M,\text{id}_M)$. By Proposition \ref{cor6}, this morphism object admits an essentially unique structure of an algebra object in $(\mathcal{C}_{\mathfrak{a}})_{\mathbb{1}/}$, and $\text{id}_M$ lifts to a module over this algebra structure. In particular, $\mathfrak{Z}(\text{id}_M)$ admits a canonical algebra structure making it into the center of $\text{id}_M$ in ${\mathcal{C}_{\mathfrak{m}}}_{M/}$. Now use \cite[Lemma 5.3.1.10]{HA} to see that the forgetful functor $\lmod({\mathcal{C}_{\mathfrak{m}}}_{M/})\times_{{\mathcal{C}_{\mathfrak{m}}}_{M/}}\{\text{id}_M\} \rightarrow \lmod(\mathcal{C})\times_{\mathcal{C}_{\mathfrak{m}}}\{M\}$ preserves final objects.
\end{proof}
\subsection{Tensor product of $\infty$-operads}
The Boardman-Vogt tensor product on ordinary operads is designed such that algebras over the tensor product $\mathcal{P}\boxtimes_{\text{BV}} \mathcal{O}$ are given by $\mathcal{P}$-algebras in the category of $\mathcal{O}$-algebras. However, it is well-known that this tensor product does not make the category of (reduced) operads into a monoidal model category. We briefly review the corresponding construction for $\infty$-operads following \cite[Section 2.5.5]{HA}. \\ \par
We want to capture bilinearity of a map between $\infty$-operads. To this end, define a functor $\wedge: \fin \times \fin \rightarrow \fin$ by sending $(\langle m\rangle,\langle n\rangle)$ to the pointed set $(\langle m\rangle^{\circ}\times \langle n\rangle^{\circ})_{+}\cong \langle mn\rangle$, where the isomorphism is given by the lexicographic ordering, and by sending $(f: \langle m\rangle \rightarrow \langle n\rangle,g: \langle m'\rangle \rightarrow \langle n'\rangle)$ to 
\begin{align*}
    \langle mm'\rangle \xrightarrow{\cong} (\langle m\rangle^{\circ}\times \langle m'\rangle^{\circ})_+ \xrightarrow{f\times g} (\langle n\rangle^{\circ}\times \langle n'\rangle^{\circ})_+ \xrightarrow{\cong} \langle nn'\rangle.
\end{align*}
\begin{definition}
We call a map of simplicial sets $F: \mathcal{O}^{\otimes} \times \mathcal{O}'^{\otimes} \rightarrow \mathcal{O}''^{\otimes}$ a \textbf{bifunctor of $\infty$-operads} if the diagram below commutes, and if $F$ sends pairs of inert maps to an inert map in $\mathcal{O}''^{\otimes}$.
\begin{center}
\begin{tikzcd}
\mathcal{O}^{\otimes}\times \mathcal{O}'^{\otimes} \arrow[d] \arrow[r, "F"] & \mathcal{O}''^{\otimes} \arrow[d] \\
\fin \times \fin \arrow[r, "\wedge"]                                        & \fin                             
\end{tikzcd}
\end{center}
Define $\text{Bil}(\mathcal{O}^{\otimes},\mathcal{O}'^{\otimes};\mathcal{O}''^{\otimes})$ to be the full subcategory of $\text{Fun}_{\fin}(\mathcal{O}^{\otimes}\times \mathcal{O}'^{\otimes},\mathcal{O}''^{\otimes})$ spanned by the bifunctors.
\end{definition}
\begin{proposition}[Proposition 3.2.4.3, \cite{HA}]\label{prop10}
    Let $\mathcal{C}^{\otimes}$ be a symmetric monoidal $\infty$-category and $\mathcal{O}^{\otimes}$ an $\infty$-operad. Then the functor 
    \begin{align*}
        \mathbf{sSet}_{/\fin} \rightarrow \mathbf{Set}
    \end{align*}
    sending $K \rightarrow \fin$ to the set of diagrams
    \begin{center}
 \begin{tikzcd}
K \times \mathcal{O}^{\otimes} \arrow[d] \arrow[r, "F"] & \mathcal{C}^{\otimes} \arrow[d] \\
\fin\times \fin \arrow[r, "\wedge"]                      & \fin                           
\end{tikzcd}
\end{center}
such that for $v\in K$ a vertex and $f$ an inert morphisms in $\mathcal{O}^{\otimes}$, the map $F(s_0(v),f)$ is inert in $\mathcal{C}^{\otimes}$, is representable. The representing object, which we denote by $\alg_{\mathcal{O}}(\mathcal{C})^{\otimes}\rightarrow \fin$, is a coCartesian fibration of $\infty$-operads. 
\end{proposition}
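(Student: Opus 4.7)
The plan is to construct $\alg_{\mathcal{O}}(\mathcal{C})^{\otimes}$ directly as the evident candidate, verify representability tautologically, and then reduce the operadic conditions to those on $\mathcal{C}^{\otimes}$. First I would define $\alg_{\mathcal{O}}(\mathcal{C})^{\otimes}$ as the simplicial set over $\fin$ whose $n$-simplices over a given $\phi \colon \Delta^n \to \fin$ are the maps $F \colon \Delta^n \times \mathcal{O}^{\otimes} \to \mathcal{C}^{\otimes}$ making the square with $\wedge$ commute (with $\phi$ on the first factor and the structure map of $\mathcal{O}^{\otimes}$ on the second) and satisfying the pointwise inertness condition in the statement. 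The projection $p$ records $\phi$. By the internal mapping adjunction in $\mathbf{sSet}_{/\fin}$, the functor sending $K \to \fin$ to the set of diagrams in the proposition is tautologically corepresented by this simplicial set, so the remainder of the proof is the verification of the structural properties of $p$.

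Next I would check that $p \colon \alg_{\mathcal{O}}(\mathcal{C})^{\otimes} \to \fin$ is a coCartesian fibration whose total space is an $\infty$-category. The structure map $q \colon \mathcal{C}^{\otimes} \to \fin$ is itself a coCartesian fibration because $\mathcal{C}^{\otimes}$ is a symmetric monoidal $\infty$-category. Given an edge $\phi \colon \langle m\rangle \to \langle n\rangle$ in $\fin$ and a vertex $F_0$ of the fiber $\alg_{\mathcal{O}}(\mathcal{C})^{\otimes}_{\langle m\rangle}$, I would produce a $p$-coCartesian lift of $\phi$ by applying, pointwise in $\mathcal{O}^{\otimes}$, the $q$-coCartesian lifts of the edges $\phi \wedge \mathrm{id}$. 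Standard results on pointwise lifting in $\mathrm{Fun}_{\fin}(\mathcal{O}^{\otimes}, \mathcal{C}^{\otimes})$ then yield both the inner fibration and coCartesian fibration properties, and the inertness condition cuts out a full simplicial subset stable under these lifts because coCartesian transport of an inert morphism over an inert base morphism remains inert.

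Finally, to verify that $p$ is a fibration of $\infty$-operads I would check the Segal condition: for each $\langle n\rangle \in \fin$, the inert morphisms $\rho^i \colon \langle n\rangle \to \langle 1\rangle$ should induce an equivalence
\begin{equation*}
\alg_{\mathcal{O}}(\mathcal{C})^{\otimes}_{\langle n\rangle} \xrightarrow{\simeq} \prod_{i=1}^{n} \alg_{\mathcal{O}}(\mathcal{C})^{\otimes}_{\langle 1\rangle}.
\end{equation*}
Unwinding definitions, an object on the left is an $\mathcal{O}^{\otimes}$-indexed family of objects of $\mathcal{C}^{\otimes}$ living in degree $n$; the Segal condition for $\mathcal{C}^{\otimes}$ decomposes each such family into $n$ independent families in degree $1$, and the pointwise inertness hypothesis makes this decomposition compatible with morphisms.

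The step I expect to be the main obstacle is the careful bookkeeping of the reparametrizations of $\fin$ induced by $\wedge$ throughout all three steps, since the fiber of $\mathcal{C}^{\otimes}$ over $\langle m\rangle \wedge \langle k\rangle \cong \langle mk\rangle$ is equivalent to $\mathcal{C}^{mk}$ and one must identify which inert projections correspond to pullback along $\mathcal{O}^{\otimes}$ and which come from $\phi$. Lurie's treatment in \cite[Section 3.2.4]{HA} sidesteps much of this by setting up a single auxiliary fibration over $\fin \times \fin$ and leveraging straightening/unstraightening of coCartesian fibrations; I would adopt that framework to keep the combinatorics manageable rather than redevelop it from scratch.
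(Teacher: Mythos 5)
The paper gives no proof of this proposition: it is quoted verbatim from Lurie (Proposition 3.2.4.3 of \cite{HA}, via Construction 3.2.4.1), so there is nothing in the text to compare against except that citation. Your sketch --- tautological representability via the exponential in $\mathbf{sSet}_{/\fin}$, coCartesian lifts produced pointwise and preserved by the inertness condition via the cancellation property of coCartesian edges, and the Segal condition inherited objectwise from $\mathcal{C}^{\otimes}$ --- is a correct outline of exactly the argument Lurie gives, so it takes essentially the same approach as the (cited) proof.
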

The fiber $\alg_{\mathcal{O}}(\mathcal{C})^{\otimes}_{\langle 1\rangle}$ over $\langle 1\rangle\in \fin$ is given by the full subcategory of $\text{Fun}_{\fin}(\mathcal{O}^{\otimes},\mathcal{C}^{\otimes})$ of maps that preserve inert morphisms, and hence can be identified with the $\infty$-category $\text{Alg}_{\mathcal{O}}(\mathcal{C})$. Hence, this proposition equips the $\infty$-category $\alg_{\mathcal{O}}(\mathcal{C})$ with the structure of a symmetric monoidal $\infty$-category. For $X\in \mathcal{O}$, the evaluation map $\alg_{\mathcal{O}}(\mathcal{C})^{\otimes} \rightarrow \mathcal{C}^{\otimes}$ is a morphism of $\infty$-operads, hence a lax symmetric monoidal functor, and we see that the symmetric monoidal structure on $\alg_{\mathcal{O}}(\mathcal{C})$ is given by the pointwise tensor product in $\mathcal{C}^{\otimes}$.
\begin{corollary}\label{cor11}
There is an equivalence of $\infty$-categories 
\begin{align*}
     \textnormal{Bil}(\mathcal{O}^{\otimes},\mathcal{O}'^{\otimes};\mathcal{O}''^{\otimes}) \simeq \alg_{\mathcal{O}}(\alg_{\mathcal{O}'}(\mathcal{C})),
\end{align*}
where the category $\alg_{\mathcal{O}'}(\mathcal{C})$ is viewed as a symmetric monoidal $\infty$-category via Proposition \ref{prop10}.
\end{corollary}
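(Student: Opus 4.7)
The plan is to unwind the universal property of $\alg_{\mathcal{O}'}(\mathcal{C})^{\otimes}$ given by Proposition \ref{prop10} and observe that it produces the same data as a bifunctor into $\mathcal{C}^{\otimes}$. (I read the target of Bil in the statement as $\mathcal{C}^{\otimes}$ rather than a general $\mathcal{O}''^{\otimes}$, since the right-hand side forces this specialization.)

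First, I would specialize Proposition \ref{prop10} to $K=\mathcal{O}^{\otimes}$, regarded as an object of $\mathbf{sSet}_{/\fin}$ via its structure map. This yields a natural bijection between maps $\mathcal{O}^{\otimes}\to \alg_{\mathcal{O}'}(\mathcal{C})^{\otimes}$ over $\fin$ and diagrams
\begin{align*}
    F\colon \mathcal{O}^{\otimes}\times \mathcal{O}'^{\otimes}\longrightarrow \mathcal{C}^{\otimes}
\end{align*}
over $\wedge$ such that for each vertex $v\in \mathcal{O}^{\otimes}$ and each inert morphism $f$ in $\mathcal{O}'^{\otimes}$, the morphism $F(s_0(v),f)$ is inert in $\mathcal{C}^{\otimes}$. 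To promote this bijection to an equivalence of $\infty$-categories, I would apply the same representability with $K=\mathcal{O}^{\otimes}\times \Delta^n$ for all $n\geq 0$, recovering the full simplicially enriched mapping spaces on both sides.

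Second, I would translate the extra condition that distinguishes $\alg_{\mathcal{O}}(\alg_{\mathcal{O}'}(\mathcal{C}))$ from all maps over $\fin$, namely preservation of inerts, into a condition on $F$. To do this I would invoke the characterization of inert morphisms in $\alg_{\mathcal{O}'}(\mathcal{C})^{\otimes}$ that follows from Proposition \ref{prop10}: a morphism is inert precisely when, under the representing identification, it corresponds to a natural transformation of diagrams whose value at each vertex $w\in \mathcal{O}'^{\otimes}$ is an inert morphism of $\mathcal{C}^{\otimes}$. Hence a map $\mathcal{O}^{\otimes}\to \alg_{\mathcal{O}'}(\mathcal{C})^{\otimes}$ preserves inerts if and only if, in addition to the partial condition of the previous paragraph, the symmetric condition holds: for each inert $g$ in $\mathcal{O}^{\otimes}$ and each vertex $w\in \mathcal{O}'^{\otimes}$, $F(g,s_0(w))$ is inert in $\mathcal{C}^{\otimes}$.

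Finally, I would combine these two partial inert conditions to recover the full bifunctor condition. Given inerts $g\colon v_0\to v_1$ in $\mathcal{O}^{\otimes}$ and $f\colon w_0\to w_1$ in $\mathcal{O}'^{\otimes}$, the pair $(g,f)$ factors in $\mathcal{O}^{\otimes}\times \mathcal{O}'^{\otimes}$ as $(g,s_0(w_1))\circ (s_0(v_0),f)$; both factors are sent by $F$ to inerts by the two conditions, and since inert morphisms in $\mathcal{C}^{\otimes}$ are closed under composition, $F(g,f)$ is inert, so $F$ is a bifunctor. Conversely, a bifunctor $F$ clearly satisfies both partial conditions by taking one argument to be a degeneracy. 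The main obstacle is the pointwise characterization of inert morphisms in $\alg_{\mathcal{O}'}(\mathcal{C})^{\otimes}$ used in the second step, which requires a careful inspection of the representing object from Proposition \ref{prop10} and of the coCartesian edges produced there; once that is in hand, the rest of the argument is a direct unwinding of definitions.
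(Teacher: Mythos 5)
Your proposal is correct and follows essentially the same route as the paper's proof: both unwind the representability statement of Proposition \ref{prop10} and observe that the two partial inert conditions (one built into the construction of $\alg_{\mathcal{O}'}(\mathcal{C})^{\otimes}$, the other coming from requiring the map $\mathcal{O}^{\otimes}\to\alg_{\mathcal{O}'}(\mathcal{C})^{\otimes}$ to preserve inerts) combine, via the factorization $(g,f)=(g,\mathrm{id})\circ(\mathrm{id},f)$, into exactly the bifunctor condition. Your reading of the target of $\textnormal{Bil}$ as $\mathcal{C}^{\otimes}$ matches the paper's intent, and your additional care with the simplicial enhancement and the pointwise characterization of inert morphisms in $\alg_{\mathcal{O}'}(\mathcal{C})^{\otimes}$ only makes explicit steps the paper leaves implicit.
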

\begin{proof}
An $\mathcal{O}$-algebra in $\alg_{\mathcal{O}'}(\mathcal{C})^{\otimes}$ is given by a morphism of simplicial sets $\mathcal{O}^{\otimes} \rightarrow \alg_{\mathcal{O}'}(\mathcal{C})^{\otimes}$ over $\fin$ sending inert morphisms to inert morphisms. In particular, by the construction of $\alg_{\mathcal{O}'}(\mathcal{C})^{\otimes}$, such an $\mathcal{O}$-algebra is given by a diagram
\begin{center}
    \begin{tikzcd}
\mathcal{O}^{\otimes}\times \mathcal{O}'^{\otimes} \arrow[d] \arrow[r] & \mathcal{C}^{\otimes} \arrow[d] \\
\fin\times \fin \arrow[r, "\wedge"]                                    & \fin                           
\end{tikzcd}
\end{center}
such that for every $X\in \mathcal{O}^{\otimes}$ and every inert map $f$ in $\mathcal{O}'^{\otimes}$, the tuple $(\text{id}_X,f)$ is sent to an inert map in $\mathcal{C}^{\otimes}$. The condition that inert morphisms in $\mathcal{O}^{\otimes}$ are sent to inert maps in $\alg_{\mathcal{O}'}(\mathcal{C})^{\otimes}$ translates to the fact that for inert maps $f$ in $\mathcal{O}^{\otimes}$ and $X\in \mathcal{O}'$, the tuple $(f,\text{id}_X)$ is sent to an inert map in $\mathcal{C}^{\otimes}$; and together those two conditions say exactly that tuples of inert maps are sent to an inert map. This is true if and only if $F$ is a bifunctor $\mathcal{O}^{\otimes} \times \mathcal{O}'^{\otimes}\rightarrow \mathcal{C}^{\otimes}$.
\end{proof}
\begin{definition}
We say that a bifunctor $F:\mathcal{O}^{\otimes}\times \mathcal{O}'^{\otimes} \rightarrow \mathcal{O}''^{\otimes}$ \textbf{exhibits $\mathcal{O}''^{\otimes}$ as a tensor product} of $\mathcal{O}^{\otimes}$ and $\mathcal{O}'^{\otimes}$ if for every $\infty$-operad $\mathcal{C}^{\otimes}$, pre-composition with $F$ determines an equivalence of $\infty$-categories 
\begin{align*}
    \alg_{\mathcal{O}''}(\mathcal{C}) \rightarrow \text{Bil}(\mathcal{O}^{\otimes},\mathcal{O}'^{\otimes};\mathcal{C}^{\otimes}).
\end{align*}
We say that $\mathcal{O}''^{\otimes}$ is a tensor product of $\mathcal{O}^{\otimes}$ and $\mathcal{O}'^{\otimes}$ if there exists a bifunctor with this property.
\end{definition}
In particular, in this case, if $\mathcal{C}^{\otimes}$ is a symmetric monoidal $\infty$-category, the above discussion shows that we have an equivalence of $\infty$-categories
\begin{align*}
    \alg_{\mathcal{O''}}(\mathcal{C}) \rightarrow \alg_{\mathcal{O}}(\alg_{\mathcal{O}'}(\mathcal{C})).
\end{align*}
\par In this sense, the tensor product of $\infty$-operads is a derived version of the Bordmann-Vogt tensor products of operads.
\subsection{The little cubes operads}\label{the_little_cubes_operads}
As seen in the introduction, we are mainly interested in algebras over the little $k$-disks operads. These are topological operads, and we present a construction to view them as $\infty$-operads. For ease of construction, we consider "little $k$-cubes" instead of "little $k$-disks". \\ \par
Let $\square^k := [0,1]^k$ the $k$-unit cube. Consider the topological one-colored operad $\mathbb{E}_k^T$ with $\mathbb{E}_k^T(n) = \text{Rect}(\square^k \times \{1,\dots,n\},\square^k)$ the space of rectilinear embeddings. Let $\mathbb{E}_k^{T,\otimes}$ denote its topological category of operators, and consider the corresponding simplicial category $\text{Sing}_{\bullet}(\mathbb{E}_k^{T,\otimes})$. Taking the homotopy coherent nerve we obtain an $\infty$-category $\mathbb{E}_k^{\otimes}$, which is an $\infty$-operad since the underlying simplicial operad is fibrant. In particular, objects in $\mathbb{E}_k^{\otimes}$ are given by $\langle n\rangle$ for $n\in \mathbb{N}$, morphisms are given by points in the space
\begin{align*}
    \map_{\mathbb{E}_k^{T,\otimes}}(\langle n\rangle,\langle m\rangle) = \coprod_{f: \langle n\rangle \rightarrow\langle m\rangle} \prod_{j\in \langle m\rangle^{\circ}}  \text{Rect}(\square^k \times \{1,\dots,n\},\square^k),
\end{align*}
and to give a 2-simplex with boundary as shown below is equivalent to giving a path from $F\circ E$ to $G$ in the space $\map_{\mathbb{E}_k^{T,\otimes}}(\langle m\rangle,\langle k\rangle)$.
\begin{center}
    \begin{tikzcd}
                                                  & \langle m\rangle \arrow[rd, "F"] &                  \\
\langle n\rangle \arrow[rr, "G"'] \arrow[ru, "E"] &                                  & \langle k\rangle
\end{tikzcd}
\end{center}
We sometimes also denote the object $\langle n\rangle$ of $\mathbb{E}_k^{\otimes}$ by $\{\underbrace{\mathfrak{a},\dots,\mathfrak{a}}_{n \text{ times}}\}$, where we view $\mathfrak{a}$ as the single color of $\mathbb{E}_k^T$.
\par Following \cite[Notation 2.1.1.16]{HA}, we denote by $\text{Mul}_{\mathbb{E}_k}(\langle n\rangle, \langle 1\rangle)\in h\an$ the fiber of the Kan complex $\map_{\mathbb{E}_k^{\otimes}}(\langle n\rangle, \langle 1\rangle)$ over the active map $\langle n \rangle \rightarrow \langle 1 \rangle$. Note that the mapping spaces of the $\infty$-category $\mathbb{E}_k^{\otimes}$ are weakly equivalent to the mapping spaces in the topological category $\mathbb{E}_k^{T,\otimes}$. We hence have a weak equivalence $\text{Mul}_{\mathbb{E}_k}(\langle n\rangle,\langle 1\rangle)\simeq \text{Rect}(\square^k\times \{1,\dots,n\},\square^k) = \mathbb{E}_k^T(n)$, and we will implicitly use the space $\mathbb{E}_k^T(n)$ as a representative for the weak homotopy type $\text{Mul}_{\mathbb{E}_k}(\langle n\rangle, \langle 1\rangle)$.\\ \par
For $k=1$ we obtain the $\mathbb{E}_1^{\otimes}$-operad, which is equivalent to the $\infty$-operad $\ass$ and governs homotopy associative algebras. We will generally identify algebras over these two $\infty$-operads. For $k=2$, we instead recover the $\infty$-operadic version of the little 2-disks operad $D_2$. In particular, an element in $\text{Mul}_{\mathbb{E}_1}(\langle n\rangle,\langle 1\rangle)$ is given by a rectangular embedding of $n$ copies of the interval $[0,1]$ into the interval $[0,1]$. An element in $\text{Mul}_{\mathbb{E}_2}(\langle n\rangle,\langle 1\rangle)$ is given by a rectangular embedding of $n$ copies of the square $[0,1]\times[0,1]$ into the square $[0,1]\times [0,1]$. 
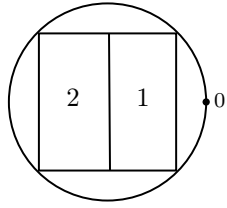
\begin{figure}[h!]
    \centering
    \tikzset{every picture/.style={line width=0.75pt}} 
\begin{tikzpicture}[x=0.75pt,y=0.75pt,yscale=-1,xscale=1]

\draw   (230,109.75) .. controls (230,82.27) and (252.27,60) .. (279.75,60) .. controls (307.23,60) and (329.5,82.27) .. (329.5,109.75) .. controls (329.5,137.23) and (307.23,159.5) .. (279.75,159.5) .. controls (252.27,159.5) and (230,137.23) .. (230,109.75) -- cycle ;
\draw   (245.14,75.14) -- (314.36,75.14) -- (314.36,144.36) -- (245.14,144.36) -- cycle ;
\draw    (280.5,75) -- (281,144.5) ;
\draw  [fill={rgb, 255:red, 0; green, 0; blue, 0 }  ,fill opacity=1 ] (328.17,109.75) .. controls (328.17,109.01) and (328.76,108.42) .. (329.5,108.42) .. controls (330.24,108.42) and (330.83,109.01) .. (330.83,109.75) .. controls (330.83,110.49) and (330.24,111.08) .. (329.5,111.08) .. controls (328.76,111.08) and (328.17,110.49) .. (328.17,109.75) -- cycle ;

\draw (292.83,101.5) node [anchor=north west][inner sep=0.75pt]   [align=left] {1};
\draw (257.5,101.5) node [anchor=north west][inner sep=0.75pt]   [align=left] {2};
\draw (332,104) node [anchor=north west][inner sep=0.75pt]  [font=\footnotesize] [align=left] {0};

\end{tikzpicture}
    \caption{The element $\mu_0\in \map_{\mathbb{E}_2}(\langle 2\rangle,\langle 1\rangle)_0$.}
\end{figure}
\FloatBarrier
Note that for $k\geq 1$, we have a homotopy equivalence $\text{Mul}_{\mathbb{E}_k}(\langle 2\rangle,\langle 1\rangle)\simeq S^{k-1}$. To construct this, fix a circumscribed $(k-1)$-sphere about $[0,1]^k$. Then to each point in $\text{Mul}_{\mathbb{E}_k}(\langle 2\rangle,\langle 1\rangle)\simeq \mathbb{E}_k^T(2)$ given by a rectangular embedding of two $k$-squares into a $k$-square, assign the intersection $S^{k-1}\cap r_{\vec{21}}$, where $r_{\vec{21}}$ is the ray through the centers of the two embedded copies of $[0,1]^k$ which starts at the center of the second copy. We frequently use this homotopy equivalence as a convenient method to label morphisms in the little cubes operads. In particular, fix a homotopy inverse $S^0 \rightarrow \text{Mul}_{\mathbb{E}_1}(\langle 2\rangle,\langle 1\rangle)$ and a homotopy inverse $S^1 \rightarrow \text{Mul}_{\mathbb{E}_2}(\langle 2\rangle,\langle 1\rangle)$. Then we get two points in $\text{Mul}_{\mathbb{E}_1}(\langle 2\rangle,\langle 1\rangle)$ named $\mu_0$ and $\mu_{1}$, and for every $t\in [0,2\pi)$ we get a point $\mu_t\in \text{Mul}_{\mathbb{E}_2}(\langle 2\rangle,\langle 1\rangle)$. We denote by the same letters representatives of these operations as 0-simplices in the respective mapping spaces.\\ \par
 Recall that 2-morphisms\footnote{We use "2-morphism" and "2-simplex" in a $\infty$-category interchangeably. However, we will often favor "2-morphism" in a context where one edge of the 2-simplex is degenerate to emphasize the globular nature of topological and dg categories.} in $\mathbb{E}^{\otimes}_k$ are given by paths in the relevant mapping spaces. There are two such 2-morphisms in $\mathbb{E}^{\otimes}_2$ that will play a special role in the subsequent discussion. On the one hand, for each $t\in [0,2\pi)$, there is a 2-morphisms $\sigma_t\in \map_{\mathbb{E}_2^{\otimes}}(\langle 2\rangle,\langle 1\rangle)_{1}$ with boundary given by $\mu_t$ and $\mu_{t+\pi (\text{mod }2\pi)}$ that is represented by the braid
\begin{figure}[h!]
    \centering
    \tikzset{every picture/.style={line width=0.75pt}} 

\begin{tikzpicture}[x=0.75pt,y=0.75pt,yscale=-1,xscale=1]

\draw    (170.33,70.67) -- (220.67,170.67) ;
\draw    (220.67,70.67) -- (198.67,114.67) ;
\draw    (192,126.67) -- (170,170.67) ;

\draw (223,57.4) node [anchor=north west][inner sep=0.75pt]    {$1$};
\draw (157.33,165.07) node [anchor=north west][inner sep=0.75pt]    {$1$};
\draw (224,166.73) node [anchor=north west][inner sep=0.75pt]    {$2$};
\draw (155,60.07) node [anchor=north west][inner sep=0.75pt]    {$2$};

\end{tikzpicture}.
\end{figure}
\FloatBarrier
On the other hand, for each $t\in [0,2\pi)$, there is a non-trivial 2-morphism $\gamma_t\in  \map_{\mathbb{E}_2^{\otimes}}(\langle 2\rangle,\langle 1\rangle)_{1}$ between $\mu_t$ and itself represented by the double-braid
\begin{figure}[h!]
    \centering
   \tikzset{every picture/.style={line width=0.75pt}} 

\begin{tikzpicture}[x=0.75pt,y=0.75pt,yscale=-1,xscale=1]

\draw    (169.67,50.67) -- (220,150.67) ;
\draw    (220,50.67) -- (198,94.67) ;
\draw    (191.33,106.67) -- (169.33,150.67) ;
\draw    (169.33,150.67) -- (219.67,250.67) ;
\draw    (220,150.67) -- (198,194.67) ;
\draw    (190.67,206) -- (168.67,250) ;

\draw (222.33,37.4) node [anchor=north west][inner sep=0.75pt]    {$1$};
\draw (222.67,249.73) node [anchor=north west][inner sep=0.75pt]    {$1$};
\draw (158.33,250.73) node [anchor=north west][inner sep=0.75pt]    {$2$};
\draw (154.33,40.07) node [anchor=north west][inner sep=0.75pt]    {$2$};

\end{tikzpicture}.
\end{figure}
\FloatBarrier
 The classical Eckmann-Hilton argument shows that, in the 1-categorical case, algebra objects in the category of algebra objects yield commutative algebra objects. In particular, if $(A,\cdot)$ is an associative algebra in a symmetric monoidal category ${C}$ and $\ast: (A,\cdot)\otimes(A,\cdot)\rightarrow (A,\cdot)$ endows $(A,\cdot)$ with the structure of an associative algebra object in $\alg({C})$, then both operations $\cdot$ and $\ast$ agree and they are commutative. In this sense, an $\mathbb{E}_1$-algebra inside the symmetric monoidal category of $\mathbb{E}_1$-algebras in ${C}$ is the same as a commutative algebra inside ${C}$, which in the 1-categorical case is the same as an $\mathbb{E}_2$-algebra. In fact, this pattern continues for all the little $k$-cubes operads, as was shows by Dunn for topological operads and later by Lurie for $\infty$-operads. To explain this, for $k,k'\geq 0$, define a topological functor
\begin{align*}
    \rho: \mathbb{E}_k^{T,\otimes}\times \mathbb{E}_{k'}^{T,\otimes} \rightarrow \mathbb{E}_{k+k'}^{T,\otimes}
\end{align*}
given on objects by $\rho(\langle m\rangle,\langle n\rangle) = \langle m\rangle \wedge \langle n\rangle$, and sending a pair of morphisms $(\alpha,\{f_j: \square^k\times \alpha^{-1}(\{j\})\rightarrow \square^k\}_{j\in \langle n\rangle^{\circ}})$ and $(\beta, \{g_i: \square^{k'}\times \beta^{-1}(\{i\})\rightarrow \square^{k'}\}_{i\in\langle n'\rangle^{\circ}})$ to 
\begin{align*}
    (\alpha \wedge \beta, \{f_j\times g_i:\square^{k+k'}\times \alpha^{-1}(\{j\})\times\beta^{-1}(\{i\})\rightarrow \square^{k+k'}\}_{j\in \langle n\rangle^{\circ},i\in \langle n'\rangle^{\circ}}).
\end{align*}
In order for this to make sense, we note that viewing a tuple $(j,i)\in \langle n\rangle^{\circ}\times \langle n'\rangle^{\circ}$ as an element of $\langle nn'\rangle^{\circ}$, we have $(\alpha\wedge \beta)^{-1}((j,i)) = \alpha^{-1}(\{j)\}\times \beta^{-1}(\{i\})$. This descends to a simplicial functor, and then taking the homotopy coherent nerve to a map of $\infty$-categories $\rho: \mathbb{E}_k^{\otimes}\times \mathbb{E}_{k'}^{\otimes} \rightarrow\mathbb{E}_{k+k'}^{\otimes}$. By construction, the diagram
\begin{center}
    \begin{tikzcd}
\mathbb{E}_k^{\otimes}\times \mathbb{E}_{k'}^{\otimes} \arrow[d] \arrow[r, "\rho"] & \mathbb{E}_{k+k'}^{\otimes} \arrow[d] \\
\fin\times \fin \arrow[r, "\wedge"]                                                & \fin                                 
\end{tikzcd}
\end{center}
commutes, and clearly $\rho$ sends pairs of inert morphisms to inert morphisms. Thus, $\rho$ is a bifunctor of $\infty$-operads.
\begin{theorem}[Dunn Additivity Theorem, Theorem 5.1.2.2\cite{HA}]\label{thm7}
    The bifunctor $\rho: \mathbb{E}_k^{\otimes} \times \mathbb{E}_{k'}^{\otimes} \rightarrow \mathbb{E}_{k+k'}^{\otimes}$ exhibits the $\infty$-operad $\mathbb{E}_{k+k'}^{\otimes}$ as a tensor product of $\mathbb{E}_k^{\otimes}$ and $\mathbb{E}_{k'}^{\otimes}$.
\end{theorem}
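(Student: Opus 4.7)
The plan is to prove the theorem by induction, reducing to the key case $\mathbb{E}_k \otimes \mathbb{E}_1 \simeq \mathbb{E}_{k+1}$, and then verifying this case by exhibiting $\rho$ as a weak approximation of $\infty$-operads.

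First I would use associativity of the tensor product of $\infty$-operads (which follows from Corollary \ref{cor11} together with the evident symmetry of bilinear maps) to reduce to showing that $\mathbb{E}_k \otimes \mathbb{E}_1 \simeq \mathbb{E}_{k+1}$ for all $k \geq 0$. Assuming this binary case, a double induction on $(k,k')$ together with the base cases $k=0$ or $k'=0$ (where $\mathbb{E}_0^{\otimes}$ serves as a unit for the tensor product of $\infty$-operads, since an $\mathbb{E}_0$-algebra is just a pointed object) yields the general statement. This reduction is essentially formal and proceeds at the level of bifunctors via Corollary \ref{cor11}.

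For the core case $k' = 1$, via Corollary \ref{cor11} applied to a symmetric monoidal target $\mathcal{C}^{\otimes}$, the goal becomes showing that precomposition with $\rho$ induces an equivalence
\begin{align*}
    \rho^{\ast} : \alg_{\mathbb{E}_{k+1}}(\mathcal{C}) \xrightarrow{\simeq} \alg_{\mathbb{E}_k}(\alg_{\mathbb{E}_1}(\mathcal{C})).
\end{align*}
Geometrically, the image of $\rho$ consists of those rectilinear embeddings into $\square^{k+1}$ that are organized into ``horizontal slabs'' indexed by an $\mathbb{E}_1$-configuration in the last coordinate, with each slab containing little $k$-cubes indexed by an $\mathbb{E}_k$-configuration in the first $k$ coordinates. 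I would formalize this by exhibiting $\rho$ as a \emph{weak approximation of $\infty$-operads} in the sense developed in \cite[Section 2.3]{HA}. Since weak approximations induce equivalences on $\infty$-categories of algebras valued in any $\infty$-operad, this suffices.

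The principal technical step is to verify the defining criterion of an approximation: for every active operation $\alpha: \langle m\rangle \to \langle 1\rangle$ in $\mathbb{E}_{k+1}^{\otimes}$, the space of factorizations of $\alpha$ through $\rho$ is weakly contractible. This reduces to the geometric claim that the subspace of $\mathbb{E}_{k+1}^T(m)$ consisting of rectilinear embeddings admitting a slab decomposition is a deformation retract of the full configuration space. The retraction is produced in two stages: first project each little $(k+1)$-cube onto its image in the last coordinate, and apply a straight-line homotopy that pulls overlapping intervals apart into a disjoint $\mathbb{E}_1$-configuration; then, within each resulting slab, straighten the remaining $k$ coordinates into an $\mathbb{E}_k$-configuration via another straight-line deformation.

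The main obstacle is carrying out this deformation coherently across all arities $m$ simultaneously and in a manner compatible with the higher composition data of the $\infty$-operad, rather than merely arity-by-arity at the level of homotopy types. This is exactly what the approximation formalism is designed to handle: rather than constructing an explicit inverse to $\rho^{\ast}$ at every level, one verifies a contractibility condition on categories of factorizations that automatically upgrades the pointwise homotopy equivalences into an equivalence of the full $\infty$-categories of algebras. The book-keeping on non-active morphisms and on compatibility between the two deformations above --- specifically, the fact that the $\mathbb{E}_1$-partition of the last coordinate must be performed functorially in the input configuration --- is where the argument becomes most delicate.
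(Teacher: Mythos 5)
The paper does not actually prove this statement: it is imported verbatim from Lurie as \cite[Theorem 5.1.2.2]{HA}, so there is no in-paper argument to compare yours against. Measured against Lurie's proof, your outline correctly identifies the overall architecture --- reduce by associativity and induction to the binary case $\mathbb{E}_k^{\otimes}\otimes\mathbb{E}_1^{\otimes}\simeq\mathbb{E}_{k+1}^{\otimes}$, then verify that case through the approximation formalism of \cite[Section 2.3.3]{HA} --- but the two places where you locate ``the main obstacle'' are precisely where the proposal stops being a proof. First, the object you propose to exhibit as a weak approximation is the bifunctor $\rho$ out of the plain product $\mathbb{E}_k^{\otimes}\times\mathbb{E}_1^{\otimes}$. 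The approximation machinery converts weak contractibility of factorization categories into an identification of $\alg_{\mathbb{E}_{k+1}}(\mathcal{C})$ with a functor category out of the \emph{domain} of the approximation; to conclude the tensor-product universal property you still need that domain to present $\mathbb{E}_k^{\otimes}\otimes\mathbb{E}_1^{\otimes}$ as an $\infty$-preoperad over $\fin$. This is why Lurie routes the argument through the wreath product $\mathbb{E}_1^{\otimes}\wr\mathbb{E}_k^{\otimes}$ together with the separate comparison \cite[Theorem 2.4.4.3]{HA} identifying wreath products with tensor products; that comparison step is absent from your plan, and the plain product does not substitute for it without further argument.

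Second, and more seriously, the weak contractibility of the factorization categories is the entire mathematical content of the theorem, and the deformation you sketch does not exist as described. The active operation $\alpha\in\mathbb{E}_{k+1}^T(m)$ is \emph{fixed}; one must contract the category of slab decompositions of that fixed configuration, not deform the configuration itself. A straight-line homotopy that ``pulls overlapping intervals apart'' changes $\alpha$ and moreover generically destroys disjointness of the rectilinear embeddings along the way, so it cannot be used to retract the factorization category. In Lurie's proof this contractibility is the long inductive argument occupying the bulk of \cite[Section 5.1.2]{HA} (an analysis of the poset of decompositions of a configuration, with a downward induction on the number of cubes and a covering argument over configuration space); nothing in your two-stage retraction survives contact with the requirement that $\alpha$ be held constant. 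As it stands the proposal is an accurate roadmap of the known proof rather than a proof: the reductions you call ``essentially formal'' are indeed formal, but both steps you flag as technical are genuine gaps, not bookkeeping.
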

This means that for every symmetric monoidal $\infty$-category $\mathcal{C}^{\otimes}$, precomposition with $\rho$ determines an equivalence of $\infty$-categories
\begin{align*}
    \rho^{\ast}: \text{Alg}_{\mathbb{E}_{k+k'}}(\mathcal{C}) \rightarrow \text{Alg}_{\mathbb{E}_{k}}(\text{Alg}_{\mathbb{E}_{k'}}(\mathcal{C})).
\end{align*}
In particular, for every $\mathbb{E}_k$-algebra $A$ in $\mathbb{E}_{k'}$-algebras in $\mathcal{C}^{\otimes}$, there exists a $\mathbb{E}_{k+k'}$-algebra $\tilde{A}$ in $\mathcal{C}^{\otimes}$ such that $\tilde{A}\circ \rho$ is equivalent to $A$ in the $\infty$-category $\text{Alg}_{\mathbb{E}_{k}}(\text{Alg}_{\mathbb{E}_{k'}}(\mathcal{C}))$.\\ \par
Let $\mathcal{C}^{\otimes}$ be a symmetric monoidal $\infty$-category and consider the symmetric monoidal $\infty$-category $\alg_{\mathbb{E}_k}(\mathcal{C})^{\otimes}$ constructed in \ref{prop10}. Recall from Definition \ref{def5} that the center $\mathfrak{Z}(A)$ of an $\mathbb{E}_k$-algebra $A\in \alg_{\mathbb{E}_k}(\mathcal{C})$ carries the structure of an $\mathscr{A}ssoc$-algebra in $\alg_{\mathbb{E}_k}(\mathcal{C})^{\otimes}$. Identifying $\ass$ with $\mathbb{E}_1^{\otimes}$, the Dunn Additivity Theorem then implies that the center is in fact an $\mathbb{E}_{k+1}$-algebra
\begin{align*}
    \mathfrak{Z}(A) \in \alg_{\mathbb{E}_1}(\alg_{\mathbb{E}_k}(\mathcal{C})) \simeq \alg_{\mathbb{E}_{k+1}}(\mathcal{C}).
\end{align*}
\subsection{The center as endomorphism object of bimodules} \label{the_center_as_endomorphism_object_of_bimodules}
We have a particularly nice description for the center in symmetric monoidal $\infty$-categories that arise as the category of $\mathcal{O}$-algebras for a coherent $\infty$-operad $\mathcal{O}$.\\ \par 
First consider the 1-categorical case. There is an equivalence of 1-categories between associative algebra objects in the monoidal category of $A$-bimodules and associative algebras under $A$
\begin{align*}
    \alg(\text{Bimod}_A) \xrightarrow{\simeq} \alg(\text{Mod}_{\field{k}})^{A/}.
\end{align*}
The equivalence is given by sending an algebra in bimodules to its unit morphism. Note that $A$ is the monoidal unit in $\text{Bimod}_A$ and hence $\alg(\text{Bimod}_A)$. The 1-centralizer of an algebra morphism $f: A\rightarrow B$ is defined as 
    \begin{align*}
        Z(f) = \{b\in B: \forall a\in A: f(a)b=bf(a)\}.
    \end{align*}
Viewing the data of $f$ as an $A$-bimodule structure on $B$, we can see that this agrees with the set of $A$-bimodule maps from $A$ to $B$:
    \begin{align*}
        {Z}(f) \cong \hom_{\text{Bimod}_A}(A,B).
    \end{align*}
We explain how this picture generalizes to the $\infty$-categorical case. Let $\mathcal{C}^{\otimes} \rightarrow \fin$ be a symmetric monoidal $\infty$-category and $\mathcal{O}^{\otimes}$ be a coherent $\infty$-operad; we will mostly be interested in the case $\mathcal{O}^{\otimes} = \mathbb{E}_1^{\otimes}$. Consider the unique bifunctor of $\infty$-operads $\mathcal{O}^{\otimes} \times \lm \rightarrow \fin \times \fin \xrightarrow{\wedge} \fin$. We have a coCartesian fibration $\alg_{\mathcal{O}}(\mathcal{C})^{\otimes} \rightarrow \lm$ with fibers over $\mathfrak{a}$ and $\mathfrak{m}$ respectively both equivalent to $\alg_{\mathcal{O}}(\mathcal{C})$ coming from the fact that $\alg_{\mathcal{O}}(\mathcal{C})$ admits the structure of a symmetric monoidal $\infty$-category as constructed in Proposition \ref{prop10}, and is hence left tensored over itself.\\ \par 
Let $A\in \alg_{\mathcal{O}}(\mathcal{C})_{\mathfrak{m}}$. To generalize the relationship between a centeralizer and bimodule morphisms to $\infty$-operads, one needs to first recover the statement about algebra objects in the monoidal category of bimodules. Let $\overline{A}\in \lmod_{\mathbb{1}}(\alg_{\mathcal{O}}(\mathcal{C}))$ be a lift of $A$ as a module over the trivial algebra $\mathbb{1}\in \alg_{\mathbb{E}_1}(\alg_{\mathcal{O}}(\mathcal{C})_{\mathfrak{a}})$. We obtain a coCartesian $\lm$-family of $\mathcal{O}$-operads as defined in \cite[Definition 5.3.1.19]{HA} 
\begin{align*}
    \mathcal{C}^{\otimes} \times_{\fin} (\mathcal{O}^{\otimes} \times \lm) \rightarrow \mathcal{O}^{\otimes} \times \lm,
\end{align*}
and we can regard $\overline{A}$ as a coCartesian $\lm$-family of $\mathcal{O}$-algebras as defined in \cite[Remark 5.3.1.22]{HA} by noting that there is a bijection
\begin{align*}
    \text{Fun}_{\lm}(\lm, \alg_{/\mathcal{O}}^{\lm}(\mathcal{C}^{\otimes} \times_{\fin} (\mathcal{O}^{\otimes} \times \lm))) \cong \text{Fun}_{\lm}(\lm, \alg_{\mathcal{O}}(\mathcal{C})^{\otimes}).
\end{align*}
We can view $A$ as an $\mathcal{O}$-module over itself, and hence $\overline{A}$ also determines a coCartesian $\lm$-family of $\mathcal{O}$-algebras in the coCartesian $\lm$-family of $\mathcal{O}$-operads\footnote{See \cite[Definition 5.3.1.23]{HA}.}
\begin{align*}
    \overline{C}^{\otimes} := \text{Mod}^{\mathcal{O},\lm}_{\overline{A}}(\mathcal{C}^{\otimes} \times_{\fin} (\mathcal{O}^{\otimes} \times \lm))^{\otimes} \rightarrow \mathcal{O}^{\otimes} \times \lm.
\end{align*}
This allows us to identify algebra objects in the category of $A$-bimodules with algebras under $A$:
\begin{proposition}[Proposition 5.3.1.27 \cite{HA}]
The forgetful functor 
\begin{align*}
    \theta: \alg_{/\mathcal{O}}^{\lm}(\textnormal{Mod}^{\mathcal{O},\lm}_{\overline{A}}(\mathcal{C}^{\otimes} \times_{\fin} (\mathcal{O}^{\otimes} \times \lm))^{\overline{A}_{\lm/}} \rightarrow \alg_{/\mathcal{O}}^{\lm}(\mathcal{C}^{\otimes} \times_{\fin} (\mathcal{O}^{\otimes}\times \lm))^{\overline{A}_{\lm/}}
\end{align*}
is an equivalence of $\infty$-categories. 
\end{proposition}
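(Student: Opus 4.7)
My plan is to exhibit an explicit essential inverse to $\theta$ and to reduce the verification that it is an inverse to a fiberwise calculation over $\lm$. Concretely, $\theta$ sends an $\mathcal{O}$-algebra object in $A$-bimodules, equipped with a unit morphism from $\overline{A}$, to its underlying $\mathcal{O}$-algebra together with the map from $\overline{A}$, regarded as an algebra under $A$ in the ambient coCartesian $\lm$-family $\mathcal{C}^{\otimes} \times_{\fin} (\mathcal{O}^{\otimes} \times \lm)$. Since $\overline{A}$ is the monoidal unit of the relative tensor product structure on $\textnormal{Mod}^{\mathcal{O},\lm}_{\overline{A}}(-)$, the datum of a unit map from $\overline{A}$ is automatic on the bimodule side, so the non-trivial content of $\theta$ lives in the forgetful step on underlying algebras.

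For the inverse, given an $\mathcal{O}$-algebra $B$ equipped with a morphism $f: A \to B$, I would endow $B$ with the canonical $A$-bimodule structure coming from the compositions $A \otimes B \xrightarrow{f \otimes \textnormal{id}} B \otimes B \xrightarrow{\mu_B} B$ and $B \otimes A \xrightarrow{\textnormal{id}\otimes f} B \otimes B \xrightarrow{\mu_B} B$, where $\mu_B$ is the multiplication of $B$. One must then check that this structure is compatible with all of the $\mathcal{O}$-operations on $B$, so that the construction lands in the category of $\mathcal{O}$-algebras in $\textnormal{Mod}^{\mathcal{O},\lm}_{\overline{A}}$ with unit morphism $f$. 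This is the operadic refinement of the observation that for an associative algebra $B$ together with a map $A \to B$, the multiplication in $B$ already provides a canonical $A$-bimodule structure.

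To conclude the equivalence, I would verify that these two constructions are mutually inverse up to coherent equivalence by reducing to a fiberwise check over $\lm$. Over the module color $\mathfrak{m}$, this recovers the statement that $\mathcal{O}$-algebra objects in $A$-bimodules are equivalent to $\mathcal{O}$-algebras under $A$ in $\mathcal{C}$; over the algebra color $\mathfrak{a}$, an analogous but essentially formal calculation applies using the symmetric monoidal structure on $\alg_{\mathcal{O}}(\mathcal{C})^{\otimes}$. The main obstacle will be rigorously packaging the inverse construction into Lurie's operadic-family formalism, in particular verifying that assigning to $(B,f)$ the canonical bimodule structure on $B$ coherently extends across the entire coCartesian $\lm$-family and really lifts to an $\mathcal{O}$-algebra in modules. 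This step is where the coherence hypothesis on $\mathcal{O}$ is essential: it ensures both that $\textnormal{Mod}^{\mathcal{O},\lm}_{\overline{A}}(-)$ is a well-behaved $\mathcal{O}$-operad and that the higher operations of $\mathcal{O}$ interact coherently with the bimodule structure built from $f$ and $\mu_B$.
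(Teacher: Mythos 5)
First, a point of reference: the paper does not prove this statement at all — it is imported verbatim from Lurie (\cite[Proposition 5.3.1.27]{HA}) — so there is no in-paper argument to compare against, and your proposal has to be measured against Lurie's proof. Your preliminary reductions are sensible: identifying the source undercategory with the plain algebra category because $\overline{A}$ is the unit (hence initial) on the module side is correct, and organizing the verification fiberwise over $\lm$ is the right bookkeeping, since the family is coCartesian and algebras in it are detected fiberwise. But the fiber over $\mathfrak{m}$ is the entire content of the theorem — it is exactly \cite[Corollary 3.4.1.7]{HA}, the equivalence $\alg_{/\mathcal{O}}(\textnormal{Mod}_A^{\mathcal{O}}(\mathcal{C})) \simeq \alg_{/\mathcal{O}}(\mathcal{C})^{A/}$ — so the reduction relocates the difficulty rather than resolving it.

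The genuine gap is in your construction of the inverse. Prescribing, for each object $(B, f\colon A \to B)$, the bimodule structure $A \otimes B \xrightarrow{f\otimes \textnormal{id}} B \otimes B \xrightarrow{\mu_B} B$ and then ``checking compatibility with all of the $\mathcal{O}$-operations'' is a $1$-categorical recipe. In the present setting an object of $\alg_{/\mathcal{O}}^{\lm}(\textnormal{Mod}^{\mathcal{O},\lm}_{\overline{A}}(\cdots))$ is a section of a fibration over $\mathcal{O}^{\otimes}\times\lm$ encoding an infinite hierarchy of coherences; there is no finite list of conditions whose verification produces such a section, and a functor of $\infty$-categories cannot be defined by specifying its values on objects. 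This is precisely why Lurie's argument does not exhibit an explicit inverse: the equivalence is extracted from the fibrational machinery, with the coherence of $\mathcal{O}$ entering through operadic Kan extensions to supply the required lifts, ultimately reducing to initiality of the trivial algebra and an identification of mapping spaces. If you want to salvage your strategy, the realistic route is indirect — show $\theta$ is conservative and preserves the relevant limits, produce an adjoint abstractly, and identify unit and counit — but as written, the sentence ``one must then check that this structure is compatible with all of the $\mathcal{O}$-operations'' names the theorem rather than proving it.
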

\begin{remark}
Note that for all $s\in\lm$, the algebra $\overline{A}_s\in \alg_{/\mathcal{O}}(\textnormal{Mod}^{\mathcal{O}}_{\overline{A}_s}(\mathcal{C}^{\otimes} \times_{\fin} (\mathcal{O}^{\otimes} \times \{s\})))$ is a trivial algebra, so in particular for $s= \mathfrak{m}$ we get an equivalence 
\begin{align*}
    \theta_{\mathfrak{m}}: \alg_{/\mathcal{O}}(\textnormal{Mod}_{A}^{\mathcal{O}}(\mathcal{C}^{\otimes} \times_{\fin} (\mathcal{O}^{\otimes} \times \{\mathfrak{m}\}))) \rightarrow \alg_{/\mathcal{O}}(\mathcal{C}^{\otimes} \times_{\fin} (\mathcal{O}^{\otimes} \times \{\mathfrak{m}\}))^{A/} \simeq \alg_{\mathcal{O}}(\mathcal{C})_{\mathfrak{m}}^{A/}.
\end{align*}
\end{remark}
Since $\overline{A}(\mathfrak{a})$ is the trivial algebra in $\alg_{\mathcal{O}}(\mathcal{C})_{\mathfrak{a}}$, we also have an equivalence $\overline{\mathcal{C}}^{\otimes}_{\mathfrak{a}} \simeq \mathcal{C}^{\otimes} \times_{\fin} (\mathcal{O}^{\otimes} \times \{\mathfrak{a}\})$, and therefore $\alg^{\lm}_{/\mathcal{O}}(\overline{\mathcal{C}})_{\mathfrak{a}} \simeq \alg_{\mathcal{O}}(\mathcal{C})_{\mathfrak{a}}$. To find the centralizer of $\text{id}_A$ in $\alg_{\mathcal{O}}(\mathcal{C})_{\mathfrak{m}}$, it hence suffices to find the centralizer of $\text{id}_A$ in $\alg_{/\mathcal{O}}(\textnormal{Mod}^{\mathcal{O}}_{A}(\mathcal{C}^{\otimes} \times_{\fin} (\mathcal{O}^{\otimes} \times \{\mathfrak{m}\})))$, in which $A$ is the trivial algebra. We can then use Proposition \ref{prop7} to upgrade this to a center $\mathfrak{Z}(A) \in \alg_{\mathbb{E}_1}(\alg_{\mathcal{O}}(\mathcal{C})_{\mathfrak{a}})$ of $A$. 
\begin{theorem}[Proposition 5.3.1.29 \cite{HA}]\label{thm8}
Suppose that for all $X\in \mathcal{O}$, there exists a morphism object $\mor_{\overline{\mathcal{C}}_{X,\mathfrak{m}}}({A}(X),{A}(X))\in \overline{ \mathcal{C}}_{X,\mathfrak{a}}$. Then there exists a centralizer $\mathfrak{Z}(\textnormal{id}_{A})\in \alg_{/\mathcal{O}}^{\lm}(\overline{\mathcal{C}})_{\mathfrak{a}}$. Furthermore, if $Z\in \alg_{/\mathcal{O}}^{\lm}(\overline{\mathcal{C}})_{\mathfrak{a}}$, then a commutative diagram 
\begin{center}
    \begin{tikzcd}
                                              & Z\otimes A \arrow[rd] &   \\
A \arrow[rr, "\textnormal{id}_A"'] \arrow[ru] &                       & A
\end{tikzcd}
\end{center}
exhibits $Z$ as the centralizer of $\textnormal{id}_A$ if and only if for all $X\in \mathcal{O}$, the induced map $Z(X)\otimes A(X) \rightarrow A(X)$ exhibits $Z(X)$ as a morphism object of $A(X)$ and $A(X)$.
\end{theorem}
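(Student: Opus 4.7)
The plan is to reduce the centralizer question at the $\mathcal{O}$-algebra level to its fiberwise versions, use the hypothesis to obtain fiberwise morphism objects, and then assemble these coherently into a single $\mathcal{O}$-algebra representing the centralizer.

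First, I would apply Lemma \ref{lem5} to the coCartesian fibration $\alg_{/\mathcal{O}}^{\lm}(\overline{\mathcal{C}})^{\otimes}\to\lm$ to identify the centralizer $\mathfrak{Z}(\textnormal{id}_A)$ in $\alg_{/\mathcal{O}}^{\lm}(\overline{\mathcal{C}})_{\mathfrak{m}}$ with a morphism object $\mor_{(\alg_{/\mathcal{O}}^{\lm}(\overline{\mathcal{C}})_{\mathfrak{m}})_{A/}}(\textnormal{id}_A,\textnormal{id}_A)$ in $(\alg_{/\mathcal{O}}^{\lm}(\overline{\mathcal{C}})_{\mathfrak{a}})_{\mathbb{1}/}$. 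By Proposition \ref{prop6}, producing this morphism object amounts to exhibiting a final object of the comma $\infty$-category
\begin{align*}
    \mathcal{K} := (\alg_{/\mathcal{O}}^{\lm}(\overline{\mathcal{C}})_{\mathfrak{a}})_{\mathbb{1}/}\times_{(\alg_{/\mathcal{O}}^{\lm}(\overline{\mathcal{C}})_{\mathfrak{m}})_{A/}}\left((\alg_{/\mathcal{O}}^{\lm}(\overline{\mathcal{C}})_{\mathfrak{m}})_{A/}\right)_{/\textnormal{id}_A}.
\end{align*}
Since mapping spaces in $\alg_{/\mathcal{O}}^{\lm}(\overline{\mathcal{C}})$ are controlled fiberwise over $\mathcal{O}^{\otimes}$, evaluation at each $X\in \mathcal{O}$ sends $\mathcal{K}$ to the analogous fiberwise comma $\infty$-category $\mathcal{K}_X$ attached to $A(X)\in\overline{\mathcal{C}}_{X,\mathfrak{m}}$. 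By hypothesis together with Proposition \ref{prop6}, each $\mathcal{K}_X$ has a final object $(M(X),\alpha_X)$ with $M(X)=\mor_{\overline{\mathcal{C}}_{X,\mathfrak{m}}}(A(X),A(X))$.

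The heart of the argument, and the main obstacle, is to assemble the pointwise objects $M(X)$ into a single $\mathcal{O}$-algebra $Z\in \alg_{/\mathcal{O}}^{\lm}(\overline{\mathcal{C}})_{\mathfrak{a}}$. For each multi-morphism $\phi\in \textnormal{Mul}_{\mathcal{O}}((X_1,\dots,X_n),Y)$, the $\mathcal{O}$-algebra structure of $A$ combined with the fiberwise actions $\alpha_{X_i}$ yields a map $M(X_1)\otimes\cdots\otimes M(X_n)\otimes A(X_1)\otimes\cdots\otimes A(X_n)\to A(Y)$ which factors, by the universal property of $M(Y)$, through a canonical structure map $M(X_1)\otimes\cdots\otimes M(X_n)\to M(Y)$. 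Coherently packaging these structure maps into a genuine section of $\alg_{/\mathcal{O}}^{\lm}(\overline{\mathcal{C}})^{\otimes}\to\lm$ preserving inert morphisms is delicate at the $\infty$-categorical level; I would realize this by showing that a forgetful functor from an auxiliary $\infty$-category of partial sections over $\mathcal{O}^{\otimes}$ down to the fiberwise representability data is a trivial Kan fibration, in line with Lurie's standard technique for producing algebras from universal-property input.

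Finally, the universal property of $Z$ is verified pointwise: a map $B\to Z$ in $(\alg_{/\mathcal{O}}^{\lm}(\overline{\mathcal{C}})_{\mathfrak{a}})_{\mathbb{1}/}$ decomposes into a coherent family $B(X)\to M(X)$, which by universality of each $M(X)$ corresponds to a coherent family of actions $B(X)\otimes A(X)\to A(X)$, i.e., to an action $B\otimes A\to A$ under $A$ in $\alg_{/\mathcal{O}}^{\lm}(\overline{\mathcal{C}})_{\mathfrak{m}}$. This simultaneously exhibits $Z$ as the centralizer and yields the "if and only if": for any $Z\in \alg_{/\mathcal{O}}^{\lm}(\overline{\mathcal{C}})_{\mathfrak{a}}$ equipped with an action making the displayed triangle commute, being a centralizer globally is equivalent to each $Z(X)$ being a morphism object of $A(X)$ with itself, as both conditions amount to the same fiberwise representability.
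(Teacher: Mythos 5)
Your reduction is pointed in the right direction and your endpoints agree with the paper's: the centralizer is a final object of the comma $\infty$-category $\mathcal{K}$, the hypothesis supplies a final object $(M(X),\alpha_X)$ of each fiberwise comma category via Proposition \ref{prop6}, and the "if and only if" is a fiberwise representability statement. But the step you yourself flag as "the heart of the argument" — coherently assembling the $M(X)$ into an object of $\alg_{/\mathcal{O}}^{\lm}(\overline{\mathcal{C}})_{\mathfrak{a}}$ — is exactly where your proposal stops being a proof. You describe the structure maps $M(X_1)\otimes\cdots\otimes M(X_n)\to M(Y)$ one multimorphism at a time and then defer the higher coherences to an unspecified "auxiliary $\infty$-category of partial sections" and an unproven trivial-Kan-fibration claim. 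That is not a minor bookkeeping issue: producing a section of a coCartesian fibration over $\mathcal{O}^{\otimes}$ from fiberwise universal properties is precisely the content one has to supply, and nothing in your sketch identifies which of Lurie's lifting results would apply or why its hypotheses hold here.

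The paper avoids this assembly problem entirely, and this is worth internalizing. Because $\overline{A}_s$ is the \emph{trivial} algebra in $\overline{\mathcal{C}}_s^{\otimes}$ for $s\in\{\mathfrak{a},\mathfrak{m}\}$, one may apply \cite[Theorem 2.2.2.4]{HA} to form an $\mathcal{O}^{\otimes}$-monoidal $\infty$-category $\mathcal{E}^{\otimes}\to\mathcal{O}^{\otimes}$ (an operadic under/over slice) with $\alg_{/\mathcal{O}}(\mathcal{E})\simeq\mathcal{K}$; then \cite[Corollary 3.2.2.5]{HA} says that limits — in particular final objects — in $\alg_{/\mathcal{O}}(\mathcal{E})$ are computed objectwise in the fibers $\mathcal{E}_X$, which by Proposition \ref{prop6} have final objects given by the morphism objects $\mor_{\overline{\mathcal{C}}_{X,\mathfrak{m}}}(A(X),A(X))$. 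In other words, the global comma category is itself an algebra category over an $\mathcal{O}$-monoidal $\infty$-category, so the coherent lift you were trying to build by hand exists automatically, and its essential uniqueness gives both the existence of $\mathfrak{Z}(\textnormal{id}_A)$ and the recognition criterion in one stroke. Note also that the triviality of $\overline{A}$ is a genuine hypothesis needed to form $\mathcal{E}^{\otimes}$; your proposal never records why the slice construction is available at the operadic level, which is a second, smaller gap. To repair your argument you would either need to carry out the trivial-fibration construction you allude to in full, or — more efficiently — route through the two cited results as the paper does.
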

\begin{proof}
By definition, the centralizer is a final object of the $\infty$-category
\begin{align*}
    \mathcal{A} := (\alg_{/\mathcal{O}}^{\lm}(\overline{\mathcal{C}})_{\mathfrak{a}})_{\mathbb{1}} \times_{(\alg_{/\mathcal{O}}^{\lm}(\overline{\mathcal{C}})_{\mathfrak{m}})_{A/}} (\alg_{/\mathcal{O}}^{\lm}(\overline{\mathcal{C}})_{\mathfrak{m}})_{A//A}.
\end{align*}
Since $\overline{A}_s$ is the trivial algebra in $\overline{\mathcal{C}}_s^{\otimes}$ for $s\in \{\mathfrak{a},\mathfrak{m}\}$, we can use \cite[Theorem 2.2.2.4]{HA} to get an $\mathcal{O}^{\otimes}$-monoidal $\infty$-category 
\begin{align*}
    \mathcal{E}^{\otimes} := (\overline{\mathcal{C}}_{\mathfrak{a}}^{\otimes})_{\mathbb{1}_{\mathcal{O}/}} \times_{(\overline{\mathcal{C}}_{\mathfrak{m}}^{\otimes})_{A_{\mathcal{O}/}}} (\overline{\mathcal{C}}_{\mathfrak{m}}^{\otimes})_{A_{\mathcal{O}//A_{\mathcal{O}}}} \rightarrow \mathcal{O}^{\otimes}
\end{align*}
such that $\alg_{/\mathcal{O}}(\mathcal{E}) \simeq \mathcal{A}$. Finally, use that limits in algebra categories are computed object-wise by \cite[Corllary 3.2.2.5]{HA} to argue that we are reduced to showing that for each $X\in \mathcal{O}$, the fiber $\mathcal{E}_X$ admits a final object. But a final object in 
\begin{align*}
    \mathcal{E}_X \simeq (\overline{\mathcal{C}}_{X,\mathfrak{a}})_{\mathbb{1}(X)/} \times_{(\overline{\mathcal{C}}_{X,\mathfrak{m}})_{A(X)/}}(\overline{\mathcal{C}}_{X,\mathfrak{m}})_{A(X)//A(X)}
\end{align*}
is equivalent to a morphism object $\mor_{\overline{\mathcal{C}}_{X,\mathfrak{m}}}(A(X),A(X))$ by Proposition \ref{prop6}.
\end{proof}
\begin{corollary}\label{cor8}
Let $\mathcal{O}^{\otimes}= \mathbb{E}_1^{\otimes}$, and let $A\in \alg_{\mathbb{E}_1}(\mathcal{C})$ be an $\mathbb{E}_1$-algebra in a symmetric monoidal $\infty$-category $\mathcal{C}^{\otimes}$. Assume that the morphism object $\mor_{\textnormal{Mod}_A^{\mathbb{E}_1}(\mathcal{C}^{\otimes} \times_{\fin} \mathbb{E}_1^{\otimes})_{\mathfrak{a}}}(A,A)\in \mathcal{C}$ exists\footnote{Note that here we implicitly view $A$ as a module over itself, i.e. as an object $A\in \textnormal{Mod}_A^{\mathbb{E}_1}(\mathcal{C}^{\otimes} \times_{\fin} (\mathbb{E}_1^{\otimes}\times \{\mathfrak{m}\}))_{\mathfrak{a}}$.}. Then there exists a centralizer $\mathfrak{Z}(\textnormal{id}_A) \in \alg_{\mathbb{E}_1}(\mathcal{C})$ with underlying object 
\begin{align*}
\mathfrak{Z}(\textnormal{id}_A)(\mathfrak{a}) \simeq \mor_{\textnormal{Mod}_A^{\mathbb{E}_1}(\mathcal{C}^{\otimes} \times_{\fin} \mathbb{E}_1^{\otimes})_{\mathfrak{a}}}(A,A),
\end{align*}
and the action of the centralizer has underlying map given by the evaluation $\alpha$ of the morphism object on $A$. Further, the multiplication of the $\mathbb{E}_1$-algebra structure on $\mathfrak{Z}(\textnormal{id}_A)$ is induced by the action of the tensor product $\mathfrak{Z}(\textnormal{id}_A)(\mathfrak{a}) \otimes \mathfrak{Z}(\textnormal{id}_A)(\mathfrak{a})$ on $A$ given by the tensor product $\alpha \otimes \alpha$ in $\mathcal{E}_{\mathfrak{a}}$:
\begin{align*}
    \ast: A \xrightarrow{\simeq} A\otimes A \rightarrow (\mathfrak{Z}(\textnormal{id}_A)(\mathfrak{a}) \otimes A) \otimes (\mathfrak{Z}((\textnormal{id}_A)(\mathfrak{a}) \otimes A) \rightarrow A \otimes A \xrightarrow{\textnormal{mult.}} A.
\end{align*}
\end{corollary}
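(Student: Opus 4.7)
The plan is to specialize Theorem \ref{thm8} to the coherent $\infty$-operad $\mathcal{O}^{\otimes} = \mathbb{E}_1^{\otimes}$, whose underlying $\infty$-category $\mathbb{E}_1$ has essentially one object. The hypothesis of Theorem \ref{thm8}---the existence of a morphism object $\mor_{\overline{\mathcal{C}}_{X,\mathfrak{m}}}(A(X),A(X))$ for \emph{every} $X\in \mathcal{O}$---therefore collapses to the single assumption of the corollary. Applying the theorem yields a centralizer $\mathfrak{Z}(\textnormal{id}_A) \in \alg^{\lm}_{/\mathbb{E}_1}(\overline{\mathcal{C}})_{\mathfrak{a}}$ whose underlying object, by the ``if and only if'' portion of the theorem, is the prescribed morphism object and whose action on $A$ is the evaluation $\alpha$.

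Next I identify the ambient category. As noted in the remark preceding Theorem \ref{thm8}, the fact that $\overline{A}(\mathfrak{a})$ is the trivial algebra gives a natural equivalence $\overline{\mathcal{C}}^{\otimes}_{\mathfrak{a}} \simeq \mathcal{C}^{\otimes} \times_{\fin}(\mathbb{E}_1^{\otimes} \times \{\mathfrak{a}\})$ of $\mathbb{E}_1$-monoidal $\infty$-categories, and hence $\alg^{\lm}_{/\mathbb{E}_1}(\overline{\mathcal{C}})_{\mathfrak{a}} \simeq \alg_{\mathbb{E}_1}(\mathcal{C})$. Under this equivalence, $\mathfrak{Z}(\textnormal{id}_A)$ is promoted to an object of $\alg_{\mathbb{E}_1}(\mathcal{C})$ with the stated underlying object and action map, establishing the first three claims of the corollary.

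To obtain the explicit formula for the multiplication, I will unpack the $\mathbb{E}_1$-monoidal $\infty$-category $\mathcal{E}^{\otimes} \to \mathbb{E}_1^{\otimes}$ built in the proof of Theorem \ref{thm8}. Its fiber $\mathcal{E}_{\mathfrak{a}}$ is (by Appendix A, cited in Proposition \ref{prop9}) an instance of the endomorphism $\infty$-category of Definition \ref{def1}, so Proposition \ref{prop9} applies and describes the tensor product of two objects $(Z,\zeta), (W,\omega) \in \mathcal{E}_{\mathfrak{a}}$ by the explicit composite $Z\otimes W \otimes A \xrightarrow{\textnormal{id}\otimes \omega} Z\otimes A \xrightarrow{\zeta} A$. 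Since by Proposition \ref{cor6} the $\mathbb{E}_1$-algebra structure on $\mathfrak{Z}(\textnormal{id}_A)$ is the essentially unique such structure on the final object of $\mathcal{E}_{\mathfrak{a}}$, its multiplication is characterized as the unique map whose composition with $\alpha$ reproduces the iterated action on $A$. Substituting $(Z,\zeta) = (W,\omega) = (\mathfrak{Z}(\textnormal{id}_A)(\mathfrak{a}),\alpha)$ and unwinding the monoidal structure on $\mathcal{E}_{\mathfrak{a}}$ yields the displayed formula for $\ast$.

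The main obstacle I expect is the careful bookkeeping in the final step: one must verify that the $\mathbb{E}_1$-monoidal structure on $\mathcal{E}^{\otimes}$ inherited from the doubly-sliced construction in Theorem \ref{thm8} restricts, on the fiber $\mathcal{E}_{\mathfrak{a}}$, to the endomorphism-monoidal structure of Proposition \ref{prop9}, and that the final-object algebra structure of Proposition \ref{cor6} is transported correctly through the identifications $\mathcal{E}_{\mathfrak{a}} \simeq (\overline{\mathcal{C}}_{\mathfrak{m}})_{\mathfrak{a}}[A]$ and $\alg^{\lm}_{/\mathbb{E}_1}(\overline{\mathcal{C}})_{\mathfrak{a}} \simeq \alg_{\mathbb{E}_1}(\mathcal{C})$ used above. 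Once this matching is established, the explicit formula for the multiplication is a direct consequence of the defining diagram following Proposition \ref{cor6}.
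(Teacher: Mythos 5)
Your first two paragraphs are consistent with the paper: the existence of the centralizer, the identification of its underlying object with the morphism object, the claim that the action is the evaluation map, and the identification $\alg^{\lm}_{/\mathbb{E}_1}(\overline{\mathcal{C}})_{\mathfrak{a}}\simeq\alg_{\mathbb{E}_1}(\mathcal{C})$ all do follow from Theorem \ref{thm8} exactly as you say, and this is how the paper disposes of the first part.

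The gap is in your derivation of the multiplication formula. You compute the tensor product on $\mathcal{E}_{\mathfrak{a}}$ via Proposition \ref{prop9}, i.e.\ via the \emph{composition} monoidal structure on the endomorphism $\infty$-category, whose product of $(Z,\zeta)$ and $(W,\omega)$ acts through $Z\otimes W\otimes A\xrightarrow{\mathrm{id}\otimes\omega}Z\otimes A\xrightarrow{\zeta}A$. Substituting $(\mathfrak{Z}(\mathrm{id}_A)(\mathfrak{a}),\alpha)$ into that formula yields the composition product $\circ$ of Corollary \ref{cor7} (the \emph{outer} multiplication), not the convolution product $\ast$ displayed in the statement. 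The multiplication Corollary \ref{cor8} describes is the \emph{inner} one: $\mathfrak{Z}(\mathrm{id}_A)$ is an $\mathbb{E}_1$-algebra in $\mathcal{C}$ because it is a section of the $\mathbb{E}_1$-monoidal $\infty$-category $\mathcal{E}^{\otimes}\rightarrow\mathbb{E}_1^{\otimes}$ built in the proof of Theorem \ref{thm8}, and the relevant tensor product on the fiber $\mathcal{E}_{\mathfrak{a}}$ is the one inherited from the slice construction of \cite[Theorem 2.2.2.4]{HA}, which tensors the two actions componentwise: $A\simeq A\otimes A\rightarrow(Z\otimes A)\otimes(W\otimes A)\rightarrow A\otimes A\rightarrow A$. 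That is how the paper derives the formula. The verification you defer to the end --- that the doubly-sliced monoidal structure restricts on $\mathcal{E}_{\mathfrak{a}}$ to the structure of Proposition \ref{prop9} --- is not routine bookkeeping; as an identification of explicit products it is false. The two structures are interchanged rather than equal, and the homotopy between the resulting products $\circ$ and $\ast$ on the final object is precisely the Eckmann--Hilton datum that produces the $\mathbb{E}_2$-structure and, later, the Gerstenhaber bracket (Corollaries \ref{cor7}, \ref{cor1}, \ref{cor9}); collapsing them at this stage proves a statement about $\circ$ rather than the stated formula for $\ast$. The repair is to replace the appeal to Proposition \ref{prop9} by a direct unwinding of the monoidal structure on $(\overline{\mathcal{C}}_{\mathfrak{a}})_{\mathbb{1}/}\times_{(\overline{\mathcal{C}}_{\mathfrak{m}})_{A/}}(\overline{\mathcal{C}}_{\mathfrak{m}})_{A//A}$ coming from \cite[Theorem 2.2.2.4]{HA}; your finality argument then correctly pins down $\ast$ as the unique map compatible with that tensored action.
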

\begin{proof}
    The first part follows directly from Theorem \ref{thm8}. For the claim about the multiplication, note that by the proof of Theorem \ref{thm8} the algebra structure on the centralizer is induced from $\bigl(\mathfrak{Z}(\text{id}_A)(\mathfrak{a}), \alpha\bigr)$ being final in $\mathcal{E}_{\mathfrak{a}}$. In particular, in $\mathcal{E}_{\mathfrak{a}}$ we have a unique-up-to-contractible-choice map 
    \begin{align*}
        \mathfrak{Z}(\text{id}_A)(\mathfrak{a}) \otimes \mathfrak{Z}(\text{id}_A)(\mathfrak{a}) \rightarrow \mathfrak{Z}(\text{id}_A)(\mathfrak{a}).
    \end{align*}
    Now recall the construction of the tensor product in operadic slice categories in \cite[Theorem 2.2.2.4]{HA}. This shows that up to homotopy, the monoidal product in $\mathcal{E}_{\mathfrak{a}}$ is given in the first component by 
    \begin{center}
        \begin{tikzcd}
\mathbb{1} \arrow[dd] &          & \mathbb{1} \arrow[dd] &   & \mathbb{1} \arrow[d, "\simeq"']         \\
                      & \otimes  &                       & = & \mathbb{1} \otimes \mathbb{1} \arrow[d] \\
X                     &          & Y                     &   & X\otimes Y                             
\end{tikzcd}
    \end{center}
    and in the second component by 
    \begin{center}
       \begin{tikzcd}
A \arrow[d, "\simeq"'] \arrow[rdd, "\textnormal{id}_{A}"] &                 &                                                                                            & A \arrow[d, "\simeq"'] \arrow[rdd, "\textnormal{id}_{A}"] &                 \\
\mathbb{1} \otimes A \arrow[d]                                          &                 & \otimes                                                                                    & \mathbb{1} \otimes A \arrow[d]                                          &                 \\
X \otimes A \arrow[r]                                                   & A &                                                                                            & Y\otimes A \arrow[r]                                                    & A \\
                                                                                      &                 & A \arrow[d, "\simeq"'] \arrow[rrddd, "\textnormal{id}_{A}"]    &                                                                                       &                 \\
                                                                                      &                 & A\otimes A \arrow[d, "\simeq"']                                &                                                                                       &                 \\
                                                                                      & =               & (\mathbb{1} \otimes A) \otimes (\mathbb{1}\otimes A) \arrow[d] &                                                                                       &                 \\
                                                                                      &                 & (X\otimes A) \otimes (Y \otimes A) \arrow[r]                   & A\otimes A \arrow[r, "\textnormal{mult}"']                & A
\end{tikzcd}
    \end{center}
    This corresponds to the tensor product action given in the statement. 
\end{proof}
\begin{corollary}\label{cor7}
    Let $\mathcal{O}^{\otimes} = \mathbb{E}_1^{\otimes}$ and $A\in \alg_{\mathbb{E}_1}(\mathcal{C})$, and assume that the morphism object exists. Then there exists a center $\mathfrak{Z}(A)\in \alg_{\mathbb{E}_1}(\alg_{\mathbb{E}_1}(\mathcal{C}))$ with underlying object $\mathfrak{Z}(A)(\mathfrak{a})(\mathfrak{a}) \simeq \mor_{\textnormal{Mod}_A^{\mathbb{E}_1}(\mathcal{C}^{\otimes} \times_{\fin} \mathbb{E}_1^{\otimes})_{\mathfrak{a}}}(A,A)$. The outer multiplication is given by the composition product
    \begin{align*}
       \circ:  (\mathfrak{Z}(A)(\mathfrak{a})(\mathfrak{a}) \otimes \mathfrak{Z}(A)(\mathfrak{a})(\mathfrak{a})) \otimes A \xrightarrow{\textnormal{id}\otimes \alpha} \mathfrak{Z}(A)(\mathfrak{a})(\mathfrak{a}) \otimes A \xrightarrow{\alpha} A.
    \end{align*}
    The inner multiplication is given by the convolution product $\ast$ described in Corollary \ref{cor8}. View the center as an object in $\alg_{\mathbb{E}_1}(\alg_{/\mathbb{E}_1}(\mathcal{E}))$ with the monoidal structure on $\alg_{/\mathbb{E}_1}(\mathcal{E})$ given by Proposition \ref{prop9}. The two multiplications assemble into a square 
    \begin{center}
        \begin{tikzcd}
\Bigl(\mathfrak{Z}(A)(\mathfrak{a})\otimes \mathfrak{Z}(A)(\mathfrak{a})\Bigr)(\mathfrak{a})\otimes \Bigl(\mathfrak{Z}(A)(\mathfrak{a})\otimes \mathfrak{Z}(A)(\mathfrak{a})\Bigr)(\mathfrak{a}) \arrow[d] \arrow[rr] &  & \mathfrak{Z}(A)(\mathfrak{a})(\mathfrak{a}) \otimes \mathfrak{Z}(A)(\mathfrak{a})(\mathfrak{a}) \arrow[d, "\ast"] \\
\Bigl(\mathfrak{Z}(A)(\mathfrak{a})\otimes \mathfrak{Z}(A)(\mathfrak{a})\Bigr)(\mathfrak{a}) \arrow[rr, "\circ"']                                                                                           &  & \mathfrak{Z}(A)(\mathfrak{a})(\mathfrak{a})                                                                      
\end{tikzcd}
    \end{center}
    in $\mathcal{E}_{\mathfrak{a}}$. Since $\mathfrak{Z}(A)(\mathfrak{a})(\mathfrak{a})$ is the morphism object and thus final, there is a contractible choice of 2-simplices filling this square.
\end{corollary}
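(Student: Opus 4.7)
The plan is to build $\mathfrak{Z}(A)$ by combining Corollary \ref{cor8} with Proposition \ref{prop7}, and then to identify the inner and outer multiplications separately. Corollary \ref{cor8} produces a centralizer $\mathfrak{Z}(\textnormal{id}_A) \in \alg_{\mathbb{E}_1}(\mathcal{C})$ whose underlying object is the morphism object $\mor(A,A)$ and whose $\mathbb{E}_1$-multiplication is the convolution product $\ast$. Proposition \ref{prop7} then lifts this centralizer to a center $\mathfrak{Z}(A) \in \alg_{\mathbb{E}_1}(\alg_{\mathbb{E}_1}(\mathcal{C}))$ whose associated action on $A$ still exhibits it as the centralizer of $\textnormal{id}_A$; in particular $\mathfrak{Z}(A)(\mathfrak{a})(\mathfrak{a}) \simeq \mor(A,A)$ and the inner $\mathbb{E}_1$-structure on the fiber $\mathfrak{Z}(A)(\mathfrak{a})$ is precisely the convolution $\ast$ from Corollary \ref{cor8}.

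To identify the outer multiplication with the composition product, I would trace through the argument of Proposition \ref{prop7}, which realizes the center as a final object in $\alg_{\mathscr{A}ssoc}(\mathcal{E}_{\mathfrak{a}})$, where $\mathcal{E}_{\mathfrak{a}}$ is the endomorphism category appearing in Theorem \ref{thm8} (and agrees, by Appendix \ref{appendixA}, with Lurie's endomorphism $\infty$-category). By Proposition \ref{prop9}, the monoidal structure on $\mathcal{E}_{\mathfrak{a}}$ has tensor product described, up to homotopy, by the formula
\begin{align*}
(B, B \otimes A \xrightarrow{\alpha} A) \otimes (B', B' \otimes A \xrightarrow{\beta} A) \;=\; \bigl(B \otimes B',\ \alpha \circ (\textnormal{id}_B \otimes \beta)\bigr).
\end{align*}
By the discussion immediately following Proposition \ref{cor6}, the resulting associative multiplication on $(\mathfrak{Z}(A)(\mathfrak{a})(\mathfrak{a}), \alpha)$ is the unique (up to contractible choice) map $\circ$ satisfying $\alpha \circ (\circ \otimes \textnormal{id}_A) \simeq \alpha \circ (\textnormal{id} \otimes \alpha)$, which is exactly the composition product displayed in the statement.

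For the compatibility square, note that both composites $(\mathfrak{Z}(A)(\mathfrak{a}) \otimes \mathfrak{Z}(A)(\mathfrak{a}))(\mathfrak{a}) \to \mathfrak{Z}(A)(\mathfrak{a})(\mathfrak{a})$ around the square are morphisms in $\mathcal{E}_{\mathfrak{a}}$ whose target is the final object $\mathfrak{Z}(A)(\mathfrak{a})(\mathfrak{a})$; since the space of morphisms into a final object is contractible, the square admits a contractible space of filling 2-simplices. The main technical obstacle will be verifying that the outer multiplication produced abstractly by Proposition \ref{prop7} really coincides with the explicit composition formula stated in the corollary; this requires unwinding Lurie's construction of the monoidal structure on the endomorphism category in the proof of \cite[Proposition 4.7.1.30]{HA}, after which the identification becomes a direct consequence of Proposition \ref{prop9} together with the paragraph following Proposition \ref{cor6}.
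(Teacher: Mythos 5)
Your proposal is correct and follows essentially the same route as the paper: Corollary \ref{cor8} together with Proposition \ref{prop7} produce the center with underlying object the morphism object, the inner multiplication is the convolution product $\ast$, and the outer multiplication is characterized by $\alpha\circ(\circ\otimes\textnormal{id}_A)\simeq\alpha\circ(\textnormal{id}\otimes\alpha)$ via the monoidal structure of Proposition \ref{prop9} and the discussion after Proposition \ref{cor6}. The only divergence is the compatibility square: the paper produces both the square and its filling by evaluating the bifunctor $\mathfrak{Z}(A)$ on $\mathbb{E}_1^{\otimes}\times\mathbb{E}_1^{\otimes}$ at the interchange square (\ref{dig2}), whereas you deduce the filling from finality of $\mathfrak{Z}(A)(\mathfrak{a})(\mathfrak{a})$ in $\mathcal{E}_{\mathfrak{a}}$; both arguments suffice for the statement as written, though the paper's identification of the square with the image of (\ref{dig2}) is the form that gets used downstream in Theorem \ref{thm11} and Corollary \ref{cor9}.
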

\begin{proof}
    By Proposition \ref{prop7}, the action $\circ$ above is the one induced by the monoidal structure on $\alg_{/\mathbb{E}_1}(\mathcal{E})$. The square is given by evaluating $\mathfrak{Z}(A)$ at the square 
    \begin{equation}\label{dig2}
          \begin{tikzcd}[column sep = large]
{(\langle 2\rangle,\langle 2\rangle)} \arrow[d, "{(\mu_0,\textnormal{id}_{\langle 2\rangle})}"'] \arrow[r, "{(\textnormal{id}_{\langle 2\rangle},\mu_0)}"] & {(\langle 2\rangle,\langle 1\rangle)} \arrow[d, "{(\mu_0,\textnormal{id}_{\langle 1\rangle})}"] \\
{(\langle 1\rangle,\langle 2\rangle)} \arrow[r, "{(\textnormal{id}_{\langle 1\rangle},\mu_0)}"']                                                 & {(\langle 1\rangle,\langle 1\rangle)}                                                
\end{tikzcd}
     \end{equation}
in $\mathbb{E}_1^{\otimes} \times \mathbb{E}_1^{\otimes}$, i.e. it is given by 
    \begin{center}
        \begin{tikzcd}
{\mathfrak{Z}(A)(\{\mathfrak{a},\mathfrak{a}\})(\{\mathfrak{a},\mathfrak{a}\})} \arrow[d] \arrow[r] & {\mathfrak{Z}(A)(\mathfrak{a})(\{\mathfrak{a},\mathfrak{a}\})} \arrow[d] \\
{\mathfrak{Z}(A)(\{\mathfrak{a},\mathfrak{a}\})(\mathfrak{a})} \arrow[r]                            & \mathfrak{Z}(A)(\mathfrak{a})(\mathfrak{a})                             
\end{tikzcd}.
    \end{center}
\end{proof}
\begin{remark}
    Note that Corollary \ref{cor7} combined with Theorem \ref{thm7} implies that the morphism object completely determines the $\mathbb{E}_2$-algebra structure of the center.
\end{remark}
\subsection{Deligne's conjecture on Hochschild cochains}\label{delignes_conjecture_on_hochschild_cochains}
We recall Deligne's Conjecture on the algebraic structure on Hochschild cochains in cases of an associative $\field{k}$-algebra and a smooth variety. \\ \par
Let $A$ be an associative $\field{k}$-algebra. The 1-category of left modules over the algebra $A^e:=A\otimes A^{\text{op}}$ is isomorphic to the 1-category of $A$-bimodules.
\begin{definition}\label{def2}
The \textbf{Hochschild cochain complex} of $A$ with coefficients in an $A$-bimodule $M$ is given by
\begin{align*}
    C^{\ast}(A,M) = \mathbb{R}\hom_{A\otimes A^{\text{op}}}(A,M).
\end{align*}
This is well-defined up to quasi-isomorphism. The Hochschild cohomology $\text{HH}^{\ast}(A,M)$ of $A$ with coefficients in $M$ is given by the cohomology of this complex. We will mainly be interested in the case $M=A$.
\end{definition}
The Hochschild cohomology of $A$ encodes the deformation theory of $A$ as a bimodule over itself. In particular
\begin{align*}
    \text{HH}^0(A,A) = \hom_{A\otimes A^{\text{op}}}(A,A) = Z(A).
\end{align*}
\par The Hochschild cohomology groups can be viewed as the Ext-groups in the category of bimodules, and can be computed as the cohomology of 
\begin{align*}
    \hom_{A\otimes A^{\text{op}}}(P,A)
\end{align*}
for any projective resolution $P \xtwoheadrightarrow{\simeq} A$.
In practice, one usually takes the bar complex $B(A)$ of $A$ to get the Hochschild cochain complex
\begin{align*}
    \hom_{A\otimes A^{\text{op}}}(B(A),A) \cong \hom_{\field{k}}(A^{\otimes \ast},A).
\end{align*}
\par In 1962, M. Gerstenhaber \cite{G} noticed that Hochschild cohomology is equipped with a shifted Lie bracket $[-,-]_G$ and a commutative product $\smile$ such that $[f,-]_G$ is a derivation for the product. Such a structure is now called a \textbf{Gerstenhaber algebra}. We denote the dg operad governing Gerstenhaber algebras by $\ger$. By a result of F. Cohen \cite{Coh}, the Gerstenhaber operad is the singular homology of the topological operad of little 2-disks $D_2$. In particular, if $A$ is an algebra over $C_{\ast}(D_2)$, then $H_{\ast}(A)$ is an algebra over $\ger$. This inspired P. Deligne in a 1993 letter to make the following conjecture.
\begin{conjecture}[Deligne's Conjecture]
    The Hochschild cochain complex of an associative $\field{k}$-algebra $A$ is an algebra over the chains on little 2-disks operad in such a way that the induced Gerstenhaber algebra structure on cohomology recovers Gerstenhaber's original one.
\end{conjecture}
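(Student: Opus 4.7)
The plan is to exhibit $C^{\ast}(A,A)$ as the $\mathbb{E}_1$-operadic center $\mathfrak{Z}(A)$ of $A$ in $\mathcal{D}_{\infty}(\field{k})$, and then to leverage the Dunn Additivity Theorem \ref{thm7} together with the explicit two-product description in Corollary \ref{cor7} to identify the induced $\mathbb{E}_2$-algebra structure with Gerstenhaber's classical structure.

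First I would apply Corollary \ref{cor8} with $\mathcal{C}^{\otimes}=\mathcal{D}_{\infty}(\field{k})^{\otimes}$: because this symmetric monoidal $\infty$-category is closed, the required morphism object of $A$ viewed as an $A$-bimodule exists and is computed by the derived hom $\mathbb{R}\hom_{A\otimes A^{\text{op}}}(A,A)$, which by Definition \ref{def2} coincides, via the bar resolution, with $C^{\ast}(A,A)=\hom_{\field{k}}(A^{\otimes\ast},A)$. Corollary \ref{cor8} then produces a centralizer of $\text{id}_A$, Proposition \ref{prop7} upgrades it to a center $\mathfrak{Z}(A)\in\alg_{\mathbb{E}_1}(\alg_{\mathbb{E}_1}(\mathcal{D}_{\infty}(\field{k})))$, and finally Dunn Additivity produces a canonical $\mathbb{E}_2$-algebra structure on $C^{\ast}(A,A)$, hence an algebra over $C_{\ast}(\mathbb{E}_2^T)$.

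Next I would unwind the two multiplications in Corollary \ref{cor7} on the explicit model $\hom_{\field{k}}(A^{\otimes\ast},A)$. The outer composition product is Yoneda composition of endomorphisms of $A$ in $A$-bimodules; lifting along the bar resolution through the standard diagonal yields precisely the Hochschild cup product $(f\smile g)(a_1,\dots,a_{m+n})=f(a_1,\dots,a_m)\cdot g(a_{m+1},\dots,a_{m+n})$, recovering Gerstenhaber's cohomological product. The inner convolution product, coming from the tensor product of morphism objects acting on $A$ through the multiplication of $A$, is a second $\mathbb{E}_1$-multiplication on the same underlying object; the Eckmann--Hilton interchange between the two commuting $\mathbb{E}_1$-structures forces them to agree on $\text{HH}^{\ast}(A,A)$ and yields graded commutativity of the cup product.

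For the Gerstenhaber bracket I would invoke the paper's forthcoming technical input Corollary \ref{cor1}, which expresses the bracket of any Dunn-additive $\mathbb{E}_2$-algebra as an explicit commutator of the two $\mathbb{E}_1$-multiplications, read off from a chain-level lift of the braid 2-morphism $\sigma_t$ of Section \ref{the_little_cubes_operads}. Applied to our center, the induced homotopy between the two orders of composition should match the classical Gerstenhaber brace $f\{g\}$ on $\hom_{\field{k}}(A^{\otimes\ast},A)$; antisymmetrizing then gives $[f,g]_G=f\{g\}-(-1)^{(|f|-1)(|g|-1)}g\{f\}$ on cohomology. The principal obstacle is precisely this final identification: producing an explicit chain-level lift of $\sigma_t$ whose image under the $\mathbb{E}_2$-action on $\mathbb{R}\hom_{A\otimes A^{\text{op}}}(A,A)$ is the classical brace. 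This requires tracing the proof of Dunn Additivity through the morphism-object model of Corollary \ref{cor8} and tracking how the two tensor structures on $\alg_{\mathbb{E}_1}(\mathcal{D}_{\infty}(\field{k}))$ induce operations on the bar-resolution representative; once accomplished, the remaining identifications with Gerstenhaber's classical formulas are direct.
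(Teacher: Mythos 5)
Your overall strategy coincides with the paper's: Theorem \ref{thm1} exhibits the Hochschild complex as the center via the morphism object of $A$ as a bimodule, Corollary \ref{cor7} identifies the two multiplications of the resulting 2-algebra with composition and convolution, and Corollaries \ref{cor1} and \ref{cor5} extract the bracket as a sum of whiskered interchange homotopies, which Corollary \ref{cor9} then matches against the circle product. However, two steps you treat as immediate are where most of the paper's work actually lies. First, the morphism object of Corollary \ref{cor8} lives in the operadic module category $\textnormal{Mod}_{\tilde A}^{\mathbb{E}_1}(\mathcal{C}^{\otimes}\times_{\fin}\mathbb{E}_1^{\otimes})_{\mathfrak{a}}$, not in $\mathcal{D}_{\infty}(\field{k})$ itself; identifying that category with $\ch(A^e)$ is not a formal consequence of closedness but requires rectification of modules, the Quillen equivalence with $\lmod_{U_{\eone}(\tilde A)}(\ch(\field{k}))$, the zig-zag of quasi-isomorphisms between $U_{\eone}(\tilde A)$ and $A\otimes A^{\textnormal{op}}$ of Lemma \ref{prop2}, and finally Theorem \ref{thm4} to promote the strict dg endomorphism complex to an $\infty$-categorical morphism object. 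Without this chain, ``the morphism object is $\mathbb{R}\hom_{A\otimes A^{\textnormal{op}}}(A,A)$'' is an assertion rather than a proof. Second, the conjecture asks for an algebra over the dg operad $C_{\ast}(\mathbb{E}^T_2)$, whereas Dunn Additivity only hands you an $\mathbb{E}_2$-algebra in an $\infty$-category; passing to an honest chain-level operad algebra is the Rectification Theorem \ref{thm3}, and one still needs Corollary \ref{cor10} to know that strictification does not change the Gerstenhaber structure in cohomology.

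A smaller imprecision: the bracket operation is the image of the double twist $\gamma_t$, not of the half twist $\sigma_t$. Theorem \ref{thm11} decomposes one half twist into Eckmann--Hilton 2-simplices, and Corollary \ref{cor1} assembles four such pieces (for both multiplications and their opposites) into $\gamma_0$; your ``commutator of the two multiplications'' heuristic lands on the right formula only after this four-term decomposition. The paper is also careful that what comes out is the signed bracket $(-1)^{|f|+1}[f,g]_G$ rather than $[f,g]_G$ on the nose, which is the precise sense in which Gerstenhaber's structure is recovered.
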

Note that the category of $A$-bimodules admits a monoidal structure by taking the tensor product over $A$. In his letter, Deligne explained that the $C_{\ast}(D_2)$-algebra structure on the cochain level should come from the two multiplications on $\mathbb{R}\hom_{A\otimes A^{\text{op}}}(A,A)$ induced by $A$ being a monoidal unit in this category. We have an "inner multiplication" coming from the bialgebra structure of $A$ in the bimodule category: Let $\e(A) := \hom_{A\otimes A^{\text{op}}}(A,A)$. Then we get
\begin{align*}
    \e(A) \otimes_A \e(A) \otimes_A A \xrightarrow{\cong} \e(A) \otimes_A \e(A) \otimes_A A \otimes_A A \\\cong \e(A) \otimes_A A \otimes_A \e(A) \otimes_A A \rightarrow A \otimes_A A \xrightarrow{\cong} A
\end{align*}
inducing a multiplication $\e(A) \otimes_A \e(A) \rightarrow \e(A)$. We also have an "outer multiplication" given by composition. \\ \par
Multiple different proofs have been given for Deligne's Conjecture, see for example \cite{Tam}, \cite{Vor}, \cite{MCS}. In particular, D. Tamarkin in \cite{Tam} constructed a map $\Psi_T: \ger_{\infty} \rightarrow \braces$ from the operad of homotopy Gerstenhaber algebras to the Braces-operad, and also proved a formality result for the Gerstenhaber operad, implying that $\ger_{\infty}$ is quasi-isomorphic to $C_{\ast}(D_2)$. Since $C^{\ast}(A,A)$ is canonically a Braces-algebra \cite{GV}, this solves Deligne's Conjecture. Notably, Tamarkin's map $\Psi_T$ depends on the choice of a Drinfeld Associator.\\ \par
There is a global version of Deligne's Conjecture. Let $X$ be a separated finite type scheme over $\field{k}$. Then the Hochschild cohomology of $X$ should be a global version of the above definitions, such that if $X = \text{Spec}(A)$, we recover Definition \ref{def2}. There have been multiple proposed definitions, for example by Grothendieck \cite{Gr} and Loday \cite{Lo}, Gerstenhaber-Schack \cite{GS} and Swan \cite{S}. The perhaps most straight-forward generalization from the affine case was given by Swan, who defined the Hochschild cohomology of $X$ to be
\begin{align*}
    \text{HH}_S^{\ast}(X) = \text{Ext}^{\ast}_{\mathcal{O}_{X\times_{\field{k}} X}}(\Delta_{\ast}\mathcal{O}_X,\Delta_{\ast} \mathcal{O}_X).
\end{align*}
Unfortunately, this does not carry a Gerstenhaber bracket. For the case that $X$ is smooth, Kontsevich \cite{K} gave an alternative definition, which does carry the structure of a Gerstenhaber algebra. He defined the complex of polydifferential operators on a regular algebra $A$ to be the subcomplex
\begin{align*}
    D_{\text{poly}}(A) \subseteq C^{\ast}(A,A)
\end{align*}
of maps $f: A^{\otimes n} \rightarrow A$ that are differential operators in each variable separately. These glue together to yield the \textbf{sheaf of polydifferential operators} $\mathcal{D}_{\text{poly}}(X)$ on $X$ with sections
\begin{align*}
    \mathcal{D}_{\text{poly}}(X)(\text{Spec}(A)) = D_{\text{poly}}(A).
\end{align*}
Then the Hochschild cohomology of $X$ is defined as the hypercohomology of the sheaf of polydifferential operators,
\begin{align*}
    \text{HH}_K^{\ast}(X) = \mathbb{H}^{\ast}(X,\mathcal{D}_{\text{poly}}(X)).
\end{align*}
Since $\mathcal{D}_{\text{poly}}(X)$ is a sheaf of homotopy Gerstenhaber algebras by Tamarkin's solution of the Deligne conjecture, the hypercohomology inherits a Gerstenhaber algebra structure.\\ \par
In Section\ref{the_infty_category_of_dg_sheaves}, we suggest another definition for the Hochschild complex of a separated quasi-compact scheme $X$ over $\field{k}$, that is given in terms of the $\mathbb{E}_1$ center of the structure sheaf in an $\infty$-category of dg sheaves on $X$. This definition automatically comes equipped with a canonical little 2-disks algebra structure, and therefore a priori satisfies Deligne's Conjecture. The difficulty instead lies in proving that this is a sensible definition for the Hochschild complex. This is the interpretation of Theorem \ref{thm1} in the affine case and Theorem \ref{thm6} in the case of a smooth scheme.
\section{The bracket operation on 2-algebras}
The classical Eckmann-Hilton argument shows that two compatible associative algebra structures on a set are equivalent to a commutative algebra structure. If the two associative multiplications are labeled $\ast$ and $\cdot$, the argument first identifies the multiplication $\cdot$ with the opposite multiplication of $\ast$
\begin{align*}
    a\cdot b = (1\ast a)\cdot (b\ast 1) = (1\cdot b)\ast(a \cdot 1) = b \ast a,
\end{align*}
and then further identifies the opposite multiplication of $\ast$ with the opposite multiplication of $\cdot$
\begin{align*}
    b\ast a = (b\cdot 1)\ast(1\cdot a) = (b\ast 1)\cdot (1\ast a) = b \cdot a.
\end{align*}
Together, this yields $a\cdot b = b\cdot a$. The interchange law $(a\ast b) \cdot (c \ast d) = (a\cdot c) \ast( b\cdot d)$ can be expressed as the commutativity of a certain square\footnote{This is the first diagram in the Theorem \ref{thm11}.} in $\mathbb{E}_1^{\otimes} \times \mathbb{E}_1^{\otimes}$. The identities of the form $a\cdot b = (1\ast a)\cdot (b\ast 1)$ correspond to certain semi-inert restriction maps\footnote{These are the triangle diagrams in Theorem \ref{thm11}.} in $\mathbb{E}_1^{\otimes} \times \mathbb{E}_1^{\otimes}$. In particular, in the $\infty$-categorical setting, all the identities used in the Eckmann-Hilton argument are witnessed by 2-simplices in the product $\mathbb{E}_1^{\otimes} \times \mathbb{E}_1^{\otimes}$. This yields a non-trivial homotopy $a\cdot b \simeq b\cdot a$. Repeating this argument with the respective opposite multiplications, i.e. with the roles of $a$ and $b$ reversed, yields another homotopy $b\cdot a \simeq a\cdot b$. Together, we obtain a non-trivial homotopy $a\cdot b \simeq a\cdot b$. In this section, we show that this corresponds to the non-trivial 2-morphism in $\mathbb{E}_2^{\otimes}$ given by the double twist $\gamma_0$.
\subsection{The bracket operation of an $\mathbb{E}_2$-algebra}\label{the_bracket_operation_of_an_e2_algebra}
Consider the topological operad $\mathbb{E}^T_2$ of little 2-squares defined in Section \ref{the_little_cubes_operads} and its corresponding dg operad $C_{\ast}(\mathbb{E}^T_2)$. If $C_{\ast}(\mathbb{E}^T_2) \rightarrow \mathbf{End}(A)$ is an operad algebra in $\ch(\field{k})$, we have an action of the 2-ary operation space
    \begin{align*}
        C_{\ast}(\mathbb{E}^T_2(2)) \otimes A^{\otimes 2} \rightarrow A,
    \end{align*}
and recalling that $\mathbb{E}^T_2(2) \simeq S^1$, taking homology yields a map
    \begin{align*}
        H_{\ast}(S^1) \otimes H_{\ast}(A)^{\otimes 2} \rightarrow H_{\ast}(A).
    \end{align*}
Since $H_{\ast}(S^1)\cong \mathbb{Z}[p] \oplus \mathbb{Z}[\gamma]$ for some choice of basepoint $p\in S^1$ and generating loop $\gamma: [0,1] \rightarrow S^1$, this yields two 2-ary operations on $H_{\ast}(A)$; one of degree 0 induced by $[p]$
    \begin{align*}
        \smile: H_{\ast}(A)\otimes H_{\ast}(A) \rightarrow H_{\ast}(A)
    \end{align*}
and one of degree 1 induced by $[\gamma]$
    \begin{align*}
        [\cdot,\cdot]: H_{\ast}(A) \otimes H_{\ast}(A) \rightarrow H_{\ast}(A)[-1].
    \end{align*}
Cohen showed in \cite{Coh} that these two operations make $H_{\ast}(A)$ into a Gerstenhaber algebra. In particular, the bracket operation on $H_{\ast}(A)$ is induced by the chain level operation $A^{\otimes 2} \rightarrow A$ corresponding to a choice of generating loop $\gamma$ of the homology of $S^1$. 
\begin{definition}
    Let $\mathcal{C}^{\otimes}$ be a symmetric monoidal $\infty$-category and let $A: \mathbb{E}_2^{\otimes} \rightarrow \mathcal{C}^{\otimes}$ be an $\mathbb{E}_2$-algebra in $\mathcal{C}^{\otimes}$. For $t\in [0,2\pi)$, we call the image under $A$ of $\gamma_t\in \map_{\mathbb{E}_2^{\otimes}}(\langle 2\rangle,\langle 1\rangle)_{1}$ the \textbf{bracket operation} of $A$ at $\mu_t\in \map_{\mathbb{E}_2^{\otimes}}(\langle 2\rangle, \langle 1 \rangle)_0$.
\end{definition}
\subsection{The bracket operation of a 2-algebra}\label{the_bracket_operation_of_a_2_algebra}
We now consider the special case of $\mathbb{E}_2$-algebras obtained via the Dunn Additivity Theorem \ref{thm7}. We have seen at the end of Section \ref{the_little_cubes_operads} that these are precisely the types of $\mathbb{E}_2$-algebras obtained as the center of an $\mathbb{E}_1$-algebra in a symmetric monoidal $\infty$-category. The main result of this section, Corollary \ref{cor1}, will enable us to recover the classical Gerstenhaber bracket in cohomology when defining the Hochschild complex as a center.
\begin{definition}
Let $\mathcal{C}^{\otimes}$ be a symmetric monoidal $\infty$-category. A \textbf{2-algebra} in $\mathcal{C}^{\otimes}$ is a bifunctor of $\infty$-operads $A: \mathbb{E}_1^{\otimes} \times \mathbb{E}_1^{\otimes} \rightarrow \mathcal{C}^{\otimes}$.
\end{definition}
\begin{remark}
Note that by Corollary \ref{cor11}, a 2-algebra is equivalently an object $A\in \alg_{\mathbb{E}_1}(\alg_{\mathbb{E}_1}(\mathcal{C}))$.
\end{remark}
Fix a 2-algebra $A: \mathbb{E}_1^{\otimes} \times \mathbb{E}_1^{\otimes} \rightarrow \mathcal{C}^{\otimes}$. The Dunn Additivity Theorem \ref{thm7} tells us that there exists an $\mathbb{E}_2$-algebra $\tilde{A}: \mathbb{E}_2^{\otimes} \rightarrow \mathcal{C}^{\otimes}$ such that the restriction of $\tilde{A}$ along $\rho: \mathbb{E}_1^{\otimes} \times \mathbb{E}_1^{\otimes} \rightarrow \mathbb{E}_2^{\otimes}$ is equivalent to $A$ in the category of bifunctors. Fixing such an $\mathbb{E}_2^{\otimes}$-algebra $\tilde{A}$, we can ask whether it is possible to express the bracket operations $\tilde{A}(\gamma_t)$ in terms of the original 2-algebra $A$. 
\begin{notation}
\begin{itemize}
    \item By abuse of notation, we denote by $A\in \mathcal{C}$ the image $A(\langle 1\rangle,\langle1 \rangle)$ and similarly by $\tilde{A} \in \mathcal{C}$ the image $\tilde{A}(\langle 1\rangle)$.
    \item We commonly label a morphism $f:\langle m\rangle \rightarrow \langle n\rangle$ in $\fin$ by its sequence of images $(f(1),\dots,f(m))$. We denote by $\tau: \langle 2\rangle \rightarrow \langle 2\rangle$ the switch map $\tau = (2,1)$, and by $\iota_{i,j}: \langle 2\rangle \rightarrow \langle 4\rangle$ the inclusion $1\mapsto i$, $2\mapsto j$.
    \item A map $F\in \map_{\mathbb{E}_k^{\otimes}}(\langle m\rangle,\langle n\rangle)_0$ is given by specifying a map $f: \langle m\rangle \rightarrow \langle n\rangle$ in $\fin$ and, for every $j\in \langle n\rangle^{\circ}$, specifying an element $F_j\in \mathbb{E}_k^T(f^{-1}(j))$. If the underlying map $f$ is clear, we denote such a morphism by the sequence $(F_1,\dots, F_n)$ of rectangular embeddings.
    \item Let $f: \langle m\rangle \rightarrow \langle n\rangle$ be semi-inert\footnote{See \cite[Definition 3.3.1.1]{HA}.} in $\fin$. We fix a coCartesian lift $F$ of $f$ in $\mathbb{E}_k^{\otimes}$ by choosing $F_j \in \mathbb{E}_k^T(f^{-1}(j))$ to be the identity map $\square^k \times \{f^{-1}(j)\} \rightarrow \square^k$ for each $j\in \langle n\rangle^{\circ}$ such that $f^{-1}(j)$ is non-empty. We denote those maps by the semi-inert map in $\fin$ they lift. 
\end{itemize}
\end{notation}
Recall that $\mu := \mu_0\in \map_{\mathbb{E}_1^{\otimes}}(\langle 2\rangle, \langle 1\rangle)_0$ corresponds to the following embedding of little intervals
\begin{center}
    \begin{tikzpicture}[x=0.75pt,y=0.75pt,yscale=-.8,xscale=.8]
    \draw    (220,100.5) -- (341,100.5) ;
    \draw [shift={(341,100.5)}, rotate = 180] [color={rgb, 255:red, 0; green, 0; blue, 0 }  ][line width=0.75]    (0,5.59) -- (0,-5.59)   ;
    \draw [shift={(280.5,100.5)}, rotate = 180] [color={rgb, 255:red, 0; green, 0; blue, 0 }  ][line width=0.75]    (0,5.59) -- (0,-5.59)   ;
    \draw [shift={(220,100.5)}, rotate = 180] [color={rgb, 255:red, 0; green, 0; blue, 0 }  ][line width=0.75]    (0,5.59) -- (0,-5.59)   ;
    \draw (306,82) node [anchor=north west][inner sep=0.75pt]   [align=left] {1};
    \draw (244,80.5) node [anchor=north west][inner sep=0.75pt]   [align=left] {2};
\end{tikzpicture}
\end{center}
The key observation in expressing the bracket operations in terms of the orignal 2-algebra is given by the following theorem.
\begin{theorem}\label{thm11}
     The images under $A$ of the 2-simplices in $\mathbb{E}^{\otimes}_1\times \mathbb{E}^{\otimes}_1$ 
     \begin{center}
          \begin{tikzcd}[column sep = large]
{(\langle 2\rangle,\langle 2\rangle)} \arrow[d, "{(\mu,\textnormal{id}_{\langle 2\rangle})}"'] \arrow[r, "{(\textnormal{id}_{\langle 2\rangle},\mu)}"] & {(\langle 2\rangle,\langle 1\rangle)} \arrow[d, "{(\mu,\textnormal{id}_{\langle 1\rangle})}"] \\
{(\langle 1\rangle,\langle 2\rangle)} \arrow[r, "{(\textnormal{id}_{\langle 1\rangle},\mu)}"']                                                 & {(\langle 1\rangle,\langle 1\rangle)}                                                
\end{tikzcd}
     \end{center}
     
     \begin{center}
          \begin{tikzcd}[column sep = large]
{(\langle 1\rangle,\langle 2\rangle)} \arrow[d, "{(\textnormal{id}_{\langle 1\rangle},(2,3))}"'] \arrow[rd, "{(\textnormal{id}_{\langle 1\rangle},\textnormal{id}_{\langle 2\rangle})}"] &                                       \\
{(\langle 1\rangle,\langle 4\rangle)} \arrow[r, "{(\textnormal{id}_{\langle 1\rangle},(\mu,\mu))}"']                                                                             & {(\langle 1\rangle,\langle 2\rangle)}
\end{tikzcd}
     \end{center}

     \begin{center}
         \begin{tikzcd}[column sep = large]
{(\langle 2\rangle,\langle 1\rangle)} \arrow[rd, "{(\tau,\textnormal{id}_{\langle 1\rangle})}"'] \arrow[r, "{((2,3),\textnormal{id}_{\langle 1\rangle})}"] & {(\langle 4\rangle,\langle 1\rangle)} \arrow[d, "{((\mu,\mu),\textnormal{id}_{\langle 1\rangle})}"] \\
                                                                                                                                               & {(\langle 2\rangle,\langle 1\rangle)}                                                    
\end{tikzcd}
     \end{center}
     can be composed to yield a 2-simplex in $\mathcal{C}^{\otimes}$ with boundary 
     \begin{center}
         \begin{tikzcd}[column sep = large]
{(A,A)} \arrow[rd, "{\iota_{2,3}}"] \arrow[rrd, "{\textnormal{id}_{(A,A)}}", bend left] \arrow[rdd, "\tau"', bend right] &                                                                               &                          \\
                                                                                                                   & {(A,A,A,A)} \arrow[d, "{(m_1,m_1)\circ \tau_{2,3}}"] \arrow[r, "{(m_2,m_2)}"] & {(A,A)} \arrow[d, "m_1"] \\
                                                                                                                   & {(A,A)} \arrow[r, "m_2"']                                                     & A                       
\end{tikzcd}
     \end{center}
     whose homotopy class is identified under the equivalence $\tilde{A}\circ \rho \simeq A$ with the image under $\tilde{A}$ of the half twist $\sigma_0$ between $\mu_0$ and $\mu_\pi$ in $\hom_{\mathbb{E}_2^{\otimes}}(\langle 2\rangle,\langle 1\rangle)_1$.
\end{theorem}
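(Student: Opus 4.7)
The strategy is to paste the three displayed 2-simplices in $\mathbb{E}_1^{\otimes}\times\mathbb{E}_1^{\otimes}$ into a single 2-simplex, apply the bifunctor $A$ to obtain the claimed 2-simplex in $\mathcal{C}^{\otimes}$, and then identify its $\rho$-image (via the equivalence $\tilde{A}\circ\rho\simeq A$) with the half twist $\sigma_0$ by a direct computation in $\mathbb{E}_2^{T,\otimes}$. The work splits cleanly into a combinatorial pasting step and a geometric identification step.

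For the combinatorial step, I would first observe that the interchange diagram factors as a pair of 2-simplices sharing the diagonal $(\mu,\mu):(\langle 2\rangle,\langle 2\rangle)\to(\langle 1\rangle,\langle 1\rangle)$. The second displayed 2-simplex factors the identity on $(\langle 1\rangle,\langle 2\rangle)$ through $(\langle 1\rangle,\langle 4\rangle)$ via $(\textnormal{id},(2,3))$ and $(\textnormal{id},(\mu,\mu))$; the third analogously factors $(\tau,\textnormal{id})$ on $(\langle 2\rangle,\langle 1\rangle)$ through $(\langle 4\rangle,\langle 1\rangle)$. Attaching these along the appropriate faces of the interchange cells and filling the resulting inner horns in the quasi-category $\mathbb{E}_1^{\otimes}\times\mathbb{E}_1^{\otimes}$ produces a 2-simplex whose two essential edges realize the Eckmann--Hilton identity $m_1\simeq m_2\circ\tau$. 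Applying $A$ pointwise transports this to $\mathcal{C}^{\otimes}$; since $A$ preserves inert morphisms, the labels on the edges match those in the polytope displayed in the theorem, with the unlabeled edges being those introduced by the horn fillers, and the resulting 2-simplex witnessing $m_1\simeq m_2\circ\tau$ with a degenerate edge on $(A,A)$.

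The main obstacle is then to verify that $\rho$ applied to the pasted 2-simplex is homotopic to $\sigma_0$ in $\mathbb{E}_2^{\otimes}$. Because $\textnormal{Mul}_{\mathbb{E}_2}(\langle 2\rangle,\langle 1\rangle)\simeq S^1$ has nontrivial $\pi_1$, the homotopy class of a path between $\mu_0$ and $\mu_\pi$ is not determined by its endpoints, and an explicit parametrization inside $\mathbb{E}_2^{T,\otimes}$ is required. I would trace each contribution separately: under $\rho$, the interchange square becomes the classical Eckmann--Hilton rotation that swings the horizontal configuration $\mu_0$ to the vertical configuration $\mu_{\pi/2}$ through the $2\times 2$ grid $\rho(\mu,\mu)\in\mathbb{E}_2^T(4)$, while the two semi-inert 2-simplices supply the further rotations needed to reach $\mu_\pi$ (the correction by $\tau$ in the first coordinate being precisely what causes a swap rather than a return to $\mu_0$). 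Concatenated, these give a half-loop in $S^1\simeq\mathbb{E}_2^T(2)$ representing $\sigma_0$. The most delicate point is aligning the parametrizations so that the rotation proceeds in the direction distinguishing $\sigma_0$ from its inverse half twist; I would settle this by computing directly with the fixed identification $\mathbb{E}_2^T(2)\simeq S^1$ via rays through the centers of the embedded squares, as introduced in Section~\ref{the_little_cubes_operads}.
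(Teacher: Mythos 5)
Your second (geometric) step is essentially the paper's argument: push the three 2-simplices into $\mathbb{E}_2^{T,\otimes}$ via $\rho$, observe that the interchange square contributes constant paths at the $2\times 2$ grid while the two semi-inert triangles contribute the paths that continuously enlarge the embedded rectangles, and check against the fixed identification $\mathbb{E}_2^T(2)\simeq S^1$ that the concatenation is the half twist $\sigma_0$ rather than its inverse. That part of your plan is sound and matches the paper.

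The gap is in the combinatorial step. The three 2-simplices cannot be ``attached along appropriate faces'' in $\mathbb{E}_1^{\otimes}\times\mathbb{E}_1^{\otimes}$, because they share no 1-dimensional faces there: the edge $(\textnormal{id}_{\langle 1\rangle},(\mu,\mu))$ of the second triangle runs $(\langle 1\rangle,\langle 4\rangle)\to(\langle 1\rangle,\langle 2\rangle)$, whereas the edge of the square that is supposed to match it, $(\textnormal{id}_{\langle 2\rangle},\mu)$, runs $(\langle 2\rangle,\langle 2\rangle)\to(\langle 2\rangle,\langle 1\rangle)$; these are morphisms between \emph{different objects}, and the same happens for the third triangle. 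The objects $(\langle 2\rangle,\langle 2\rangle)$, $(\langle 1\rangle,\langle 4\rangle)$ and $(\langle 4\rangle,\langle 1\rangle)$ only become identified --- all equivalent to $(A,A,A,A)$ --- after applying $A$, so there are no inner horns to fill upstairs and the pasting must be carried out in $\mathcal{C}^{\otimes}$. Once there, you owe an argument that the two maps $(A,A,A,A)\to(A,A)$ arising from the square and from each triangle agree up to homotopy, and likewise that the two unit insertions coming from $(\langle 1\rangle,\langle 0\rangle)\to(\langle 1\rangle,\langle 1\rangle)$ and $(\langle 0\rangle,\langle 1\rangle)\to(\langle 1\rangle,\langle 1\rangle)$ agree. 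This is where the paper spends most of its proof: it tests each pair of candidate maps after post-composition with coCartesian lifts of the inert maps $\rho^i$, using condition (2) in the definition of an $\infty$-operad together with explicit factorizations in $\mathbb{E}_1^{\otimes}\times\mathbb{E}_1^{\otimes}$, and it handles the unit maps via the contractibility of $\mathcal{C}^{\otimes}_{\langle 0\rangle}$. Without some version of these coherence checks your composite 2-simplex in $\mathcal{C}^{\otimes}$ is not even defined, so the subsequent comparison with $\tilde{A}(\sigma_0)$ has nothing to compare.
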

\begin{proof}
    First check that the 2-simplices in $\mathbb{E}_1^{\otimes}\times \mathbb{E}_1^{\otimes}$ indeed induce composable 2-simples in $\mathcal{C}^{\otimes}$ with the depicted boundaries. Recall that a bifunctor of $\infty$-operads sends pairs of inert maps to inert maps. Therefore, if $\rho^i: \langle m\rangle \rightarrow \langle 1 \rangle$ is the inert map $i\mapsto 1$, $i\neq j \mapsto \ast$, the map $A(\rho^i,\text{id}): A(\langle m\rangle, \langle n\rangle) \rightarrow A(\langle 1\rangle, \langle n\rangle)$ must be an inert lift in $\mathcal{C}^{\otimes}$ of $\rho^i: \langle mn\rangle \rightarrow \langle 1\rangle$. A similar argument holds for $\rho^j$ in the second component. This shows that under the identification $\mathcal{C}^{\otimes}_{\langle n\rangle} \simeq \mathcal{C}^{\times n}$, the object $A(\langle m\rangle, \langle n\rangle)$ is equivalent to the tuple $(A,\dots, A)$ with $m$ entries. Applying $A$ to the diagrams in $\mathbb{E}_1^{\otimes} \times \mathbb{E}_1^{\otimes}$, we hence get diagrams
    \begin{center}
       \begin{tikzcd}
                                                                                  & {(A,A,A,A)} \arrow[rr] \arrow[dd] &                                                                        & {(A,A)} \arrow[dd] \\
{A(\langle 2\rangle,\langle 2\rangle)} \arrow[dd] \arrow[rr] \arrow[ru, "\simeq"] &                                   & {A(\langle 2\rangle,\langle 1\rangle)} \arrow[dd] \arrow[ru, "\simeq"] &                    \\
                                                                                  & {(A,A)} \arrow[rr]                &                                                                        & A                  \\
{A(\langle 1\rangle,\langle 2\rangle)} \arrow[rr] \arrow[ru, "\simeq"]            &                                   & {A(\langle 1\rangle,\langle 1\rangle)} \arrow[ru, "\simeq"]            &                   
\end{tikzcd}
    \end{center}

    \begin{center}
        \begin{tikzcd}
                                                                                 & {(A,A)} \arrow[d] \arrow[rd]                                &         \\
{A(\langle 1\rangle,\langle 2\rangle)} \arrow[d] \arrow[rd] \arrow[ru, "\simeq"] & {(A,A,A,A)} \arrow[r]                                       & {(A,A)} \\
{A(\langle 1\rangle,\langle 4\rangle)} \arrow[r] \arrow[ru, "\simeq" near end]            & {A(\langle 1\rangle,\langle 2\rangle)} \arrow[ru, "\simeq"] &        
\end{tikzcd}
    \end{center}

    \begin{center}
       \begin{tikzcd}
                                                                                 & {(A,A)} \arrow[r] \arrow[rd]                                          & {(A,A,A,A)} \arrow[d] \\
{A(\langle 2\rangle,\langle 1\rangle)} \arrow[r] \arrow[rd] \arrow[ru, "\simeq"] & {A(\langle 4\rangle,\langle 1\rangle)} \arrow[d] \arrow[ru, "\simeq" near start] & {(A,A)}               \\
                                                                                 & {A(\langle 2\rangle,\langle 1\rangle)} \arrow[ru, "\simeq"]           &                      
\end{tikzcd}
    \end{center}
    It hence suffices to show that the respective back sides of the diagrams fit together into the depicted 2-simplex. To this end we check that the maps $(A,A,A,A) \rightarrow (A,A)$ in the square agree with the respective maps $(A,A,A,A)\rightarrow (A,A)$ in the triangles, and similarly for the two maps $(A,A) \rightarrow (A,A,A,A)$ in the different triangles. Note that by point (2) of the definition \cite[Definition 2.1.1.10]{HA} of $\infty$-operads, it suffices to show that each of those pairs of maps agrees after post-composition with coCartesian lifts of the $\rho^i$. Consider first the maps induced by $(\langle 2\rangle,\langle 2\rangle) \xrightarrow{\textnormal{id}_{\langle 2\rangle},\mu} (\langle 2\rangle,\langle 1\rangle)$ and $(\langle 1\rangle,\langle 4\rangle) \xrightarrow{\textnormal{id}_{\langle 1\rangle},(\mu,\mu)} (\langle 1\rangle,\langle 2\rangle)$. We have a factorization 
    \begin{center}
        \begin{tikzcd}
{(\langle 2\rangle,\langle 2\rangle)} \arrow[r, "{\textnormal{id},\mu}"] \arrow[rd, "{\rho^i,\textnormal{id}}"'] & {(\langle 2\rangle,\langle 1\rangle)} \arrow[r, "{\rho^i,\textnormal{id}}"] & {(\langle 1\rangle,\langle 1\rangle)} \\
                                                                                                                 & {(\langle 1\rangle,\langle 2\rangle)} \arrow[ru, "{\textnormal{id},\mu}"']  &                                      
\end{tikzcd}
    \end{center}
    in $\mathbb{E}_1^{\otimes}\times \mathbb{E}_1^{\otimes}$, where the unique bifunctor $\mathbb{E}_1^{\otimes} \times \mathbb{E}_1^{\otimes} \rightarrow \fin$ sends the lower left hand side map to the inert map $f_1= 1,2,\ast,\ast: \langle 4\rangle \rightarrow \langle 2\rangle$ if $i=1$ and to the inert map $f_2 =\ast, \ast, 1,2$ if $i = 2$. Since inert pairs are sent to inert maps by $A$, this diagram maps to 
    \begin{center}
        \begin{tikzcd}
{(A,A,A,A)} \arrow[r, "{A(\textnormal{id},\mu)}"] \arrow[rd] & {(A,A)} \arrow[r, "\rho^i"] & A \\
                                                            & {(A,A)} \arrow[ru, "m_2"']  &  
\end{tikzcd}.
    \end{center}
    where $(A,A,A,A)\rightarrow (A,A)$ is an inert lift of $f_1$ or $f_2$ respectively. On the other hand, we also have a factorization 
    \begin{center}
        \begin{tikzcd}
{(\langle 1\rangle,\langle 4\rangle)} \arrow[r, "{\textnormal{id},(\mu,\mu)}"] \arrow[rd, "{\textnormal{id},f_i}"'] & {(\langle 1\rangle,\langle 2\rangle)} \arrow[r, "{\textnormal{id},\rho^i}"] & {(\langle 1\rangle,\langle 1\rangle)} \\
                                                                                                                    & {(\langle 1\rangle,\langle 2\rangle)} \arrow[ru, "{\textnormal{id},\mu}"']  &                                      
\end{tikzcd}
    \end{center}
    This again is sent by $A$ to 
    \begin{center}
        \begin{tikzcd}[column sep = large]
{(A,A,A,A)} \arrow[rd] \arrow[r, "{A(\textnormal{id},(\mu,\mu))}"] & {(A,A)} \arrow[r, "\rho^i"] & A \\
                                                                   & {(A,A)} \arrow[ru, "m_2"']  &  
\end{tikzcd},
    \end{center}
    showing that our two maps $(A,A,A,A)\rightarrow (A,A)$ indeed agree up to homotopy. An analogous analysis can be carried out with the other two maps $(A,A,A,A)\rightarrow (A,A)$.\par 
    For the two inclusions $(\langle 1\rangle,\langle 2\rangle) \rightarrow (\langle 1\rangle,\langle 4\rangle)$ and $(\langle 2\rangle,\langle 1\rangle)\rightarrow (\langle 4\rangle,\langle 1\rangle)$, it suffices to show that the two unit maps coming from $(\langle 1\rangle,\langle 0\rangle) \rightarrow (\langle 1\rangle,\langle 1\rangle)$ and $(\langle 0\rangle,\langle 1\rangle)\rightarrow (\langle 1\rangle,\langle 1\rangle)$ agree. To see this, note that the composition
    \begin{align*}
        (\langle 1\rangle, \langle 2\rangle) \rightarrow (\langle 1\rangle, \langle 4\rangle) \xrightarrow{\rho^i} (\langle 1\rangle, \langle 1\rangle)
    \end{align*}
    is either inert (for $i=2,3$), or it factors as 
    \begin{align*}
        (\langle 1\rangle, \langle 2\rangle) \xrightarrow{\text{inert}} (\langle 1\rangle, \langle 0\rangle) \rightarrow (\langle 1\rangle, \langle 1\rangle)
    \end{align*}
    for $i=1,4$. The same holds for the other inclusion with the entries switched. To show that the two unit maps agree, note that both $(\langle 0\rangle,\langle 0\rangle) \rightarrow (\langle 1\rangle,\langle 0\rangle)$ and $(\langle 0\rangle,\langle 0\rangle) \rightarrow (\langle 0\rangle,\langle 1\rangle)$ are sent to the identity on the empty tuple by $A$, up to homotopy, since $\mathcal{C}^{\otimes}_{\langle 0\rangle}$ is contractible. Then the image of the diagram
    \begin{center}
        \begin{tikzcd}
                                                                       & {(\langle 1\rangle,\langle 0\rangle)} \arrow[rd] &                                       \\
{(\langle 0\rangle,\langle 0\rangle)} \arrow[rr] \arrow[ru] \arrow[rd] &                                                  & {(\langle 1\rangle,\langle 1\rangle)} \\
                                                                       & {(\langle 0\rangle,\langle 1\rangle)} \arrow[ru] &                                      
\end{tikzcd}
    \end{center}
    under $A$ then shows that the two unit maps are homotopic.\par
    We now show that the depicted 2-simplex in $\mathcal{C}^{\otimes}$ indeed corresponds to the image of the half twist under $\tilde{A}$. To this end, examine the images of the three 2-simplices in $\mathbb{E}_1^{\otimes} \times \mathbb{E}_1^{\otimes}$ under $\rho: \mathbb{E}_1^{\otimes}\times \mathbb{E}_1^{\otimes} \rightarrow \mathbb{E}_2^{\otimes}$. We get 
    \begin{center}
        \includegraphics[scale=.35]{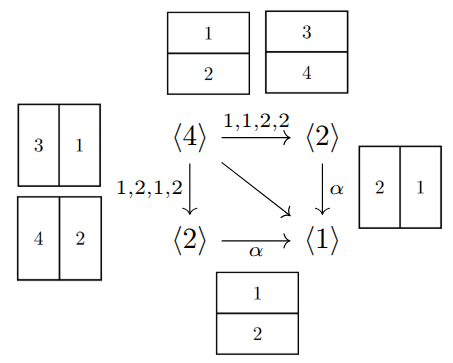}
    \end{center}

    \begin{center}
        \includegraphics[scale=.35]{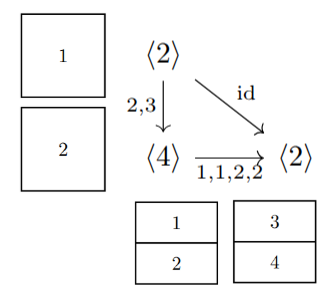}
    \end{center}

    \begin{center}
       \includegraphics[scale=.35]{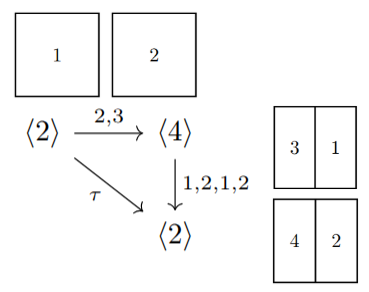}
    \end{center}
    with the paths forming the fillings of the two triangles in the square diagram both the constant path at
    \begin{center}
        \includegraphics[scale=.35]{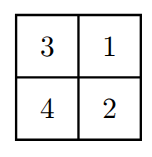},
    \end{center}
    while the fillings of the triangle diagrams are given by continuously enlarging the respective rectangles. Composing those (as paths) in the topological category $\mathbb{E}_2^{T,\otimes}$ as depicted here
    \begin{equation}\label{dig3}
        \begin{tikzcd}
	{\langle 2\rangle} \\
	& {\langle 4\rangle} & {\langle 2\rangle} \\
	& {\langle 2\rangle} & {\langle 1\rangle}
	\arrow["{2,3}", from=1-1, to=2-2]
	\arrow["{\text{id}}", curve={height=-18pt}, from=1-1, to=2-3]
	\arrow["\tau"', curve={height=18pt}, from=1-1, to=3-2]
	\arrow["{1,1,2,2}", from=2-2, to=2-3]
	\arrow["{1,2,1,2}"', from=2-2, to=3-2]
	\arrow["\alpha", from=2-3, to=3-3]
	\arrow["\alpha"', from=3-2, to=3-3]
\end{tikzcd}
    \end{equation}
    we hence get the half twist 
    \begin{center}
        \includegraphics[scale=.3]{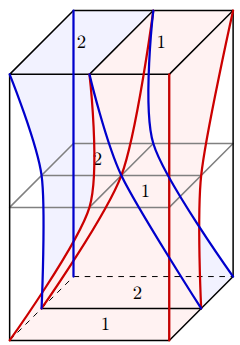}.
    \end{center}
    Applying $\tilde{A}$ to the above simplices (\ref{dig3}) induces a diagram of 2-simplices in $\mathcal{C}^{\otimes}$
    \begin{center}
        \begin{tikzcd}
{(\tilde{A},\tilde{A})} \arrow[rd] \arrow[rrd, bend left] \arrow[rdd, bend right] &                                                                  &                                   \\
                                                                                  & {(\tilde{A},\tilde{A},\tilde{A}, \tilde{A})} \arrow[r] \arrow[d] & {(\tilde{A},\tilde{A})} \arrow[d] \\
                                                                                  & {(\tilde{A},\tilde{A})} \arrow[r]                                & \tilde{A}                        
\end{tikzcd}
    \end{center}
    and by construction the isomorphism between $\tilde{A}\circ \rho$ and $A$ identifies this diagram with the one in the statement. Since maps between $\infty$-categories respect composition, this proves the claim.
\end{proof}
We can repeat this analysis for the other "parts" of the classical Eckmann-Hilton argument; using the inclusion $(1,4):\langle 2\rangle \rightarrow \langle 4\rangle$ and the opposite multiplication $\mu_1\in \map_{\mathbb{E}_1^{\otimes}}(\langle 2\rangle,\langle 1\rangle)_0$. In particular, we get representations of the image under $\tilde{A}$ of all four different parts of the double twist in terms of compositions of 2-simplices in the image of $A$. As a corollary, we obtain
\begin{corollary}\label{cor1}
    The homotopy class of the bracket on $\tilde{A}$ at $t=0$ can be produced as a composition of the following 2-simplices in $\mathcal{C}^{\otimes}$
    \begin{center}
         \begin{tikzcd}
{(A,A)} \arrow[rd, "{\iota_{2,3}}"] \arrow[rrd, "{\textnormal{id}}", bend left] \arrow[rdd, "\tau"', bend right] &                                                                               &                          \\
                                                                                                                   & {(A,A,A,A)} \arrow[d, "{(m_1,m_1)\circ \tau_{2,3}}"] \arrow[r, "{(m_2,m_2)}"] & {(A,A)} \arrow[d, "m_1"] \\
                                                                                                                   & {(A,A)} \arrow[r, "m_2"']                                                     & A                       
\end{tikzcd},
    \end{center}

    \begin{center}
        \begin{tikzcd}
{(A,A)} \arrow[rd, "{\iota_{1,4}}"] \arrow[rrd, "\textnormal{id}", bend left] \arrow[rdd, "\textnormal{id}"', bend right] &                                                                                                                                                        &                                            \\
                                                                                                              & {(A,A,A,A)} \arrow[d, "{(m_1^{\textnormal{op}},m_1^{\textnormal{op}})\circ \tau_{2,3}}"] \arrow[r, "{(m_2^{\textnormal{op}},m_2^{\textnormal{op}})}"] & {(A,A)} \arrow[d, "m_1^{\textnormal{op}}"] \\
                                                                                                              & {(A,A)} \arrow[r, "m_2^{\textnormal{op}}"']                                                                                                            & A                                         
\end{tikzcd},
    \end{center}

    \begin{center}
        \begin{tikzcd}
{(A,A)} \arrow[rd, "{\iota_{2,3}}"] \arrow[rrd, "\textnormal{id}", bend left] \arrow[rdd, "\tau"', bend right] &                                                                                                                                                        &                                            \\
                                                                                                         & {(A,A,A,A)} \arrow[d, "{(m_1^{\textnormal{op}},m_1^{\textnormal{op}}))\circ \tau_{2,3}}"] \arrow[r, "{(m_2^{\textnormal{op}},m_2^{\textnormal{op}})}"] & {(A,A)} \arrow[d, "m_1^{\textnormal{op}}"] \\
                                                                                                         & {(A,A)} \arrow[r, "m_2^{\textnormal{op}}"']                                                                                                            & A                                         
\end{tikzcd},
    \end{center}

    \begin{center}
        \begin{tikzcd}
{(A,A)} \arrow[rd, "{\iota_{1,4}}"] \arrow[rrd, "\textnormal{id}", bend left] \arrow[rdd, "\textnormal{id}"', bend right] &                                                                               &                          \\
                                                                                                                          & {(A,A,A,A)} \arrow[d, "{(m_1,m_1)\circ \tau_{2,3}}"] \arrow[r, "{(m_2,m_2)}"] & {(A,A)} \arrow[d, "m_1"] \\
                                                                                                                          & {(A,A)} \arrow[r, "m_2"']                                                     & A                       
\end{tikzcd}.
    \end{center}
\end{corollary}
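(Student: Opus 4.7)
The plan is to extend the proof of Theorem \ref{thm11} by three parallel analyses, one for each of the three additional 2-simplices listed, and then assemble the four resulting 2-simplices in $\mathcal{C}^{\otimes}$ into a single composite representing the double twist $\gamma_0$.

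Each of the four listed 2-simplices arises, as in Theorem \ref{thm11}, by composing an Eckmann--Hilton interchange square with two unit triangles in $\mathbb{E}_1^{\otimes}\times \mathbb{E}_1^{\otimes}$. The cases differ only in (i) the choice of semi-inert embedding $\langle 2\rangle\to \langle 4\rangle$, using either $\iota_{2,3}$ or $\iota_{1,4}$, and (ii) the choice of active 2-ary operation $\mu$ versus its opposite $\mu_1\in \map_{\mathbb{E}_1^{\otimes}}(\langle 2\rangle,\langle 1\rangle)_0$, which determines whether the multiplications appearing on the 2-simplex boundary are $m_i$ or $m_i^{\textnormal{op}}$. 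For each of the three new diagrams I would first run the same compatibility verification as in the proof of Theorem \ref{thm11}, confirming that the three 2-simplices in $\mathbb{E}_1^{\otimes}\times \mathbb{E}_1^{\otimes}$ assemble into a single 2-simplex in $\mathcal{C}^{\otimes}$ with the depicted boundary. This reduces, exactly as before, to checking that the component maps $(A,A,A,A) \to (A,A)$ agree after postcomposition with the inert projections $\rho^i$, and that the two candidate unit maps into $(\langle 1\rangle,\langle 1\rangle)$ are canonically identified via a 2-simplex emanating from the essentially unique object $(\langle 0\rangle,\langle 0\rangle)$.

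Next, applying the bifunctor $\rho: \mathbb{E}_1^{\otimes}\times \mathbb{E}_1^{\otimes}\to \mathbb{E}_2^{\otimes}$ to each of the four composite 2-simplices, I would identify each image, following the method of the second half of the proof of Theorem \ref{thm11}, with an interpolating path in $\mathbb{E}_2^T(2)\simeq S^1$ between two specific 2-ary multiplications. The four paths will be the four successive segments obtained by traversing the full circle of 2-ary operations in $\mathbb{E}_2^T(2)$ based at $\mu_0$, so that their concatenation represents a non-trivial loop homotopic to $\gamma_0$. The conclusion then follows from the identification $\tilde{A}\circ \rho \simeq A$ together with the fact that functors of $\infty$-categories preserve composition of 2-simplices.

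The main obstacle is the topological bookkeeping in this final step. For each of the three new diagrams one has to trace the action of $\rho$ on a concrete chain of three 2-simplices in $\mathbb{E}_1^{\otimes}\times \mathbb{E}_1^{\otimes}$ to pin down the direction and endpoints of the corresponding path in $\mathbb{E}_2^T(2)$. The key subtlety is that switching $\iota_{2,3}$ for $\iota_{1,4}$, or $\mu$ for $\mu_1$, flips the sense in which the rectangles are continuously enlarged inside $\square^2$, and hence changes the direction of the corresponding arc on $S^1$; one must check that the four arcs are oriented so as to concatenate into a once-around loop rather than cancelling out.
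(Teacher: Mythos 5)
Your proposal matches the paper's own derivation: the corollary is obtained precisely by repeating the analysis of Theorem~\ref{thm11} for the remaining three combinations of semi-inert inclusion ($\iota_{2,3}$ versus $\iota_{1,4}$) and multiplication ($\mu_0$ versus the opposite $\mu_1$), identifying each resulting 2-simplex under $\rho$ with one of the four arcs making up the double twist $\gamma_0$, and concatenating. The orientation bookkeeping you single out as the main obstacle is exactly the part the paper also leaves implicit, so there is no substantive difference in approach.
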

\section{Recovering the Hochschild complex as $\mathbb{E}_1$-center}\label{Recovering_the_Hochschild_complex_as_e1_center}
Often the Hochschild cohomology of a $\field{k}$-algebra is called its "derived center". In this section we will show that this statement is true in a very precise sense. Namely, the Hochschild complex is the $\mathbb{E}_1$-center in the derived $\infty$-category of chain complexes. \par
In contrast to \cite[Definition 5.1]{BZFN}, we use the definition of the derived center via its universal property as in \ref{def5}, and then show, using Lurie's theory on higher centers, that we can reduce to considering the derived endomorphism complex of $A$ as an $A$-bimodule. As a consequence, we get a direct proof that the $\mathbb{E}_2$-algebra structure is obtained via the higher Eckmann-Hilton argument \ref{cor1} from the Yoneda and convolution products, and we can use this to show that this definition of the Hochschild cochain complex yields the correct Gerstenhaber algebra in cohomology.
\subsection{Symmetric monoidal dg model categories}\label{symmetric_monoidal_dg_model_categories}
We first examine how a symmetric monoidal model category with a compatible structure of a dg category yields a symmetric monoidal $\infty$-category. If $\mathbf{S}$ is a monoidal model category, one can define the notion of an $\mathbf{S}$-enriched model category as in \cite[Definition A.3.1.5]{HTT}. In this section, we consider $\mathbf{S}$ to be the (symmetric) monoidal model category $\ch(\field{k})$ of chain complexes of $\field{k}$-modules with the projective model structure as in \cite[Proposition 4.2.13]{H}. We call $\ch(\field{k})$-enriched model categories \textbf{dg model categories}. Analogously to simplicial model categories, they are tensored and cotensored over $\ch(\field{k})$, and they satisfy an (SM7) property for the cotensoring.\\ \par
If $C$ is a dg category, denote by $C_0$ the underlying 1-category. If $C$ is a dg model category, we view $C_0$ as an ordinary model category.
\begin{theorem}
Let $C$ be a dg category that is tensored over the category $\ch(\field{k})$ of chain complexes of $\field{k}$-modules, let $C'$ be a full dg subcategory, and let $W$ be a collection of morphisms in $C'$ that are isomorphisms in the homotopy category of the dg category $C$. Assume that the following conditions are satisfied:
\begin{itemize}
    \item Every isomorphism in $C'$ belongs to $W$.
    \item The set $W$ satisfies the 2-out-of-3 property.
    \item For all $X\in C'$, we also have $N_{\ast}(\Delta^1)\otimes X \in C'$.
    \item For each $X\in C'$, the map $N_{\ast}(\Delta^1)\otimes X \rightarrow X$ induced by the map $[1]\rightarrow [0]$ belongs to $W$.
\end{itemize}
Then the canonical map $\theta: N(C'_0) \rightarrow N_{\text{dg}}(C')$ constructed in \cite[Remark 1.3.1.9]{HA} induces an equivalence of $\infty$-categories $\theta': N(C'_0)[W^{-1}]$ $\simeq N_{\text{dg}}(C')$.
\end{theorem}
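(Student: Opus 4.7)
The strategy is to produce $\theta'$ from the universal property of the localization and then check that it is essentially surjective and fully faithful on mapping spaces.

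First I would verify that $\theta$ sends every morphism in $W$ to an equivalence in $N_{\text{dg}}(C')$. By construction the homotopy category of the $\infty$-category $N_{\text{dg}}(C)$ coincides with the homotopy category of the dg category $C$ (this is standard for the dg nerve), so any $f\in W$, being an isomorphism in $h(C)$, is an equivalence in $N_{\text{dg}}(C)$, and hence in its full subcategory $N_{\text{dg}}(C')$. By the universal property of the Dwyer--Kan localization $N(C'_0)\to N(C'_0)[W^{-1}]$, the functor $\theta$ factors through $\theta': N(C'_0)[W^{-1}]\to N_{\text{dg}}(C')$. Essential surjectivity is immediate since both $\infty$-categories have the same set of objects.

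The real content is fully faithfulness. Fix $X,Y\in C'$. On the dg-nerve side, $\map_{N_{\text{dg}}(C')}(X,Y)$ is computed (up to weak equivalence) as the Dold--Kan image of the connective truncation $\tau_{\geq 0}C'(X,Y)$ of the mapping complex. On the localization side, I would use the explicit cosimplicial resolution $[n]\mapsto N_{\ast}(\Delta^n)\otimes X$ provided by the dg-tensoring: conditions (iii) and (iv) ensure that this cosimplicial object lands in $C'$ and that each coface/codegeneracy lies in $W$, so it is a cosimplicial frame on $X$ for the pair $(C'_0,W)$ in the sense of Dwyer--Kan. General nonsense about hammock localization (or, equivalently, the marked-simplicial-set model for localizations from \cite[Chapter 1]{HA}) then identifies $\map_{N(C'_0)[W^{-1}]}(X,Y)$ with the simplicial set $[n]\mapsto \hom_{C'_0}(N_{\ast}(\Delta^n)\otimes X,Y)$. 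By the tensor-cotensor adjunction of the dg-enrichment, this last simplicial set is naturally isomorphic to
\begin{equation*}
[n]\longmapsto Z_0\bigl(\ihom(N_{\ast}(\Delta^n),C'(X,Y))\bigr),
\end{equation*}
which, after applying Dold--Kan duality, is precisely the Dold--Kan image of $\tau_{\geq 0}C'(X,Y)$. Naturality of these identifications in $X,Y$ shows they are induced by $\theta'$, completing the proof.

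The main obstacle I anticipate is the bookkeeping in the middle step: matching the hammock-localization description of $\map_{N(C'_0)[W^{-1}]}(X,Y)$ with the concrete simplicial set coming from the cosimplicial resolution $N_{\ast}(\Delta^\bullet)\otimes X$. This relies on showing that this cosimplicial object is cofibrant enough (in an appropriate Reedy-type sense) to compute the derived mapping space; condition (iv), asserting that the augmentation $N_{\ast}(\Delta^1)\otimes X\to X$ lies in $W$, is exactly what makes this cosimplicial diagram a valid framing. If a clean framing argument is unavailable in the general setting here, a fallback is to run the proof strategy of \cite[Theorem 1.3.4.20]{HA}, replacing simplicial hom-complexes by dg mapping complexes throughout; the four hypotheses are precisely what is needed to carry that argument over.
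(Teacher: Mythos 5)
Your overall architecture (factor $\theta$ through the localization, essential surjectivity is trivial, full faithfulness is the content, and the dg-nerve mapping space is the Dold--Kan image of $\tau_{\geq 0}C'(X,Y)$) is reasonable, but the step carrying all the weight --- identifying $\map_{N(C'_0)[W^{-1}]}(X,Y)$ with $[n]\mapsto \hom_{C'_0}(N_{\ast}(\Delta^n)\otimes X,Y)$ via a cosimplicial frame --- does not follow from the stated hypotheses. First, hypothesis (iii) only places $N_{\ast}(\Delta^1)\otimes X$ in $C'$; since $C'$ is an arbitrary full subcategory and $N_{\ast}(\Delta^n)$ is not $N_{\ast}(\Delta^1)^{\otimes n}$, there is no reason for $N_{\ast}(\Delta^n)\otimes X$ to lie in $C'$ when $n\geq 2$, so your cosimplicial object need not exist in $C'_0$ at all. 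Second, $W$ is not assumed to be the class of all homotopy equivalences between objects of $C'$: it is only required to contain the isomorphisms and the specific maps $N_{\ast}(\Delta^1)\otimes X\to X$, and to satisfy 2-out-of-3; the cofaces and codegeneracies in higher cosimplicial degrees cannot be forced into $W$ from these generators. Third, and most seriously, the Dwyer--Kan identification of hammock-localization mapping spaces with those computed from a (co)simplicial resolution is a theorem about model categories, or at least about relative categories admitting a homotopy calculus of fractions; the pair $(C'_0,W)$ carries no such structure here, and supplying it is essentially equivalent to the theorem being proved. So ``general nonsense about hammock localization'' cannot be invoked at that point.

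Your fallback is the correct proof, and it is the one the paper gives: the four hypotheses are verbatim those of \cite[Proposition 1.3.4.5]{HA} (stated there for $\ch(\mathcal{A})'\subseteq\ch(\mathcal{A})$), and Lurie's argument uses only the single cylinder $N_{\ast}(\Delta^1)\otimes X$ --- which is exactly why the hypotheses mention only $\Delta^1$ --- so it goes through unchanged for a general dg category tensored over $\ch(\field{k})$. I would drop the framing argument and promote the fallback to the proof.
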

\begin{proof}
The above conditions are exactly what is needed to repeat the proof of \cite[Proposition 1.3.4.5]{HA} replacing $\ch(\mathcal{A})$ with ${C}$ and $\ch(\mathcal{A})'$ with ${C}'$.
\end{proof}
\begin{corollary}\label{cor2}
Let $C$ be a dg model category, and let $C^{\circ}$ be the full dg subcategory on bifibrant objects. Then the map $N(C^{c}_0) \rightarrow N_{\text{dg}}(C^{\circ})$ exhibits the dg nerve as the underlying $\infty$-category of $C_0$.
\end{corollary}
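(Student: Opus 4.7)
The plan is to deduce the corollary from the preceding theorem by taking $C' := C^{\circ}$, the full dg subcategory of bifibrant objects, and $W$ the class of weak equivalences in $C^{\circ}$. The preliminary input is the standard fact, provable for dg model categories in exactly the same way as for simplicial ones using cylinder and path objects, that a weak equivalence between bifibrant objects is a chain homotopy equivalence, equivalently an isomorphism in the dg homotopy category of $C$. This identifies $W$ with the class of morphisms required by the hypotheses of the theorem.

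I would then verify the four conditions in order. Conditions (i) and (ii) are immediate from the model category axioms. For (iv), the codegeneracy $N_{\ast}(\Delta^1) \rightarrow N_{\ast}(\Delta^0) = \field{k}$ is a quasi-isomorphism between cofibrant objects of $\ch(\field{k})$, and applying the left Quillen functor $-\otimes X$ for cofibrant $X$ together with Ken Brown's lemma shows that $N_{\ast}(\Delta^1) \otimes X \rightarrow X$ is a weak equivalence. For (iii), cofibrancy of $N_{\ast}(\Delta^1) \otimes X$ follows immediately from the pushout-product property of the tensoring $\ch(\field{k}) \times C \rightarrow C$, since $N_{\ast}(\Delta^1)$ is cofibrant in $\ch(\field{k})$; fibrancy is more delicate and must be extracted from the fact that $N_{\ast}(\Delta^1) \otimes X$ is a cylinder object on $X$ in the dg model category $C$. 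Once the hypotheses are in hand, the theorem yields an equivalence $\theta': N((C^{\circ})_0)[W^{-1}] \xrightarrow{\simeq} N_{\text{dg}}(C^{\circ})$. Composing with the equivalence $N(C^c_0)[W^{-1}] \xrightarrow{\simeq} N((C^{\circ})_0)[W^{-1}]$ induced by fibrant replacement — a dg analogue of \cite[Proposition 1.3.4.7]{HA} — identifies $N_{\text{dg}}(C^{\circ})$ with the underlying $\infty$-category of $C_0$ via the canonical map $N(C^c_0) \rightarrow N_{\text{dg}}(C^{\circ})$.

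The main obstacle is condition (iii), specifically the fibrancy of $N_{\ast}(\Delta^1) \otimes X$ for bifibrant $X$. Tensoring with a cofibrant chain complex is a left Quillen functor and so does not formally preserve fibrancy, meaning that the pushout-product axiom alone is not enough. Overcoming this will require exploiting the specific cylinder-object structure on $N_{\ast}(\Delta^1)$; in many dg model categories of interest (in particular the categories of unbounded $\field{k}$-linear dg modules appearing later in the paper, where every object is fibrant) the issue is vacuous, but in full generality one may need to thicken $C^{\circ}$ slightly or invoke an additional replacement argument in order to close it under $N_{\ast}(\Delta^1) \otimes -$.
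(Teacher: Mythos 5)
Your overall strategy is the paper's: take $C' = C^{\circ}$, reduce to the preceding theorem, handle the passage from $N(C_0^c)$ to $N(C_0^{\circ})$ by replacement, identify $W$ with the homotopy equivalences between bifibrant objects, and dispose of conditions (i), (ii), (iv) essentially as the paper does (your appeal to Ken Brown's lemma for (iv), versus the paper's two-out-of-three argument via the trivial cofibration $d_0:\field{k}\rightarrow N_{\ast}(\Delta^1)$, is an immaterial difference). The one place where you do not actually have a proof is exactly the place you flag yourself: the fibrancy of $N_{\ast}(\Delta^1)\otimes X$ in condition (iii). You are right that the pushout-product axiom only gives cofibrancy and that a left Quillen functor has no business preserving fibrant objects; but your proposed escapes (extract it from the cylinder-object structure, thicken $C^{\circ}$, or restrict to categories where everything is fibrant) either do not obviously work or change the statement, so as written this step is a genuine gap rather than a routine verification.

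The paper closes it with a short but essential observation: $N_{\ast}(\Delta^1)$ is a bounded complex of finite-dimensional $\field{k}$-vector spaces, hence dualizable in $\ch(\field{k})$, so that $N_{\ast}(\Delta^1)\otimes - \cong \map_{C}(N_{\ast}(\Delta^1)^{\vee},-)$ as endofunctors of $C$. A dg model category is by definition also cotensored over $\ch(\field{k})$ and satisfies the (SM7) axiom for the cotensoring, so cotensoring with the cofibrant complex $N_{\ast}(\Delta^1)^{\vee}$ preserves fibrations and in particular fibrant objects. Combined with the cofibrancy you already established, this gives $N_{\ast}(\Delta^1)\otimes X\in C^{\circ}$ for $X\in C^{\circ}$ with no hypotheses beyond those of the corollary. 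If you insert this dualizability argument in place of your ``more delicate'' step, the remainder of your proposal goes through and coincides with the paper's proof.
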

\begin{proof}
Let $C_{\Delta}$ be the simplicial category obtained from ${C}$ via the Dold-Kan correspondence. Since $N({C}_0^{c})[W^{-1}] \simeq N({C}_0^{\circ})[W^{-1}]$ it suffices to take ${C}' = {C}^{\circ}$ above and $W$ the set of homotopy equivalences. In a dg model category, left homotopy between bifibrant objects agrees with chain homotopy in the dg sense, i.e. a homotopy is a map $h: N(\Delta^1) \otimes X \rightarrow Y$ with the correct restrictions to $\{0\}$ and $\{1\}$. In particular,
\begin{align*}
    \hom_{{C}_0}(N_{\ast}(\Delta^1)\otimes X,Y)  &\cong \hom_{\ch(\field{k})}(N_{\ast}(\Delta^1),\map_{{C}}(X,Y)) \\&\cong \hom_{\ch_{\geq 0}(\field{k})}(N_{\ast}(\Delta^1),\tau_{\geq 0} \map_{{C}}(X,Y) \\&\cong \hom_{\text{sSet}}(\Delta^1, \text{DK}_{\bullet}\tau_{\geq 0}\map_{{C}}(X,Y)) \\&\cong \map_{{C}_{\Delta}}(X,Y)_1
\end{align*}
so left homotopies correspond to 1-chains of the mapping complex of ${C}$. One checks directly that the diagram making $h: N_{\ast}(\Delta^1)\otimes X \rightarrow Y$ into a homotopy between $f,g\in \hom_{{C}_0}(X,Y)$ forces the corresponding 1-chain $z\in \map_{{C}}(X,Y)_1$ to satisfy $dz = f-g$. This shows that homotopy equivalences in ${C}^{\circ}_0$ become isomorphisms in the homotopy category $h{C}_{\Delta}$, which is isomorphic to the dg homotopy category $h{C}$. Clearly every isomorphism is a homotopy equivalence and the set of homotopy equivalences satisfies 2-out-of-3. By assumption, the map $\otimes: \ch(\field{k}) \times {C} \rightarrow {C}$ is a left Quillen bifunctor. The complex $N_{\ast}(\Delta^1)$ is cofibrant, and hence $N_{\ast}(\Delta^1)\otimes -$ preserves cofibrant objects. It also preserves fibrant objects, since $N_{\ast}(\Delta^1)$ is dualizable and so $N_{\ast}(\Delta^1) \otimes - \cong \map_{C}(N_{\ast}(\Delta^1)^{\vee},-)$. This shows that $N_{\ast}(\Delta^1)\otimes X\in {C}^{\circ}$. Finally, note that $d_0: \field{k}\rightarrow N_{\ast}(\Delta^1)$ is a trivial cofibration in $\ch(\field{k})$, and thus if $X\in {C}^{\circ}$, the map $X \cong \field{k}\otimes X \rightarrow N_{\ast}(\Delta^1) \otimes X$ is again a trivial cofibration. Now the map $N_{\ast}(\Delta^1)\rightarrow \field{k}$ is a left inverse to $d_0$ and in particular 
\begin{align*}
    X \rightarrow N_{\ast}(\Delta^1) \otimes X \rightarrow X
\end{align*}
is the identity on $X$ and thus a weak equivalence. By 2-out-of-3, this means that $N_{\ast}(\Delta^1)\otimes X \rightarrow X$ must be a weak equivalence.
\end{proof}
\begin{remark}
Note that for any dg category ${C}$, we have an equivalence of $\infty$-categories $N_{\text{hc}}({C}_{\Delta}) \rightarrow N_{\text{dg}}({C})$. For simplicial model categories, the homotopy coherent nerve of the bifibrant objects is always equivalent to the $\infty$-category underlying the model category. In contrast, $C_{\Delta}$ is only \textbf{weakly simplicially enriched}, i.e. it is neither tensored nor cotensored over $\mathbf{sSet}$, and therefore does not satisfy the requirements of this theorem. The above corollary then shows that we get this relationship between the homotopy coherent nerve and the model category regardless.
\end{remark}
If ${C}$ is a (symmetric) monoidal model category, then \cite[Example 4.1.7.6]{HA} shows that $N({C}^c)[W^{-1}]$ is a (symmetric) monoidal $\infty$-category. If ${C}$ is also a simplicial model category and the (symmetric) monoidal structure is compatible with the simplicial enrichment, then \cite[Corollary 4.1.7.16]{HA} shows that the (symmetric) monoidal structure on this $\infty$-category is given by $N_{\text{hc}}(({C}^{\circ})^{\otimes})$, and in fact one readily checks that the same hold if ${C}$ is just weakly simplicially enriched. Since the Dold-Kan functor $\mathrm{DK}_{\bullet}$ is not symmetric, Lurie's result does not imply that the dg nerve of a (symmetric) monoidal dg model category $C$ presents the (symmetric) monoidal structure of $N({C}^c)[W^{-1}]$. Nevertheless, $\mathrm{DK}_{\bullet}$ is homotopy symmetric lax monoidal, and in fact Hinich proved the following.
\begin{proposition}[\cite{Hin}, Theorem 3.2.3]\label{prop8}
The dg nerve  
\begin{align*}
    N_{\textnormal{dg}}: N(\textnormal{Cat}_{\textnormal{dg}})[W_{\textnormal{dg}}^{-1}] \rightarrow N(\textnormal{Cat}_{\Delta})[W_{\Delta}^{-1}]\simeq \textnormal{Cat}_{\infty}
\end{align*}
from the symmetric monoidal $\infty$-category of dg categories to the symmetric monoidal $\infty$-category of $\infty$-categories is lax symmetric monoidal. In particular, it is a morphism of $\infty$-operads. It thus induces a map from the $\infty$-category of symmetric monoidal dg categories to the $\infty$-category of symmetric monoidal $\infty$-categories.
\end{proposition}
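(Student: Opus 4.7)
The plan is to build the lax symmetric monoidal structure by factoring the dg nerve through the homotopy coherent nerve via Dold--Kan, and then exploiting standard lax monoidal properties at each step. For a dg category $C$, form the simplicial category $C_\Delta$ by applying $\mathrm{DK}_\bullet \circ \tau_{\geq 0}$ to each morphism complex; as noted in the discussion leading up to Corollary \ref{cor2}, there is a natural equivalence of $\infty$-categories $N_{\mathrm{dg}}(C) \simeq N_{\mathrm{hc}}(C_\Delta)$. Thus it suffices to exhibit the composite $C \mapsto N_{\mathrm{hc}}(C_\Delta)$ as lax symmetric monoidal with respect to the dg tensor product on the source and the Cartesian product on the target.

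First, I would establish that $C \mapsto C_\Delta$ is lax symmetric monoidal from $(\mathrm{Cat}_{\mathrm{dg}}, \otimes_{\field{k}})$ to $(\mathrm{Cat}_\Delta, \times)$. The essential input is that $\mathrm{DK}_\bullet$ is lax symmetric monoidal via the Eilenberg--Zilber shuffle map. Composing with the product-preserving forgetful functor to simplicial sets yields natural maps $\mathrm{DK}_\bullet(X) \times \mathrm{DK}_\bullet(Y) \to \mathrm{DK}_\bullet(X \otimes_{\field{k}} Y)$; applied levelwise to mapping complexes (and together with the canonical identification of object sets), these assemble into functors $C_\Delta \times D_\Delta \to (C \otimes D)_\Delta$ satisfying the lax monoidal coherence conditions. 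Next, the homotopy coherent nerve $N_{\mathrm{hc}}: \mathrm{Cat}_\Delta \to \mathrm{Cat}_\infty$ is a right adjoint, and since both symmetric monoidal structures in sight are Cartesian, it is in fact strongly symmetric monoidal at the level of the localized $\infty$-categories. Composing these two functors therefore gives a lax symmetric monoidal functor at the point-set level.

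The final step is to descend to the localization of dg categories at dg quasi-equivalences. Here I would rely on the fact that the Eilenberg--Zilber shuffle is always a quasi-isomorphism, ensuring compatibility of the lax structure maps with the relevant weak equivalences, and then invoke Lurie's general symmetric monoidal localization machinery (\cite[Proposition 4.1.7.4]{HA}) to produce a lax symmetric monoidal functor of $\infty$-operads. The main obstacle is coherence: the shuffle map is only symmetric up to higher homotopies rather than strictly, and the tensor product of dg categories does not yield a symmetric monoidal model structure in the naive sense, so one has to pass to a derived tensor product and carefully track cofibrancy. Rectifying the point-set lax structure coherently through the localization is precisely where the technical work lies; Hinich in fact bypasses explicit point-set rectification by constructing the whole setup natively at the $\infty$-operadic level, using the theory of operadic enrichment to produce the lax structure without ever fixing a strict model.
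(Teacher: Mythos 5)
The paper offers no proof of this proposition at all: it is stated as a direct citation of Hinich's Theorem 3.2.3 in \cite{Hin}, so the only fair comparison is between your sketch and Hinich's actual argument. Your proposed route founders on a point that the paper itself flags in the sentence immediately preceding the proposition: the Dold--Kan functor $\mathrm{DK}_{\bullet}$ is \emph{not} lax symmetric monoidal at the point-set level. The Eilenberg--Zilber shuffle map makes the normalization functor lax symmetric monoidal; the lax structure on its inverse $\mathrm{DK}_{\bullet}$ is instead the one induced (by doctrinal adjunction) from the Alexander--Whitney map, which does not commute with the symmetries. Consequently the ``essential input'' of your first step --- strictly coherent functors $C_{\Delta}\times D_{\Delta}\rightarrow (C\otimes D)_{\Delta}$ compatible with the symmetry isomorphisms --- does not exist. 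What exists is a lax monoidal structure that is symmetric only up to coherent higher homotopy, and promoting that homotopy-coherent symmetry data to a genuine map of $\infty$-operads after localization is not a routine application of \cite[Proposition 4.1.7.4]{HA}, which localizes an already-given symmetric monoidal structure rather than rectifying a homotopy-coherent one. That promotion \emph{is} the content of the theorem. You acknowledge the obstruction in your closing paragraph, but acknowledging it and then writing that Hinich bypasses it leaves the entire proof outstanding: at that point the statement has been reduced to itself.

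The unproblematic parts of your sketch are fine: the equivalence $N_{\mathrm{dg}}(C)\simeq N_{\mathrm{hc}}(C_{\Delta})$, the fact that $N_{\mathrm{hc}}$ preserves products (being a right adjoint) and is therefore symmetric monoidal for the Cartesian structures, and --- since $\field{k}$ is a field, so all mapping complexes are flat --- the fact that $\otimes_{\field{k}}$ of dg categories preserves quasi-equivalences in each variable, so the monoidal structure descends to the localization. A complete proof must either follow Hinich and construct the lax symmetric structure natively at the $\infty$-operadic level, or replace $\mathrm{DK}_{\bullet}$ by a genuinely symmetric lax monoidal substitute and verify the coherence by hand; in either case the symmetry coherence is the theorem, not a footnote to it.
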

\subsection{Rectification of algebras over an $\infty$-operad}\label{Rectification_of_algebras_over_an_infty_operad}
Let $\mathcal{O}$ be a topological operad. Applying the singular chain functor with $\field{k}$-coefficients produces a dg operad $C_{\ast}(\mathcal{O})$. In this section, we will generalize results of Hinich \cite{Hin} and D. Pavlov and J. Scholbach \cite{PS}, and show that $C_{\ast}(\mathcal{O})$-algebras in a symmetric monoidal dg model category $C$ correspond to algebras over the $\infty$-operad $N^{\otimes}(\text{Sing}_{\bullet}(\mathcal{O}))$ in the symmetric monoidal $\infty$-category $N_{\text{dg}}(C^{\circ})$.\\

Let ${C}$ be a symmetric monoidal dg model category. Suppose further that ${C}$ is cofibrantly generated and symmetrically flat as defined in \cite[Definition 2.1. (vii)]{PS}, and that $C_{\ast}(\mathcal{O})$ is admissible \cite[Definition 5.1.]{PS} and well-pointed \cite[Definition 6.1]{PS} in ${C}$, and that $C$ admits a lax symmetric monoidal fibrant replacement functor. Let $\alg_{C_{\ast}(\mathcal{O})}(C)$ denote the model category of strict $C_{\ast}(\mathcal{O})$-algebras in $C$ with the transfer model structure from the forgetful functor to $C$. Note that this model structure exists by the assumption that $C_{\ast}(\mathcal{O})$ is admissible in $C$. The construction in \cite[Section 4.2]{Hin} generalizes directly to give a functor
\begin{align*}
    \phi: N(\alg_{C_{\ast}({\mathcal{O}})}({C})^{c}) \rightarrow \alg_{N^{\otimes}(\text{Sing}_{\bullet}(\mathcal{O}))}(N_{\text{dg}}({C}^{\circ})).
\end{align*}
Roughly, a cofibrant $C_{\ast}(\mathcal{O})$-algebra in $C$ is given by a symmetric monoidal functor of dg categories from the PROP $P_{C_{\ast}(\mathcal{O})}$ generated by $C_{\ast}(\mathcal{O})$ to $C^c$, and applying the dg nerve as in Proposition \ref{prop8} yields a symmetric monoidal functor $P_{\mathcal{O}} \rightarrow \text{N}_{\text{dg}}(C^{\circ})$. Here, $P_{\mathcal{O}}$ is the symmetric monoidal envelope of the $\infty$-operad $N^{\otimes}(\text{Sing}_{\bullet}(\mathcal{O}))$ as defined in \cite[Construction 2.2.4.1.]{HA}, and hence this construction yields an $N^{\otimes}(\text{Sing}_{\bullet}(\mathcal{O}))$-algebra in $\text{N}_{\text{dg}}(C^{\circ})$.\\

This functor $\phi$ carries weak equivalences to equivalences as shown in \cite[Diagram 49]{Hin}, and therefore yields a comparison map
\begin{align*}
    \Phi:  N(\alg_{C_{\ast}(\mathcal{O})}({C})^{c})[W_{\alg_{C_{\ast}(\mathcal{O})}({C})}^{-1}] \rightarrow \alg_{N^{\otimes}(\text{Sing}_{\bullet}(\mathcal{O}))}(N_{\text{dg}}({C}^{\circ})).
\end{align*}
\begin{theorem}\label{thm3}
    Let $\mathcal{O}$ and ${C}$ as above. Then $\Phi$ is an equivalence of $\infty$-categories.
\end{theorem}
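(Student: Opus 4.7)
The plan is to prove Theorem \ref{thm3} by generalizing Hinich's rectification argument \cite{Hin} from the setting of chain complexes of $\field{k}$-modules to our arbitrary symmetric monoidal dg model category $C$, supplementing with Pavlov-Scholbach's operadic admissibility and symmetric flatness machinery \cite{PS} to handle the operad resolutions. The argument decomposes into three parts: that $\Phi$ preserves and detects equivalences, is essentially surjective, and is fully faithful. Detection is essentially immediate: weak equivalences in $\alg_{C_{\ast}(\mathcal{O})}(C)$ are detected on the underlying object in $C$ by construction of the transfer model structure (which exists by admissibility of $C_{\ast}(\mathcal{O})$), equivalences in $\alg_{N^{\otimes}(\text{Sing}_{\bullet}(\mathcal{O}))}(N_{\text{dg}}(C^{\circ}))$ are likewise detected on the underlying object of $N_{\text{dg}}(C^{\circ})$, and Corollary \ref{cor2} bridges the two descriptions of the underlying $\infty$-category.

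For essential surjectivity, a given $\infty$-algebra $A \in \alg_{N^{\otimes}(\text{Sing}_{\bullet}(\mathcal{O}))}(N_{\text{dg}}(C^{\circ}))$ is resolved by the monadic bar construction with respect to the free-forgetful adjunction $F \dashv U$ between $C$ and the category of strict $C_{\ast}(\mathcal{O})$-algebras. Admissibility promotes this to a Quillen adjunction, symmetric flatness of $C$ ensures that the iterated free algebras $F^n U^n A$ have the correct homotopy type (controlling the behavior of the Sigma-action on tensor powers), and well-pointedness of $C_{\ast}(\mathcal{O})$ guarantees that the resulting simplicial object is Reedy cofibrant and that its geometric realization computes $A$. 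The realization yields a strict cofibrant $C_{\ast}(\mathcal{O})$-algebra whose image under $\Phi$ is equivalent to $A$.

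Full faithfulness reduces, following Hinich, to identifying the dg hom complex $\underline{\textnormal{Hom}}_{\alg_{C_{\ast}(\mathcal{O})}(C)}(X,Y)$ for cofibrant $X$ and fibrant $Y$ with the mapping space $\map_{\alg_{N^{\otimes}(\text{Sing}_{\bullet}(\mathcal{O}))}(N_{\text{dg}}(C^{\circ}))}(\Phi X, \Phi Y)$. The main technical obstacle is verifying that $\alg_{C_{\ast}(\mathcal{O})}(C)$ itself carries a dg model category structure to which Corollary \ref{cor2} applies: the $\ch(\field{k})$-tensoring on $C$ must lift to strict $C_{\ast}(\mathcal{O})$-algebras and interact correctly with the transfer model structure to yield a left Quillen bifunctor. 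The symmetric flatness condition on $C$ is what ensures that free algebras on tensor products are sufficiently cofibrant for this compatibility, while well-pointedness controls the unit and multiplication maps of $C_{\ast}(\mathcal{O})$. Once this dg model structure on $\alg_{C_{\ast}(\mathcal{O})}(C)$ is in place, Corollary \ref{cor2} identifies its underlying $\infty$-category with both the domain of $\Phi$ (by definition) and, via the mapping-space computation, with the relevant full subcategory of the codomain, completing the proof.
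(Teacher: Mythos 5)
Your overall strategy (a monadicity-flavored comparison, with symmetric flatness controlling free algebras and the transfer model structure supplying conservativity) is in the right spirit, but two of your three steps have genuine gaps, and the paper's actual proof is organized quite differently precisely to avoid them. The paper does not prove essential surjectivity and full faithfulness by hand; it applies Lurie's monadicity criterion \cite[Corollary 4.7.3.16]{HA} to the triangle formed by $\Phi$ and the two forgetful functors $G, G'$ down to $(N(C^c)[W^{-1}])^{[\mathcal{O}]}$, following the steps of \cite[Theorem 4.5.4.7]{HA}: one checks that $G$ is conservative (clear from the transfer model structure), that $G$ preserves geometric realizations (reduced to preservation of homotopy sifted colimits, which is \cite[Proposition 7.9]{PS}), and that the canonical transformation $G'\circ F' \rightarrow G\circ F$ between the composites with the left adjoints is an equivalence. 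This last point is the real content: the strict free algebra $\mathbb{F}_{C_{\ast}(\mathcal{O})}(X)$ must also be free in the $\infty$-operadic sense of \cite[Definition 3.1.3.1]{HA}, which is Hinich's \cite[Lemma 4.3.4]{Hin} (an analogue of \cite[Proposition 3.1.3.13]{HA}), and this is where symmetric flatness and $\Sigma$-cofibrancy enter. Your remark that symmetric flatness controls the $\Sigma$-action on tensor powers correctly gestures at this, but you never isolate the comparison of monads as the key lemma.

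The concrete gaps are these. First, your essential surjectivity argument resolves an object $A$ of the $\infty$-category $\alg_{N^{\otimes}(\text{Sing}_{\bullet}(\mathcal{O}))}(N_{\text{dg}}(C^{\circ}))$ by the bar construction of the \emph{strict} adjunction $F\dashv U$; but $A$ is not a strict algebra, so $F^nU^nA$ is not defined until you already know $A$ lies in the essential image, or until you have identified the strict monad with the $\infty$-categorical one --- which is the comparison of free functors above. Second, and more seriously, your full faithfulness argument proposes to put a dg model structure on $\alg_{C_{\ast}(\mathcal{O})}(C)$ and invoke Corollary \ref{cor2}. Even granting that the $\ch(\field{k})$-tensoring lifts to strict algebras compatibly with the transfer model structure (which is far from automatic: the tensoring on algebras is built from coequalizers of free algebras and does not commute with the forgetful functor, so the SM7 axiom is not inherited), Corollary \ref{cor2} would only identify $N(\alg_{C_{\ast}(\mathcal{O})}(C)^c)[W^{-1}]$ with the dg nerve of the dg category of bifibrant strict algebras. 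It says nothing about $\alg_{N^{\otimes}(\text{Sing}_{\bullet}(\mathcal{O}))}(N_{\text{dg}}(C^{\circ}))$, which is defined as a category of $\infty$-operad maps into $N_{\text{dg}}(C^{\circ})^{\otimes}$, not as a dg nerve. Identifying these two is exactly the statement of the theorem, so at that point your argument is circular; the monadicity criterion is what lets the paper sidestep any direct mapping-space computation.
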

\begin{proof}
    We use Corollary 4.7.3.16 in \cite{HA}, which is an application of the $\infty$-categorical Barr-Beck Theorem \cite[Theorem 4.7.3.5]{HA}. Consider the diagram
    \begin{center}
        \begin{tikzcd}
{ N(\alg_{C_{\ast}(\mathcal{O})}({C})^{c})[W_{\alg_{C_{\ast}(\mathcal{O})}({C})}^{-1}]} \arrow[rr, "\Phi"] \arrow[rd, "G"'] &                                              & { \alg_{N^{\otimes}(\text{Sing}_{\bullet}(\mathcal{O}))}(N({C}^{c})[W^{-1}])} \arrow[ld, "G'"] \\
                                                                                                                                                & {(N({C}^c)[W^{-1}])^{[\mathcal{O}]}} &                                                                                
\end{tikzcd}
    \end{center}
    where $(N(C^c)[W^{-1}])^{[\mathcal{O}]}$ denotes the coproduct of $N(C^c)[W^{-1}]$ over the collection of colors $[\mathcal{O}]$ in $\mathcal{O}$, and $G$ and $G'$ are the forgetful functors given by evaluation at the colors of $\mathcal{O}$. To show that $\Phi$ is an equivalence of $\infty$-categories, it then suffices to show that $G$ and $G'$ satisfy the assumptions (1)-(5) of \cite[Corollary 4.7.3.16]{HA}, and that in addition $G$ is conservative. To verify these assumptions, we follow the proof of \cite[Theorem 4.5.4.7]{HA}; in particular we will refer to "steps (a)-(e)" from this proof.\par
    Steps (a)-(c) can be proven exactly like in \cite[Theorem 4.5.4.7]{HA}, by just replacing the commutative operad by $\mathcal{O}$. For step (d), it is clear that $G$ is conservative since the weak equivalences in $\alg_{C_{\ast}(\mathcal{O})}({C})$ are transferred from the ones in ${C}$ via the forgetful functor. To show that $G$ preserves geometric realization of simplicial objects, it suffices to show that it preserves homotopy sifted colimits. This is shown in \cite[Proposition 7.9]{PS}. Finally, for step (e), we need to show that the canonical transformation $G'\circ F' \rightarrow G\circ F$ is an equivalence, where $F$ and $F'$ are the left adjoints of $G$ and $G'$ respectively. This boils down to showing that for any cofibrant object $X\in {C}^{[\mathcal{O}]}$, the strict free $C_{\ast}(\mathcal{O})$-algebra generated by $X$ is also a free $N^{\otimes}(\text{Sing}_{\bullet}(\mathcal{O}))$-algebra in the sense of \cite[Definition 3.1.3.1]{HA}. To this end, Hinich \cite[Lemma 4.3.4]{Hin} proved an analogue of \cite[Proposition 3.1.3.13]{HA} for the setting of free algebras generated by objects of different colors. We have a map of $\infty$-operads\footnote{Here, $\mathscr{T}riv^{\otimes}$ is the trivial $\infty$-operad as defined in \cite[Example 2.1.3.5.]{HA}} $(\mathscr{T}riv^{\otimes})^{[\mathcal{O}]} \rightarrow {N}^{\otimes}(\text{Sing}_{\bullet}(\mathcal{O}))$, and a collection $X:=\{X_i\}_{i\in [\mathcal{O}]}$ of objects in ${N}_{\text{dg}}(C^{\circ})$ induces a $(\mathscr{T}riv^{\otimes})^{[\mathcal{O}]}$-algebra $\overline{X}$ in $N_{\text{dg}}(C^{\circ})^{\otimes} \times_{\fin} N^{\otimes}(\text{Sing}_{\bullet}(\mathcal{O}))$ over $N^{\otimes}(\text{Sing}_{\bullet}(\mathcal{O}))$. Let $\mathbb{F}_{C_{\ast}(\mathcal{O})}(X)$ be the strict free $C_{\ast}(\mathcal{O})$ algebra generated by $X$, and let $\mathbb{F}= \phi(\mathbb{F}_{C_{\ast}(\mathcal{O})}(X))$. For a color $c\in [\mathcal{O}]$, Hinich \cite[Lemma 4.3.4]{Hin} constructs a map $\text{Sym}_{\mathcal{O}}(\overline{X})_c \rightarrow \mathbb{F}_c$ which is an equivalence for all colors if and only if $\mathbb{F}$ is indeed the free $N^{\otimes}(\text{Sing}_{\bullet}(\mathcal{O}))$-algebra. Hinich checks this for the case of $C = \ch(\field{k})$, but one readily sees that all his arguments still work for any symmetric monoidal dg model category $C$.
\end{proof}
\begin{remark}
We call a homotopy preimage of an $N^{\otimes}(\text{Sing}_{\bullet}(\mathcal{O}))$-algebra $A$ under $\Phi$ a \textbf{strictification} of $A$.
\end{remark}
\begin{corollary}
The dg nerve induces a map $\alg_{\lm}(N(\textnormal{Cat}_{\textnormal{dg}})[W_{\textnormal{dg}}^{-1}]) \rightarrow \alg_{\lm}(\textnormal{Cat}_{\infty})$. By \ref{thm3}, this implies that a dg model category left tensored over a monoidal dg model category yields an $\infty$-category left tensored over a monoidal $\infty$-category.
\end{corollary}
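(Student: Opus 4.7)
The plan is to address the two assertions in sequence: the first is essentially formal, and the second combines it with a rectification argument.

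For the first assertion, I would argue as follows. Proposition \ref{prop8} establishes that $N_{\text{dg}}$, viewed as a functor between symmetric monoidal $\infty$-categories, is lax symmetric monoidal, equivalently a morphism of the underlying $\infty$-operads. Any morphism of $\infty$-operads $F: \mathcal{C}^{\otimes} \rightarrow \mathcal{D}^{\otimes}$ induces a functor $\alg_{\mathcal{P}}(\mathcal{C}) \rightarrow \alg_{\mathcal{P}}(\mathcal{D})$ for any $\infty$-operad $\mathcal{P}$, by post-composing an algebra $\mathcal{P}^{\otimes} \rightarrow \mathcal{C}^{\otimes}$ with $F$. Taking $\mathcal{P} = \lm$ gives the claimed map.

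For the second assertion, I would start with a dg model category $D$ left tensored over a monoidal dg model category $C$. Restricting the monoidal product of $C$ and the left tensoring $C \times D \rightarrow D$ to the respective full subcategories of bifibrant objects (and composing with a fibrant replacement functor whenever needed to remain in the bifibrant subcategories) produces strict $\lm$-algebra data in the symmetric monoidal dg category $\text{Cat}_{\text{dg}}$, i.e., an object of $\alg_{\lm}(N(\text{Cat}_{\text{dg}})[W^{-1}_{\text{dg}}])$. Applying the functor from the first assertion transports this to an $\lm$-algebra in $\text{Cat}_{\infty}$; by Corollary \ref{cor2}, the underlying $\infty$-categories of this $\lm$-algebra recover the $\infty$-categories presented by the model categories $C$ and $D$. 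Theorem \ref{thm3} is invoked to legitimize the passage from strict $\lm$-algebra data to a coherent $\infty$-operadic one, by identifying the $\infty$-category of strict $\lm$-algebras in the ambient symmetric monoidal dg model category with $\lm$-algebras in its underlying $\infty$-category; one applies it with $\lm$ viewed through its underlying discrete topological operad, whose chain operad is essentially the $\field{k}$-linearization.

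The main obstacle is verifying that the ambient setting (here, $\text{Cat}_{\text{dg}}$ with its tensor product) satisfies the hypotheses of Theorem \ref{thm3}: admissibility and well-pointedness of $\lm$, symmetric flatness, cofibrant generation, and the existence of a lax symmetric monoidal fibrant replacement functor. Since $\lm$ is a discrete 2-colored operad whose multiarity operation spaces are singletons with no nontrivial symmetries, these reduce to essentially formal compatibilities of the model structure on $\text{Cat}_{\text{dg}}$ with its tensor product, but carefully verifying them for the Tabuada-type model structure—especially the interplay between the tensor product of dg categories and the fibrant replacement—is where the real technical work lies.
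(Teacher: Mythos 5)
Your handling of the first assertion is correct and is clearly the intended argument: Proposition \ref{prop8} exhibits $N_{\textnormal{dg}}$ as a morphism of $\infty$-operads, and post-composition induces the map on $\lm$-algebras. (The paper states this corollary without a written proof, so the comparison can only be against its evident intent.)

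The second assertion is where your proposal has a genuine gap, and it sits exactly in the step you defer as ``technical work.'' First, the data you construct is not a strict $\lm$-algebra in $\textnormal{Cat}_{\textnormal{dg}}$: once a fibrant replacement functor is inserted to stay inside the bifibrant subcategories, associativity and the module axioms hold only up to weak equivalence, so you have left the 1-category of strict $\lm$-algebras before you ever get to rectify. The strict $\lm$-algebra should instead be taken on the full dg categories $C$ and $D$, where $\otimes: C\boxtimes C \rightarrow C$ and $C\boxtimes D \rightarrow D$ are honest dg functors, or on the cofibrant objects, which are closed under the tensoring by the pushout-product axiom of $C$ and $D$ themselves. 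Second, and more seriously, Theorem \ref{thm3} cannot be applied with $\textnormal{Cat}_{\textnormal{dg}}$ as the ambient category: its hypotheses require a symmetric monoidal dg \emph{model} category, and $\textnormal{Cat}_{\textnormal{dg}}$ is neither $\ch(\field{k})$-enriched as a model category nor monoidal model --- the tensor product $\boxtimes$ of dg categories is well known to fail the pushout-product axiom for the Tabuada model structure (a tensor product of cofibrant dg categories need not be cofibrant). So the hypotheses you propose to verify do not merely require care; they fail. This is precisely why Hinich's Proposition \ref{prop8}, which establishes lax symmetric monoidality of the dg nerve on the localization $N(\textnormal{Cat}_{\textnormal{dg}})[W_{\textnormal{dg}}^{-1}]$ directly rather than through a monoidal model structure, is the tool that does the work here. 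The passage from a strict $\lm$-algebra in the 1-category $\textnormal{Cat}_{\textnormal{dg}}$ to an object of $\alg_{\lm}(N(\textnormal{Cat}_{\textnormal{dg}})[W_{\textnormal{dg}}^{-1}])$ uses that the localization functor is symmetric monoidal because $\boxtimes$ preserves quasi-equivalences over a field (every dg category over $\field{k}$ is h-flat), not Theorem \ref{thm3}; the identification of the underlying $\infty$-categories of the resulting $\lm$-algebra with the localizations of $C$ and $D$ at their weak equivalences (via Corollary \ref{cor2}) then still has to be addressed as a separate step.
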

We can now show that if $M$ is a dg category left tensored over a monoidal dg model category $C$, a dg morphism object yields an $\infty$-categorical morphism object in the sense of Definition \ref{def3}.
\begin{lemma}\label{lem1}
Let ${C}$ be a monoidal dg model category with underlying monoidal product $\otimes: {C}_0\times {C}_0 \rightarrow {C}_0$. Then the induced monoidal product $N_{\text{dg}}({C}^{\circ}) \times  N_{\text{dg}}({C}^{\circ})\rightarrow N_{\text{dg}}({C}^{\circ})$ sends $A,B\in {C}^{\circ}$ to an object equivalent to $R(A\otimes B)$. A similar statement holds for dg model categories left tensored over a monoidal dg model category.
\end{lemma}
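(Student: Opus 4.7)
The plan is to reduce the statement to the known form of the monoidal structure on $N(C_0^c)[W^{-1}]$. By Corollary \ref{cor2}, the equivalence $N(C_0^c)[W^{-1}]\simeq N_{\textnormal{dg}}(C^{\circ})$ sends a cofibrant object $X$ to a fibrant replacement $R(X)\in C^{\circ}$. The monoidal structure on the localization, as constructed in \cite[Example 4.1.7.6]{HA}, is the derived tensor product, and by the pushout-product axiom of a monoidal model category this is computed by the ordinary tensor product on cofibrant objects. Thus for $A,B\in C^{\circ}\subseteq C^c$, the monoidal product in $N(C_0^c)[W^{-1}]$ is represented by $A\otimes B\in C_0^c$, which then transports back to $R(A\otimes B)\in C^{\circ}$ under the reverse of this equivalence.

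The remaining task is to check that this identification of underlying $\infty$-categories lifts to an identification of monoidal $\infty$-categories. For this I would appeal to Theorem \ref{thm3} applied to the associative operad: both $N(C_0^c)[W^{-1}]$ and $N_{\textnormal{dg}}(C^{\circ})$ present the underlying $\infty$-category of $C$, and the monoidal structures on each side exhibit the same $\mathbb{E}_1$-algebra structure in $\textnormal{Cat}_{\infty}$ coming from the strict monoidal structure of the dg model category $C$. Rectification via Theorem \ref{thm3} (or, alternatively, the lax symmetric monoidality of the dg nerve from Proposition \ref{prop8}) ensures that these two presentations of the $\mathbb{E}_1$-algebra structure are equivalent, so the two monoidal products agree up to equivalence, and the preceding paragraph identifies this product on bifibrant objects as $R(A\otimes B)$.

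The left-tensored case follows by the same argument, replacing the $\ass$-structure by the $\lm$-structure of a left tensoring and applying Theorem \ref{thm3} in the relative setting. The main obstacle I expect is the second step: since $C^{\circ}$ is not literally closed under $\otimes$, one cannot equip it with a strict monoidal dg structure and directly invoke Proposition \ref{prop8}. Instead one must pass through the cofibrant model $C^c$ and use rectification to carry the comparison through, which is where the full strength of Theorem \ref{thm3} enters.
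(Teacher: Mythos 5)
Your first paragraph is a correct computation, but it takes a different (and longer) route than the paper: the paper's entire proof is the observation that the monoidal structure on $N_{\text{dg}}({C}^{\circ})$ is, by definition, the one produced by Proposition \ref{prop8} from the strict monoidal dg structure on ${C}$, and unwinding Hinich's construction the binary product of two bifibrant objects is represented by a fibrant replacement of their strict tensor product. You instead compute the product in $N({C}_0^c)[W^{-1}]$ via \cite[Example 4.1.7.6]{HA} and transport it across the equivalence of Corollary \ref{cor2}, which forces you to prove the extra statement that the Lurie localization monoidal structure and the Hinich dg-nerve monoidal structure agree. That is exactly the right thing to worry about, but your proposed tool for it does not work: Theorem \ref{thm3} compares strict $C_{\ast}(\mathcal{O})$-algebras \emph{inside} a fixed symmetric monoidal dg model category with $\mathcal{O}$-algebras in its dg nerve; to use it to identify two monoidal structures on the $\infty$-category itself you would have to take the ambient category to be $\textnormal{Cat}_{\textnormal{dg}}$, which is not a symmetric monoidal dg model category of the required type (the tensor product of dg categories is not a Quillen bifunctor for the standard model structure), and in any case the statement of Theorem \ref{thm3} already presupposes the monoidal structure on $N_{\text{dg}}({C}^{\circ})$ whose identification is in question, so the appeal is circular. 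Your parenthetical fallback --- Proposition \ref{prop8} --- is the correct instrument and is precisely how the paper proceeds: Hinich's theorem is formulated at the level of $N(\textnormal{Cat}_{\textnormal{dg}})[W_{\textnormal{dg}}^{-1}]$ exactly so as to absorb the failure of ${C}^{\circ}$ to be closed under $\otimes$, and the lemma is then read off from the resulting description of the binary product without any detour through $N({C}^c)[W^{-1}]$. The same comment applies verbatim to the left-tensored case, where the relevant structure is the $\lm$-algebra produced by the corollary following Theorem \ref{thm3}, not a relative form of Theorem \ref{thm3} itself.
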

\begin{proof}
    This follows directly from the description of the monoidal structure in Proposition \ref{prop8}.
\end{proof}
\begin{theorem}\label{thm4}
Let ${C}$ be a monoidal dg model category and let ${M}$ be a dg model category that is left tensored over ${C}$. In particular, we have a dg functor\footnote{Here $\boxtimes$ denotes the tensor product of dg categories.} $\otimes: {C}\boxtimes {M} \rightarrow {M}$ whose underlying functor is a left Quillen bifunctor. Assume that for $A,B\in {M}^{\circ}$ we have a dg morphism object $\mor_{{M}}(A,B)\in {C}$ together with map $\alpha: \mor_{{M}}(A,B)\otimes A \rightarrow B$ in ${M}$ such that composition with $\alpha$ induces an isomorphism 
\begin{align*}
    \map_{{C}}(C,\mor_{{M}}(A,B)) \cong \map_{{M}}(C\otimes A,B) \in \ch(\field{k}).
\end{align*}
Then
\begin{itemize}
    \item[(1)] The induced map $\tilde{\alpha}\in \map_{N_{\textnormal{dg}}({M}^{\circ})}(R(Q\mor_{{M}}(A,B)\otimes A),B)$  makes $Q\mor_{{M}}(A,B)\in N_{\textnormal{dg}}({C}^{\circ})$ into a morphism object for $A,B\in N_{\textnormal{dg}}({M}^{\circ})$ in the sense of Definition \ref{def3}.
    \item[(2)] If $\beta: R(X\otimes A) \rightarrow B$ is another morphism object for $A$ and $B$ in $N_{\textnormal{dg}}({M}^{\circ})$, then the induced map $f: X\xrightarrow{\simeq} Q\mor_{{M}}(A,B)$ is a weak equivalence in ${C}$, and $\tilde{\alpha}\circ R(f\otimes \textnormal{id}_A) \simeq \beta$ are homotopic.
\end{itemize}
\end{theorem}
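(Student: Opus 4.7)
The strategy is to verify the universal property of Definition \ref{def3} by comparing both sides to strict dg hom-complexes and applying Dold--Kan. For any dg model category $D$ and any pair $X,Y\in D^{\circ}$, the mapping space in $N_{\textnormal{dg}}(D^{\circ})$ is modeled by $\textnormal{DK}_{\bullet}\tau_{\geq 0}\map_{D}(X,Y)$; this is built into the comparison between the dg nerve and the homotopy coherent nerve of the Dold--Kan simplicial enrichment, as used in the proof of Corollary \ref{cor2}. We take $\tilde{\alpha}$ to be the image under $N_{\textnormal{dg}}$ of the composite $Q\mor_{M}(A,B)\otimes A\rightarrow \mor_{M}(A,B)\otimes A\xrightarrow{\alpha} B$, promoted to a map out of the fibrant replacement via any lift $R(Q\mor_M(A,B)\otimes A)\to Q\mor_M(A,B)\otimes A$ (well-defined up to homotopy since the source is cofibrant and the target is weakly equivalent to it). By Lemma \ref{lem1}, $R(Q\mor_M(A,B)\otimes A)$ indeed represents the tensor product $Q\mor_M(A,B)\otimes A$ computed in $N_{\textnormal{dg}}(M^{\circ})$.

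\textbf{The zigzag of quasi-isomorphisms.} For $X\in C^{\circ}$ I would assemble
\begin{equation*}
\map_{C}(X,Q\mor_{M}(A,B))\rightarrow \map_{C}(X,\mor_{M}(A,B))\cong \map_{M}(X\otimes A,B)\leftarrow \map_{M}(R(X\otimes A),B).
\end{equation*}
The first arrow is a quasi-isomorphism since $X$ is cofibrant and $Q\mor_M(A,B)\to \mor_M(A,B)$ is a trivial fibration between fibrant objects; fibrancy of the target is automatic when every object of $C$ is fibrant (as in $\ch(\field{k})$), and in general can be arranged by a further fibrant replacement of $\mor_M(A,B)$ without changing the universal property up to quasi-isomorphism. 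The middle isomorphism is the dg universal property hypothesized for $\mor_{M}(A,B)$. The third arrow uses that $-\otimes A$ is a left Quillen functor, so $X\otimes A$ is cofibrant and $X\otimes A\to R(X\otimes A)$ is a weak equivalence between cofibrants; since $B$ is fibrant, the induced map on $\map_M(-,B)$ is a quasi-isomorphism by (SM7). Applying $\textnormal{DK}_{\bullet}\tau_{\geq 0}$ converts this into a zigzag of weak equivalences of simplicial mapping spaces in the dg nerves, establishing the equivalence of mapping spaces required by Definition \ref{def3}.

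\textbf{Identifying the composite with post-composition by $\tilde{\alpha}$.} This is the main technical obstacle: the zigzag above produces \emph{some} equivalence and we must see it is induced by $\tilde{\alpha}$. I would track $\textnormal{id}_{Q\mor_M(A,B)}\in \map_{C}(Q\mor_M(A,B),Q\mor_M(A,B))_0$ through the zigzag specialized to $X=Q\mor_M(A,B)$: it first maps to the trivial fibration $p: Q\mor_M(A,B)\to \mor_M(A,B)$, which under the dg Yoneda identification is precisely the composite $p\otimes \textnormal{id}_A$ followed by $\alpha$, and pulling back along $X\otimes A\to R(X\otimes A)$ produces a class homotopic to $\tilde{\alpha}$. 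By naturality in $X$ (Yoneda), a natural transformation of representable functors is pinned down by its value on the identity of the representing object, so the composite of the zigzag must coincide up to homotopy with post-composition by $\tilde{\alpha}$. This gives (1).

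\textbf{Part (2).} Given another morphism object $\beta: R(X\otimes A)\to B$ for $A,B$ in $N_{\textnormal{dg}}(M^{\circ})$, Proposition \ref{prop6} characterizes $(X,\beta)$ as a final object of $N_{\textnormal{dg}}(C^{\circ})\times_{N_{\textnormal{dg}}(M^{\circ})}N_{\textnormal{dg}}(M^{\circ})_{/B}$. By part (1) the same is true of $(Q\mor_M(A,B),\tilde{\alpha})$. Any two final objects of an $\infty$-category are related by an essentially unique equivalence, which produces the required $f:X\xrightarrow{\simeq}Q\mor_M(A,B)$ in $N_{\textnormal{dg}}(C^{\circ})$ together with a homotopy $\tilde{\alpha}\circ R(f\otimes\textnormal{id}_A)\simeq \beta$ witnessing compatibility of the structure maps.
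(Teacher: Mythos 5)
Your proof is correct and follows essentially the same route as the paper: the same zigzag $\map_{C}(X,Q\mor_{M}(A,B))\to\map_{C}(X,\mor_{M}(A,B))\cong\map_{M}(X\otimes A,B)\leftarrow\map_{M}(R(X\otimes A),B)$, passage through $\mathrm{DK}_{\bullet}\tau_{\geq 0}$ together with Lemma \ref{lem1}, and uniqueness of morphism objects (equivalently, final objects via Proposition \ref{prop6}) plus saturation for part (2). Your additional Yoneda argument pinning down that the resulting equivalence is induced by post-composition with $\tilde{\alpha}$ addresses a point the paper leaves implicit, and is a welcome refinement rather than a deviation.
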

\begin{proof}
For (1), note that if $C\in N_{\text{dg}}({C}^{\circ})$ is bifibrant, $Q\mor_{{M}}(A,B) \xtwoheadrightarrow{\simeq} \mor_{{M}}(A,B)$ is the cofibrant replacement map, and $C\otimes A\xhookrightarrow{\simeq} R(C\otimes A)$ is the fibrant replacement map, we get a weak equivalence
\begin{align*}
    \map_{{C}}(C,Q\mor_{{M}}(A,B)) \xrightarrow{\simeq} \map_{{C}}(C,\mor_{{M}}(A,B)) \cong \map_{{M}}(C\otimes A,B) \xrightarrow{\simeq} \map_{{M}}(R(C\otimes A),B)
\end{align*}
of chain complexes. Applying $\textnormal{DK}_{\bullet}\tau_{\geq 0}$, we get
\begin{align*}
    \map_{N_{\text{dg}}({C}^{\circ})}(C,Q\mor_{{M}}(A,B)) \simeq \map_{N_{\text{dg}}({M}^{\circ})}(R(C\otimes A),B).
\end{align*}
Together with Lemma \ref{lem1} this yields the result.\\
For (2), we automatically get $M\simeq \mor_{{M}}(A,B)$ in the $\infty$-category $N_{\textnormal{dg}}({C}^{\circ})$ since morphism objects are unique up to equivalence. Now recall that $N_{\textnormal{dg}}({C}^{\circ}) \simeq N({C}^c)[W^{-1}]$, and since model categories are saturated this implies the result.
\end{proof}
\subsection{Gerstenhaber algebras in the homotopy category}
Recall that in Section \ref{the_bracket_operation_of_an_e2_algebra}, we defined the bracket operations of an $\mathbb{E}_2$-algebra as the image of the 2-morphisms $\gamma_t \in \map_{\mathbb{E}_2^{\otimes}}(\langle 2\rangle,\langle 1\rangle)_1$. We explain how to explicitly describe the bracket operations of a 2-algebra in a symmetric monoidal dg model category, and how the bracket operations of an $\mathbb{E}_2$-algebra in this setting correspond to actual Gerstenhaber brackets. \\ \par
The following result will make it possible for us to compute the Gerstenhaber bracket on the center of an $\mathbb{E}_1$-algebra in the dg nerve of a symmetric monoidal dg model category.
\begin{corollary}\label{cor5}
If $\mathcal{C}^{\otimes}$ is the dg nerve of a symmetric monoidal dg model category $C$ and $A$ is a 2-algebra with associated $\mathbb{E}_2$-algebra $\tilde{A}$, then the  bracket operation at $t=0$ on $\tilde{A}$ is given by the chain homotopy 
\begin{align*}
    \tilde{A}(\gamma_0) \simeq h\iota_{2,3} + h^{\text{op}}\iota_{1,4} +  h^{\text{op}}\iota_{2,3} + h \iota_{1,4}.
\end{align*}
Here $h$ and $h^{\text{op}}$ are the chain homotopies corresponding to the image of the square diagram (\ref{dig2}) in $\mathbb{E}^{\otimes}_1\times \mathbb{E}^{\otimes}_1$ for the multiplications and their opposite multiplications respectively.
\end{corollary}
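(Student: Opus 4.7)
The plan is to unpack each of the four 2-simplices from Corollary \ref{cor1} explicitly in the dg model, identify the resulting chain homotopies, and observe that their composition in the dg nerve is their sum.

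First, I would use the rectification theorem (Theorem \ref{thm3}) to replace $A$ up to equivalence by a strict algebra over the dg operad $C_\ast(\mathbb{E}_1^T \times \mathbb{E}_1^T)$ in $C$. In this strict model the maps $m_1$, $m_2$, $m_1^{\textnormal{op}}$ and $m_2^{\textnormal{op}}$ are honest chain maps with strict units, and the interchange square (\ref{dig2}) commutes only up to the specified degree-$1$ chain $h$ satisfying $dh = m_1 \circ (m_2,m_2) - m_2 \circ (m_1,m_1)\circ \tau_{2,3}$ (and analogously for $h^{\textnormal{op}}$). By construction, these are exactly the chain homotopies which, under the dg nerve, fill the images of the square (\ref{dig2}).

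Second, I would re-examine the proof of Theorem \ref{thm11} to observe that each of the four 2-simplices in Corollary \ref{cor1} is itself the composite of one copy of the square (\ref{dig2}) with two attached triangles whose outer edges are of the form $\iota_{2,3}$ or $\iota_{1,4}$ paired with $\tau$ or $\textnormal{id}$. In the strict model, both of these triangles commute strictly: for example $(m_2,m_2)\circ \iota_{2,3} = \textnormal{id}_{(A,A)}$ and $(m_1,m_1)\circ \tau_{2,3}\circ \iota_{2,3} = \tau$ follow at once from strict unitality. Hence their fillers in $N_{\textnormal{dg}}(C^{\circ})$ vanish, and the composite 2-simplex has filler equal to the square's filler pre-composed with the appropriate inclusion. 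Running through the four cases produces fillers $h\iota_{2,3}$, $h^{\textnormal{op}}\iota_{1,4}$, $h^{\textnormal{op}}\iota_{2,3}$, and $h\iota_{1,4}$.

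Finally, I would invoke the explicit description of compositions in the dg nerve: two 2-simplices glued along a common edge, via a thin 3-simplex whose top-dimensional filler vanishes, compose to a 2-simplex whose filler is the signed sum of the component fillers. Applied three times along the shared edges between our four 2-simplices, this produces $\tilde{A}(\gamma_0)$ as the sum of the four chain homotopies above, yielding the stated formula. The main obstacle will be a careful sign check at this last step: one must verify that each of the four half-twists in the loop $\gamma_0$ is traversed in the same orientation and so contributes with the same sign. I expect this to be straightforward once one notes that the four Eckmann--Hilton arguments each model one ``quarter'' of the full $2\pi$ rotation witnessed by $\gamma_0$, all oriented consistently.
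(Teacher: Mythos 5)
Your overall strategy is the same as the paper's: decompose $\tilde{A}(\gamma_0)$ via Corollary \ref{cor1}, identify 2-morphisms in the dg nerve with chain homotopies, use that whiskering (horizontal composition with a 1-morphism) is strictly defined in a dg category, and use that composing homotopies along a common edge amounts to adding them. The paper's proof is exactly these observations followed by a citation of Corollary \ref{cor1}; it never strictifies, because in $N_{\textnormal{dg}}(C^{\circ})^{\otimes}$ a morphism $(A,\dots,A)\to(A,\dots,A)$ already \emph{is} a chain map $A^{\otimes m}\to A^{\otimes n}$ (up to chain homotopy) and a 2-simplex already \emph{is} a chain homotopy, so $h$ and $h^{\textnormal{op}}$ can simply be taken to be whatever fillers the bifunctor $A$ assigns to the image of the square (\ref{dig2}).

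The one step where you go beyond the paper — rectifying to a strict $C_{\ast}(\mathbb{E}_1^T\times\mathbb{E}_1^T)$-algebra in order to make the two triangles commute ``on the nose'' by strict unitality — is also the step that does not hold as written. Theorem \ref{thm3} produces a \emph{cofibrant} strict algebra over the chains on a topological operad, and such an algebra is not strictly unital: $\mathbb{E}_1^T(1)=\textnormal{Rect}(\square,\square)$ is contractible but not a point, the composite $(m_2,m_2)\circ\iota_{2,3}$ is the action of a non-identity element of that space, and the triangle fillers are the images of specific non-constant $1$-chains (the ``continuously enlarging rectangles'' paths in the proof of Theorem \ref{thm11}), which are canonically nullhomotopic but need not vanish. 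Worse, if you did impose enough strictness to kill those $1$-chains, you would face the opposite problem: the interchange square (\ref{dig2}) commutes on the nose in $\mathbb{E}_1^{\otimes}\times\mathbb{E}_1^{\otimes}$ (its fillers in the source are constant paths), so in a fully strict model the square fillers $h$, $h^{\textnormal{op}}$ would also be forced to zero and the formula would degenerate. The content of $\gamma_0$ has to live somewhere, and the statement of Corollary \ref{cor5} places it in the square fillers of the $\infty$-categorical bifunctor $A$; your argument should therefore stay at the level of the dg nerve (where the triangle fillers are absorbed into the identification of the boundary edges $(m_2,m_2)\circ\iota_{2,3}\simeq\textnormal{id}$, unique up to contractible choice) rather than pass to a strictification.
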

\begin{proof}
Note first that by construction of the symmetric monoidal structure on ${C}$, a morphism $\underbrace{(A,\dots,A)}_{m \text{ times}} \rightarrow \underbrace{(A,\dots,A)}_{n \text{ times}}$ in $\mathcal{C}^{\otimes}$ corresponds to a map $A^{\otimes m} \rightarrow A^{\otimes n}$ in $C$, and this map is unique up to chain homotopy. Similarly, a 2-morphism between such maps in $C^{\otimes}$ corresponds to a chain homotopy between the corresponding maps in $C$. Fix such maps in $C$ corresponding to all the involved diagrams. Horizontal composition of maps is strictly defined in dg categories, and hence we have well defined whiskering compositions $h\iota_{2,3}$, $h\iota_{1,4}$, $h^{\text{op}}\iota_{2,3}$ and $h^{\text{op}}\iota_{1,4}$. Finally, note that horizontal composition of chain homotopies is given by addition. Then the result follows from Corollary \ref{cor1}.
\end{proof}
We now want to argue that in the case of a symmetric monoidal dg model category, the abstract bracket operations correspond precisely to the Gerstenhaber bracket in the homotopy category. \\ \par
Let $A: \mathbb{E}_2^{\otimes} \rightarrow N_{\text{dg}}(C^{\circ})^{\otimes}$ be an $\mathbb{E}_2$-algebra in the dg nerve of a symmetric monoidal dg model category $C$. Then we get an induced map
\begin{align*}
    \mathbb{E}^T_2(2) \simeq \map_{\mathbb{E}_2^{\otimes}}^{\alpha}(\langle 2\rangle, \langle 1\rangle) \rightarrow \map_{N_{\text{dg}}(C^{\circ})^{\otimes}}(A(\langle 2\rangle),A(\langle 1\rangle)).
\end{align*}
Let $A= A(\langle 1\rangle)\in C^{\circ}$. Then we have a homotopy equivalence
\begin{align*}
    \map_{N_{\text{dg}}(C^{\circ})^{\otimes}}(A(\langle 2\rangle),A(\langle 1\rangle)) \simeq \map_{N_{\text{dg}}(C^{\circ})}(A^{\otimes 2},A).
\end{align*}
Hence we get a map (well-defined up to homotopy)
\begin{align*}
    \mathbb{E}^T_2(2) \rightarrow \map_{N_{\text{dg}}(C^{\circ})}(A^{\otimes 2},A) \simeq \text{DK}\tau_{\geq 0} \map_C(A^{\otimes 2},A),
\end{align*}
and therefore taking homology 
\begin{align*}
    (\ger(2))_n \cong H_n(\mathbb{E}^T_2(2)) \rightarrow \hom_{hC}(A^{\otimes 2},A[n]).
\end{align*}
This procedure yields a Gerstenhaber algebra structure on $A$ in the dg homotopy category of $C$ whose bracket is indeed given by the image of $\gamma_0$.\\ \par
Given an operation $\rho \in \map_{\mathbb{E}_2^{\otimes}}^{\alpha}(\langle n\rangle, \langle 1\rangle)_0$, the algebra $A$ yields a map $A(\rho)\in \map_{N_{\textnormal{dg}}(C^{\circ})^{\otimes}}(A(\langle n\rangle),A(\langle 1\rangle))_0$, which by the construction of the symmetric monoidal structure \ref{prop8} on $N_{\textnormal{dg}}(C^{\circ})^{\otimes}$ corresponds to a morphism $A(\langle 1\rangle)^{\otimes n} \rightarrow A(\langle 1\rangle)$ in $C$. By abuse of notation, we often denote the image of $\rho$ as a map $A(\rho): A^{\otimes n} \rightarrow A$.
\begin{proposition}
Let $A: \mathbb{E}_2^{\otimes} \rightarrow N_{\textnormal{dg}}(C^{\circ})^{\otimes}$ be an $\mathbb{E}_2$-algebra in the dg nerve of a symmetric monoidal dg model category. Let $A^{\textnormal{str}}$ be a homotopy preimage of $A$ under $\Phi$. Without loss of generality, assume that the underlying object of $A^{\textnormal{str}}$ is fibrant. Note that $\Phi(A^{\textnormal{str}})(\langle 1\rangle) \simeq A^{\textnormal{str}}$ agree in $C$ by construction.\\ There is a chain homotopy $h\in \map_C({A^{\textnormal{str}}}^{\otimes 2}, A)_1$
\begin{center}
\begin{tikzcd}
	{{A^{\textnormal{str}}}^{\otimes 2}} && {A^{\textnormal{str}}} \\
	{A^{\otimes 2}} && A
	\arrow[""{name=0, anchor=center, inner sep=0}, "{A^{\textnormal{str}}(\mu_0)}", from=1-1, to=1-3]
	\arrow["\simeq"', from=1-1, to=2-1]
	\arrow["\simeq", from=1-3, to=2-3]
	\arrow[""{name=1, anchor=center, inner sep=0}, "{A(\mu_0)}"', from=2-1, to=2-3]
	\arrow[shorten <=4pt, shorten >=4pt, Rightarrow, from=0, to=1, "h"]
\end{tikzcd}
\end{center}
and a degree 2 map $K\in \map_C({A^{\textnormal{str}}}^{\otimes 2}, A)_2$ filling the cylinder
\begin{center}
\begin{tikzcd}[column sep = large, row sep = large]
	{{A^{\textnormal{str}}}^{\otimes 2}} && {A^{\textnormal{str}}} \\
	\\
	{A^{\otimes 2}} && A
	\arrow[""{name=0, anchor=center, inner sep=0}, "{{A^{\textnormal{str}}(\mu_0)}}", curve={height=-18pt}, from=1-1, to=1-3]
	\arrow[""{name=1, anchor=center, inner sep=0}, "{{A^{\textnormal{str}}(\mu_0)}}"', curve={height=18pt}, from=1-1, to=1-3]
	\arrow["\simeq"', from=1-1, to=3-1]
	\arrow["\simeq", from=1-3, to=3-3]
	\arrow[""{name=2, anchor=center, inner sep=0}, "{{A(\mu_0)}}", curve={height=-18pt}, from=3-1, to=3-3]
	\arrow[""{name=3, anchor=center, inner sep=0}, "{{A(\mu_0)}}"', curve={height=18pt}, from=3-1, to=3-3]
	\arrow["{A^{\textnormal{str}}(\gamma_0)}", shorten <=5pt, shorten >=5pt, Rightarrow, from=0, to=1]
	\arrow["{A(\gamma_0)}", shorten <=5pt, shorten >=5pt, Rightarrow, from=2, to=3]
\end{tikzcd}.
\end{center}
These (higher) homotopies identify the product and bracket of $A$ and $A^{\textnormal{str}}$. 
\end{proposition}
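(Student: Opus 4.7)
The strategy is to exploit that by the defining property of $A^{\textnormal{str}}$ as a homotopy preimage, there is an equivalence $\Phi(A^{\textnormal{str}}) \simeq A$ in the $\infty$-category $\alg_{\mathbb{E}_2}(N_{\textnormal{dg}}(C^{\circ}))$, where $\Phi$ is the equivalence of $\infty$-categories of Theorem \ref{thm3}. Such an equivalence of $\mathbb{E}_2$-algebras, once rigidified and unpacked through the dg nerve, is precisely the coherence data $h$ and $K$; the proof amounts to reading them off from the explicit simplicial description of $N_{\textnormal{dg}}$.

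First I would promote the abstract equivalence $\Phi(A^{\textnormal{str}}) \simeq A$ to a componentwise equivalence of maps of simplicial sets over $\fin$
\begin{align*}
\phi: \Phi(A^{\textnormal{str}}) \Rightarrow A: \mathbb{E}_2^{\otimes} \longrightarrow N_{\textnormal{dg}}(C^{\circ})^{\otimes},
\end{align*}
whose value at the color $\langle 1\rangle$ is (up to homotopy) the fixed weak equivalence $A^{\textnormal{str}} \xrightarrow{\simeq} A$. This is automatic from $\Phi$ being an equivalence of $\infty$-categories combined with standard lifting properties of the coCartesian fibration $\alg_{\mathbb{E}_2}(N_{\textnormal{dg}}(C^{\circ}))^{\otimes} \to \fin$. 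To extract $h$, apply $\phi$ to the edge $\mu_0 \in \map_{\mathbb{E}_2^{\otimes}}(\langle 2\rangle, \langle 1\rangle)_0$: naturality produces a 2-simplex in $\map_{N_{\textnormal{dg}}(C^{\circ})^{\otimes}}\bigl(\Phi(A^{\textnormal{str}})(\langle 2\rangle), A(\langle 1\rangle)\bigr)$ whose boundary is the square displayed in the statement. Using the explicit description of $N_{\textnormal{dg}}$, a 2-simplex on vertices $X_0, X_1, X_2$ with 1-faces $f_{01}, f_{12}, f_{02}$ corresponds to a degree 1 element $f_{012}$ of $\map_{C}(X_0, X_2)$ with $d f_{012} = f_{12}f_{01} - f_{02}$; applying this here gives a chain homotopy $h$ of the required form.

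To extract $K$, apply $\phi$ to the 2-simplex $\gamma_0 \in \map_{\mathbb{E}_2^{\otimes}}(\langle 2\rangle, \langle 1\rangle)_1$. The resulting data, combined with the naturality data of $\phi$, assembles into a prism in $\map_{N_{\textnormal{dg}}(C^{\circ})^{\otimes}}\bigl({A^{\textnormal{str}}}^{\otimes 2}, A\bigr)$ whose top and bottom 2-faces are $A^{\textnormal{str}}(\gamma_0)$ and $A(\gamma_0)$, and whose two vertical 2-faces are filled by copies of $h$. Decomposing this prism into 3-simplices of the dg nerve and invoking the analogous description of 3-simplices of $N_{\textnormal{dg}}$ as degree 2 elements of the underlying mapping complexes with prescribed boundary yields the desired $K \in \map_C({A^{\textnormal{str}}}^{\otimes 2}, A)_2$, whose differential relation is precisely the cylinder filling demanded in the statement.

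The main technical obstacle will be bookkeeping: carefully converting $n$-simplices of the mapping spaces of $N_{\textnormal{dg}}(C^{\circ})^{\otimes}$ into degree-$(n-1)$ elements of the underlying hom-complexes in $C$ via the Dold-Kan correspondence, and matching every boundary summand against the faces prescribed by the prism. A secondary point is rigidifying the $\infty$-categorical equivalence $\Phi(A^{\textnormal{str}}) \simeq A$ into an honest natural transformation with the correct component at $\langle 1\rangle$; this is routine once one uses that the fibers of $\Phi$ are contractible Kan complexes and that $A^{\textnormal{str}}$ is fibrant, but it needs to be spelled out to justify that $\phi(\langle 1\rangle)$ may be taken to be the chosen weak equivalence. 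Once these identifications are in place, the compatibility claims of the proposition follow directly from the simplex identities of $N_{\textnormal{dg}}$.
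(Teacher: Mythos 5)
Your proposal is correct and follows essentially the same route as the paper: the paper also rigidifies the equivalence $\Phi(A^{\textnormal{str}})\simeq A$ into a map $\eta\colon \Delta^1\times\mathbb{E}_2^{\otimes}\to N_{\textnormal{dg}}(C^{\circ})^{\otimes}$, evaluates it on (degenerate) simplices over $\mu_0$ to produce two composable 2-simplices whose composite is $h$, and runs the analogous 3-simplex analysis over $\gamma_0$ to produce $K$. The only cosmetic difference is that the paper makes explicit that the naturality square is filled by \emph{two} triangles (homotopies $h_1$ and $h_2$) that must be composed, which your sketch compresses into a single step.
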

\begin{proof}
There is a natural equivalence $\eta: \Delta^1 \times \mathbb{E}_2^{\otimes} \rightarrow N_{\text{dg}}(C^{\circ})^{\otimes}$ between $\Phi(A^{\text{str}})$ and $A$. Evaluating at the 1-simplex $(e, s_0\langle n\rangle)\in \Delta^1 \times \mathbb{E}_2^{\otimes}$, where $e$ denotes the non-degenerate 1-simplex of $\Delta^1$, yields equivalences $\Phi(A^{\text{str}})^{\otimes n} \xrightarrow{\simeq} A^{\otimes n}$ in $C$. Evaluating at the 1-simplex $(e,\mu_0)$ yields a map $\Phi(A^{\textnormal{str}})^{\otimes 2} \rightarrow A$, and the 2-simplex $(s_0e, s_1\mu_0)$ yields a chain homotopy $h_1$ between $\eta(e,\mu_0)$ and $\eta(e,s_0\langle 1 \rangle)\circ \Phi(A^{\textnormal{str}})(\mu_0)$. Similarly, the 2-simplex $(s_1e, s_0\langle 2\rangle)$ yields a chain homotopy $h_2$ between $\eta(e,\mu_0)$ and $A(\mu_0) \circ \eta(e,s_0 \langle 2\rangle)$. Composing these, we get the chain homotopy $h$ above. A similar analysis with 3-simplices yields the degree 2 map $K$.
\end{proof}
\begin{corollary}\label{cor10}
The induced Gerstenhaber algebra of $A^{\textnormal{str}}$ in the homotopy category agrees with the one constructed directly from $A$.
\end{corollary}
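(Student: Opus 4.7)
The plan is to reduce Corollary~\ref{cor10} to comparing the arity-2 generators of the Gerstenhaber operad on each side. Recall from Cohen's computation \cite{Coh} that $\ger \cong H_{\ast}(\mathbb{E}_2^T)$ is generated under operadic composition by the class of a point $[p]\in H_0(\mathbb{E}_2^T(2))$, yielding the commutative product $\smile$, and the class of a generating loop $[\gamma]\in H_1(\mathbb{E}_2^T(2))$, yielding the Lie bracket $[\cdot,\cdot]$. Both Gerstenhaber structures (on $H_{\ast}(A)$ from the $\mathbb{E}_2$-action, and on $H_{\ast}(A^{\textnormal{str}})$ from the strict $C_{\ast}(\mathbb{E}_2^T)$-action) are obtained by applying $H_{\ast}$ to the respective action maps, and their operadic compositions are both inherited from the ones in $\mathbb{E}_2^T$. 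Hence a direct comparison reduces to checking that the actions of $[p]$ and $[\gamma]$ agree under the identification $H_{\ast}(A)\cong H_{\ast}(A^{\textnormal{str}})$ induced by the weak equivalence of the previous proposition.

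For the product, the chain homotopy $h$ handed to us by the proposition exhibits $A(\mu_0)$ and $A^{\textnormal{str}}(\mu_0)$ as homotopic $0$-cycles in the mapping complex, once we identify $\map_C(A^{\otimes 2}, A)$ with $\map_C((A^{\textnormal{str}})^{\otimes 2}, A^{\textnormal{str}})$ via the weak equivalence (using that the latter is monoidal in the sense of Lemma~\ref{lem1}). Thus they represent the same class in $H_0$, realizing the same cup product $[p]$. For the bracket, the degree-$2$ chain $K$, together with the whiskerings of $h$ on either side of the cylinder, witnesses that $A(\gamma_0)$ and $A^{\textnormal{str}}(\gamma_0)$ are homologous $1$-cycles in $\map_C(A^{\otimes 2}, A)$, since $\partial K$ equals the difference of these cycles modulo a boundary built out of $h$. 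Hence they represent the same class in $H_1$, realizing the same bracket $[\gamma]$.

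Combining these, the generators $[p]$ and $[\gamma]$ act identically under the identification of the two underlying graded homology objects, and by the quadratic presentation of $\ger$ the full Gerstenhaber algebra structures coincide. The main technical obstacle, namely the explicit chain-level comparison of the operations $\mu_0$ and $\gamma_0$ between the rectified and unrectified versions, was already resolved in the preceding proposition by evaluating the natural equivalence $\eta$ between $\Phi(A^{\textnormal{str}})$ and $A$ on appropriate $2$- and $3$-simplices of $\Delta^1\times \mathbb{E}_2^{\otimes}$. The remaining bookkeeping, ensuring that the identification on the source $A^{\otimes 2}$ is compatible with both the product and bracket actions simultaneously, follows from the functoriality of the weak equivalence $A^{\textnormal{str}}\xrightarrow{\simeq} A$ together with the monoidality of the dg nerve.
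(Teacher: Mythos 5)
Your proposal is correct and follows essentially the same route as the paper: the corollary is stated there without a separate proof precisely because the preceding proposition's homotopies $h$ and $K$ already identify the images of $[p]$ and $[\gamma]$ in the homotopy category, and the Gerstenhaber structure constructed in Section 4.3 is determined by these arity-2 classes. Your explicit reduction to the generators of $\ger$ and the bookkeeping remark about using the same (monoidal) identification on source and target are faithful expansions of what the paper leaves implicit.
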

\subsection{The category of $\mathbb{E}_1$-modules in chain complexes}\label{the_category_of_e1_modules_in_chain_complexes}
We want to use the description of the center in Corollary \ref{cor7} to compute the center of an associative $\field{k}$-algebra. Recall that this description involves the morphism object in the category of $\mathbb{E}_1$-modules over the algebra. In this section, we examine this module category.\\ \par
Let $\mathcal{C}^{\otimes} = N_{\text{dg}}(\ch(\field{k}))^{\otimes}$ be the symmetric monoidal $\infty$-category corresponding to the symmetric monoidal dg model category $\ch(\field{k})$. Let $A$ be an $\eone$-algebra in $\ch(\field{k})$. By abuse of notation, we denote $\Phi(A)\in \alg_{\mathbb{E}_1}(\mathcal{C})$ again by $A$, where $\Phi$ is the equivalence constructed in Section \ref{Rectification_of_algebras_over_an_infty_operad}. We use the notation from Section \ref{the_center_as_endomorphism_object_of_bimodules}.\\ \par
Recall that $\mathbb{E}_1^{\otimes}$ only has a single color $\mathfrak{a}$, and by Corollary \ref{cor7} the underlying object of the center $\mathfrak{Z}({A})$ at this color is equivalent to the morphism object $\text{Mor}_{\bar{\mathcal{C}}_{\mathfrak{a},\mathfrak{m}}}({A},{A})\in \bar{\mathcal{C}}_{\mathfrak{a},\mathfrak{a}}$. Here ${A}$ is viewed as a module over itself, i.e. as an object of
\begin{align*}
        \overline{\mathcal{C}}_{\mathfrak{a},\mathfrak{m}}=\text{Mod}_{{A}}^{\mathbb{E}_1}(\mathcal{C}\times_{\fin}\mathbb{E}_1)_{\mathfrak{a}}.
\end{align*}
Note that $\overline{\mathcal{C}}_{\mathfrak{a},\mathfrak{a}} = \text{Mod}_{\mathbb{1}}^{\mathbb{E}_1}(\mathcal{C}\times_{N(\text{Fin}_{\ast})}\mathbb{E}_1)_{\mathfrak{a}} \simeq (\mathcal{C}^{\otimes}\times_{\fin}\mathbb{E}_1^{\otimes})_{\mathfrak{a}} \simeq \mathcal{C}$, so 
\begin{align*}
    \mathfrak{Z}({A})(\mathfrak{a})\simeq \mor_{\text{Mod}_{{A}}^{\mathbb{E}_1}(\mathcal{C}\times_{\fin}\mathbb{E}_1)_{\mathfrak{a}}}({A},{A}) \in \mathcal{C}.
\end{align*}
By \cite[Proposition B.1.2]{Hin}, we have an equivalence of $\infty$-categories
\begin{align*}
    \text{Mod}_{{A}}^{\mathbb{E}_1}(N_{\text{dg}}(\ch(\field{k}))\times_{\fin}\mathbb{E}_1)_{\mathfrak{a}} \simeq \alg_{M\mathbb{E}_1}(N_{\text{dg}}(\ch(\field{k}))) \times_{\alg_{\mathbb{E}_1}(N_{\text{dg}}(\ch(\field{k})))} \{{A}\}.
\end{align*}
Here $M\mathbb{E}_1$ is the $\infty$-operad defined as in \cite[5.2]{Hin} that governs pairs of $\mathbb{E}_1$-algebras and bimodules over them. We can now use Hinich's Rectification Theorem for modules \cite[Theorem 5.2.3]{Hin} to get an equivalence of $\infty$-categories
\begin{align*}
    N(\text{Mod}_{{A}}^{\eone}(\ch(\field{k}))^c)[W_{\text{Mod}}^{-1}] \xrightarrow{\simeq} \text{Mod}^{\mathbb{E}_1}_{{A}}(N_{\text{dg}}(\ch(\field{k}))\times_{\fin}\mathbb{E}_1)_{\mathfrak{a}}.
\end{align*}
Note here that since ${A}$ is cofibrant, by \cite[Theorem 2.6]{BM} the module category $\text{Mod}_{{A}}^{\eone}(\ch(\field{k}))$ indeed carries a model category structure transferred via the forgetful functor to $\ch(\field{k})$. 
\begin{notation}
    Let $\mathcal{O}$ be a dg operad and $B$ an $\mathcal{O}$-algebra. We denote by $U_{\mathcal{O}}(B)$ the enveloping algebra of $B$ as defined in \cite[Section 12.3.4]{LV}.
\end{notation}
By \cite[Proposition 2.7]{BM}, the category $\lmod_{U_{\eone}({A})}(\ch(\field{k}))$ can also be made into a model category via transfer from the forgetful functor. By \cite[Theorem 1.10]{BM} we have an isomorphism of categories making the following diagram commute 
\begin{equation}\label{dig1}
    \begin{tikzcd}
	{\text{Mod}_{{A}}^{\eone}(\ch(\field{k}))} && {\lmod_{U_{\eone}({A})}(\ch(\field{k}))} \\
	& {\ch(\field{k})}
	\arrow["\cong", from=1-1, to=1-3]
	\arrow[from=1-1, to=2-2]
	\arrow[from=1-3, to=2-2]
\end{tikzcd}.
\end{equation}
In particular, this isomorphism yields a Quillen equivalence between these two model categories. Consequently, the model category $\lmod_{U_{\eone}(A)}(\ch(\field{k}))$ has an underlying $\infty$-category equivalent to $\text{Mod}_A^{\mathbb{E}_1}(N_{\text{dg}}(\ch(\field{k})\times_{\fin} \mathbb{E}_1)_{\mathfrak{a}}$.
\subsection{The Hochschild complex as a center}\label{the_hochschild_complex_as_a_center}
Fix an associative $\field{k}$-algebra $A$. In particular, $A$ is an $\textbf{Assoc}$-algebra in the category $\ch(\field{k})$. In order to exhibit the Hochschild cochain complex of $A$ as an $\mathbb{E}_1$-center, we use the equivalence $\Phi$ constructed in Section \ref{Rectification_of_algebras_over_an_infty_operad} to view $A$ as an $\mathbb{E}_1$-algebra in the symmetric monoidal $\infty$-category $\mathcal{C}^{\otimes} := N_{\text{dg}}(\ch(\field{k}))^{\otimes}$.\\\par
To this end, let $\phi: C_{\ast}(\mathbb{E}^T_1) \xrightarrow{\simeq} \mathbf{Assoc}$ be the projection map. Then we get $\phi^{\ast}A \in \alg_{C_{\ast}(\mathbb{E}^T_1)}(\ch(\field{k}))$. If $\tilde{A}\xtwoheadrightarrow{\simeq} \phi^{\ast}A$ is a cofibrant replacement, we can then use Theorem \ref{thm3} to get an object $\Phi(\tilde{A})\in \alg_{\mathbb{E}_1}(N_{\text{dg}}(\ch(\field{k}))$, which we by abuse of notation again denote by $\tilde{A}$. We arrive at our next main result:
\begin{theorem}\label{thm1}
    For any projective resolution $P\xtwoheadrightarrow{\simeq}A$ of $A$ as an $A^e$-module, the evaluation map 
    \begin{align*}
        \textnormal{ev}: \map_{\ch(A^e)}(P,P) \otimes P \rightarrow P
    \end{align*}
    makes the Hochschild complex of $A$ into a center of $\tilde{A}\in \alg_{\mathbb{E}_1}(\ch(\field{k}))$. In particular, this makes $\map_{\ch(A^e)}(P,P)$ into an object of $\alg_{\mathbb{E}_1}(\alg_{\mathbb{E}_1}(N_{\textnormal{dg}}(\ch(\field{k}))))\simeq \alg_{\mathbb{E}_2}(N_{\textnormal{dg}}(\ch(\field{k})))$.
\end{theorem}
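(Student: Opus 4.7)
The plan is to combine Corollary \ref{cor7} with the rectification results of Section \ref{Rectification_of_algebras_over_an_infty_operad} and Section \ref{the_category_of_e1_modules_in_chain_complexes} to reduce the statement to an assertion about dg morphism objects, which can then be handled by Theorem \ref{thm4}. More precisely, Corollary \ref{cor7} says that in order to exhibit $\map_{\ch(A^e)}(P,P)$ as the underlying object of the center $\mathfrak{Z}(\tilde{A})$, it suffices to exhibit it (together with the evaluation map) as the morphism object for $\tilde{A}$ and $\tilde{A}$ inside the $\infty$-category $\textnormal{Mod}^{\mathbb{E}_1}_{\tilde{A}}(\mathcal{C}^{\otimes} \times_{\fin} \mathbb{E}_1^{\otimes})_{\mathfrak{a}}$, where $\mathcal{C}^{\otimes} = N_{\textnormal{dg}}(\ch(\field{k}))^{\otimes}$.

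First, I would use the chain of equivalences recalled in Section \ref{the_category_of_e1_modules_in_chain_complexes}: by Hinich's rectification theorem for modules combined with the isomorphism of categories in diagram (\ref{dig1}), we have an equivalence
\begin{align*}
    \textnormal{Mod}^{\mathbb{E}_1}_{\tilde{A}}(\mathcal{C}^{\otimes} \times_{\fin} \mathbb{E}_1^{\otimes})_{\mathfrak{a}} \simeq N(\lmod_{U_{\eone}(\tilde{A})}(\ch(\field{k}))^c)[W^{-1}].
\end{align*}
Since $\tilde{A}\xtwoheadrightarrow{\simeq} \phi^{\ast}A$ is a quasi-isomorphism of $\eone$-algebras, the enveloping algebra $U_{\eone}(\tilde{A})$ is quasi-isomorphic to $U_{\mathbf{Assoc}}(A) = A^e$, and restriction of scalars induces an equivalence between the underlying $\infty$-categories of $\lmod_{U_{\eone}(\tilde{A})}(\ch(\field{k}))$ and $\lmod_{A^e}(\ch(\field{k}))$. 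Under this identification $\tilde{A}$ is sent to $A$ viewed as an $A^e$-module via the multiplication.

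Next, I would invoke Theorem \ref{thm4} for the dg model category $\lmod_{A^e}(\ch(\field{k}))$ (which is tensored over $\ch(\field{k})$ via the standard tensoring of chain complexes). For $P\xtwoheadrightarrow{\simeq} A$ a projective resolution, $P$ is cofibrant in $\ch(A^e)$ (every chain complex is fibrant in the projective model structure), and the dg adjunction
\begin{align*}
    \map_{\ch(\field{k})}(V,\map_{\ch(A^e)}(P,P))\cong \map_{\ch(A^e)}(V\otimes P, P)
\end{align*}
together with fibrancy-cofibrancy of all objects involved shows that the hypotheses of Theorem \ref{thm4} are satisfied. Hence $\map_{\ch(A^e)}(P,P)$, equipped with the evaluation map, is a morphism object for $P$ and $P$ in the $\infty$-category $N(\lmod_{A^e}(\ch(\field{k}))^c)[W^{-1}]$. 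Since $P\simeq A\simeq \tilde{A}$ in this $\infty$-category (via the resolution and the restriction of scalars identification), this gives the required morphism object for $\tilde{A}$ and $\tilde{A}$.

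Finally, Corollary \ref{cor7} (together with Proposition \ref{prop7} and Theorem \ref{thm8}) upgrades this morphism object to an $\mathbb{E}_1$-algebra in $\mathcal{C}$ acting universally on $\tilde{A}$, i.e.\ to a center in $\alg_{\mathbb{E}_1}(\mathcal{C})$. Invoking the Dunn Additivity Theorem \ref{thm7} yields the $\mathbb{E}_2$-structure. Classical computations identify $\map_{\ch(A^e)}(P,P)$ with the Hochschild cochain complex (for instance by taking $P$ the bar resolution $B(A)$, in which case $\map_{A^e}(B(A),A)\cong \hom_{\field{k}}(A^{\otimes\ast},A)$), and since different projective resolutions produce quasi-isomorphic morphism objects, the result is independent of the choice of $P$. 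The main obstacle in this argument is the bookkeeping involved in the equivalence $\textnormal{Mod}^{\mathbb{E}_1}_{\tilde{A}}(\mathcal{C}^{\otimes} \times_{\fin} \mathbb{E}_1^{\otimes})_{\mathfrak{a}} \simeq N(\lmod_{A^e}(\ch(\field{k}))^c)[W^{-1}]$, specifically verifying that $\tilde{A}$ as an $\mathbb{E}_1$-module over itself is sent, under this chain of equivalences, to $A$ as an $A^e$-module in a way compatible with the evaluation action used to define the morphism object.
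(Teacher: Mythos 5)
Your proposal is correct and follows essentially the same route as the paper: reduce via Corollary \ref{cor7} to a morphism object in $\mathbb{E}_1$-modules, rectify to $\lmod_{U_{\eone}(\tilde{A})}(\ch(\field{k}))$ via diagram (\ref{dig1}), pass to $\ch(A^e)$, and apply Theorem \ref{thm4}. The only step you state without argument --- that $U_{\eone}(\tilde{A})$ is quasi-isomorphic to $A\otimes A^{\textnormal{op}}$ --- is exactly the content of the paper's Lemma \ref{prop2}, which requires a genuine zig-zag argument through $\mathbb{A}_{\infty}$-algebras and operadic base change, so it deserves more than a passing assertion.
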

Clearly, to prove the theorem we will need to apply our results in Section \ref{the_center_as_endomorphism_object_of_bimodules} and \ref{symmetric_monoidal_dg_model_categories} on endomorphism objects in bimodules, and dg mapping objects. Hence, the first step is to identify derived modules over $A^e$ with modules over $\mathbb{E}_1$, or equivalently via the Quillen equivalence (\ref{dig1}), modules over the enveloping algebra $U_{\eone}(\tilde{A})$.
\begin{lemma}\label{prop2}
There exists a zig-zag of quasi-isomorphisms between $U_{\eone}(\tilde{A})$ and $A\otimes A^{\textnormal{op}}$.
\end{lemma}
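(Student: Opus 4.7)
The plan is to construct the zig-zag by combining the functoriality of the operadic enveloping algebra (in the sense of \cite[Section 12.3.4]{LV}) in both the operad and the algebra, together with homotopy invariance. First, I would recall that $U_{\mathcal{O}}(B)$ is characterized by the identification $\lmod_{U_{\mathcal{O}}(B)}(\ch(\field{k})) \cong \textnormal{Mod}^{\mathcal{O}}_{B}(\ch(\field{k}))$. Specializing to the strictly associative operad $\textbf{Assoc}$ and $B = A$, the classical identification of operadic $\textbf{Assoc}$-modules with $A$-bimodules gives the base case $U_{\textbf{Assoc}}(A) \cong A \otimes A^{\textnormal{op}}$ as associative algebras.

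Next, I would exploit two sources of naturality for $U$. On the algebra side, any morphism of $\mathcal{O}$-algebras $f : B' \to B$ induces, via restriction of operadic modules, a natural map of associative algebras $U_{\mathcal{O}}(B') \to U_{\mathcal{O}}(B)$. On the operad side, any operad map $\phi : \mathcal{O} \to \mathcal{P}$ and any $\mathcal{P}$-algebra $B$ yield a natural map $U_{\mathcal{O}}(\phi^{\ast} B) \to U_{\mathcal{P}}(B)$, again coming from restriction of modules. Applying both to the projection $\phi : \eone \to \textbf{Assoc}$ and to the cofibrant replacement $\tilde{A} \twoheadrightarrow \phi^{\ast} A$ produces the candidate two-step zig-zag
\begin{align*}
    U_{\eone}(\tilde{A}) \longrightarrow U_{\eone}(\phi^{\ast} A) \longrightarrow U_{\textbf{Assoc}}(A) \cong A \otimes A^{\textnormal{op}}.
\end{align*}

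Then I would verify that both arrows are quasi-isomorphisms. The left arrow is obtained by applying $U_{\eone}$ to a trivial fibration of $\eone$-algebras with cofibrant source $\tilde{A}$. Using the identification \eqref{dig1} and \cite[Proposition 2.7]{BM}, the enveloping algebra functor is left Quillen (from $\eone$-algebras to associative algebras, both with transferred model structures), so by Ken Brown's lemma it preserves weak equivalences between cofibrant objects; one reduces to this case by first replacing $\phi^{\ast} A$ cofibrantly and observing that the resulting factorization of $\tilde{A} \to \phi^{\ast} A$ is again a trivial fibration between cofibrant objects. The right arrow is a change-of-operad comparison. Since $\phi : \eone \to \textbf{Assoc}$ is a quasi-isomorphism of admissible $\Sigma$-cofibrant dg operads (the $\Sigma_n$-action on $\textnormal{Rect}(\square \times \{1,\dots,n\},\square)$ is free, so its singular chains are $\Sigma$-cofibrant), the standard invariance of the enveloping algebra under Quillen equivalences of operads---established, for instance, in Fresse's book \emph{Modules over Operads and Functors}---shows this map to be a quasi-isomorphism.

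The principal obstacle is that $\phi^{\ast} A$ is typically \emph{not} cofibrant as an $\eone$-algebra, so the homotopy invariance of $U_{\eone}$ along the left arrow cannot be applied directly. The cleanest remedy is to insert an auxiliary cofibrant replacement $\tilde{A}'$ of $\phi^{\ast} A$ and factor the zig-zag through $U_{\eone}(\tilde{A}')$, or equivalently to use that the map $U_{\eone}(\tilde{A}) \to U_{\eone}(\phi^{\ast} A)$ and the change-of-operad map $U_{\eone}(\tilde{A}') \to U_{\textbf{Assoc}}(A)$ fit into a commutative square whose other two edges are quasi-isomorphisms by the cofibrant-case arguments. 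Once this bookkeeping is in place, the remaining steps are formal consequences of the model-categorical machinery already established in Sections \ref{Rectification_of_algebras_over_an_infty_operad} and \ref{the_category_of_e1_modules_in_chain_complexes}.
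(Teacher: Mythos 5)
Your overall strategy is the same as the paper's: identify $U_{\mathbf{Assoc}}(A)\cong A\otimes A^{\textnormal{op}}$ and transport this along the weak equivalence $\phi:\eone\rightarrow \mathbf{Assoc}$ using Fresse's homotopy invariance of enveloping algebras. The difference is that you try to travel directly along $\phi$ via the restriction-direction comparison $U_{\eone}(\phi^{\ast}A)\rightarrow U_{\mathbf{Assoc}}(A)$, whereas the paper detours through a cofibrant operad $\mathbb{A}_{\infty}$ mapping to both $\eone$ and $\mathbf{Assoc}$ and uses the operadic pushforward $\psi_{!}$. That detour is not decorative: Fresse's change-of-operads theorem (17.4.B) is formulated for the map $U_{\mathcal{P}}(B)\rightarrow U_{\mathcal{Q}}(\phi_{!}B)$ at a \emph{cofibrant} $\mathcal{P}$-algebra $B$, and neither $\phi^{\ast}A$ nor $A$ is cofibrant as an algebra over its respective operad. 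Your final paragraph correctly senses that auxiliary cofibrant replacements and a two-out-of-three argument in a naturality square are needed here; once you write that square down (with $\phi_{!}$ of a cofibrant replacement of $\phi^{\ast}A$ in one corner, comparing to $A$ via the derived counit), your route closes up and is arguably a little shorter than the paper's.

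The one genuine flaw is your justification of the left arrow. The enveloping algebra functor $U_{\eone}:\alg_{\eone}(\ch(\field{k}))\rightarrow \alg(\ch(\field{k}))$ is \emph{not} left Quillen: it is not even a left adjoint, since already $U_{\mathbf{Assoc}}(A)=A\otimes A^{\textnormal{op}}$ fails to preserve coproducts (the free product of algebras). So Ken Brown's lemma does not apply, and \cite[Proposition 2.7]{BM} only gives the transferred model structure on modules over $U_{\eone}(\tilde{A})$, not any Quillen property of $U_{\eone}$ itself. The statement you need — that $U_{\eone}$ sends weak equivalences between algebras with cofibrant underlying complexes (automatic over a field) to quasi-isomorphisms, for a $\Sigma$-cofibrant operad — is true, but it is the nontrivial content of Fresse's Theorem 17.4.A, proved by a filtration argument rather than by adjunction formalities. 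Replace the Ken Brown argument with a citation of that theorem and your proof is sound.
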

\begin{proof}
Let $\theta: \mathbb{A}_{\infty} \xtwoheadrightarrow{\simeq} \mathbf{Assoc}$ be a cofibrant replacement of dg operads, and note that we have a diagram
\begin{center}
    \begin{tikzcd}
	& \eone \\
	{\mathbb{A}_{\infty}} & {\mathbf{Assoc}}
	\arrow["{\phi,\simeq}", two heads, from=1-2, to=2-2]
	\arrow["{\psi, \simeq}", from=2-1, to=1-2]
	\arrow["{\theta,\simeq}"', two heads, from=2-1, to=2-2]
\end{tikzcd}.
\end{center}
Since we work in characteristic zero, all of the involved operads are automatically admissible and $\Sigma$-cofibrant. In particular, all the above weak equivalences are strong equivalences of operads, and thus induce a Quillen equivalence between their respective algebra categories. 
Let $A'\xtwoheadrightarrow{\simeq} A$ be a cofibrant replacement in associative algebras, and let $\hat{A} \xtwoheadrightarrow{\simeq} \theta^{\ast}A'$ be a cofibrant replacement in $\mathbb{A}_{\infty}$-algebras. Then in particular, the unit map $\hat{A} \rightarrow \psi^{\ast}\psi_! \hat{A}$ is a weak equivalence, and hence using \cite[Theorem 17.4.A, 17.4.B]{F} we get the following diagram of weak equivalences of dg algebras
\begin{center}
    \begin{tikzcd}[sep=small]
	&&& {U_{\mathbb{A}_{\infty}}(\hat{A})} \\
	&& {U_{\mathbb{A}_{\infty}}(\theta^{\ast}A')} && {U_{\mathbb{A}_{\infty}}(\psi^{\ast}\psi_!\hat{A})} \\
	& {A'\otimes A'^{\text{op}} = U_{\text{Assoc}}(A')} &&&& {U_{\eone}(\psi_!\hat{A})} \\
	{A\otimes A^{\text{op}}}
	\arrow["\simeq"', from=1-4, to=2-3]
	\arrow["\simeq", from=1-4, to=2-5]
	\arrow["{\theta_{\flat}}"', from=2-3, to=3-2]
	\arrow["{\psi_{\flat}}", from=2-5, to=3-6]
	\arrow["\simeq"', from=3-2, to=4-1]
\end{tikzcd}
\end{center}
Hence it suffices to show that $U_{\eone}(\psi_!\hat{A})$ is quasi-isomorphic to $U_{\eone}(\tilde{A})$. To this end, let $A_1 \xtwoheadrightarrow{\simeq} \phi^{\ast}A'$ be a cofibrant replacement. Note that $\psi_! \hat{A}$ is still cofibrant, and we hence have a lift $f:\psi_! \hat{A} \rightarrow A_1$ in the diagram
\begin{center}
    \begin{tikzcd}
\emptyset \arrow[d] \arrow[r]                                & A_1 \arrow[d, "\simeq", two heads] \\
\psi_!\tilde{A} \arrow[r, "\simeq"'] \arrow[ru, "f", dashed] & \phi^{\ast}A'                     
\end{tikzcd}.
\end{center}
By 2-out-of-3, $f$ must be a weak equivalence. This is a weak equivalence between cofibrant objects, so again by \cite[Theorem 17.4.A]{F}, we get a quasi-isomorphism $U_{\eone}(\psi_!\hat{A}) \xrightarrow{\simeq} U_{\eone}(A_1)$. The map $\phi^{\ast} A' \rightarrow \phi^{\ast}A$ is a trivial fibration as $\phi^{\ast}$ is right Quillen, and in particular the composition
\begin{align*}
    A_1 \xtwoheadrightarrow{\simeq} \phi^{\ast}A' \xtwoheadrightarrow{\simeq} \phi^{\ast}A
\end{align*}
is again a cofibrant replacement. We now again find a lift $g: A_1 \rightarrow \tilde{A}$ in the diagram
\begin{center}
    \begin{tikzcd}
\emptyset \arrow[d] \arrow[r]                               & \tilde{A} \arrow[d, "\simeq", two heads] \\
A_1 \arrow[r, "\simeq"', two heads] \arrow[ru, "g", dashed] & \phi^{\ast}A                            
\end{tikzcd}
\end{center}
which again is a weak equivalence, and thus finally induces a quasi-isomorphism
\begin{align*}
    U_{\eone}(A_1) \xrightarrow{\simeq} U_{\eone}(\tilde{A}).
\end{align*}
Summarizing, we get the following zig-zag
\begin{align*}
    A\otimes A^{\text{op}} \xleftarrow{\simeq} U_{\mathbb{A}_{\infty}}(\hat{A}) \xrightarrow{\simeq} U_{\eone}(\tilde{A}).
\end{align*}
\end{proof}
\begin{proof}[Proof of Theorem \ref{thm1}]
By Corollary \ref{cor8} and Corollary \ref{cor7}, the center of $\tilde{A}$ is given by the morphism object $\mor_{\textnormal{Mod}^{\mathbb{E}_1}_{\tilde{A}}(\mathcal{C}^{\otimes} \times_{\fin} \mathbb{E}_1^{\otimes})_{\mathfrak{a}}}(\tilde{A},\tilde{A})$ together with its evaluation map 
\begin{align*}
\alpha: \mor_{\textnormal{Mod}^{\mathbb{E}_1}_{\tilde{A}}(\mathcal{C}^{\otimes} \times_{\fin} \mathbb{E}_1^{\otimes})_{\mathfrak{a}}}(\tilde{A},\tilde{A}) \otimes \tilde{A} \rightarrow \tilde{A}.
\end{align*}
It therefore suffices to exhibit $\text{ev}: \map_{\ch(A^e)}(P,P) \otimes P \rightarrow P$ as such a morphism object. By the rectification results in Section \ref{the_category_of_e1_modules_in_chain_complexes} and the Quillen equivalence (\ref{dig1}), the $\mathbb{E}_1$-module category $\textnormal{Mod}^{\mathbb{E}_1}_{\tilde{A}}(\mathcal{C}^{\otimes} \times_{\fin} \mathbb{E}_1^{\otimes})_{\mathfrak{a}}$ is the underlying $\infty$-category of the model category $\lmod_{U_{\eone}(\tilde{A})}(\ch(\field{k}))$. By Lemma \ref{prop2}, we further have a Quillen equivalence 
\begin{align*}
    \lmod_{U_{\eone}(\tilde{A})}(\ch(\field{k})) \simeq \lmod_{A\otimes A^{\text{op}}}(\ch(\field{k})) \cong \ch(A^e),
\end{align*}
and $\ch(A^e)$ is a dg model category; in particular it is left tensored over the symmetric monoidal dg model category $\ch(\field{k})$. We can hence apply Theorem \ref{thm4}: $P\in \ch(A^e)^{\circ}$ is bifibrant by assumption, and we already know that $\map_{\ch(A^e)}(P,P)$ together with the evaluation map is a dg endomorphism object for $P$. Therefore, it also is an endomorphism object for $P\in N_{\text{dg}}(\ch(A^e)^{\circ})$, and the equivalence of $\infty$-categories $N_{\text{dg}}(\ch(A^e)^{\circ})\simeq \text{Mod}^{\mathbb{E}_1}_{\tilde{A}}(\mathcal{C}^{\otimes}\times_{\fin} \mathbb{E}_1^{\otimes})_{\mathfrak{a}}$ identifies $P$ with $\tilde{A}$. This proves the claim.
\end{proof}
Now that we have established the Hochschild complex as a center, we can compute its bracket as an $\mathbb{E}_2$-algebra, as in Section \ref{the_bracket_operation_of_a_2_algebra}, and compare to the classical Gerstenhaber bracket. Recall that we view the Hochschild cochain complex 
\begin{align*}
    C^{\ast}(A,A) \simeq \hom_{\field{k}}(A^{\otimes \ast},A)
\end{align*}
as a chain complex concentrated in non-positive degrees with component in degree $-n$ given by $\hom_{\field{k}}(A^{\otimes n},A)$. We will follow the sign conventions in \cite{Wit}.
\begin{definition}
The \textbf{signed Gerstenhaber bracket} is the degree 1 map
\begin{align*}
    \hom_{\field{k}}(A^p,A) \otimes \hom_{\field{k}}(A^q,A) &\rightarrow \hom_{\field{k}}(A^{p+q-1},A)\\
    f\otimes g &\mapsto (-1)^{p+1} (f\{g\} - (-1)^{(p-1)(q-1)} g\{f\})
\end{align*}
with $-\{-\}$ the circle product as defined in \cite[Definition 1.4.1]{Wit}
\begin{align*}
    f\{g\}(a_1\otimes \dots \otimes a_{p+q-1}) = \sum_{i=1}^p (-1)^{\epsilon_i} f(a_1\otimes \dots \otimes a_{i-1} \otimes g(a_i \otimes \dots \otimes a_{i+q})\otimes a_{i+q+1} \otimes \dots \otimes a_{p+q-1})
\end{align*}
and $\epsilon = (q-1)(i-1)$. We call the Gerstenhaber algebra structure on the Hochschild cohomology with the signed cup product and the signed Gerstenhaber bracket the \textbf{signed classical Gerstenhaber algebra structure}.
\end{definition}
\begin{corollary}\label{cor9}
Take $P= B(A) \xtwoheadrightarrow{\simeq}A$ to be the bar resolution. The strictification of the Hochschild complex ${\hom}_{\field{k}}(A^{\otimes \ast},A)\in \ch(\field{k})$ as the center of $\tilde{A}$ naturally carries the structure of a $C_{\ast}(\mathbb{E}^T_2)$-algebra. This $C_{\ast}(\mathbb{E}^T_2)$-algebra structure recovers the signed classical Gerstenhaber algebra structure in cohomology.
\end{corollary}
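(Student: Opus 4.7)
By Corollary \ref{cor10}, it is enough to compute the Gerstenhaber algebra induced in cohomology directly from the $\mathbb{E}_2$-algebra $\mathfrak{Z}(\tilde A) \in \alg_{\mathbb{E}_2}(N_{\textnormal{dg}}(\ch(\field{k})))$ constructed in Theorem \ref{thm1}, without explicit passage through a strictification. By Corollary \ref{cor7}, the $\mathbb{E}_2$-structure on the underlying object $\mathfrak{Z}(\tilde A)(\mathfrak{a})(\mathfrak{a}) \simeq \map_{\ch(A^e)}(B(A), B(A))$ is determined by two chain-level multiplications: the outer composition product $\circ$, which is Yoneda composition of $A^e$-module endomorphisms of $B(A)$; and the inner convolution product $\ast$, whose explicit form is dictated by Corollary \ref{cor8}. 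Under the identification $\map_{\ch(A^e)}(B(A), B(A)) \cong \hom_{\field{k}}(A^{\otimes \ast}, A)$, the product $\circ$ becomes classical composition of cochain lifts, while tracing through the formula of Corollary \ref{cor8} shows that $\ast$ reduces to the convolution $f \ast g = \mu_A \circ (f \otimes g) \circ \Delta_{B(A)}$ for the shuffle coproduct $\Delta$ on $B(A)$, i.e., the standard chain-level cup product.

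The next step is to check that both products descend to the classical signed cup product on $\textnormal{HH}^{\ast}(A,A)$. For $\circ$ this is the Cartan--Eilenberg/Gerstenhaber identification of the Yoneda product on $\textnormal{Ext}_{A^e}^{\ast}(A,A)$ with the cup product, and for $\ast$ this is the standard formula for the cup product as convolution along the bar coalgebra. (As a sanity check, by Eckmann--Hilton the two products must already agree on $\textnormal{HH}^0 = Z(A)$, and a homological bootstrap propagates the agreement to all degrees.) Incorporating the signs coming from the $C_{\ast}(\mathbb{E}_2^T)$-algebra convention described in Section~\ref{the_bracket_operation_of_an_e2_algebra} gives the signed cup product component of the Gerstenhaber algebra.

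For the bracket, apply Corollary \ref{cor5}: the operation $\tilde A(\gamma_0)$ is realized, up to chain homotopy, as the signed sum
\begin{align*}
\tilde A(\gamma_0) \;\simeq\; h\,\iota_{2,3} \;+\; h^{\textnormal{op}}\,\iota_{1,4} \;+\; h^{\textnormal{op}}\,\iota_{2,3} \;+\; h\,\iota_{1,4},
\end{align*}
where $h$ is a chain homotopy filling the interchange square (\ref{dig2}) between $\circ$ and $\ast$, and $h^{\textnormal{op}}$ is the analogous homotopy obtained by replacing $\circ$ and $\ast$ with their opposites. The plan is to fix as a representative of $h$ the canonical chain homotopy that witnesses the failure of middle-four-interchange between Yoneda composition and convolution on $\hom_{\field{k}}(A^{\otimes \ast}, A)$ at the cochain level. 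A direct calculation on the bar resolution identifies this representative with the Gerstenhaber brace product $f\{g\}$ (in the sense of \cite{Wit}). Assembling the four summands with the signs dictated by the inclusions $\iota_{2,3}$, $\iota_{1,4}$ and the switch $\tau$ then yields $(-1)^{p+1}\bigl(f\{g\} - (-1)^{(p-1)(q-1)} g\{f\}\bigr)$ on $(f,g) \in C^p(A,A) \otimes C^q(A,A)$, which is precisely the signed classical Gerstenhaber bracket.

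The principal obstacle is the last identification. The square (\ref{dig2}) admits a contractible space of fillings by finality of the morphism object, so $h$ is not literally specified by the universal property; one must exhibit a particular filling whose underlying 2-chain is the brace product. Concretely, this means lifting the two compositions around (\ref{dig2}) to explicit $A^e$-linear maps on $B(A)^{\otimes 2} \to B(A)$ and producing a chain homotopy between them whose image under the identification $\map_{\ch(A^e)}(B(A), B(A)) \simeq \hom_{\field{k}}(A^{\otimes \ast}, A)$ is $f\{g\}$. Once such a filling is exhibited, the remaining verification that the signed four-term sum reproduces the Gerstenhaber bracket is a routine sign-tracking computation essentially identical to Gerstenhaber's original derivation of the bracket from the pre-Lie structure on cochains.
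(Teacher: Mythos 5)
Your outline coincides with the paper's own proof: reduce via Corollary \ref{cor10} to computing the Gerstenhaber structure directly from the $\mathbb{E}_2$-algebra, use Corollary \ref{cor7} to identify the two products with composition and convolution on $\hom_{\field{k}}(A^{\otimes\ast},A)$ (both recovering the signed cup product), and assemble the bracket from Corollary \ref{cor5} as the four-term sum of restricted homotopies. The problem is that the step you yourself flag as the ``principal obstacle'' is the entire mathematical content of the bracket computation, and you leave it as a plan rather than executing it; as written the argument is incomplete there. Moreover, your framing of that step is slightly off. You say one must ``exhibit a particular filling whose underlying 2-chain is the brace product,'' but the point of the contractibility statement in Corollary \ref{cor7} is the opposite: since the space of fillings of the interchange square is contractible, \emph{any} chain homotopy filling the square of cup products computes the bracket, so one only needs to produce some explicit filling and read off its restrictions along $\iota_{2,3}$ and $\iota_{1,4}$. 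No lifting of the two compositions to $A^e$-linear maps on $B(A)^{\otimes 2}$ and no new bar-resolution homotopy construction is required.

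Concretely, the paper takes the four-variable operation
\begin{align*}
h\colon f_1\otimes g_1\otimes f_2\otimes g_2 \mapsto \pm\, f_1\smile f_2\{g_1\}\smile g_2,
\end{align*}
whose homotopy property is exactly the classical identity $f\smile g-(-1)^{pq}\,g\smile f=\pm\,\partial\bigl(g\{f\}\bigr)\pm \partial g\{f\}\pm g\{\partial f\}$ of \cite[Lemma 1.4.5]{Wit}. Note that this filling is \emph{not} itself a brace product; it restricts along $\iota_{2,3}$ to $\pm\, g\{f\}$ and along $\iota_{1,4}$ to the trivial homotopy, and the analogous $h^{\textnormal{op}}$ for the opposite multiplications restricts to $\pm\, f\{g\}$ and the trivial homotopy. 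Summing the four terms from Corollary \ref{cor5} then yields $(-1)^{|f|+1}[f,g]_G$, as you anticipate. So your strategy is correct and identical to the paper's, but to close the gap you need to supply this explicit filling (or an equivalent one) together with the sign bookkeeping, rather than asserting that ``a direct calculation on the bar resolution'' will produce it.
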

\begin{proof}
By Theorem \ref{thm1}, the classical Hochschild complex is a center of $\tilde{A}$ and thus inherits a $\mathbb{E}_2$-algebra structure in the derived $\infty$-category. The first part of the corollary follows directly from the Rectification Theorem \ref{thm3}. For the second part, note that by Corollary \ref{cor10}, the underlying Gerstenhaber structure on cohomology of the strictification agrees with the Gerstenhaber structure computed directly from the $\mathbb{E}_2$-algebra $\hom_{\field{k}}(A^{\otimes \ast},A)$.\par 
By Corollary \ref{cor7}, the two multiplications of the 2-algebra structure on the Hochschild complex as center are given by composition and convolution respectively. But both of these recover the formula for the signed cup product for the bar resolution. Hence, we get the classical signed cup product in cohomology. \par
Again by Corollary \ref{cor7}, the compatibility square has a contractible choice of fillings, and it therefore suffices to find a chain homotopy $h: \hom_{\field{k}}(A^{\otimes \ast},A)^{\otimes 4} \rightarrow \hom_{\field{k}}(A^{\otimes (\ast -1)}, A)$ in the square
\begin{center}
    \begin{tikzcd}
	{{\hom}_{\field{k}}(A^{\otimes \ast},A)^{\otimes 4}} & {{\hom}_{\field{k}}(A^{\otimes \ast},A)^{\otimes 2}} \\
	{{\hom}_{\field{k}}(A^{\otimes \ast},A)^{\otimes 2}} & {{\hom}_{\field{k}}(A^{\otimes \ast},A)}
	\arrow["{\smile \otimes \smile}", from=1-1, to=1-2]
	\arrow["{(\smile \otimes \smile) \circ (\text{id}\otimes \tau \otimes \text{id})}"', from=1-1, to=2-1]
	\arrow["h"', shorten <=14pt, shorten >=11pt, Rightarrow, from=1-2, to=2-1]
	\arrow["\smile", from=1-2, to=2-2]
	\arrow["\smile"', from=2-1, to=2-2]
\end{tikzcd}.
\end{center}
Recall that the circle product $f\otimes g \mapsto f\{g\}$ witnesses the homotopy commutativity of the signed cup product $f\otimes g \mapsto f\smile g$ via the following equality \cite[Lemma 1.4.5]{Wit}:
\begin{align*}
    f\smile g - (-1)^{pq} g\smile f = (-1)^{(p+1)q} \partial g\{f\} + (-1)^{(p+1)q-1} \partial(g\{f\}) + (-1)^{pq+1} g\{\partial f\}.
\end{align*}
with $|f| = -p$ and $|g| = -q$, and $\partial f = (-1)^p df$ with $d$ the usual Hochschild codifferential. This in turn yields a homotopy for the square by setting 
\begin{align*}
    h: f_1\otimes g_1 \otimes f_2 \otimes g_2 \mapsto (-1)^{|f_1|+|f_1|+|f_2||g_1|-1} f_1 \smile f_2\{g_1\}\smile g_2,
\end{align*}
with codifferential $\partial$ on the Hochschild complex. Restricting along $\iota_{1,4}$ yields the trivial homotopy between the cup product and itself, and restricting along $\iota_{2,3}$ yields the homotopy 
\begin{align*}
    H: {\hom}_{\field{k}}(A^{\otimes p},A) \otimes {\hom}_{\field{k}}(A^{\otimes q},A) &\rightarrow {\hom}_{\field{k}}(A^{\otimes p+q-1},A)\\
    f\otimes g &\mapsto (-1)^{pq+q-1} g\{f\}
\end{align*}
between $\smile$ and $\smile \circ \tau$. For the opposite multiplications, we get 
\begin{align*}
    h^{\text{op}}: f_1\otimes g_1\otimes f_2\otimes g_2 \mapsto (-1)^{\alpha + |f_2||g_1| + |f_2| -1} g_2 \smile g_1\{f_2\} \smile f_1
\end{align*}
with $\alpha = |f_1||f_2| + |g_1||g_2| + |f_1||g_1| + |f_2||g_2| + |f_1||g_2| + |f_2||g_1|$. Restricting along $\iota_{1,4}$ again yields the trivial homotopy, and restricting along $\iota_{2,3}$ yields 
\begin{align*}
    H^{\text{op}}: f\otimes g \mapsto (-1)^{p+1} f\{g\}.
\end{align*}
Hence by the remark after Corollary \ref{cor5}, the bracket is given by
\begin{align*}
    [f,g] = H(f\otimes g) + H^{\text{op}}(f\otimes g) &= (-1)^{|f||g|+|g|-1} g\{f\} + (-1)^{|f|+1} f\{g\} \\&= (-1)^{|f|+1}(f\{g\} - (-1)^{(|f|+1)(|g|+1)} g\{f\}) \\&= (-1)^{|f|+1} [f,g]_G.
\end{align*}
This agrees with the signed classical Gerstenhaber bracket, proving the claim. 
\end{proof}
\section{The Hochschild complex of a scheme} \label{The_Hochschild_complex_of_a_scheme}
As explained in Section \ref{delignes_conjecture_on_hochschild_cochains}, we want to globalize the above results to a quasi-compact separated scheme $X$ over $\field{k}$. We follow the same reasoning as in the previous section. The structure sheaf $\mathcal{O}_X$ is an associative algebra object in the category of (pre)sheaves of $\field{k}$-modules on $X$. We consider the dg category $\dgpsh{X}$ of complexes of presheaves of $\field{k}$-modules on $X$, and show that it can be equipped with a dg model category structure that presents the $\infty$-category of dg sheaves on $X$. We can make $\mathcal{O}_X$ into an $C_{\ast}(\mathbb{E}_1^T)$-algebra in $\dgpsh{X}$, which by Theorem \ref{thm3} produces an $\mathbb{E}_1$-algebra in the associated $\infty$-category of dg sheaves. We then argue that the center of this $\mathbb{E}_1$-algebra is a good model for the Hochschild cochain complex of $X$. Note that this does not require $X$ to be smooth. 
\subsection{The $\infty$-category of dg sheaves}\label{the_infty_category_of_dg_sheaves}
Let $X$ be a quasi-compact separated scheme over $\field{k}$. Since the category of presheaves of complexes of $\field{k}$-modules on $X$ is a functor category, it admits an injective and a projective model structure. These model structures do not know anything about the geometry of $X$; in particular the bifibrant objects are not "homotopy sheaves" in any way. In order to present the $\infty$-category of dg sheaves, one needs to localize these model structures at Čech nerves of covering families. In this section, we will recall how to construct the "local projective" model structure, and prove some basic properties of this model category. Note that we use presheaves instead of sheaves since the category of dg sheaves does not have enough projectives, and the injective model structure does not behave well with respect to tensor products.
\begin{proposition}[\cite{Hin2}, Theorem 1.3.1]
Let $S$ be a site. There is a cofibrantly generated model structure on the category $\dgpsh{S}$ of presheaves of $\field{k}$-module complexes on $S$ with 
\begin{itemize}
    \item weak equivalences the maps $f: \mathcal{F} \rightarrow \mathcal{G}$ such that the degreewise sheafification $f^a: \mathcal{F}^a \rightarrow \mathcal{G}^a$ is a quasi-isomorphism of complexes of sheaves,
    \item cofibrations generated by maps $f: \mathcal{F} \rightarrow \mathcal{F}\langle x;dx = z\in \mathcal{F}(U)\rangle$ corresponding to adding a section to kill a cycle z over $U\in S$, and
    \item fibrations the maps $f: \mathcal{F} \rightarrow \mathcal{G}$ such that $f(U): \mathcal{F}(U) \rightarrow \mathcal{G}(U)$ is surjective for all $U\in S$ and for any hypercover $\epsilon: V_{\bullet} \rightarrow U$ the diagram
    \begin{center}
        \begin{tikzcd}
\mathcal{F}(U) \arrow[d, "f(U)"'] \arrow[r] & {\check{C}(V_{\bullet},\mathcal{F})} \arrow[d] \\
\mathcal{G}(U) \arrow[r]                    & {\check{C}(V_{\bullet},\mathcal{G})}          
\end{tikzcd}
    \end{center}
    is a homotopy pullback. 
\end{itemize}
\end{proposition}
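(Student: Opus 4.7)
The plan is to realize this model structure as a left Bousfield localization of the global projective model structure on $\dgpsh{S}$, whose weak equivalences are objectwise quasi-isomorphisms and whose fibrations are objectwise surjections. That global structure exists by a standard transfer argument from the projective model structure on $\ch(\field{k})$: the generating (trivial) cofibrations are obtained by tensoring the generators of $\ch(\field{k})$ with representables $\field{k}[h_U]$, which are precisely the stated maps $\mathcal{F}\to \mathcal{F}\langle x; dx = z\in \mathcal{F}(U)\rangle$. Since $\ch(\field{k})$ is combinatorial and left proper (cofibrations are degreewise split injections with free cokernel), the functor-category structure on $\dgpsh{S}$ inherits both properties.

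Next I would fix, via a cardinality bound extracted from the site, a small set $H$ of hypercover maps of the form $\check{C}(V_\bullet)\to \field{k}[h_U]$, where $\check{C}(V_\bullet)$ denotes the normalized chain complex associated to the simplicial presheaf of $\field{k}$-modules freely generated by $V_\bullet$. Smith's localization theorem (as presented by Hirschhorn or Barwick) then produces the left Bousfield localization $L_H\dgpsh{S}$, which remains combinatorial and left proper. Cofibrations are unchanged, and the local fibrant objects are exactly the objectwise-fibrant $\mathcal{F}$ satisfying homotopy descent along every hypercover of every object of $S$.

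The content of the theorem is then the identification of the localized weak equivalences with the local quasi-isomorphisms, i.e.\ maps $f$ such that the degreewise sheafification $f^a$ is a quasi-isomorphism. One direction is immediate: any local weak equivalence sheafifies to a quasi-isomorphism because sheafification factors through hypercover-inverting. For the converse, I would show that the local fibrant replacement $\widetilde{\mathcal{F}}$ satisfies $\widetilde{\mathcal{F}}(U)\simeq \mathbb{R}\Gamma(U,\mathcal{F}^a)$, a Jardine-style result obtainable by combining the Dold--Kan correspondence with the analogous statement for the projective local model structure on simplicial presheaves of $\field{k}$-modules. Once this is in place, the characterization of fibrations in the statement follows from the standard Bousfield-localization criterion, together with the fact that for objectwise-fibrant targets being local-fibrant is equivalent to the indicated square being a homotopy pullback.

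The main technical obstacle is the comparison between the hypercover-inverting replacement and geometric sheafification. The cleanest route I see is to reduce to the known simplicial case via Dold--Kan and to check carefully that hypercohomology of bounded-below complexes of presheaves is detected locally in the same way as the homotopy groups of their simplicial incarnations; this is precisely where the geometry of the site $S$ enters the argument, and where a direct approach (as in Hinich's original proof) may in fact be simpler than routing everything through Jardine's theorem.
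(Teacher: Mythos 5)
The paper does not actually prove this proposition: it is quoted verbatim from Hinich (\cite{Hin2}, Theorem 1.3.1), and the only gloss the paper adds is the remark that the structure is the left Bousfield localization of the projective model structure at the \v{C}ech complexes of hypercovers. Your outline therefore agrees with the paper's framing of the result, and the first two steps (the transferred global projective structure, with the generating cofibrations correctly identified as pushouts of $\field{k}[h_U]\otimes(S^{n-1}\to D^n)$, followed by Smith-type localization at a small set of hypercovers) are sound. The "easy" direction of the comparison of weak equivalences is also fine, though it deserves a sentence more than you give it: one checks that the class of sheafification-quasi-isomorphisms contains the hypercover maps (Verdier's hypercovering theorem) and is closed under the cell-attachment operations producing the localized trivial cofibrations, using exactness and filtered-colimit-compatibility of sheafification.

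There are, however, two concrete gaps. First, the reduction of the hard direction to simplicial presheaves via Dold--Kan cannot work as stated, because $\dgpsh{S}$ consists of unbounded ($\mathbb{Z}$-graded) complexes and Dold--Kan only applies to connective ones; you implicitly acknowledge this by restricting to "bounded-below complexes," but then the key identity $\widetilde{\mathcal{F}}(U)\simeq \mathbb{R}\Gamma(U,\mathcal{F}^a)$ is not established for the category the theorem is about. For a general site (no finite cohomological dimension hypothesis) the passage from bounded-below to unbounded complexes is precisely where convergence of the descent spectral sequence becomes an issue, so some additional argument (a truncation/limit argument, or a direct construction of a resolution functor as in Hinich's original proof) is required; this is not a formality. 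Second, the proposition characterizes \emph{all} fibrations, not just those with local-fibrant codomain, and the "standard Bousfield-localization criterion" (Hirschhorn's characterization of fibrations between fibrant objects) does not deliver this: in a general left Bousfield localization the full class of fibrations admits no such description, so the stated characterization (objectwise surjective plus the homotopy-pullback square against every hypercover) must be verified separately, e.g.\ by a right-properness/base-change argument. Neither gap invalidates the strategy, but both must be filled before the proposal constitutes a proof.
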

This is the left Bousfield localization of the projective model structure on $\dgpsh{S}$ with respect to the Čech complexes of hypercoverings. We therefore call it the \textbf{local projective model structure}. In particular, acyclic fibrations in the local projective model structure are just acyclic fibrations in the unlocalized projective model structure. Note that if $S$ has enough points, weak equivalences can be detected at stalks, i.e. a map $f: \mathcal{F} \rightarrow \mathcal{G}$ is a weak equivalence if and only if for each point $x$ of $S$, the induced map $f_x: \mathcal{F}_x \rightarrow \mathcal{G}_x$ is a quasi-isomorphism.
\begin{notation}
Let $X$ be a scheme over $\field{k}$. Let $\textnormal{Aff}(X)$ be the site of affine open subsets of $X$, and let $\textnormal{Open}(X)$ the site of all open subsets of $X$. We call the associated categories of dg presheaves $\affpsh{X}$ and $\dgpsh{X}$ respectively. We have a natural inclusion $\iota: \textnormal{Aff}(X) \rightarrow \textnormal{Open}(X)$ that induces to a restriction functor
\begin{align*}
    \iota_{\ast}: \dgpsh{X} \rightarrow\affpsh{X}.
\end{align*}
\end{notation}
\begin{proposition}\label{prop3}
The restriction functor $\iota_{\ast}$ admits a left adjoint $\iota^{-1}$, and the pair $\iota^{-1}\dashv \iota_{\ast}$ forms a Quillen equivalence. Both $\iota_{\ast}$ and $\iota^{-1}$ preserve weak equivalences, and $\iota^{-1}$ preserves acyclic fibrations. The unit $\textnormal{id} \Rightarrow \iota_{\ast} \iota^{-1}$ is an isomorphism, and the counit $\iota^{-1}\iota_{\ast} \Rightarrow \textnormal{id}$ is a component-wise weak equivalence.
\end{proposition}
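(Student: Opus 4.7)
My plan is to construct $\iota^{-1}$ as a left Kan extension and then verify all the properties by a systematic stalk computation. First I would observe that restriction $\iota_{\ast}$ admits a left adjoint as the left Kan extension along $\iota^{\textnormal{op}}$, which exists since $\ch(\field{k})$ is cocomplete. Unwinding the standard formula, for $F \in \affpsh{X}$ and $U \in \textnormal{Open}(X)$ this gives
\begin{align*}
    (\iota^{-1}F)(U) \;\cong\; \colim_{\substack{V \in \textnormal{Aff}(X) \\ U \subseteq V}} F(V),
\end{align*}
the indexing poset of affine opens containing $U$ carrying the restriction maps of $F$. I would then invoke separatedness of $X$ to note that $V_1 \cap V_2$ is affine and still contains $U$ whenever $V_1,V_2$ do, so the indexing category is filtered. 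When $U$ is itself affine, $V = U$ is terminal in the indexing poset and the colimit reduces to $F(U)$; this shows that the unit $F \to \iota_{\ast}\iota^{-1}F$ is an isomorphism, reflecting the general principle that left Kan extension along a fully faithful functor has isomorphism unit.

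The central geometric input I would exploit is that, since $X$ is a scheme, the affine neighborhoods of any $x \in X$ are cofinal among all open neighborhoods of $x$. This immediately gives $(\iota_{\ast}\mathcal{G})_x \simeq \mathcal{G}_x$ for every $\mathcal{G} \in \dgpsh{X}$. Commuting the two filtered colimits in the formula above likewise yields $(\iota^{-1}F)_x \simeq F_x$ for every $F$. Since both sites admit enough points (the scheme-theoretic points of $X$), weak equivalences in the local projective structures are detected stalkwise, and I conclude that both $\iota_{\ast}$ and $\iota^{-1}$ preserve weak equivalences. The counit $\iota^{-1}\iota_{\ast}\mathcal{G} \to \mathcal{G}$ becomes a stalkwise isomorphism via $(\iota^{-1}\iota_{\ast}\mathcal{G})_x \simeq (\iota_{\ast}\mathcal{G})_x \simeq \mathcal{G}_x$, hence a weak equivalence.

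For the preservation of acyclic fibrations by $\iota^{-1}$, I would use that left Bousfield localization leaves cofibrations (and therefore acyclic fibrations) unchanged, so these are the levelwise acyclic fibrations in $\ch(\field{k})$; filtered colimits in $\ch(\field{k})$ preserve surjections and quasi-isomorphisms, which settles the claim. To conclude that the pair is a Quillen adjunction, it suffices to check that $\iota_{\ast}$ is right Quillen: the same levelwise description gives preservation of acyclic fibrations, and fibrations are preserved because any hypercover in $\textnormal{Aff}(X)$ is simultaneously a hypercover in $\textnormal{Open}(X)$, so the homotopy-pullback square characterizing local projective fibrations is inherited unchanged. Since both functors preserve all weak equivalences, the derived unit and counit agree up to weak equivalence with the underlying unit (an isomorphism) and counit (a weak equivalence), yielding the Quillen equivalence.

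I expect the main obstacle to be the stalk identification $(\iota^{-1}F)_x \simeq F_x$: it requires commuting two filtered colimits and then using cofinality of affine neighborhoods, which is precisely where separatedness of $X$ and its being a scheme feed in. Essentially every other assertion of the proposition follows cleanly from this identification, once one knows the local projective structures are detected at stalks.
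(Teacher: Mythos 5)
Your proposal is correct and follows essentially the same route as the paper's proof: the same explicit colimit formula for $\iota^{-1}$, the same stalk-preservation argument via cofinality of affine neighborhoods (affine opens forming a basis), the same finality observation for the unit, and the same exactness-of-filtered-colimits argument for acyclic fibrations. You are somewhat more explicit than the paper about why the index category is filtered (separatedness) and why $\iota_{\ast}$ is right Quillen, but these are refinements of the same argument rather than a different approach.
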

\begin{proof}
The left adjoint $\iota^{-1}$ is given by $(\iota^{-1}\mathcal{F})(V) = \colim_{V\subseteq U \in {\textnormal{Aff}(X)}}\mathcal{F}(U)$. The direct image $\iota_{\ast}$ clearly preserves acyclic fibrations, since these are pointwise. The sites of all opens and of affine opens have the same points, namely points in the topological space $X$. This follows because affine opens form a basis of the Zariski topology. Even more, $\iota_{\ast}$ and $\iota^{-1}$ preserve stalks at these points. Taking stalks is a left adjoint, so this follows trivially for the inverse image, and for the direct image we note that small enough neighborhoods of a point $x\in X$ always contain an affine open neighborhood of $x$. This shows that both adjoints preserve weak equivalences, and in particular $\iota^{-1}$ preserves acyclic cofibrations. The fact that $\iota^{-1}$ preserves acyclic fibrations follows from the fact that filtered colimits are exact in Grothendieck categories. Finally, note that if $U$ is affine, then $\colim_{U\subseteq W \in {\textnormal{Aff}(X)}}\mathcal{F}(W) \cong F(U)$ since $U$ is final in the index category. This shows that the unit is an isomorphism. The fact that the counit is a component-wise weak equivalence again follows from the fact that both adjoints preserve weak equivalences.
\end{proof}
\begin{definition}
    We call the underlying $\infty$-category of the local projective model structure on $\dgpsh{X}$ the \textbf{$\infty$-category of dg sheaves on $X$}
    \begin{align*}
        \textnormal{Sh}_{\infty}(X) := N(\dgpsh{X}^c)[W^{-1}].
    \end{align*}
\end{definition}
By Proposition \ref{prop3}, we have an equivalence of $\infty$-categories
\begin{align*}
    \iota_{\ast}: \textnormal{Sh}_{\infty}(X)\rightarrow N(\affpsh{X}^c)[W^{-1}] 
\end{align*}
with quasi-inverse $\iota^{-1}: N(\affpsh{X}^c)[W^{-1}]\rightarrow \textnormal{Sh}_{\infty}(X)$.
\begin{remark}
Even though the model categories of presheaves on affine opens and general opens yield the same $\infty$-category, the above model category structure depends on the choice of site. On affine open subsets, all quasi-coherent sheaves on $X$ are automatically fibrant, which is not true for general opens. In particular, on affine opens the structure sheaf $\mathcal{O}_X$ itself is fibrant.
\end{remark}
\begin{proposition}
    If the topos on $S$ has enough points and $S$ admits finite products, then the local projective model structure yields a closed symmetric monoidal model category. If in addition $S$ admits a final object, then $\dgpsh{S}$ is a symmetric monoidal dg model category.
\end{proposition}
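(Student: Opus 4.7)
The plan is to verify the pushout-product and unit axioms for the projective model structure, then show that these axioms descend to the local projective model structure via the enough-points hypothesis, and finally promote the structure to a dg model category using the final object.

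First, I would establish that $\dgpsh{S}$ carries a closed symmetric monoidal structure inherited from $\ch(\field{k})$, with closedness supplied by the finite-product hypothesis on $S$ (needed to define the internal hom in terms of restrictions along products). I would then verify the pushout-product and unit axioms for the unlocalized projective model structure: since the generating (acyclic) cofibrations take the form of tensor products of (free $\field{k}$-linearized) representable presheaves with generating (acyclic) cofibrations of $\ch(\field{k})$, both axioms reduce to their counterparts in $\ch(\field{k})$, which are standard.

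Next, to descend to the local projective model structure, I would invoke the standard criterion that a left Bousfield localization of a symmetric monoidal model category is again symmetric monoidal provided tensoring with a projectively cofibrant presheaf preserves local weak equivalences. The enough-points hypothesis identifies local weak equivalences with stalkwise quasi-isomorphisms; since stalks are filtered colimits and therefore commute with tensor products, and an induction over cell attachments shows cofibrant presheaves have cofibrant stalks in $\ch(\field{k})$, tensoring with a cofibrant presheaf preserves stalkwise quasi-isomorphisms. Combined with the pushout-product axiom already verified before localization, this yields the pushout-product axiom for the local projective model structure; the unit axiom is preserved by left Bousfield localization automatically.

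Finally, for the dg model category structure, I would use the final object $1 \in S$ to obtain a symmetric monoidal functor $\underline{(-)} \colon \ch(\field{k}) \rightarrow \dgpsh{S}$ sending $V$ to the constant presheaf $\underline{V}$. The tensoring is then $\mathcal{F} \otimes V := \mathcal{F} \otimes \underline{V}$ and the cotensoring is defined pointwise, with mapping complex $\map_{\dgpsh{S}}(\mathcal{F},\mathcal{G}) := \ihom(\mathcal{F},\mathcal{G})(1)$. The (SM7) axiom for this cotensoring follows formally from the closed symmetric monoidal model category structure already established, since $\underline{(-)}$ is left Quillen. The main obstacle is the descent step: one must carefully control the stalks of cofibrant presheaves and verify that tensoring with a cofibrant object preserves stalkwise quasi-isomorphisms, which is precisely where the enough-points hypothesis is indispensable.
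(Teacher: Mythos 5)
Your proposal is correct and follows essentially the same route as the paper: establish the symmetric monoidal model structure on the unlocalized projective model structure (the paper cites Pavlov--Scholbach for this where you verify it on generating cofibrations), descend to the localization by checking that tensoring a local weak equivalence with a cofibrant object is again a local weak equivalence via stalks, and build the dg tensoring/cotensoring through the constant presheaf functor at the final object, reducing (SM7) to the already-established pushout-product axiom. The extra detail you supply on stalks commuting with tensor products and on cofibrancy of stalks is a reasonable expansion of the paper's terser "this is clear if the topos has enough points."
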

\begin{proof}
By \cite[Proposition 7.9]{PS}, the global projective model structure on $\dgpsh{S}$ inherits the structure of a symmetric monoidal model category since $S$ admits finite products. By \cite[Theorem 4.6]{W}, to show that this symmetric monoidal model structure descends to the local projective model structure, it suffices to argue that for $f$ a local weak equivalence and $\mathcal{F}$ a cofibrant object, the map $f\otimes \textnormal{id}_{\mathcal{F}}$ is again a local weak equivalence. But this is clear if the topos has enough points, since we can then check local weak equivalences on stalks. Since the presheaf category admits an internal hom given by
\begin{align*}
    \mathcal{H}om(\mathcal{F},\mathcal{G})(U) = \map_{\dgpsh{S_U}}(\mathcal{F}|_U, \mathcal{G}|_U),
\end{align*}
this proves the first part. For the second part, note that presheaves of chain complexes are the same as chain complexes of presheaves of $\field{k}$-modules.  Since the later is an abelian category, this category automatically admits a dg enrichment. Recall that if $\ast\in S$ is terminal, we have the constant presheaf functor
\begin{align*}
    \textnormal{const}_{\ast}: \ch(\field{k}) &\rightarrow \dgpsh{S}\\
    C &\mapsto (U \mapsto C).
\end{align*}
We define a tensoring
\begin{align*}
    \ch(\field{k}) \times \dgpsh{S} \rightarrow \dgpsh{S}, \quad (C,\mathcal{F}) \mapsto \textnormal{const}_{\ast}(C) \otimes \mathcal{F}
\end{align*}
as well as a powering
\begin{align*}
    \ch(\field{k})^{\textnormal{op}}\times \dgpsh{S} \rightarrow \dgpsh{S}, \quad (C,\mathcal{F}) \mapsto \mathcal{H}om(\textnormal{const}_{\ast}(C),\mathcal{F}).
\end{align*}
One easily checks that these indeed satisfy the correct adjointness properties. It hence suffices to check the pushout-product axiom. The argument below shows that $\textnormal{const}_{\ast}$ preserves cofibrations. Hence if $i: C\rightarrow D$ is a cofibration in $\ch(\field{k})$, then $\textnormal{const}_{\ast}(i): \textnormal{const}_{\ast}(C) \rightarrow \textnormal{const}_{\ast}(D)$ is a cofibration in $\dgpsh{S}$, and therefore the pushout-product axiom follows directly from the pushout-product axiom in $\dgpsh{S}$.
\end{proof}
\begin{corollary}
Let $X$ be a separated quasi-compact scheme over $\field{k}$. The category $\dgpsh{X}$ admits the structure of a symmetric monoidal dg model category. In particular, the $\infty$-category $\textnormal{Sh}_{\infty}(X)$ admits the structure of a symmetric monoidal $\infty$-category $\textnormal{Sh}_{\infty}(X)^{\otimes}$.
\end{corollary}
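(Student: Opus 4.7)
The plan is to derive this corollary directly from the preceding proposition by verifying its three hypotheses for the site $\textnormal{Open}(X)$ and then invoking the passage from symmetric monoidal dg model categories to symmetric monoidal $\infty$-categories developed in Section \ref{symmetric_monoidal_dg_model_categories}.

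First, I would check that the Zariski topos on $X$ has enough points. Since $X$ is a scheme, the usual points of $X$ (in the sense of its underlying topological space) provide a conservative family of stalk functors on sheaves, and by the local nature of quasi-isomorphism this is exactly what is needed. Second, the site $\textnormal{Open}(X)$ admits finite products: the product of $U$ and $V$ in $\textnormal{Open}(X)$ is the intersection $U\cap V$, which is again open. Third, $X$ itself is a final object of $\textnormal{Open}(X)$.

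With those three properties verified, the preceding proposition applies and yields that $\dgpsh{X}$ is a symmetric monoidal dg model category. To obtain the $\infty$-categorical statement, I would invoke \cite[Example 4.1.7.6]{HA} as cited in Section \ref{symmetric_monoidal_dg_model_categories}, which equips $N(\dgpsh{X}^c)[W^{-1}] = \textnormal{Sh}_{\infty}(X)$ with a symmetric monoidal structure presented (via Proposition \ref{prop8} and Lemma \ref{lem1}) by the underlying symmetric monoidal structure of $\dgpsh{X}$.

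I do not anticipate any substantive obstacle here; the statement is essentially a specialization of the previous proposition, and the only thing to note is a small subtlety about separatedness and quasi-compactness. These hypotheses on $X$ are not needed to establish the symmetric monoidal dg model structure itself, but they will be used later (for instance in analyzing $\mathbb{R}\Gamma_U$ on affine opens and comparing with the affine Hochschild complex), which is why they are recorded as part of the standing hypotheses on $X$ here.
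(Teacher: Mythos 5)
Your proposal is correct and matches the paper's (implicit) argument: the corollary is stated without a separate proof precisely because it amounts to checking that $\textnormal{Open}(X)$ has a final object, has finite products given by intersection, and that the Zariski topos of a topological space has enough points, after which the preceding proposition and the machinery of Section \ref{symmetric_monoidal_dg_model_categories} apply. Your side remark that separatedness and quasi-compactness are not needed for this particular statement is also accurate.
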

Let $U\subseteq X$ be an affine open. Then we have adjoint functors
\begin{center}
    \begin{tikzcd}
            \ch(\field{k})\arrow[r, shift left=1ex, "C_U"{name=G}] & \affpsh{X}\arrow[l, shift left=.5ex, "\Gamma_U"{name=F}]
            \arrow[phantom, from=F, to=G, , "\scriptscriptstyle\boldsymbol{\dashv}" rotate=-90]
        \end{tikzcd}
\end{center}
where $\Gamma_U$ sends a complex of presheaves $\mathcal{F}$ to $\mathcal{F}(U)$ and $C_U$ is the constant presheaf functor sending $C$ to the presheaf 
\begin{align*}
    V\mapsto \begin{cases}
        C & \text{if } V\subseteq U\\
        0 & \text{otherwise}
    \end{cases}.
\end{align*}
If we equip $\affpsh{X}$ with the projective model structure, then $\Gamma_U$ preserves fibrations and weak equivalences by construction. Therefore we obtain a Quillen adjunction. We can compose this with the Quillen adjucntion
\begin{center}
    \begin{tikzcd}
            \affpsh{X}^{\text{proj}}\arrow[r, shift left=1ex, "\text{id}"{name=G}] & \affpsh{X}^{\text{loc}}\arrow[l, shift left=.5ex, "\text{id}"{name=F}]
            \arrow[phantom, from=F, to=G, , "\scriptscriptstyle\boldsymbol{\dashv}" rotate=-90]
        \end{tikzcd}
\end{center}
of the Bousfield localization to obtain a Quillen adjunction
\begin{center}
    \begin{tikzcd}
            \ch(\field{k})\arrow[r, shift left=1ex, "C_U"{name=G}] & \affpsh{X}^{\text{loc}}\arrow[l, shift left=.5ex, "\Gamma_U"{name=F}]
            \arrow[phantom, from=F, to=G, , "\scriptscriptstyle\boldsymbol{\dashv}" rotate=-90].
        \end{tikzcd}
\end{center}
Since $C_U$ is left Quillen, it preserves weak equivalences between cofibrant objects. But every object in $\ch(\field{k})$ is cofibrant, so $C_U$ preserves weak equivalences.\\

The same argument works if we instead consider the site of all opens $\textnormal{Open}(X)$. In this case, for any open $V\subseteq X$ we obtain a Quillen adjunction $C_V \dashv \Gamma_V$. Note that in particular we can then take $V=X$. If $V$ is affine, this agrees with the above construction. The functors $C_V$ and $\Gamma_V$ are both strong symmetric monoidal, since the tensor product of presheaves is taken section-wise. In particular, we obtain lax symmetric monoidal functors of $\infty$-categories $C_V: \mathcal{D}_{\infty}(\field{k})^{\otimes} \rightarrow \text{Sh}_{\infty}(X)^{\otimes}$ and $\mathbb{R}\Gamma_V: \text{Sh}_{\infty}(X)^{\otimes} \rightarrow \mathcal{D}_{\infty}(\field{k})^{\otimes}$.
\begin{notation}
    Let $\mathcal{O}$ be a dg operad. We then have an operad $C_X(\mathcal{O})$ in $\dgpsh{X}$. By abuse of notation, we will usually denote the operad $C_X(\mathcal{O})$ just by $\mathcal{O}$.
\end{notation}
\begin{lemma}\label{lem2}
    The functor $C_X$ preserves cofibrancy and $\Sigma$-cofibrancy of operads, as well as weak equivalences of operads. Every operad in $\dgpsh{X}$ is admissible \cite[Definition 5.1.]{PS}, and even strongly admissible \cite[Definition 5.1.]{PS} if it lies in the image of $C_X$.
\end{lemma}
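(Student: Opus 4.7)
The statement has three parts: preservation properties of $C_X$, admissibility of every operad in $\dgpsh{X}$, and strong admissibility for operads in the image of $C_X$. I would prove them in that order.

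For the preservation properties, the adjunction $C_X \dashv \Gamma_X$ is Quillen since $\Gamma_X$ preserves (trivial) fibrations, which are defined objectwise in the local projective structure. Because every object of $\ch(\field{k})$ is cofibrant, Ken Brown's lemma forces $C_X$ to preserve all weak equivalences, hence operadic weak equivalences, which are defined arity-wise. For cofibrancy and $\Sigma$-cofibrancy, I would use that the model structure on operads is transferred along the free operad functor from symmetric sequences, and that $\Sigma$-cofibrancy is componentwise projective cofibrancy. Since $C_X$ is cocontinuous and strong symmetric monoidal, it commutes with free operad functors and with free $\Sigma_n$-equivariant functors, and therefore sends generating (trivial) cofibrations of operads and symmetric sequences in $\ch(\field{k})$ to generating (trivial) cofibrations in $\dgpsh{X}$. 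A standard cell induction then yields preservation of cofibrancy and $\Sigma$-cofibrancy. In characteristic zero, $\Sigma$-cofibrancy is moreover automatic, since semisimplicity of $\field{k}[\Sigma_n]$ causes $\Sigma_n$-projective cofibrations to coincide with underlying cofibrations both in $\ch(\field{k})$ and, checking stalkwise, in $\dgpsh{X}$.

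For the admissibility statements I would appeal directly to the Pavlov--Scholbach framework cited in the paper. Their sufficient conditions are that the ambient symmetric monoidal model category be symmetric $h$-monoidal in their sense and that the operad be $\Sigma$-cofibrant. Every operad in $\dgpsh{X}$ is $\Sigma$-cofibrant by the previous paragraph, so it suffices to check symmetric $h$-monoidality of $\dgpsh{X}$ with the local projective structure. This follows from the analogous property of $\ch(\field{k})$ in two steps: the global projective structure on $\dgpsh{X}$ inherits symmetric $h$-monoidality objectwise, and the left Bousfield localization at \v{C}ech hypercovers preserves this property because local weak equivalences are detected on stalks, stalks commute with symmetric powers (as filtered colimits distribute over tensor products and coinvariants), and symmetric powers preserve stalkwise weak equivalences in characteristic zero. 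For strong admissibility of operads in the image of $C_X$: because $C_X$ is strong symmetric monoidal and preserves $\Sigma$-cofibrancy, it transfers the strong admissibility enjoyed by every operad in $\ch(\field{k})$ to the $\dgpsh{X}$ side. Concretely, free extensions of algebras along cofibrations in $\dgpsh{X}$ are computed by applying $C_X$ pointwise to the corresponding free extensions in $\ch(\field{k})$, and therefore inherit the required property that they remain cofibrations in the underlying category.

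The main subtlety I anticipate is the verification that left Bousfield localization at \v{C}ech hypercovers preserves Pavlov--Scholbach's symmetric $h$-monoidality axiom; both the enough-points property of the Zariski topos and the characteristic-zero hypothesis enter essentially at this step, and the argument must be written carefully since acyclic cofibrations are enlarged by the localization.
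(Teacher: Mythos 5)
Your treatment of the first half of the lemma is essentially the paper's, reached by a slightly longer route: the paper lifts the Quillen adjunction $C_X\dashv \Gamma_X$ to symmetric collections and to operads (both functors being strong symmetric monoidal and the model structures being transferred), so that $C_X$ preserves cofibrancy and $\Sigma$-cofibrancy simply because it is left Quillen there, and it preserves weak equivalences of operads because these are arity-wise and $C_X$ was already shown to preserve weak equivalences of underlying presheaves. Your cell-induction over generating cofibrations proves the same thing.

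There are, however, two genuine gaps in the admissibility half. First, your claim that every operad in $\dgpsh{X}$ is $\Sigma$-cofibrant is false, and it does not follow from your previous paragraph, which only concerns operads in the image of $C_X$. $\Sigma$-cofibrancy requires in particular that each arity component be cofibrant in the projective model structure on presheaves, and a general presheaf of complexes (e.g.\ $\mathcal{O}_X$ itself) is not projectively cofibrant; the characteristic-zero semisimplicity of $\field{k}[\Sigma_n]$ removes only the equivariance obstruction, not the underlying cofibrancy obstruction, and projective cofibrancy of presheaves certainly cannot be checked stalkwise. The statement ``every operad in $\dgpsh{X}$ is admissible'' must therefore go through a criterion that makes no $\Sigma$-cofibrancy assumption; this is exactly what the paper does by citing \cite[Theorem 5.11]{PS} together with Section 8 of \cite{PS2} (the point of the symmetric flatness/$h$-monoidality hypotheses being precisely to dispense with $\Sigma$-cofibrancy of the operad). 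Second, your concrete justification of strong admissibility does not work: pushouts of $\O$-algebras along generating cofibrations of $\dgpsh{X}$ freely add a section over a particular open $U$ and are therefore not obtained by applying $C_X$ to anything in $\ch(\field{k})$, so free extensions in $\dgpsh{X}$ are not computed by transporting free extensions from $\ch(\field{k})$. The paper instead applies \cite[Proposition 6.3]{PS} directly to the operad $C_X(\mathcal{O})$ inside $\dgpsh{X}$, using that every operad in $\ch(\field{k})$ is $\Sigma$-cofibrant in characteristic zero together with the first part of the lemma to conclude that $C_X(\mathcal{O})$ is $\Sigma$-cofibrant.
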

\begin{proof}
The Quillen adjunction $C_X\dashv \Gamma_X$ induces adjunctions between the respective categories of symmetric collections and symmetric operads, since both are strong symmetric monoidal. The model structure on symmetric collection is transferred from the underlying model category, and hence the adjunction is again Quillen. Similarly, fibrations and weak equivalences of operads are pointwise, and hence $\Gamma_X$ preserves fibrations and trivial fibrations of operads. To see that $C_X$ preserves weak equivalences, note that these are point-wise in operads, and $C_X$ preserves weak equivalences on the underlying model categories. To see that operads in $\dgpsh{X}$ are admissible, use \cite[Theorem 5.11]{PS} and Section 8 of \cite{PS2}. Now to see that every operad in the image of $C_X$ is even strongly admissible, use \cite[Proposition 6.3]{PS} together with the fact that every operad in $\ch(\field{k})$ is $\Sigma$-cofibrant.
\end{proof}
This shows that we get a diagram of admissible $\Sigma$-cofibrant operads
\begin{center}
    \begin{tikzcd}
                                                                                             & C_X(C_{\ast}(\mathbb{E}^T_1)) \arrow[d, "{C_X(\phi), \simeq}"] \\
C_X(\mathbb{A}_{\infty}) \arrow[ru, "{C_X(\psi),\simeq}"] \arrow[r, "{C_X(\theta),\simeq}"'] & C_X(\mathbf{Assoc})                                           
\end{tikzcd}
\end{center}
and $C_X(\mathbb{A}_{\infty})$ is still cofibrant.\\\par
If $X$ is a quasi-compact seperable scheme over $\field{k}$ with structure sheaf $\mathcal{O}_X$, we consider the $C_{\ast}(\mathbb{E}^T_1)$-algebra $\phi^{\ast}\mathcal{O}_X$ and choose a cofibrant replacement $\tilde{\mathcal{O}}_X \xtwoheadrightarrow{\simeq}\phi^{\ast}\mathcal{O}_X$. Applying the functor $\Phi$ constructed in Section \ref{Rectification_of_algebras_over_an_infty_operad}, we get $\Phi(\tilde{\mathcal{O}}_X)\in \alg_{\mathbb{E}_1}(\text{Sh}_{\infty}(X))$. As in the affine case, we will keep denoting this by just $\tilde{\mathcal{O}}_X$.
\begin{definition}
Let $X$ be a quasi-compact separated scheme over $\field{k}$. We define the \textbf{Hochschild cochain complex} of $X$ to be the center
\begin{align*}
    \mathfrak{Z}(\tilde{\mathcal{O}}_X)\in \alg_{\mathbb{E}_1}(\alg_{\mathbb{E}_1}(\text{Sh}_{\infty}(X))) \simeq \alg_{\mathbb{E}_2}(\text{Sh}_{\infty}(X)).
\end{align*}
\end{definition}
\subsection{The Hochschild complex of a scheme is local}
In this section we will prove the following theorem, showing that the center of a scheme glues together the affine Hochschild complexes.
\begin{theorem}\label{thm5}
Let $U = \textnormal{Spec}(A)\subseteq X$ be an affine open. The map $\mathbb{R}\Gamma_U: \textnormal{Sh}_{\infty}(X) \rightarrow \mathcal{D}_{\infty}(\field{k})$ is lax symmetric monoidal and hence induces a map $\mathbb{R}\Gamma_U: \alg_{\mathbb{E}_2}(\textnormal{Sh}_{\infty}(X)) \rightarrow \alg_{\mathbb{E}_2}(\mathcal{D}_{\infty}(\field{k}))$. We have
\begin{align*}
    \mathbb{R}\Gamma_U(\mathfrak{Z}(\tilde{O}_X)) \simeq \mathfrak{Z}(\tilde{A})\in \alg_{\mathbb{E}_2}(\mathcal{D}_{\infty}(\field{k}))
\end{align*}
for any cofibrant replacement $\tilde{A}$ of $\phi^{\ast}A$.
\end{theorem}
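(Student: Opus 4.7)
The plan breaks into two stages. First, I would establish the lax symmetric monoidality of $\mathbb{R}\Gamma_U$ directly from the preceding discussion: the presheaf-level functor $\Gamma_U$ is strong symmetric monoidal since tensor products of presheaves are computed sectionwise, and it is right Quillen for the local projective model structure. The induced $\infty$-functor is therefore lax symmetric monoidal, and combined with Proposition \ref{prop10} this produces the advertised map on $\mathbb{E}_2$-algebras. Moreover, on underlying objects one has $\mathbb{R}\Gamma_U(\tilde{\mathcal{O}}_X) \simeq \tilde{A}$ in $\alg_{\mathbb{E}_1}(\mathcal{D}_{\infty}(\field{k}))$, because applying $\Gamma_U$ to a cofibrant replacement of $\mathcal{O}_X$ in $\alg_{\eone}(\dgpsh{X})$ yields, up to weak equivalence, a cofibrant replacement of $A$ in $\alg_{\eone}(\ch(\field{k}))$.

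Next, I would identify both centers with morphism objects in the respective $\mathbb{E}_1$-module categories via Corollary \ref{cor7}: the underlying object of $\mathfrak{Z}(\tilde{\mathcal{O}}_X)$ is the morphism object of $\tilde{\mathcal{O}}_X$ regarded as a module over itself in $\textnormal{Sh}_{\infty}(X)$, and similarly for $\mathfrak{Z}(\tilde{A})$. By a sheaf analog of the rectification in Section \ref{the_category_of_e1_modules_in_chain_complexes} together with a sheaf version of Lemma \ref{prop2}, these morphism objects are presented by derived homs of $\mathcal{O}_X$ (resp. $A$) as a bimodule over itself. Concretely, I would work with a sheafified bar resolution $\mathcal{B}(\mathcal{O}_X) \twoheadrightarrow \mathcal{O}_X$ that is cofibrant as an $\mathcal{O}_X$-bimodule presheaf, so that the sheaf-level derived hom is computed by $\ihom_{\mathcal{O}_X^e}(\mathcal{B}(\mathcal{O}_X), \mathcal{O}_X)$. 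Because $U$ is affine and $\mathcal{O}_X^{\otimes n}(U) = A^{\otimes n}$, applying $\Gamma_U$ sends $\mathcal{B}(\mathcal{O}_X)$ to the classical bar resolution $B(A) \twoheadrightarrow A$, and the sectionwise formula for the internal hom of presheaves identifies $\Gamma_U(\ihom_{\mathcal{O}_X^e}(\mathcal{B}(\mathcal{O}_X), \mathcal{O}_X))$ with $\hom_{A^e}(B(A), A)$ — the affine Hochschild complex of Theorem \ref{thm1}.

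The main obstacle will be twofold. On the technical side, I must confirm that the sheafified bar resolution is cofibrant in the local projective model structure on $\mathcal{O}_X$-bimodule presheaves, and that it survives as a cofibrant replacement after transferring along the Quillen equivalence with $\mathbb{E}_1$-modules over $\tilde{\mathcal{O}}_X$. Since the local projective structure is a left Bousfield localization of the projective one, cofibrations coincide, so one only needs the bar augmentation to remain a local weak equivalence, which follows from it being a componentwise quasi-isomorphism. On the conceptual side, the equivalence of underlying objects must be promoted to an equivalence of $\mathbb{E}_2$-algebras. Here Corollary \ref{cor7} does the heavy lifting: both multiplications on either center are determined up to contractible choice by the action on the underlying module, so the canonical $\mathbb{E}_2$-algebra map $\mathbb{R}\Gamma_U(\mathfrak{Z}(\tilde{\mathcal{O}}_X)) \to \mathfrak{Z}(\tilde{A})$ coming from the universal property of $\mathfrak{Z}(\tilde{A})$ is automatically an equivalence once it is one on underlying objects.
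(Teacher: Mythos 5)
Your overall skeleton agrees with the paper's: identify both centers with morphism objects of bimodules (Corollary \ref{cor7} plus rectification), compute over the affine open, and upgrade the equivalence of underlying objects to one of $\mathbb{E}_2$-algebras using the lax symmetric monoidality of $\mathbb{R}\Gamma_U$ and the fact that everything is pinned down by the evaluation map. The first and last stages are fine. The middle stage, however, has two genuine gaps, and they sit exactly where the real work of the theorem lives.

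First, the sheafified bar resolution $\mathcal{B}(\mathcal{O}_X)$ is \emph{not} cofibrant in the (local) projective model structure on presheaves of $\mathcal{O}_X\otimes\mathcal{O}_X$-modules. Each level $\mathcal{B}_n(\mathcal{O}_X)\cong(\mathcal{O}_X\otimes\mathcal{O}_X)\otimes\mathcal{O}_X^{\otimes n}$ is free over $\mathcal{O}_X\otimes\mathcal{O}_X$ on the presheaf $\mathcal{O}_X^{\otimes n}$, but $\mathcal{O}_X^{\otimes n}$ is not a projectively cofibrant (indeed not a projective) presheaf of $\field{k}$-modules: the projective generators are the presheaves $C_U(\field{k})$ freely generated by a section over a single open, and $\mathcal{O}_X$ is not a retract of sums of these. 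Sectionwise freeness over $A^e$ does not give projective cofibrancy over the site. This is precisely why the paper works with an abstract cofibrant resolution $\mathcal{P}$ and only maps it \emph{into} $\mathcal{B}(\mathcal{O}_X)$ by lifting against the acyclic fibration $\mathcal{B}(\mathcal{O}_X)\twoheadrightarrow\mathcal{O}_X$ (and even that only appears after Theorem \ref{thm6}). Second, the internal hom of presheaves is not computed sectionwise: by the paper's own formula, $\mathcal{H}om_{\mathcal{O}_X\otimes\mathcal{O}_X}(\mathcal{F},\mathcal{G})(U)=\map_{\mathcal{O}_U\otimes\mathcal{O}_U}(\mathcal{F}|_U,\mathcal{G}|_U)$ is a complex of natural transformations over the whole subsite $\textnormal{Aff}(U)$, not $\hom_{A^e}(\mathcal{F}(U),\mathcal{G}(U))$. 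So even with a legitimate cofibrant resolution, $\Gamma_U$ of the hom-presheaf does not immediately return $\hom_{A^e}(B(A),A)$; identifying it with $\map_{A\otimes A}(P,P)$ is the content of the paper's chain of equivalences (passing through $\Delta_{\ast}$ to the site $\diag_X$, resolving $(\Delta_{\ast}\mathcal{P})|_{U\times_{\field{k}}U}$ by some $\mathcal{P}'$, and comparing with $\tilde{P}$ for a cofibrant $A^e$-resolution $P$ of $A$, using that $\tilde{(-)}$ is left Quillen and preserves acyclic fibrations). Both gaps are repairable, but as written the step ``apply $\Gamma_U$ and read off the affine Hochschild complex'' would fail.
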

Just like in the affine case, the center has underlying object
\begin{align*}
    \mathfrak{Z}(\tilde{\mathcal{O}}_X)(\mathfrak{a}) \simeq \text{Mor}_{\text{Mod}^{\mathbb{E}_1}_{\tilde{\mathcal{O}}_X}(\text{Sh}_{\infty}(X)\times_{\fin} \mathbb{E}_1)_{\mathfrak{a}}}(\tilde{\mathcal{O}}_X,\tilde{\mathcal{O}}_X) \in \text{Sh}_{\infty}(X),
\end{align*}
and it hence suffices to understand this endomorphism object. To this end, note that we can adapt Hinich's Rectification Theorem for modules \cite[Theorem 5.2.3]{Hin} to the local projective model structure on complexes of presheaves by using the generalized Rectification Theorem \ref{thm3}. We hence have an equivalence of $\infty$-categories
\begin{align*}
    N(\text{Mod}_{\tilde{\mathcal{O}}_X}^{C_{\ast}(\mathbb{E}^T_1)}(\dgpsh{X}^c))[W_{\text{Mod}}^{-1}] \simeq   \text{Mod}^{\mathbb{E}_1}_{\tilde{\mathcal{O}}_X}(\text{Sh}_{\infty}(X)\times_{\fin}\mathbb{E}_1)_{\mathfrak{a}}.
\end{align*}
By \cite[Theorem 1.5.1]{Hin2}, for any associative algebra object $\mathcal{O}$ in $\dgpsh{X}$, the category $\lmod_{\mathcal{O}}(\dgpsh{X})$ carries a model structure transferred from the local projective model structure on $\dgpsh{X}$. Again following the affine case, we have a Quillen equivalence
\begin{align*}
        \text{Mod}_{\tilde{\mathcal{O}}_X}^{C_{\ast}(\mathbb{E}^T_1)}(\dgpsh{X}) \cong \lmod_{U_{C_{\ast}(\mathbb{E}^T_1)}(\tilde{\mathcal{O}}_X)}(\dgpsh{X}).
\end{align*}
\begin{proposition}
    There exists a zig-zag of weak equivalences betwen $U_{C_{\ast}(\mathbb{E}^T_1)}(\tilde{\mathcal{O}}_X)$ and $\mathcal{O}_X\otimes \mathcal{O}_X$ in the category of associative algebras in $\dgpsh{X}$.
\end{proposition}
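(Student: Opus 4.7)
The plan is to mirror the proof of Lemma \ref{prop2} inside the symmetric monoidal dg model category $\dgpsh{X}$ rather than $\ch(\field{k})$. All of the key inputs transfer: Lemma \ref{lem2} shows that $C_X(\mathbf{Assoc})$, $C_X(\eone)$ and $C_X(\mathbb{A}_\infty)$ are admissible $\Sigma$-cofibrant operads in $\dgpsh{X}$ (with $C_X(\mathbb{A}_\infty)$ still cofibrant) and that the operadic weak equivalences $C_X(\psi)\colon C_X(\mathbb{A}_\infty)\xrightarrow{\simeq} C_X(\eone)$, $C_X(\theta)\colon C_X(\mathbb{A}_\infty)\xrightarrow{\simeq} C_X(\mathbf{Assoc})$, and $C_X(\phi)\colon C_X(\eone)\xrightarrow{\simeq} C_X(\mathbf{Assoc})$ persist. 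Since we work in characteristic zero, each of these induces a Quillen equivalence between the transferred model structures on the respective algebra categories.

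First, I would choose a cofibrant replacement $A'\twoheadrightarrow \mathcal{O}_X$ in $\alg_{\mathbf{Assoc}}(\dgpsh{X})$ followed by a cofibrant replacement $\hat{\mathcal{O}}_X \twoheadrightarrow \theta^{\ast}A'$ in $\alg_{\mathbb{A}_\infty}(\dgpsh{X})$. Exactly as in Lemma \ref{prop2}, the unit $\hat{\mathcal{O}}_X \to \psi^{\ast}\psi_{!}\hat{\mathcal{O}}_X$ is then a weak equivalence between cofibrant $\mathbb{A}_\infty$-algebras, and Fresse's invariance of enveloping algebras \cite[Theorems 17.4.A, 17.4.B]{F} yields
\[
A'\otimes A'^{\text{op}} \;=\; U_{\mathbf{Assoc}}(A') \;\xleftarrow{\simeq}\; U_{\mathbb{A}_\infty}(\hat{\mathcal{O}}_X) \;\xrightarrow{\simeq}\; U_{\eone}(\psi_{!}\hat{\mathcal{O}}_X)
\]
in associative algebras in $\dgpsh{X}$. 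Choosing a cofibrant replacement $A_1 \twoheadrightarrow \phi^{\ast}A'$ in $\eone$-algebras, the usual lifting argument produces weak equivalences $\psi_{!}\hat{\mathcal{O}}_X \xrightarrow{\simeq} A_1 \xrightarrow{\simeq} \tilde{\mathcal{O}}_X$ between cofibrant objects (the second composed with $A_1 \twoheadrightarrow \phi^{\ast}A'\twoheadrightarrow \phi^{\ast}\mathcal{O}_X$ and concluded by 2-out-of-3). A second application of Fresse then delivers quasi-isomorphisms $U_{\eone}(\psi_{!}\hat{\mathcal{O}}_X) \xrightarrow{\simeq} U_{\eone}(A_1) \xrightarrow{\simeq} U_{\eone}(\tilde{\mathcal{O}}_X)$. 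Finally, since $\mathcal{O}_X$ is commutative, $\mathcal{O}_X^{\text{op}} = \mathcal{O}_X$, and the weak equivalence $A'\twoheadrightarrow \mathcal{O}_X$ of cofibrant associative algebras induces a weak equivalence $A'\otimes A'^{\text{op}} \xrightarrow{\simeq} \mathcal{O}_X \otimes \mathcal{O}_X$ of associative algebras (being a tensor product of weak equivalences between cofibrant objects), closing the zig-zag.

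The main obstacle will be verifying that Fresse's invariance theorems, formulated in \cite{F} over $\ch(\field{k})$, apply inside $\dgpsh{X}$. Their proofs only require that the base be a cofibrantly generated, symmetrically flat, symmetric monoidal model category in which the operads in question are $\Sigma$-cofibrant and admissible; every one of these hypotheses has already been checked for $\dgpsh{X}$ in Section \ref{the_infty_category_of_dg_sheaves} and Lemma \ref{lem2}, so no new homotopical input is needed. A subsidiary point worth a line or two is the claim that $(\psi_{!},\psi^{\ast})$ is genuinely a Quillen equivalence on algebras over $C_X(\mathbb{A}_\infty) \to C_X(\eone)$, but this also follows from the $\Sigma$-cofibrancy supplied by Lemma \ref{lem2} together with the standard Quillen equivalence criterion for algebra categories over a weak equivalence of $\Sigma$-cofibrant operads.
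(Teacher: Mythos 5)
Your proposal follows essentially the same route as the paper: both adapt the affine argument of Lemma \ref{prop2} to $\dgpsh{X}$, lean on Lemma \ref{lem2} for admissibility and $\Sigma$-cofibrancy of $C_X(\mathbf{Assoc})$, $C_X(\eone)$ and $C_X(\mathbb{A}_{\infty})$, and apply Fresse's invariance theorems to the enveloping algebras; your identification of the applicability of \cite[Theorems 17.4.A, 17.4.B]{F} in $\dgpsh{X}$ as the main point to check matches the paper's own ``amendment,'' which resolves it via strong admissibility (the forgetful functor preserves cofibrant objects, so the underlying presheaf complexes of the relevant algebras are cofibrant).

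One justification in your closing step is off, though the conclusion stands. You assert that $A'\otimes A'^{\textnormal{op}} \to \mathcal{O}_X\otimes\mathcal{O}_X$ is a weak equivalence because it is ``a tensor product of weak equivalences between cofibrant objects,'' but $\mathcal{O}_X$ is not cofibrant in $\dgpsh{X}$ (neither as a presheaf complex nor as an associative algebra) — indeed the whole point of introducing $\mathcal{O}_X'$ is that it is not. The paper instead verifies this equivalence stalkwise: the presheaf tensor product is computed sectionwise, stalks commute with it, and over the field $\field{k}$ every complex is cofibrant, so $(\mathcal{O}_X'\otimes\mathcal{O}_X')_x \to (\mathcal{O}_X\otimes\mathcal{O}_X)_x$ is a quasi-isomorphism for every point $x$, and weak equivalences in the local projective model structure are detected on stalks. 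With that substitution your argument is complete.
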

\begin{proof}
    We adapt the proof of Proposition \ref{prop2}. Let $\mathcal{O}_X' \xtwoheadrightarrow{\simeq} \mathcal{O}_X$ be a cofibrant resolution of $\mathbf{Assoc}$-algebras in $\dgpsh{X}$. Then
    \begin{align*}
        (\mathcal{O}_X'\otimes \mathcal{O}_X')_x \xrightarrow{\cong} \mathcal{O}_{X,x}' \otimes \mathcal{O}_{X,x}' \xrightarrow{\simeq} \mathcal{O}_{X,x} \otimes \mathcal{O}_{X,x} \xrightarrow{\cong} (\mathcal{O}_X\otimes \mathcal{O}_X)_x,
    \end{align*}
    showing that $\mathcal{O}_X' \otimes \mathcal{O}_X'$ is weakly equivalent to $\mathcal{O}_X\otimes \mathcal{O}_X$. The rest of the argument goes through exactly like before with the following amendment: Let $\hat{\mathcal{O}}_X \xtwoheadrightarrow{\simeq} \theta^{\ast}\mathcal{O}_X'$ be a cofibrant replacement of $\mathbb{A}_{\infty}$-algebras. To obtain the zig-zag 
    \begin{center}
        \begin{tikzcd}
                                                     & U_{\mathbb{A}_{\infty}}(\hat{\mathcal{O}}_X) \arrow[ld, "\simeq"'] \arrow[rd, "\simeq"] &                                                               \\
U_{\mathbb{A}_{\infty}}(\theta^{\ast}\mathcal{O}_X') &                                                                                         & U_{\mathbb{A}_{\infty}}(\psi^{\ast}\psi_!\hat{\mathcal{O}}_X)
\end{tikzcd}
    \end{center}
    using \cite[Theorem 17.4.A, 17.4.B]{F}, we have to argue that the underlying complexes of presheaves of these $\mathbb{A}_{\infty}$-algebras are cofibrant. To this end, recall from Lemma \ref{lem2} that $\mathbb{A}_{\infty}$ and $\mathbf{Assoc}$ are both strongly admissible, meaning that the forgetful functor from algebras preserves cofibrant objects. In particular, the underlying complexes of presheaves of $\mathcal{O}_X'$ and $\hat{\mathcal{O}}_X$ are both cofibrant. Now recall that $\psi_!$ is left Quillen and hence preserves cofibrancy, and the restriction of scalars functors $\theta^{\ast}$ and $\psi^{\ast}$ do not alter the underlying complex. This finishes the argument.
\end{proof}
Using \cite[Lemma 1.5.3]{Hin2}, we deduce that when equiped with the transfer model structure, the category $\lmod_{U_{\eone}(\tilde{\mathcal{O}}_X)}(\dgpsh{X})$ is Quillen equivalent to the category $\lmod_{\mathcal{O}_X\otimes \mathcal{O}_X}(\dgpsh{X})$. 
\begin{lemma}
    The category $\lmod_{\mathcal{O}_X\otimes \mathcal{O}_X}(\dgpsh{X})$ is a dg model category that is left tensored over the symmetric monoidal dg model category $\dgpsh{X}$. For $\mathcal{M},\mathcal{N}\in \lmod_{\mathcal{O}_X\otimes \mathcal{O}_X}(\dgpsh{X})$, we have a dg morphism object $\mathcal{H}om_{\mathcal{O}_X\otimes \mathcal{O}_X}(\mathcal{M},\mathcal{N})\in \dgpsh{X}$ with
    \begin{align*}
        \mathcal{H}om_{\mathcal{O}_X\otimes \mathcal{O}_X}(\mathcal{M},\mathcal{N})(V) =\map_{\mathcal{O}_V\otimes \mathcal{O}_V}(\mathcal{M}|_{V}, \mathcal{N}|_V).
    \end{align*}
\end{lemma}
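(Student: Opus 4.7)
The plan is to verify the three structural assertions in sequence, relying on the fact that everything is computed section-wise over opens. First, for the dg enrichment of $\lmod_{\mathcal{O}_X\otimes\mathcal{O}_X}(\dgpsh{X})$, I would take $\map_{\mathcal{O}_X\otimes\mathcal{O}_X}(\mathcal{M},\mathcal{N})$ to be the equalizer that selects module maps inside $\map_{\dgpsh{X}}(\mathcal{M},\mathcal{N})$. Tensoring and cotensoring over $\ch(\field{k})$ are inherited from $\dgpsh{X}$, because for $C\in\ch(\field{k})$ the presheaf $C\otimes\mathcal{M}$ acquires a module structure through the second factor (with $C$ pulled back via the constant presheaf functor), and the pushout-product axiom (SM7) for the transferred model structure \cite[Theorem 1.5.1]{Hin2} follows from (SM7) in $\dgpsh{X}$ combined with the fact that the forgetful functor to $\dgpsh{X}$ creates cofibrations via free-module pushouts.

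Second, to exhibit $\lmod_{\mathcal{O}_X\otimes\mathcal{O}_X}(\dgpsh{X})$ as left-tensored over $\dgpsh{X}$, I use that $\mathcal{O}_X\otimes\mathcal{O}_X$ is a commutative algebra object in the symmetric monoidal category $\dgpsh{X}$: given $\mathcal{F}\in\dgpsh{X}$ and $\mathcal{M}\in\lmod_{\mathcal{O}_X\otimes\mathcal{O}_X}(\dgpsh{X})$, the presheaf tensor product $\mathcal{F}\otimes\mathcal{M}$ carries a canonical module structure obtained by braiding the action past $\mathcal{F}$ and applying the action on $\mathcal{M}$. The pushout-product axiom for this bifunctor again reduces section-wise to the one in $\dgpsh{X}$.

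Third, to construct the morphism object, I define $\mathcal{H}om_{\mathcal{O}_X\otimes\mathcal{O}_X}(\mathcal{M},\mathcal{N})$ as the presheaf whose sections on $V$ are $\map_{\mathcal{O}_V\otimes\mathcal{O}_V}(\mathcal{M}|_V,\mathcal{N}|_V)$, with restriction maps induced by further restriction of presheaf maps. This is well defined because $(\mathcal{O}_X\otimes\mathcal{O}_X)|_V=\mathcal{O}_V\otimes\mathcal{O}_V$ and restriction to $V$ carries modules to modules. The required adjunction
\begin{align*}
\map_{\dgpsh{X}}(\mathcal{F},\mathcal{H}om_{\mathcal{O}_X\otimes\mathcal{O}_X}(\mathcal{M},\mathcal{N}))\cong \map_{\lmod_{\mathcal{O}_X\otimes\mathcal{O}_X}(\dgpsh{X})}(\mathcal{F}\otimes\mathcal{M},\mathcal{N})
\end{align*}
is obtained as follows. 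A map $\phi$ on the LHS is a compatible family of maps $\phi_V\colon\mathcal{F}(V)\to\map_{\mathcal{O}_V\otimes\mathcal{O}_V}(\mathcal{M}|_V,\mathcal{N}|_V)$; evaluating the image at the open $V$ and sending $f\otimes m\mapsto\phi_V(f)(V)(m)$ produces a module map $\mathcal{F}\otimes\mathcal{M}\to\mathcal{N}$. Conversely, given a module map $\psi$, the assignment $W\mapsto(m\mapsto\psi_W(f|_W\otimes m))$ for $f\in\mathcal{F}(V)$ and $W\subseteq V$ defines an element $\phi_V(f)\in\map_{\mathcal{O}_V\otimes\mathcal{O}_V}(\mathcal{M}|_V,\mathcal{N}|_V)$; these two assignments are mutually inverse by Yoneda.

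The main obstacle is bookkeeping with restriction maps: one needs to check that in the third step the two constructions are strictly compatible with restriction along nested opens $W\subseteq V\subseteq X$ in both the $\mathcal{F}$ variable and the $(\mathcal{M},\mathcal{N})$ variables, and that the resulting bijection is in fact an isomorphism of $\ch(\field{k})$-complexes rather than just of graded abelian groups. Because each step only uses naturality of the presheaf-level tensor-hom adjunction on $V$ and the functoriality of restriction of modules along open inclusions, this compatibility can be verified directly, at which point the resulting complex $\mathcal{H}om_{\mathcal{O}_X\otimes\mathcal{O}_X}(\mathcal{M},\mathcal{N})$ together with the evaluation map is a dg morphism object in the sense of Theorem \ref{thm4}.
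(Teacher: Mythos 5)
Your proposal is correct and follows essentially the same route as the paper: the dg enrichment is the section-wise one, the tensoring over $\dgpsh{X}$ is given by braiding the $\mathcal{O}_X\otimes\mathcal{O}_X$-action past $\mathcal{F}$, and the morphism object is the presheaf of local module mapping complexes. The only difference is that you verify the tensor-hom adjunction and pushout-product axiom by hand where the paper defers both to Hinich's \cite[Lemma 1.6.3]{Hin2}, which is a harmless substitution.
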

\begin{proof}
Since $\lmod_{\mathcal{O}_X\otimes \mathcal{O}_X}(\dgpsh{X}))$ is the category of complexes of presheaves of $\mathcal{O}_X\otimes \mathcal{O}_X$-modules, we automatically have an enrichement over $\ch(\field{k})$. Now note that $\lmod_{\mathcal{O}_X\otimes \mathcal{O}_X}(\dgpsh{X}))$ is tensored and powered over $\dgpsh{X}$: If $\mathcal{F}\in \dgpsh{X}$ and $\mathcal{M} \in \lmod_{\mathcal{O}_X\otimes \mathcal{O}_X}(\dgpsh{X})$, we obtain an $\mathcal{O}_X\otimes \mathcal{O}_X$-module structure on the tensor product in complexes of presheaves by 
\begin{align*}
    (\mathcal{O}_X\otimes \mathcal{O}_X)\otimes (\mathcal{F} \otimes \mathcal{M}) \cong \mathcal{F} \otimes \mathcal{O}_X\otimes \mathcal{O}_X \otimes \mathcal{M} \rightarrow \mathcal{F} \otimes \mathcal{M}.
\end{align*}
We obtain a module structure on $\mathcal{H}om(\mathcal{F},\mathcal{M})$ by the pointwise module structure
\begin{align*}
        \mathcal{O}_X\otimes \mathcal{O}_X \otimes \mathcal{H}om(\mathcal{F},\mathcal{M}) \rightarrow \mathcal{H}om(\mathcal{M},\mathcal{M}) \otimes \mathcal{H}om(\mathcal{F},\mathcal{M}) \\\xrightarrow{\text{comp}} \mathcal{H}om(\mathcal{F},\mathcal{M}).
\end{align*}
One easily checks that these indeed yield a tensoring and powering respectively. Now to obtain the tensoring and powering over $\ch(\field{k})$, we pre-compose these operations with the constant presheaf functor $C_X: \ch(\field{k}) \rightarrow \dgpsh{X}$. The pushout-product axiom is checked in \cite[Lemma 1.6.3]{Hin2}, and Hinich also shows in the same section that the above actually yields a morphism object. 
\end{proof}
We can now again use Theorem \ref{thm4} to conclude that the center of $\tilde{\mathcal{O}}_X$ in the $\infty$-category $\alg_{\mathbb{E}_1}(\text{Sh}_{\infty}(X))$ is given by 
\begin{align*}
    \mathfrak{Z}(\tilde{\mathcal{O}}_X)(\mathfrak{a}) \simeq Q\mathcal{H}om_{\mathcal{O}_X\otimes\mathcal{O}_X}(\mathcal{O},\mathcal{O})
\end{align*}
for a bifibrant model $\mathcal{O}$ of $\mathcal{O}_X$ as an $\mathcal{O}_X\otimes \mathcal{O}_X$-module. The center action is given by the evaluation map
\begin{align*}
    R(Q\mathcal{H}om_{\mathcal{O}_X\otimes \mathcal{O}_X}(\mathcal{O},\mathcal{O}) \otimes \mathcal{O}) \rightarrow \mathcal{O}.
\end{align*}
\par Note that in contrast to the affine case, where $A$ was already fibrant as an $A \otimes A$-module, we need to take a \textit{bifibrant} resolution $\mathcal{O}$ of $\mathcal{O}_X$, since it is not fibrant in the local projective model structure on $\dgpsh{X}$. However, $\mathcal{O}_X$ is fibrant in the local projective model structure for the site of affine opens on $X$, and we have already seen that presheaves on this smaller site present the same $\infty$-category. We now show that we can indeed compute the restriction $\iota_{\ast}\mathfrak{Z}(\tilde{\mathcal{O}}_X)$ by just taking a cofibrant resolution of $\mathcal{O}_X$.
\begin{proposition}\label{prop4}
We have an equivalence
\begin{align*}
    \iota_{\ast}(Q\mathcal{H}om_{\mathcal{O}_X\otimes \mathcal{O}_X}(\mathcal{O},\mathcal{O})) \simeq Q\mathcal{H}om_{\iota_{\ast}(\mathcal{O}_X\otimes \mathcal{O}_X)}(\mathcal{P},\mathcal{P})
\end{align*}
for any cofibrant resolution $\mathcal{P} \xtwoheadrightarrow{\simeq} \iota_{\ast}\mathcal{O}_X$ in $\lmod_{\iota_{\ast}(\mathcal{O}_X\otimes\mathcal{O}_X)}(\affpsh{X})$. 
\end{proposition}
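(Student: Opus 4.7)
The approach is to transport the cofibrant resolution $\mathcal{P}$ through the Quillen equivalence $\iota^{-1}\dashv \iota_*$ of Proposition \ref{prop3} and then compare the two dg morphism sheaves via an adjunction identity. The upshot is that both displayed objects will be shown to compute the same derived mapping sheaf in $\textnormal{Sh}_{\infty}(X)$, with $\mathcal{O}$ playing the role of a bifibrant replacement on the all-opens side and $\mathcal{P}$ playing the role of a cofibrant replacement on the affine side (where $\iota_*\mathcal{O}_X$ is already fibrant).

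First I would check that $\iota^{-1}\dashv \iota_*$ lifts to a Quillen adjunction between the module categories. Because $\iota_*$ is strong symmetric monoidal (tensor products of presheaves are pointwise) and $\iota_*\iota^{-1}\cong\textnormal{id}$ monoidally, the pair restricts to
\[
\iota^{-1}:\lmod_{\iota_*(\mathcal{O}_X\otimes\mathcal{O}_X)}(\affpsh{X}) \;\rightleftarrows\; \lmod_{\mathcal{O}_X\otimes\mathcal{O}_X}(\dgpsh{X}):\iota_*,
\]
which is Quillen for the transferred model structure of \cite[Theorem 1.5.1]{Hin2}, since fibrations and weak equivalences are detected by the forgetful functors. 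The same reasoning applies slicewise, and $\iota^{-1}$ commutes with restriction to an open $V\subseteq X$; combining the adjunction on each slice with the formula for $\mathcal{H}om$ gives the key sectionwise identity
\[
\iota_*\mathcal{H}om_{\mathcal{O}_X\otimes\mathcal{O}_X}(\iota^{-1}\mathcal{N},\mathcal{M}) \;\cong\; \mathcal{H}om_{\iota_*(\mathcal{O}_X\otimes\mathcal{O}_X)}(\mathcal{N},\iota_*\mathcal{M}).
\]

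Next I would transport resolutions. Applying the left Quillen functor $\iota^{-1}$ to $\mathcal{P}\twoheadrightarrow\iota_*\mathcal{O}_X$ produces a cofibrant object $\iota^{-1}\mathcal{P}$, and the counit $\iota^{-1}\iota_*\mathcal{O}_X\xrightarrow{\simeq}\mathcal{O}_X$ (a weak equivalence by Proposition \ref{prop3}) exhibits $\iota^{-1}\mathcal{P}$ as a cofibrant resolution of $\mathcal{O}_X$ as an $\mathcal{O}_X\otimes\mathcal{O}_X$-module. Using that $\mathcal{O}$ is bifibrant, a standard lifting argument produces a weak equivalence $f:\iota^{-1}\mathcal{P}\xrightarrow{\simeq}\mathcal{O}$ between cofibrant resolutions of $\mathcal{O}_X$. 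Since $\mathcal{O}$ is fibrant, $f$ induces a sectionwise weak equivalence on $\mathcal{H}om_{\mathcal{O}_X\otimes\mathcal{O}_X}(-,\mathcal{O})$ by Ken Brown's lemma for the dg model category $\lmod_{\mathcal{O}_X\otimes\mathcal{O}_X}(\dgpsh{X})$. Applying $\iota_*$ (which preserves weak equivalences, again by Proposition \ref{prop3}) and then the adjunction identity yields
\[
\iota_*\mathcal{H}om_{\mathcal{O}_X\otimes\mathcal{O}_X}(\mathcal{O},\mathcal{O})\xrightarrow{\simeq}\iota_*\mathcal{H}om_{\mathcal{O}_X\otimes\mathcal{O}_X}(\iota^{-1}\mathcal{P},\mathcal{O})\;\cong\; \mathcal{H}om_{\iota_*(\mathcal{O}_X\otimes\mathcal{O}_X)}(\mathcal{P},\iota_*\mathcal{O}).
\]
Finally, $\iota_*\mathcal{O}$ is fibrant (right Quillen), and the composite $\mathcal{P}\xrightarrow{\cong}\iota_*\iota^{-1}\mathcal{P}\xrightarrow{\iota_*f}\iota_*\mathcal{O}$ is a weak equivalence, so cofibrancy of $\mathcal{P}$ gives $\mathcal{H}om_{\iota_*(\mathcal{O}_X\otimes\mathcal{O}_X)}(\mathcal{P},\mathcal{P})\xrightarrow{\simeq}\mathcal{H}om_{\iota_*(\mathcal{O}_X\otimes\mathcal{O}_X)}(\mathcal{P},\iota_*\mathcal{O})$. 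Passing to cofibrant replacements $Q$ at both ends of the resulting zig-zag produces the asserted equivalence.

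The hard part will be the careful verification that the Quillen equivalence $\iota^{-1}\dashv \iota_*$ really does descend slicewise to the transferred model structures on the module categories, and that the dg internal hom $\mathcal{H}om$ is compatible with restriction to opens and with $\iota^{-1}$ in the way claimed (so that the displayed adjunction identity genuinely represents a derived statement and not only a pointwise 1-categorical one). Once these compatibilities are in place, the argument reduces to Theorem \ref{thm4} plus the lifting and Ken Brown arguments above.
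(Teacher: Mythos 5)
Your proposal is correct and follows essentially the same route as the paper: the same two preparatory lemmas (the Quillen equivalence between the module categories over the two sites, and the adjunction identity $\iota_{\ast}\mathcal{H}om_{\mathcal{O}_X\otimes \mathcal{O}_X}(\iota^{\ast}\mathcal{F},\mathcal{G}) \cong \mathcal{H}om_{\iota_{\ast}(\mathcal{O}_X\otimes \mathcal{O}_X)}(\mathcal{F},\iota_{\ast}\mathcal{G})$), followed by transporting $\mathcal{P}$ across the adjunction and comparing resolutions via lifting arguments. The only cosmetic difference is that you apply Ken Brown's lemma to $\mathcal{H}om(-,\mathcal{O})$ directly, whereas the paper inserts an intermediate fibrant replacement $\mathcal{R}$ of $\iota^{\ast}\mathcal{P}$ and compares $\mathcal{H}om(\mathcal{R},\mathcal{R})$ with $\mathcal{H}om(\iota^{\ast}\mathcal{P},\mathcal{R})$ before pushing down with $\iota_{\ast}$; the technical point you flag (that the module-level left adjoint is really the composite of $\iota^{-1}$ with extension of scalars along the counit) is exactly what the paper settles by citing Hinich's restriction-of-scalars Quillen equivalence.
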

Before we prove the proposition, we need a few lemmas.
\begin{lemma}
We have a Quillen equivalence 
\begin{center}
    \begin{tikzcd}
           \lmod_{\iota_{\ast}(\mathcal{O}_X \otimes \mathcal{O}_X)}(\affpsh{X})\arrow[r, shift left=1ex, "\iota^{\ast}"{name=G}] & \lmod_{\mathcal{O}_X \otimes \mathcal{O}_X}(\dgpsh{X})\arrow[l, shift left=.5ex, "\iota_{\ast}"{name=F}]
            \arrow[phantom, from=F, to=G, , "\scriptscriptstyle\boldsymbol{\dashv}" rotate=-90].
        \end{tikzcd}
\end{center}
Both adjoints preserve weak equivalences.
\end{lemma}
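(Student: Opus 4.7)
The plan is to bootstrap the Quillen equivalence $\iota^{-1}\dashv \iota_*$ of Proposition \ref{prop3} up to the module categories. Because the tensor product of presheaves is computed sectionwise and $\iota_*$ is just restriction along a full subcategory inclusion, $\iota_*$ is strong symmetric monoidal. Consequently, it sends $\mathcal{O}_X\otimes \mathcal{O}_X$-modules to $\iota_*(\mathcal{O}_X\otimes\mathcal{O}_X)$-modules by restriction of scalars:
\begin{align*}
    \iota_*: \lmod_{\mathcal{O}_X\otimes\mathcal{O}_X}(\dgpsh{X})\longrightarrow \lmod_{\iota_*(\mathcal{O}_X\otimes\mathcal{O}_X)}(\affpsh{X}).
\end{align*}
Both module categories inherit model structures transferred from the respective presheaf categories via \cite[Theorem 1.5.1]{Hin2}, so weak equivalences and fibrations are those on the underlying presheaf level. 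Since $\iota_*$ preserves fibrations and weak equivalences on underlying presheaves by Proposition \ref{prop3}, it is right Quillen. The left adjoint $\iota^*$ exists abstractly, and I would describe it on free modules by
\begin{align*}
    \iota^*\bigl(\iota_*(\mathcal{O}_X\otimes\mathcal{O}_X)\otimes \mathcal{F}\bigr) \simeq (\mathcal{O}_X\otimes\mathcal{O}_X)\otimes \iota^{-1}\mathcal{F},
\end{align*}
which follows from $\iota_*$ being strong monoidal together with the fact that the unit $\mathcal{F}\to \iota_*\iota^{-1}\mathcal{F}$ is an isomorphism.

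The preservation of weak equivalences by $\iota_*$ is immediate: since such equivalences are detected at stalks on underlying presheaves and $\iota_*$ preserves stalks, the claim follows from the transferred model structure. For $\iota^*$, I would first verify preservation on free modules via the displayed formula, where the claim reduces to preservation by $\iota^{-1}$ at the presheaf level (Proposition \ref{prop3}) combined with homotopy invariance of the tensor product with $\mathcal{O}_X\otimes \mathcal{O}_X$; the latter invariance is obtained by replacing $\mathcal{O}_X\otimes \mathcal{O}_X$ by a cofibrant resolution in associative algebras, exactly as in the proof of Proposition \ref{prop2}, and transferring along the resulting Quillen equivalence of module categories. The general case then propagates by cellular resolution and the fact that $\iota^*$ preserves colimits. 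Finally, the Quillen equivalence property follows from the Quillen equivalence of underlying presheaf categories: the forgetful functors to underlying presheaves detect weak equivalences, so the derived unit and counit of $\iota^*\dashv \iota_*$ are weak equivalences on modules if and only if they are on underlying presheaves, where the unit is an isomorphism and the counit is a weak equivalence by Proposition \ref{prop3}.

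The main obstacle is establishing that $\iota^*$ preserves \emph{all} weak equivalences, not merely those between cofibrant modules as is typical for a left Quillen functor. This unusually strong property is inherited from the analogous property of $\iota^{-1}$ on underlying presheaves, but the transfer requires sufficient flatness of $\mathcal{O}_X\otimes\mathcal{O}_X$. The cleanest route is to cofibrantly resolve the algebra in an auxiliary category of associative algebras and propagate the equivalence along a zig-zag in the spirit of the proof of Proposition \ref{prop2}; once this homotopical flatness is in place, the remainder of the argument is formal.
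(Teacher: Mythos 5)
Your overall strategy---bootstrapping the presheaf-level Quillen equivalence of Proposition \ref{prop3} up to the module categories---is the right one, but the key step as written does not go through. You assert that the derived unit and counit of $\iota^{\ast}\dashv\iota_{\ast}$ on modules ``are weak equivalences on modules if and only if they are on underlying presheaves, where the unit is an isomorphism and the counit is a weak equivalence by Proposition \ref{prop3}.'' This conflates two different adjunctions. The underlying presheaf of $\iota^{\ast}\mathcal{M}$ is not $\iota^{-1}$ applied to the underlying presheaf of $\mathcal{M}$: by your own formula on free modules, $\iota^{\ast}$ is the composite of $\iota^{-1}$ (which lands in modules over $\iota^{-1}\iota_{\ast}(\mathcal{O}_X\otimes\mathcal{O}_X)$) with an extension of scalars along the counit $\epsilon\colon \iota^{-1}\iota_{\ast}(\mathcal{O}_X\otimes\mathcal{O}_X)\to\mathcal{O}_X\otimes\mathcal{O}_X$. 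The unit and counit of the module-level adjunction therefore differ from the presheaf-level ones by this base change, and controlling that base change is exactly the ``flatness'' issue you flag at the end but never resolve; the cellular-induction/cofibrant-resolution repair you sketch is left unexecuted and, as stated, only controls derived functors along a Quillen equivalence, which is not the same as showing the point-set functor $\iota^{\ast}$ preserves \emph{all} weak equivalences.

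The paper closes this gap by making the factorization explicit: first, $\iota^{-1}\dashv\iota_{\ast}$ induces a Quillen equivalence between $\lmod_{\iota_{\ast}(\mathcal{O}_X\otimes\mathcal{O}_X)}(\affpsh{X})$ and $\lmod_{\iota^{-1}\iota_{\ast}(\mathcal{O}_X\otimes\mathcal{O}_X)}(\dgpsh{X})$; second, since the counit $\epsilon$ is a weak equivalence of algebras, \cite[Lemma 1.5.3]{Hin2} gives a Quillen equivalence $\epsilon^{\ast}\dashv\epsilon_{\ast}$ between modules over $\iota^{-1}\iota_{\ast}(\mathcal{O}_X\otimes\mathcal{O}_X)$ and over $\mathcal{O}_X\otimes\mathcal{O}_X$. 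Composing yields the statement. The observation you are missing for preservation of all weak equivalences is that $\epsilon$ is not merely a weak equivalence but an isomorphism on stalks (affine opens are cofinal among the neighborhoods of any point), so the extension of scalars $\epsilon^{\ast}$ is a stalkwise isomorphism and preserves local weak equivalences with no flatness or cellular argument required; $\epsilon_{\ast}$ and both functors in the first adjunction preserve weak equivalences for the reasons already recorded in Proposition \ref{prop3}. Inserting this factorization and the stalkwise observation turns your sketch into essentially the paper's proof.
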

\begin{proof}
The Quillen equivalence $\iota^{-1}\dashv \iota_{\ast}$ induces a Quillen equivalence
\begin{center}
    \begin{tikzcd}
           \lmod_{\iota_{\ast}(\mathcal{O}_X \otimes \mathcal{O}_X)}(\affpsh{X})\arrow[r, shift left=1ex, "\iota^{-1}"{name=G}] & \lmod_{\iota^{-1}\iota_{\ast}(\mathcal{O}_X \otimes \mathcal{O}_X)}(\dgpsh{X})\arrow[l, shift left=.5ex, "\iota_{\ast}"{name=F}]
            \arrow[phantom, from=F, to=G, , "\scriptscriptstyle\boldsymbol{\dashv}" rotate=-90].
        \end{tikzcd}
\end{center}
The counit yields a weak equivalence $\epsilon: \iota^{-1}\iota_{\ast}(\mathcal{O}_X\otimes \mathcal{O}_X) \xrightarrow{\simeq} \mathcal{O}_X\otimes\mathcal{O}_X$, and hence by \cite[Lemma 1.5.3]{Hin2} we get a Quillen equivalence
\begin{center}
    \begin{tikzcd}
           \lmod_{\iota^{-1}\iota_{\ast}(\mathcal{O}_X \otimes \mathcal{O}_X)}(\dgpsh{X})\arrow[r, shift left=1ex, "\epsilon^{\ast}"{name=G}] & \lmod_{\mathcal{O}_X \otimes \mathcal{O}_X}(\dgpsh{X})\arrow[l, shift left=.5ex, "\epsilon_{\ast}"{name=F}]
            \arrow[phantom, from=F, to=G, , "\scriptscriptstyle\boldsymbol{\dashv}" rotate=-90].
        \end{tikzcd}
\end{center}
composing these yields the Quillen equivalence in the statement. Clearly, $\epsilon_{\ast}$ preserves weak equivalences. But $\epsilon$ is an isomorphism on stalks, and hence $\epsilon^{\ast}$ also preserves weak equivalences.
\end{proof}
\begin{lemma}
For $\mathcal{F}\in \lmod_{\iota_{\ast}(\mathcal{O}_X\otimes \mathcal{O}_X)}(\affpsh{X})$ and $\mathcal{G} \in \lmod_{\mathcal{O}_X\otimes \mathcal{O}_X}(\dgpsh{X})$, we have
\begin{align*}
    \iota_{\ast}\mathcal{H}om_{\mathcal{O}_X\otimes \mathcal{O}_X}(\iota^{\ast}\mathcal{F},\mathcal{G}) \cong \mathcal{H}om_{\iota_{\ast}(\mathcal{O}_X\otimes \mathcal{O}_X)}(\mathcal{F},\iota_{\ast}\mathcal{G}).
\end{align*}
\end{lemma}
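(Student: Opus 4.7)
The plan is to recognize the claimed identity as the enriched, module-theoretic version of the adjunction $\iota^{\ast}\dashv\iota_{\ast}$: pulling back along $\iota^{\ast}$ on the source of the internal Hom corresponds to pushing forward along $\iota_{\ast}$ on the target. Because both sides of the purported isomorphism are presheaves of chain complexes on $\textnormal{Aff}(X)$, it suffices to build, naturally in an affine open $V\subseteq X$, an isomorphism on sections; the module-object $\mathcal{H}om$ is then determined up to canonical identification.

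First I would reduce to the local picture. Let $\iota_V\colon \textnormal{Aff}(X)_V\rightarrow \textnormal{Open}(X)_V$ be the corresponding inclusion of slice sites. Restriction to $V$ visibly commutes with $\iota_{\ast}$, namely $(\iota_{\ast}\mathcal{G})|_V\cong \iota_{V,\ast}(\mathcal{G}|_V)$. For the left adjoint one has
\[
(\iota^{\ast}\mathcal{F})|_V(W)=\textnormal{colim}_{W\subseteq U\in\textnormal{Aff}(X)}\mathcal{F}(U),\qquad \iota_V^{\ast}(\mathcal{F}|_V)(W)=\textnormal{colim}_{W\subseteq U\subseteq V,\,U\in\textnormal{Aff}(X)}\mathcal{F}(U),
\]
and since $X$ is separated the intersection of any two affine opens is affine, so the system indexed by affine opens of $X$ containing $W$ is cofinally dominated by those contained in $V$. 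This gives $(\iota^{\ast}\mathcal{F})|_V\cong \iota_V^{\ast}(\mathcal{F}|_V)$, compatibly with the actions of $(\mathcal{O}_X\otimes \mathcal{O}_X)|_V$ on both sides.

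Second, I would evaluate both sides at $V$ using the definition of $\mathcal{H}om$ as a presheaf of mapping complexes. The LHS becomes $\map_{(\mathcal{O}_X\otimes\mathcal{O}_X)|_V}(\iota_V^{\ast}(\mathcal{F}|_V),\mathcal{G}|_V)$ and the RHS becomes $\map_{\iota_{V,\ast}((\mathcal{O}_X\otimes\mathcal{O}_X)|_V)}(\mathcal{F}|_V,\iota_{V,\ast}(\mathcal{G}|_V))$. The adjunction $\iota_V^{\ast}\dashv\iota_{V,\ast}$ is automatically enriched over $\ch(\field{k})$ since both functors are $\field{k}$-linear and commute with the $\ch(\field{k})$-tensoring, so one has an isomorphism of underlying chain complexes of $\field{k}$-modules
\[
\map_{\textnormal{dgPSh}(\textnormal{Open}(X)_V)}(\iota_V^{\ast}(\mathcal{F}|_V),\mathcal{G}|_V)\cong \map_{\textnormal{dgPSh}(\textnormal{Aff}(X)_V)}(\mathcal{F}|_V,\iota_{V,\ast}(\mathcal{G}|_V)).
\]
The task then is to check that this isomorphism restricts to one between module-linear maps.

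Third, and this is the only non-formal point, I would verify that a morphism $\phi\colon \iota_V^{\ast}(\mathcal{F}|_V)\rightarrow \mathcal{G}|_V$ is $(\mathcal{O}_X\otimes\mathcal{O}_X)|_V$-linear if and only if its adjoint $\widetilde{\phi}\colon \mathcal{F}|_V\rightarrow \iota_{V,\ast}(\mathcal{G}|_V)$ is $\iota_{V,\ast}((\mathcal{O}_X\otimes\mathcal{O}_X)|_V)$-linear. This follows from the construction of the module structures, namely that the action on $\iota_V^{\ast}(\mathcal{F}|_V)$ is obtained by applying $\iota_V^{\ast}$ to the action on $\mathcal{F}|_V$ and composing with the counit $\iota_V^{\ast}\iota_{V,\ast}((\mathcal{O}_X\otimes\mathcal{O}_X)|_V)\rightarrow (\mathcal{O}_X\otimes\mathcal{O}_X)|_V$, while the action on $\iota_{V,\ast}(\mathcal{G}|_V)$ is $\iota_{V,\ast}$ of the action on $\mathcal{G}|_V$. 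Chasing the two action squares through the adjunction interchanges the two linearity conditions; once that matching is in place, naturality in $V$ is automatic from the functoriality of the restrictions, producing the desired isomorphism of presheaves. The main obstacle, then, is simply the bookkeeping of this compatibility between the counit-induced module structure on the left and the direct-image module structure on the right; the rest is formal.
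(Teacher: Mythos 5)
Your proposal is correct and follows essentially the same route as the paper: evaluate both internal Homs at an affine open $V$, identify $(\iota^{\ast}\mathcal{F})|_V$ with the pullback of $\mathcal{F}|_V$ along the inclusion of slice sites, and apply the $\ch(\field{k})$-enriched adjunction between the module categories; the paper's proof is just this chain of isomorphisms written tersely, with the cofinality and module-linearity checks you spell out left implicit. One small correction to your third step: the $(\mathcal{O}_X\otimes\mathcal{O}_X)$-action on $\iota^{\ast}\mathcal{F}$ is obtained by extension of scalars along the counit $\iota^{-1}\iota_{\ast}(\mathcal{O}_X\otimes\mathcal{O}_X)\to\mathcal{O}_X\otimes\mathcal{O}_X$ (as in the preceding lemma of the paper), not by restriction along it, but the adjunction you invoke is exactly the composite of the $\iota^{-1}\dashv\iota_{\ast}$ and extension/restriction-of-scalars adjunctions, so the argument goes through unchanged.
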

\begin{proof}
Let $U\in \textnormal{Aff}(X)$, and let $\iota': \textnormal{Aff}(U) \rightarrow \textnormal{Open}(U)$ be the inclusion. Then
\begin{align*}
    \map_{\mathcal{O}_U\otimes \mathcal{O}_U}(\iota^{\ast}\mathcal{F}|_U,\mathcal{G}|_U) &\cong \map_{\mathcal{O}_U\otimes \mathcal{O}_U}(\iota'^{\ast}(\mathcal{F}|_U),\mathcal{G}|_U) \\&\cong \map_{\iota'_{\ast}(\mathcal{O}_U\otimes \mathcal{O}_U)}(\mathcal{F}|_U,\iota'_{\ast}(\mathcal{G}|_U)) \\&\cong \map_{\iota_{\ast}(\mathcal{O}_X\otimes \mathcal{O}_X)|_U}(\mathcal{F}|_U,\iota_{\ast}\mathcal{G}|_U).
\end{align*}
\end{proof}
\begin{proof}[Proof of Proposition \ref{prop4}]
By definition, we have a diagram in $\lmod_{\mathcal{O}_X\otimes \mathcal{O}_X}(\dgpsh{X})$
\begin{center}
    \begin{tikzcd}
\mathcal{P}' \arrow[d, "\simeq"', tail] \arrow[r, "\simeq", two heads] & \mathcal{O}_X \\
\mathcal{O}                                                            &              
\end{tikzcd}
\end{center}
with $\mathcal{P}'$ cofibrant and $\mathcal{O}$ bifibrant. If $\mathcal{P} \xtwoheadrightarrow{\simeq} \iota_{\ast}\mathcal{O}_X$ is a cofibrant resolution, we get a weak equivalence $\iota^{\ast}\mathcal{P} \xrightarrow{\simeq} \iota^{\ast}\iota_{\ast} \mathcal{O}_X$ and $\iota^{\ast} \mathcal{P}$ is still cofibrant. We can hence solve the following lifting problem
\begin{center}
    \begin{tikzcd}
0 \arrow[rr] \arrow[d, tail]                                     &                                                            & \mathcal{P}' \arrow[d, "\simeq", two heads] \\
\iota^{\ast}\mathcal{P} \arrow[r, "\simeq"'] \arrow[rru, dashed] & \iota^{\ast}\iota_{\ast}\mathcal{O}_X \arrow[r, "\simeq"'] & \mathcal{O}_X                              
\end{tikzcd}
\end{center}
and by 2-out-of-3, the lift $\iota^{\ast}\mathcal{P} \rightarrow \mathcal{P}'$ is again a weak equivalence. Let $\iota^{\ast}\mathcal{P} \overset{\simeq}{\rightarrowtail} \mathcal{R}$ be a fibrant resolution. We can also solve the lifting problem
\begin{center}
    \begin{tikzcd}
\iota^{\ast}\mathcal{P} \arrow[d, "\simeq"', tail] \arrow[r, "\simeq"] & \mathcal{P}' \arrow[r, "\simeq", tail] & \mathcal{O} \arrow[d] \\
\mathcal{R} \arrow[rr] \arrow[rru, dashed]                             &                                        & 0                    
\end{tikzcd}
\end{center}
to obtain a weak equivalence $\mathcal{R} \xrightarrow{\simeq} \mathcal{O}$. In particular, this is a weak equivalence between bifibrant objects. Therefore,
\begin{align*}
    \iota_{\ast}Q\mathcal{H}om_{\mathcal{O}_X\otimes \mathcal{O}_X}(\mathcal{O}, \mathcal{O}) &\simeq \iota_{\ast}Q\mathcal{H}om_{\mathcal{O}_X\otimes \mathcal{O}_X}(\mathcal{R},\mathcal{R}) \\&\simeq \iota_{\ast}Q\mathcal{H}om_{\mathcal{O}_X\otimes \mathcal{O}_X}(\iota^{\ast}\mathcal{P},\mathcal{R}) \\&\simeq Q\mathcal{H}om_{\iota_{\ast}(\mathcal{O}_X\otimes \mathcal{O}_X)}(\mathcal{P},\iota_{\ast}\mathcal{R}) \\&\simeq Q\mathcal{H}om_{\iota_{\ast}(\mathcal{O}_X\otimes \mathcal{O}_X)}(\mathcal{P},  \mathcal{P}), 
\end{align*}
where in the last step we used that $\iota_{\ast}\iota^{\ast}(\mathcal{P}) \cong \mathcal{P}$ and that hence $\iota_{\ast} \mathcal{R} \leftarrow \iota_{\ast}\iota^{\ast}\mathcal{P}$ is a weak equivalence between fibrant objects. 
\end{proof}
Since $\iota_{\ast}$ is symmetric monoidal, Proposition \ref{prop4} shows that we get an induced $\mathbb{E}_2$-algebra structure on $Q\mathcal{H}om_{\iota_{\ast}(\mathcal{O}_X\otimes \mathcal{O}_X)}(\mathcal{P},\mathcal{P})$. Further, if $U\subseteq X$ is affine, then 
\begin{align*}
    \mathbb{R}\Gamma_U(\mathfrak{Z}(\tilde{\mathcal{O}}_X)) \simeq \mathbb{R}\Gamma_U(Q\mathcal{H}om_{\iota_{\ast}(\mathcal{O}_X\otimes \mathcal{O}_X)}(\mathcal{P},\mathcal{P}))
\end{align*}
as $\mathbb{E}_2$-algebras. \\ \par
In order to prove Theorem \ref{thm5}, it thus suffices to show that 
\begin{align*}
    \mathbb{R}\Gamma_U(Q\mathcal{H}om_{\iota_{\ast}(\mathcal{O}_X\otimes \mathcal{O}_X)}(\mathcal{P},\mathcal{P})) \simeq \map_{A\otimes A}(P,P)
\end{align*}
for $U = \textnormal{Spec}(A)$ and $P \xtwoheadrightarrow{\simeq} A$ a cofibrant replacement. Since we will solely work with the affine open site from now on, we will denote the restriction of a presheaf $\mathcal{F}$ on $\textnormal{Open}(X)$ to a presheaf on $\textnormal{Aff}(X)$ simply by $\mathcal{F}$ for the remainder of this subsection.
\begin{notation}
Let ${\diag}_X$ be the site of affine opens on $X\times_{\field{k}} X$ of the form $W\times_{\field{k}} W$ for $W\subseteq X$ affine open. Of course, this site is isomorphic to the affine open site on $X$, but it better conceptualizes sheaves coming from $A$-bimodules.
\end{notation}
\begin{lemma}\label{lem3}
\begin{enumerate}
    \item The functors
    \begin{align*}
        \Delta_{\ast}: \affpsh{X} \rightarrow \dgpsh{\diag_X}&, \quad \mathcal{F} \mapsto \Bigl(W\times_{\field{k}} W \mapsto \mathcal{F}\bigl(\Delta^{-1}(W\times_{\field{k}} W)\bigr) = \mathcal{F}(W)\Bigr),\\
        \Delta^{-1}: \dgpsh{\diag_X} \rightarrow \affpsh{X}&, \quad \mathcal{G} \mapsto \Bigl(U \mapsto \colim_{\Delta(U)\subseteq W\times_{\field{k}} W}\mathcal{G}(W\times_{\field{k}} W) \cong \mathcal{G}(U\times_{\field{k}} U)\Bigr)
    \end{align*}
    are isomorphisms of categories. 
    \item We have $\Delta_{\ast}(\mathcal{O}_X \otimes \mathcal{O}_X) \cong \mathcal{O}_{X\times_{\field{k}} X}$ and $\Delta^{-1}(\mathcal{O}_{X\times_{\field{k}} X}) \cong \mathcal{O}_X \otimes \mathcal{O}_X$. In particular, the above isomorphism yields an isomorphism
    \begin{align*}
        \lmod_{\mathcal{O}_{X\times_{\field{k}} X}}(\dgpsh{\diag_X}) \cong \lmod_{\mathcal{O}_X\otimes \mathcal{O}_X}(\affpsh{X}).
    \end{align*}
    \item Both $\Delta_{\ast}$ and $\Delta^{-1}$ preserve all three classes of fibrations, cofibrations and weak equivalences in the presheaf categories as well as in the left module categories.
    \item For any commutative $\field{k}$-algebra $A$, the adjunction $\tilde{(-)} \dashv \Gamma_{\textnormal{Spec}(A)}$ between complexes of $A$-modules and complexes of presheaves of $\mathcal{O}_{\textnormal{Spec}(A)}$-modules is a Quillen adjunction. In addition, $\tilde{(-)}$ preserves acyclic fibrations.
    \item The previous statement remains true if we consider $\tilde{(-)}$ as a functor from complexes of $A\otimes A$-modules to complexes of presheaves of $\mathcal{O}_{\textnormal{Spec}(A)}$-modules on the site $\diag_{\textnormal{Spec}(A)}$.
\end{enumerate}
\end{lemma}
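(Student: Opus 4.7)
\medskip

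The plan is to treat the five parts essentially independently, since each reduces to unpacking definitions and invoking standard facts about (pre)sheaves on the affine open site.

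For part (1), I would first observe that the assignment $W \mapsto W \times_{\field{k}} W$ is a bijection between objects of $\text{Aff}(X)$ and objects of $\diag_X$ (since $\diag_X$ is defined to consist precisely of these diagonal products), and that this bijection extends to an isomorphism of the underlying 1-categories, since for $W_1 \subseteq W_2$ we have $W_1 \times_{\field{k}} W_1 \subseteq W_2 \times_{\field{k}} W_2$ and conversely any such inclusion of diagonal products pulls back along $\Delta$ to an inclusion $W_1 \subseteq W_2$. Moreover, a Zariski cover $\{W_i\}$ of $W$ by affine opens yields a cover $\{W_i \times_{\field{k}} W_i\}$ of $W \times_{\field{k}} W$ in $\diag_X$ and vice versa, so this is in fact an isomorphism of sites. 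Then $\Delta_{\ast}$ and $\Delta^{-1}$ are the induced inverse equivalences on presheaf categories; the formula for $\Delta^{-1}$ reduces to $\mathcal{G}(U \times_{\field{k}} U)$ because $U \times_{\field{k}} U$ is the minimal element of $\diag_X$ containing $\Delta(U)$. Part (2) is then a direct computation: for $W$ affine open in $X$,
\begin{align*}
\Delta_{\ast}(\mathcal{O}_X \otimes \mathcal{O}_X)(W \times_{\field{k}} W) = \mathcal{O}_X(W) \otimes_{\field{k}} \mathcal{O}_X(W) = \mathcal{O}_{X \times_{\field{k}} X}(W \times_{\field{k}} W),
\end{align*}
using the conventions that the presheaf tensor product is the unsheafified pointwise tensor product and that $W \times_{\field{k}} W = \textnormal{Spec}(\mathcal{O}_X(W) \otimes_{\field{k}} \mathcal{O}_X(W))$ for $W$ affine. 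The module category isomorphism is then formal from parts (1) and (2).

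For part (3), the local projective model structures on both sides are defined in terms of the site structure (generating cofibrations involve representables and hypercovers control fibrations and weak equivalences). Since $\Delta_\ast$ and $\Delta^{-1}$ come from an isomorphism of sites, they automatically transport the three distinguished classes of morphisms to each other; in particular each preserves all three. The analogous statements on modules follow by transfer, since the transferred model structures on $\lmod_{\mathcal{O}_X \otimes \mathcal{O}_X}(\affpsh{X})$ and $\lmod_{\mathcal{O}_{X \times_{\field{k}} X}}(\dgpsh{\diag_X})$ are likewise transported under the isomorphism.

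For parts (4) and (5), I would verify the Quillen condition via the right adjoint. The functor $\Gamma_{\textnormal{Spec}(A)}$ sends a presheaf $\mathcal{F}$ to $\mathcal{F}(\textnormal{Spec}(A))$, and trivial fibrations in the local projective model structure coincide with trivial fibrations in the unlocalized projective model structure (i.e., pointwise surjective quasi-isomorphisms), so $\Gamma_{\textnormal{Spec}(A)}$ preserves trivial fibrations, and similarly fibrations in the unlocalized structure are pointwise. The extra claim that $\widetilde{(-)}$ itself preserves acyclic fibrations (despite being a left adjoint) will come from exactness: on the presheaf level, $\widetilde{M}(\textnormal{Spec}(B)) = M \otimes_A B$ where $A \to B$ is a localization, hence flat, so $\widetilde{(-)}$ sends short exact sequences of $A$-modules to pointwise short exact sequences, and in particular sends quasi-isomorphisms to local quasi-isomorphisms and surjections to pointwise surjections. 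The argument for (5) is identical, replacing $A$ by $A \otimes A$, $\textnormal{Spec}(A)$ by $\textnormal{Spec}(A) \times_{\field{k}} \textnormal{Spec}(A)$, and viewing $\widetilde{(-)}$ as landing in presheaves on $\diag_{\textnormal{Spec}(A)}$ via the site isomorphism of part (1).

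The main bookkeeping obstacle will be checking part (3) carefully for the fibrations: in the local projective structure, fibrations are not pointwise but are characterized by a hyperdescent square, so I need to verify that the site isomorphism sends hypercovers to hypercovers (which it does, essentially because covers correspond), and that the resulting homotopy pullback squares are preserved. Once this is in place, the rest of the lemma is largely routine.
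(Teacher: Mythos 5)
Your proposal is correct and follows essentially the same route as the paper: parts (1)--(2) by construction and direct computation, and parts (4)--(5) via flatness of localizations and the pointwise nature of (trivial) fibrations. The only minor divergence is in part (3), where you transport all three classes at once through the isomorphism of sites, whereas the paper deduces preservation of cofibrations from preservation of acyclic fibrations using the two adjunctions $\Delta^{-1}\dashv\Delta_{\ast}\dashv\Delta^{-1}$ and checks weak equivalences on stalks (using that the points of the topos on $\diag_X$ are exactly the $\Delta(x)$); both arguments are sound.
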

\begin{proof}
Statement 1. is true by construction of the functors. \\
For 2., simply note that $\mathcal{O}_{X\times_k X}(W\times_{\field{k}} W) \cong \mathcal{O}_X(W)\otimes_{\field{k}} \mathcal{O}_X(W) \cong (\mathcal{O}_X\otimes \mathcal{O}_X)(W)$. \\
For 3., note that we get two adjoint equivalences $\Delta^{-1}\dashv \Delta_{\ast}$ and $\Delta_{\ast} \dashv \Delta^{-1}$. Clearly both $\Delta^{-1}$ and $\Delta_{\ast}$ preserve acyclic fibrations, and hence both also preserve cofibrations. Since $\diag_X$ is isomorphic as a site to $\textnormal{Aff}(X)$, the sheaf topos $\text{Sh}({\diag_X})_{\field{k}}$ also has enough points and hence we can check weak equivalences at stalks. But since $\diag_X$ is isomorphic to $\text{Aff}(X)$, \cite[Chapter VII.5, Corollary 4]{MLM} shows that all points in the sheaf topos on $\diag_X$ are of the form $\Delta(x)$ for $x\in X$, and $(\Delta_{\ast}\mathcal{F})_{\Delta(x)} \cong \mathcal{F}_x$ and $(\Delta^{-1}\mathcal{G})_{x} \cong \mathcal{G}_x$, proving that both $\Delta^{-1}$ and $\Delta_{\ast}$ preserve weak equivalences. \\
For 4., first note that this is indeed an adjunction. To see this, let $M$ be a complex of $A$-modules and consider a map $M\rightarrow \mathcal{F}(\textnormal{Spec}(A))$. If $U= \textnormal{Spec}(B)$ is an affine open of $X = \textnormal{Spec}(A)$, then we get a restriction map $\mathcal{F}(X) \rightarrow \mathcal{F}(\textnormal{Spec}(B))$ which is a map of $A$-modules. But $\mathcal{F}(\textnormal{Spec}(B))$ is a $B$-module, and hence we get a map $\mathcal{F}(X) \otimes_A B \rightarrow \mathcal{F}(\textnormal{Spec}(B))$. We can hence construct a map
\begin{align*}
    \tilde{M}(\textnormal{Spec}(B)) \cong M\otimes_A B \rightarrow \mathcal{F}(X)\otimes_A B \rightarrow \mathcal{F}(\textnormal{Spec}(B)).
\end{align*}
of $B$-modules. Now note that $\tilde{(-)}$ sends quasi-isomorphisms to pointwise weak equivalences: If $M \rightarrow N$ is a quasi-isomorphism of complexes of $A$-modules and $U = \textnormal{Spec}(B) \subseteq \textnormal{Spec}(A)$ is an affine open, then in particular $B$ is flat over $A$ and therefore $- \otimes_A B$ preserves quasi-isomorphisms. Hence $\tilde(M)(U) = M\otimes_A B \rightarrow N\otimes_A B = \tilde{N}(U)$ is again a quasi-isomorphism. Further, we already know that the global sections functor preserves acyclic fibrations. This shows that the above adjunction is Quillen. Now if $M \rightarrow N$ is an acyclic fibration, then so is $M\otimes_A B \rightarrow N\otimes_A B$. This finishes the proof.\\
For 5., just note that everything in the proof of 4. still works.
\end{proof}
\begin{proof}[Proof of Theorem \ref{thm5}]
We work over the affine open site. Let $\mathcal{P} \xtwoheadrightarrow{\simeq} \mathcal{O}_X$ be a cofibrant resolution in $\mathcal{O}_X\otimes \mathcal{O}_X$ modules. The $\mathcal{O}_X\otimes\mathcal{O}_X$-module $\mathcal{P}$ is bifibrant and $\mathcal{H}om_{\mathcal{O}_X\otimes \mathcal{O}_X}(-,-)$ is a right Quillen bifunctor, implying that $Q\mathcal{H}om_{\mathcal{O}_X\otimes \mathcal{O}_X}(\mathcal{P},\mathcal{P})$ is again bifibrant in $\affpsh{X}$. We hence get a weak equivalence
\begin{align*}
    \mathbb{R}\Gamma_U(Q\mathcal{H}om_{\mathcal{O}_X\otimes \mathcal{O}_X}(\mathcal{P}, \mathcal{P})) \simeq \mathcal{H}om_{\mathcal{O}_X\otimes \mathcal{O}_X}(\mathcal{P}, \mathcal{P})(U) \cong \map_{\mathcal{O}_U\otimes \mathcal{O}_U}(\mathcal{P}|_U,\mathcal{P}|_U)
\end{align*}
We then have the following chain of weak equivalences 
\begin{align*}
    \map_{\mathcal{O}_U\otimes \mathcal{O}_U}(\mathcal{P}|_U,\mathcal{P}|_U)&\cong \map_{\mathcal{O}_{U\times_{\field{k}} U}}((\Delta_U)_{\ast}(\mathcal{P}|_U), (\Delta_U)_{\ast}(\mathcal{P}|_U)) \\&\cong \map_{\mathcal{O}_{X\times_{\field{k}} X}|_{U\times_{\field{k}} U}}(\Delta_{\ast}(\mathcal{P})|_{U\times_{\field{k}} U}, \Delta_{\ast}(\mathcal{P})|_{U\times_{\field{k}}U}).
\end{align*} 
By the above lemma $\Delta_{\ast}\mathcal{P}$ is again bifibrant, and $(\Delta_{\ast}\mathcal{P})|_{U\times_{\field{k}}U}$ is fibrant. We can therefore use \cite[Proposition 1.7.3]{Hin2} with a choice of cofibrant resolution $\mathcal{P}'\xtwoheadrightarrow{\simeq} (\Delta_{\ast}(\mathcal{P}))|_{U\times_{\field{k}} U}$ to get weak equivalences
\begin{align*}
    \map_{\mathcal{O}_{X\times_{\field{k}} X}|_{U\times_{\field{k}} U}}(\Delta_{\ast}(\mathcal{P})|_{U\times_{\field{k}} U}, \Delta_{\ast}(\mathcal{P})|_{U\times_{\field{k}}U}) &\xrightarrow{\simeq} \map_{\mathcal{O}_{X\times_{\field{k}} X}|_{U\times_{\field{k}} U}}(\mathcal{P}', \Delta_{\ast}(\mathcal{P})|_{U\times_{\field{k}}U})\\& \xleftarrow{\simeq} \map_{\mathcal{O}_{X\times_{\field{k}} X}|_{U\times_{\field{k}} U}}(\mathcal{P}', \mathcal{P}').
\end{align*}
Note that $(\Delta_{\ast} \mathcal{P})|_{U\times_{\field{k}} U} \xtwoheadrightarrow{\simeq} (\Delta_U)_{\ast}\mathcal{O}_U \cong \tilde{A}$ is again a trivial fibration, and therefore $\mathcal{P}' \xtwoheadrightarrow{\simeq} \tilde{A}$ is a cofibrant resolution in $\mathcal{O}_{U\times_{\field{k}} U}$-modules. Now let $P \xtwoheadrightarrow{\simeq} A$ be a cofibrant resolution of $A$ as an $A^e$-module. Then $\tilde{P}$ is a cofibrant $\mathcal{O}_{U\times_{\field{k}} U}$-module, and we hence get a weak equivalence $\tilde{P} \xrightarrow{\simeq} \mathcal{P}'$ between bifibrant objects. Therefore,
\begin{align*}
    \map_{\mathcal{O}_{U\times_{\field{k}}U}}(\mathcal{P}',\mathcal{P}') &\xrightarrow{\simeq} \map_{\mathcal{O}_{U\times_{\field{k}}U}}(\tilde{P},\mathcal{P}') \\&\xleftarrow{\simeq} \map_{\mathcal{O}_{U\times_{\field{k}}U}}(\tilde{P},\tilde{P}) \\&\cong \map_{A\otimes A} (P,P).
\end{align*}
This proves that $\mathbb{R}\Gamma_U(\mathfrak{Z}(\tilde{\mathcal{O}}_X)(\mathfrak{a})) \simeq \mathfrak{Z}(\tilde{A})(\mathfrak{a})$ as complexes of $\field{k}$-modules. Recall that $\mathbb{R}\Gamma_U$ is lax symmetric monoidal, and therefore we get an induced evaluation map 
\begin{align*}
    Q\mathcal{H}om_{\mathcal{O}_X\otimes \mathcal{O}_X}(\mathcal{P}, \mathcal{P})(U) \otimes \mathcal{P}(U) \rightarrow R(Q\mathcal{H}om_{\mathcal{O}_X\otimes \mathcal{O}_X}(\mathcal{P},\mathcal{P})(U) \otimes \mathcal{P}(U)) \rightarrow \mathcal{P}(U)
\end{align*}
But $\mathcal{P}(U) \simeq \mathcal{O}_X(U) \cong A \simeq P$, so this is in fact equivalent to the evaluation map 
\begin{align*}
    \map_{A\otimes A}(P,P) \otimes P \rightarrow P
\end{align*}
of the center of $A$. This shows that the above equivalence is indeed an equivalences of $\mathbb{E}_2$-algebras.
\end{proof}
\subsection{Recovering polydifferential operators as the center of $\mathcal{O}_X$}
Now suppose that $X$ is quasi-compact, separated, of finite type and smooth over $\field{k}$. Let $\dgsh{X}$ denote the category of sheaves of complexes of $\field{k}$-modules on the site $\text{Open}(X)$. Recall that in this case the Hochschild cohomology of $X$ is given by the hypercohomology of the sheaf of polydifferential operators $\mathcal{D}_{\text{poly}}(X)\in \dgsh{X}$. If $U = \text{Spec}(A)\subseteq X$ is an affine open, then 
\begin{align*}
    \mathcal{D}_{\text{poly}}(X)_n(U) &= \{f\in \text{Hom}_{\field{k}}(A^{\otimes n},A): f \text{ is a differential operator in each factor}\}\\&\subseteq \text{Hom}_{\field{k}}(A^{\otimes n},A).
\end{align*}
The sheaf of polydifferential operators is quasi-coherent as an $\mathcal{O}_X$-module, and therefore fibrant in the local projective model structure on affine opens. We want to show that the sheaf of polydifferential operators is indeed a model of the center of $\mathcal{O}_X$.
\begin{theorem}\label{thm6}
Let $X$ be a smooth, quasi-compact, separated scheme of finite type over $\field{k}$. We have an equivalence
\begin{align*}
    Q\mathcal{D}_{\textnormal{poly}}(X) \simeq \mathfrak{Z}(\tilde{\mathcal{O}}_X)(\mathfrak{a})
\end{align*}
    in the $\infty$-category $\lmod_{\tilde{\mathcal{O}}_X}(\textnormal{Sh}_{\infty}(X))$ of $\tilde{\mathcal{O}}_X$-modules. 
\end{theorem}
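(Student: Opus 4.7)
The plan is to combine Theorem \ref{thm5} with the classical Hochschild--Kostant--Rosenberg (HKR) comparison to reduce to an affine statement, and then argue that the two objects already agree at the level of presheaves on the affine open site $\textnormal{Aff}(X)$. Concretely, for each $U = \spec(A) \subseteq X$ affine, the canonical inclusion of polydifferential operators into the Hochschild cochains
\begin{align*}
    \iota_U: D_{\textnormal{poly}}(A) \hookrightarrow \hom_{\field{k}}(A^{\otimes \ast}, A) \cong \hom_{A\otimes A^{\textnormal{op}}}(B(A),A)
\end{align*}
is a map of dg presheaves on $\textnormal{Aff}(X)$ whose value on $U$ is a quasi-isomorphism precisely because $A$ is smooth (this is the polydifferential version of HKR, with the quasi-isomorphism reflecting the fact that $B(A)$ and any "differential" resolution become weakly equivalent as $A^e$-modules over a smooth base). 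Combined with Theorem \ref{thm1}, the right-hand side computes $\mathfrak{Z}(\tilde{A})(\mathfrak{a})$, and by Theorem \ref{thm5}, its value at $U$ is $\mathbb{R}\Gamma_U(\mathfrak{Z}(\tilde{\mathcal{O}}_X)(\mathfrak{a}))$.

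First I would construct an explicit morphism of presheaves $\iota: \mathcal{D}_{\textnormal{poly}}(X) \to \mathcal{E}$ in $\affpsh{X}$, where $\mathcal{E}$ is a strict model of $\mathfrak{Z}(\tilde{\mathcal{O}}_X)(\mathfrak{a})$ given locally by $\map_{\mathcal{O}_U \otimes \mathcal{O}_U}(\mathcal{P}|_U, \mathcal{P}|_U)$ for a cofibrant resolution $\mathcal{P} \twoheadrightarrow \mathcal{O}_X$ as developed in Section \ref{The_Hochschild_complex_of_a_scheme}. The value of $\iota$ on $U = \spec(A)$ factors through the chain of equivalences assembled in the proof of Theorem \ref{thm5} and lands, up to chosen equivalences, at the inclusion $\iota_U$. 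Since both $\mathcal{D}_{\textnormal{poly}}(X)$ and $\mathcal{E}$ satisfy descent on $\textnormal{Aff}(X)$ (the former because polydifferential operators localize correctly on smooth affines; the latter by Theorem \ref{thm5}), and $\iota_U$ is a quasi-isomorphism for every affine open, the map $\iota$ becomes an equivalence in $\textnormal{Sh}_{\infty}(X)$.

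Second I would upgrade this equivalence to one of $\tilde{\mathcal{O}}_X$-modules. The natural evaluation action
\begin{align*}
    \mathcal{D}_{\textnormal{poly}}(X) \otimes \mathcal{O}_X \rightarrow \mathcal{O}_X, \quad f \otimes a \mapsto f(a)
\end{align*}
makes $\mathcal{D}_{\textnormal{poly}}(X)$ into an $\mathcal{O}_X$-module, and after replacing $\mathcal{O}_X$ by $\tilde{\mathcal{O}}_X$ via the rectification of Section \ref{Rectification_of_algebras_over_an_infty_operad}, into an object of $\lmod_{\tilde{\mathcal{O}}_X}(\textnormal{Sh}_{\infty}(X))$. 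The inclusion $\iota_U$ intertwines this evaluation action with the evaluation action $\mor_{\overline{\mathcal{C}}_{\mathfrak{m}}}(\tilde{A},\tilde{A}) \otimes \tilde{A} \rightarrow \tilde{A}$ that exhibits $\mathfrak{Z}(\tilde{A})$ as a center in Corollary \ref{cor7}. Pulling this compatibility through the sheaf-level argument above shows that $\iota$ is a morphism of $\tilde{\mathcal{O}}_X$-modules whose underlying map in $\textnormal{Sh}_{\infty}(X)$ is an equivalence, hence an equivalence in $\lmod_{\tilde{\mathcal{O}}_X}(\textnormal{Sh}_{\infty}(X))$.

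The main obstacle is keeping track of the module structure throughout the zig-zag of strictifications and fibrant--cofibrant replacements used in Sections \ref{the_infty_category_of_dg_sheaves} and the proof of Theorem \ref{thm5}; a priori the center only exists up to contractible choice, so one must either work with a specific cofibrant resolution $\mathcal{P}$ and exhibit $\iota$ compatibly on strictifications, or invoke the universal property of the center directly by verifying that the evaluation action of $\mathcal{D}_{\textnormal{poly}}(X)$ on $\tilde{\mathcal{O}}_X$ is final in $\lmod(\textnormal{Sh}_{\infty}(X)^{\otimes}) \times_{\textnormal{Sh}_{\infty}(X)} \{\tilde{\mathcal{O}}_X\}$, reducing finality again to Theorem \ref{thm5} and the local HKR quasi-isomorphism. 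I expect the latter route to be cleaner: the universal property of the center is local, so once affine sections give centers and the restriction maps identify them, the result follows.
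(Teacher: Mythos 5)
Your local input is correct --- for a smooth affine $A$ the inclusion $D_{\textnormal{poly}}(A)\hookrightarrow \hom_{\field{k}}(A^{\otimes\ast},A)$ is a quasi-isomorphism --- but the gluing step as you describe it has a genuine gap. The assignment $U=\spec(A)\mapsto \hom_{\field{k}}(A^{\otimes\ast},A)$ is not a presheaf on $\textnormal{Aff}(X)$: a localization $A\to A'$ does not induce any natural map $\hom_{\field{k}}(A^{\otimes n},A)\to\hom_{\field{k}}(A'^{\otimes n},A')$, so there are no restriction maps on the full Hochschild cochains (this non-functoriality is precisely why the sheaf of polydifferential operators exists as a separate object). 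Likewise, the ``chain of equivalences assembled in the proof of Theorem \ref{thm5}'' is a zig-zag built from cofibrant resolutions $P\twoheadrightarrow A$ chosen separately for each affine $U$, and nothing makes these choices compatible with restriction; so the local quasi-isomorphisms $\iota_U$ do not assemble into a map $\iota\colon \mathcal{D}_{\textnormal{poly}}(X)\to\mathcal{E}$ without substantial additional work. Your fallback --- ``the universal property of the center is local'' --- is also unjustified as stated: finality in $\lmod(\textnormal{Sh}_{\infty}(X)^{\otimes})\times_{\textnormal{Sh}_{\infty}(X)}\{\tilde{\mathcal{O}}_X\}$ is not obviously detected on an affine cover, and you would need an argument for that.

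The paper sidesteps all of this by working globally. It first identifies the underlying object of the center with $\mathbb{R}\mathcal{H}om_{\mathcal{O}_X\overset{a}{\otimes}\mathcal{O}_X}(\mathcal{O}_X,\mathcal{O}_X)$ (Lemma \ref{lem6}), pushes this through $\Delta_{\ast}$ to $\mathbb{R}\mathcal{H}om_{\mathcal{O}_{X\times_{\field{k}}X}}(\Delta_{\ast}\mathcal{O}_X,\Delta_{\ast}\mathcal{O}_X)$ using that $\Delta_{\ast}$ of a K-injective resolution is again K-injective, and then invokes Yekutieli's globally defined comparison $\Delta_{\ast}\mathcal{D}_{\textnormal{poly}}(X)\simeq \mathbb{R}\mathcal{H}om_{\mathcal{O}_{X\times_{\field{k}}X}}(\Delta_{\ast}\mathcal{O}_X,\Delta_{\ast}\mathcal{O}_X)$ from \cite{Y}, which is proved using the completed bar complex $\widehat{\mathcal{B}}(X)$ --- a global flat resolution that does restrict correctly --- rather than by gluing affine quasi-isomorphisms. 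The compatibility of actions that you worry about is then handled after the proof by the globally defined evaluation map $\mathcal{D}_{\textnormal{poly}}(X)\otimes\mathcal{B}(\mathcal{O}_X)\to\mathcal{O}_X$. To rescue your local-to-global strategy you would have to replace $\hom_{\field{k}}(A^{\otimes\ast},A)$ by such a functorial global model before attempting any comparison, at which point you have essentially reconstructed the paper's argument.
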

It suffices to show this equivalence for the sites of affine opens, since they yield an equivalent $\infty$-category. Since $\iota_{\ast}\mathfrak{Z}(\tilde{\mathcal{O}}_X)(\mathfrak{a})\simeq Q\mathcal{H}om_{\mathcal{O}_X\otimes \mathcal{O}_X}(\mathcal{P},\mathcal{P})$, it thus suffices to show 
\begin{align*}
    \iota_{\ast}\mathcal{D}_{\text{poly}}(X) \simeq \mathcal{H}om_{\mathcal{O}_X\otimes \mathcal{O}_X}(\mathcal{P},\mathcal{P})
\end{align*}
as presheaves of $\mathcal{O}_X$-modules. In the following we will work in the site of affine opens and we suppress the restriction of sheaves to affine opens.
\begin{notation}
    Let $\mathcal{O}$ be an associative algebra in dg sheaves. If $\mathcal{F},\mathcal{G}$ are sheaves of left $\mathcal{O}$-modules, recall that $\mathbb{R}\mathcal{H}om_{\mathcal{O}}(\mathcal{F},\mathcal{G}) = \mathcal{H}om_{\mathcal{O}}(\mathcal{F},\mathcal{J})$ for some K-injective resolution $\mathcal{J}$ of $\mathcal{G}$ in the catgory of sheaves of left $\mathcal{O}$-modules. Let $\Delta: X\rightarrow X\times_{\field{k}} X$ be the diagonal. We have already used the adjunction $\Delta^{-1}\dashv \Delta_{\ast}$ induced by this in Lemma \ref{lem3} above, but we now want to consider the full site of affine opens on $X\times_{\field{k}} X$ instead of the smaller site $\diag_X$, and we also consider sheaves instead of presheaves. In particular, the map $\Delta^{-1}: \affsh{X\times_{\field{k}}X} \rightarrow \affsh{X}$ is now given by the presheaf version followed by sheafification. We then have $\Delta^{-1}\Delta_{\ast} \cong \text{id}$ since $X$ is separated. In particular, $\Delta^{-1}\mathcal{O}_{X\times_{\field{k}} X} \cong \mathcal{O}_X \overset{a}{\otimes} \mathcal{O}_X$, where we denote by $\overset{a}{\otimes}$ the tensor product of sheaves.
\end{notation}
\begin{lemma}\label{lem6}
\begin{enumerate}
    \item Let $\mathcal{P} \xtwoheadrightarrow{\simeq}\mathcal{O}_X$ be a cofibrant resolution of presheaves of $\mathcal{O}_X\otimes \mathcal{O}_X$-modules. We have a local quasi-isomorphism of complexes of presheaves 
    \begin{align*}
        \mathcal{H}om_{\mathcal{O}_X\otimes \mathcal{O}_X}(\mathcal{P},\mathcal{P}) \simeq \mathbb{R}\mathcal{H}om_{\mathcal{O}_X\overset{a}{\otimes}\mathcal{O}_X}(\mathcal{O}_X,\mathcal{O}_X)
    \end{align*}
    \item If $\mathcal{F}$ and $\mathcal{G}$ are sheaves, then $\Delta_{\ast} \mathcal{H}om_{\mathcal{O}_X\overset{a}{\otimes}\mathcal{O}_X}(\Delta^{-1}\mathcal{F},\mathcal{G}) \cong \mathcal{H}om_{\mathcal{O}_{X\times_{\field{k}} X}}(\mathcal{F},\Delta_{\ast}\mathcal{G})$.
    \item If $\mathcal{O}_X\xrightarrow{\simeq} \mathcal{I}$ is a K-injective resolution in sheaves of $\mathcal{O}_X\overset{a}{\otimes}\mathcal{O}_X$-modules, then $\Delta_{\ast}\mathcal{O}_X \rightarrow \Delta_{\ast}\mathcal{I}$ is a K-injective resolution in $\mathcal{O}_{X\times_{\field{k}} X}$-modules.
\end{enumerate}
\end{lemma}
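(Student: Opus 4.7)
The lemma splits into three parts of quite different flavor, each leveraging that the diagonal $\Delta$ is a closed immersion (since $X$ is separated) and that sheafification is well-behaved for computing $\mathcal{H}om$ into a sheaf. I would prove them in the order stated.

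For part (1), the plan is to assemble the zig-zag
\begin{align*}
\mathcal{H}om_{\mathcal{O}_X\otimes \mathcal{O}_X}(\mathcal{P},\mathcal{P}) \xrightarrow{\simeq} \mathcal{H}om_{\mathcal{O}_X\otimes \mathcal{O}_X}(\mathcal{P},\mathcal{O}_X) \cong \mathcal{H}om_{\mathcal{O}_X\overset{a}{\otimes}\mathcal{O}_X}(\mathcal{P}^a,\mathcal{O}_X) \xrightarrow{\simeq} \mathbb{R}\mathcal{H}om_{\mathcal{O}_X\overset{a}{\otimes}\mathcal{O}_X}(\mathcal{O}_X,\mathcal{O}_X).
\end{align*}
The first arrow is post-composition with the trivial fibration $\mathcal{P}\twoheadrightarrow \mathcal{O}_X$. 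Since $\mathcal{H}om_{\mathcal{O}_X\otimes \mathcal{O}_X}$ is a right Quillen bifunctor on the local projective model structure and $\mathcal{O}_X$ is fibrant on the affine open site (being quasi-coherent), one concludes that this map is a local quasi-isomorphism --- the argument of the proof of Theorem \ref{thm5} already uses that $\mathcal{P}$ is bifibrant in this sense, and otherwise one passes through a fibrant replacement $\mathcal{P}\to \mathcal{Q}$. The middle identification is the sheafification--forget adjunction: since $\mathcal{O}_X$ is a sheaf, presheaf morphisms of $\mathcal{O}_X\otimes \mathcal{O}_X$-modules $\mathcal{P}\to \mathcal{O}_X$ correspond to sheaf morphisms of $\mathcal{O}_X\overset{a}{\otimes}\mathcal{O}_X$-modules $\mathcal{P}^a\to \mathcal{O}_X$, and this identification passes to the internal hom presheaf over every open. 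For the final arrow, the generating cofibrations of the local projective model structure produce, after sheafification, modules of the form $j_{U!}((\mathcal{O}_X\overset{a}{\otimes}\mathcal{O}_X)|_U)[n]$, which are K-flat; hence $\mathcal{P}^a\xrightarrow{\simeq}\mathcal{O}_X$ is a K-flat resolution in $\mathcal{O}_X\overset{a}{\otimes}\mathcal{O}_X$-modules, and such a resolution computes the right derived internal hom after pairing with a K-injective resolution on the target side.

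For part (2), the statement is purely a formal consequence of the adjunction $\Delta^{-1}\dashv \Delta_\ast$ on sheaves of abelian groups combined with $\Delta^{-1}\mathcal{O}_{X\times_{\field{k}}X}\cong \mathcal{O}_X\overset{a}{\otimes}\mathcal{O}_X$. For any $\mathcal{O}_{X\times_{\field{k}}X}$-module $\mathcal{F}$, the sheaf $\Delta^{-1}\mathcal{F}$ carries a canonical $\mathcal{O}_X\overset{a}{\otimes}\mathcal{O}_X$-module structure, and the unit/counit of the adjunction restrict to module maps. Writing out sections of $\mathcal{H}om$ on each affine open $W\times_{\field{k}} W$ in terms of module hom-sets over the corresponding restriction and applying the module-level adjunction produces the claimed isomorphism of presheaves.

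For part (3), I would verify the K-injective criterion directly. Given any acyclic complex $\mathcal{F}$ of $\mathcal{O}_{X\times_{\field{k}} X}$-modules, part~(2) (applied with the roles clear) gives
\begin{align*}
\map_{\mathcal{O}_{X\times_{\field{k}} X}}(\mathcal{F},\Delta_\ast\mathcal{I})\cong \map_{\mathcal{O}_X\overset{a}{\otimes}\mathcal{O}_X}(\Delta^{-1}\mathcal{F},\mathcal{I}).
\end{align*}
The inverse image $\Delta^{-1}$ is exact on sheaves of abelian groups and hence on modules, so $\Delta^{-1}\mathcal{F}$ remains acyclic; K-injectivity of $\mathcal{I}$ then forces the right-hand side to be acyclic, proving $\Delta_\ast\mathcal{I}$ is K-injective. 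Finally, $\Delta_\ast$ is exact on all sheaves because $\Delta$ is a closed immersion (its stalks at $\Delta(x)$ are $(\cdot)_x$ and vanish elsewhere), so the quasi-isomorphism $\mathcal{O}_X\xrightarrow{\simeq}\mathcal{I}$ is preserved to give $\Delta_\ast\mathcal{O}_X\xrightarrow{\simeq}\Delta_\ast\mathcal{I}$.

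The main technical obstacle will be the last step of part (1): rigorously identifying $\mathcal{P}^a$ as a K-flat resolution in sheaves of $\mathcal{O}_X\overset{a}{\otimes}\mathcal{O}_X$-modules requires tracking what the generating cofibrations $\mathcal{F}\to \mathcal{F}\langle x;dx=z\rangle$ become after sheafification. A cleaner alternative is to fix once and for all a K-injective resolution $\mathcal{O}_X\xrightarrow{\simeq}\mathcal{J}$ in sheaves, observe that $\mathcal{J}$ remains locally fibrant when viewed as a presheaf, and collapse the argument to $\mathcal{H}om(\mathcal{P},\mathcal{P})\simeq \mathcal{H}om(\mathcal{P},\mathcal{J})\cong \mathcal{H}om_{\mathcal{O}_X\overset{a}{\otimes}\mathcal{O}_X}(\mathcal{P}^a,\mathcal{J})\simeq \mathbb{R}\mathcal{H}om_{\mathcal{O}_X\overset{a}{\otimes}\mathcal{O}_X}(\mathcal{O}_X,\mathcal{O}_X)$, where the last equivalence uses only that $\mathcal{P}^a\xrightarrow{\simeq}\mathcal{O}_X$ is a quasi-isomorphism and that $\mathcal{H}om(-,\mathcal{J})$ sends quasi-isomorphisms to quasi-isomorphisms because $\mathcal{J}$ is K-injective.
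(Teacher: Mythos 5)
Your proposal, in the ``cleaner alternative'' you give at the end, is essentially the paper's proof of part (1): the paper also replaces the target by a K-injective resolution $\mathcal{I}$ of $\mathcal{O}_X$ in sheaf modules and then passes across the sheafification adjunction; the only real difference is that the paper factors your single adjunction step into an extension/restriction-of-scalars Quillen equivalence $\alpha^{\ast}\dashv \alpha_{\ast}$ along the unit $\alpha: \mathcal{O}_X\otimes\mathcal{O}_X \xrightarrow{\simeq} \mathcal{O}_X\overset{a}{\otimes}\mathcal{O}_X$ (via \cite[Lemma 1.5.3]{Hin2}) followed by sheafification of presheaf modules, and it inserts the intermediate target $\mathcal{O}_X$ so that only the trivial fibration $\mathcal{P}\twoheadrightarrow\mathcal{O}_X$ and the weak equivalence $\mathcal{O}_X\rightarrow\mathcal{I}$ between fibrant objects ever have to be pushed through the right Quillen bifunctor $\mathcal{H}om(-,-)$. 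Parts (2) and (3) likewise match the paper's arguments, except that for the quasi-isomorphism claim in (3) you invoke exactness of $\Delta_{\ast}$ (stalkwise, $\Delta$ being a closed immersion) where the paper uses fibrancy of $\mathcal{O}_X$ and $\mathcal{I}$ together with Quillenness of the presheaf-level $\Delta^{-1}\dashv\Delta_{\ast}$ adjunction; both are valid.

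Two slips should be corrected. First, the justification in your primary route for the last arrow of (1) is wrong as stated: K-flatness of $\mathcal{P}^a$ is a condition adapted to derived tensor products and does not make $\mathcal{H}om_{\mathcal{O}_X\overset{a}{\otimes}\mathcal{O}_X}(\mathcal{P}^a,\mathcal{O}_X)$ compute $\mathbb{R}\mathcal{H}om$ into the \emph{unresolved} target $\mathcal{O}_X$; and once you ``pair with a K-injective resolution on the target side,'' K-flatness plays no role and you are already running your alternative argument, so the detour should simply be deleted. Relatedly, your assertion that a K-injective complex of sheaves remains fibrant in the local projective presheaf structure is true (for a hypercover $V_{\bullet}\rightarrow U$ the augmented Čech object is a local resolution, and mapping it into a K-injective complex yields the required homotopy pullback square), but it is doing real work in your first step $\mathcal{H}om(\mathcal{P},\mathcal{P})\simeq\mathcal{H}om(\mathcal{P},\mathcal{J})$ and deserves an explicit argument --- note the paper needs the same fact for its step $\mathcal{H}om_{\mathcal{O}_X\overset{a}{\otimes}\mathcal{O}_X}(\alpha^{\ast}\mathcal{P},\mathcal{O}_X)\simeq \mathcal{H}om_{\mathcal{O}_X\overset{a}{\otimes}\mathcal{O}_X}(\alpha^{\ast}\mathcal{P},\mathcal{I})$. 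Second, in (2) you verify the isomorphism only on opens of the form $W\times_{\field{k}} W$, but at this point the relevant site is the \emph{full} site of affine opens of $X\times_{\field{k}} X$ (the paper switches away from $\diag_X$ precisely here), so the presheaf isomorphism must be checked at an arbitrary affine open $U\subseteq X\times_{\field{k}} X$, using the restricted adjunction for $\Delta|_{\Delta^{-1}(U)}$ as in the paper's computation; the same formal argument goes through verbatim, so this is a phrasing error rather than a mathematical one.
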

\begin{proof}
For 1., let $\alpha: \mathcal{O}_X\otimes \mathcal{O}_X \xrightarrow{\simeq} \mathcal{O}_X \overset{a}{\otimes} \mathcal{O}_X$ be the unit of the sheafification adjunction. This is a weak equivalence of dg algebras in presheaves, and therefore by \cite[Lemma 1.5.3]{Hin2} induces a Quillen equivalence
\begin{center}
    \begin{tikzcd}
           \lmod_{\mathcal{O}_X \otimes \mathcal{O}_X}(\affpsh{X})\arrow[r, shift left=1ex, "\alpha^{\ast}"{name=G}] & \lmod_{\mathcal{O}_X \overset{a}{\otimes} \mathcal{O}_X}(\affpsh{X})\arrow[l, shift left=.5ex, "\alpha_{\ast}"{name=F}]
            \arrow[phantom, from=F, to=G, , "\scriptscriptstyle\boldsymbol{\dashv}" rotate=-90]
        \end{tikzcd}.
\end{center}
We therfore get the following chain of weak equivalences
\begin{align*}
\mathcal{H}om_{\mathcal{O}_X\otimes \mathcal{O}_X}(\mathcal{P},\mathcal{P}) &\simeq \mathcal{H}om_{\mathcal{O}_X\otimes \mathcal{O}_X}(\mathcal{P},\alpha_{\ast}\mathcal{O}_X) \\&\cong \mathcal{H}om_{\mathcal{O}_X\overset{a}{\otimes}\mathcal{O}_X}(\alpha^{\ast}\mathcal{P},\mathcal{O}_X) \\&\simeq \mathcal{H}om_{\mathcal{O}_X\overset{a}{\otimes} \mathcal{O}_X}(\alpha^{\ast}\mathcal{P},\mathcal{I}) \\&\cong \mathcal{H}om_{\mathcal{O}_X\overset{a}{\otimes}\mathcal{O}_X}((\alpha^{\ast}\mathcal{P})^a,\mathcal{I})  \\&\simeq \mathcal{H}om_{\mathcal{O}_X\overset{a}{\otimes} \mathcal{O}_X}(\mathcal{O}_X,\mathcal{I}) \\&= \mathbb{R}\mathcal{H}om_{\mathcal{O}_X\overset{a}{\otimes} \mathcal{O}_X}(\mathcal{O}_X,\mathcal{O}_X).
\end{align*}
The 2. statement follows directly from the definition of the presheaf hom and the adjoint properties of $\Delta^{-1}$ and $\Delta_{\ast}$. If $U$ is an affine open in $X\times_{\field{k}} X$, then
\begin{align*}
    \Delta_{\ast}\mathcal{H}om_{\mathcal{O}_X \overset{a}{\otimes} \mathcal{O}_X}(\Delta^{-1}\mathcal{F},\mathcal{G})(U) &= \map_{(\mathcal{O}_X \overset{a}{\otimes} \mathcal{O}_X)|_{\Delta^{-1}(U)}}(\Delta^{-1}\mathcal{F}|_{\Delta^{-1}(U)}, \mathcal{G}|_{\Delta^{-1}(U)}) \\&\cong \map_{(\mathcal{O}_X \overset{a}{\otimes} \mathcal{O}_X)|_{\Delta^{-1}(U)}}((\Delta|_{\Delta^{-1}(U)})^{-1}\mathcal{F}|_{U}, \mathcal{G}|_{\Delta^{-1}(U)}) \\&\cong \map_{\mathcal{O}_{(X\times_{\field{k}}X)|_U}}(\mathcal{F}_U, \Delta_{\ast}\mathcal{G}|_U) = \mathcal{H}om_{\mathcal{O}_{X\times_{\field{k}} X}}(\mathcal{F},\Delta_{\ast}\mathcal{G})(U).
\end{align*}
For the 3. statement, note first that $\mathcal{O}_X$ and $\mathcal{I}$ are both fibrant in the local projective model structure, and the presheaf version of the $\Delta^{-1}\dashv \Delta_{\ast}$ adjunction is Quillen for this model structure on the affine open sites, so $\Delta_{\ast} \mathcal{O}_X \rightarrow \Delta_{\ast}\mathcal{I}$ is again a local weak equivalence. Further $\Delta^{-1}$ is exact, and therefore preserves acyclic complexes. Therefore, if $\mathcal{S}$ is an acyclic $\mathcal{O}_{X\times_{\field{k}} X}$-module, then 
\begin{align*}
    \map_{\mathcal{O}_{X\times_k X}}(\mathcal{S},\Delta_{\ast}\mathcal{I}) \cong \map_{\mathcal{O}_X\overset{a}{\otimes}\mathcal{O}_X}(\Delta^{-1}\mathcal{S},\mathcal{I})
\end{align*}
is acyclic, proving that $\Delta_{\ast}\mathcal{I}$ is K-injective.
\end{proof}
\begin{proof}[Proof of Theorem \ref{thm6}]
By \cite[Corollary 2.9]{Y} we have a local weak equivalence 
\begin{align*}
\Delta_{\ast} \mathcal{D}_{\textnormal{poly}}(X)\simeq \mathbb{R}\mathcal{H}om_{\mathcal{O}_{X\times_{\field{k}} X}}(\Delta_{\ast}\mathcal{O}_X,\Delta_{\ast}\mathcal{O}_X).
\end{align*}
We then get the following chain of local weak equivalences
\begin{align*}
    \Delta_{\ast} \mathbb{R}\mathcal{H}om_{\mathcal{O}_X \overset{a}{\otimes} \mathcal{O}_X}(\mathcal{O}_X,\mathcal{O}_X) &= \Delta_{\ast}\mathcal{H}om_{\mathcal{O}_X \overset{a}{\otimes} \mathcal{O}_X}(\mathcal{O}_X,\mathcal{I})\\&\cong \Delta_{\ast}\mathcal{H}om_{\mathcal{O}_X \overset{a}{\otimes} \mathcal{O}_X}(\Delta^{-1}\Delta_{\ast}\mathcal{O}_X,\mathcal{I}) \\&\cong \mathcal{H}om_{\mathcal{O}_{X\times_{\field{k}}X}}(\Delta_{\ast}\mathcal{O}_X,\Delta_{\ast}\mathcal{I}) \\&\simeq \mathbb{R}\mathcal{H}om_{\mathcal{O}_{X\times_{\field{k}} X}}(\Delta_{\ast}\mathcal{O}_X,\Delta_{\ast}\mathcal{O}_X) \\&\simeq \Delta_{\ast}\mathcal{D}_{\textnormal{poly}}(X)
\end{align*}
where in the second to last step we used \ref{lem6}(3.). Now note that $\Delta^{-1}$ preserves local weak equivalences, and therefore 
\begin{align*}
\mathbb{R}\mathcal{H}om_{\mathcal{O}_X\overset{a}{\otimes} \mathcal{O}_X}(\mathcal{O}_X,\mathcal{O}_X) \simeq \mathcal{D}_{\textnormal{poly}}(X).
\end{align*}
Together with \ref{lem6}(1.) this finishes the proof.
\end{proof}

Let $\mathcal{B}(\mathcal{O}_X)$ denote the $\mathcal{O}_X\otimes \mathcal{O}_X$-module presheaf $U= \text{Spec}(A)\mapsto B(A)$, where $B(A)$ denotes the bar complex of $A$. We have an acyclic fibration $\mathcal{B}(\mathcal{O}_X) \xtwoheadrightarrow{\simeq} \mathcal{O}_X$ given by multiplication. Hence for a cofibrant resolution $\mathcal{P} \xtwoheadrightarrow{\simeq} \mathcal{O}_X$ of $\mathcal{O}_X$ as an $\mathcal{O}_X\otimes \mathcal{O}_X$-module, we get a lift $\mathcal{P} \rightarrow \mathcal{B}(\mathcal{O}_X)$. We get an evaluation map
\begin{align*}
    \mathcal{D}_{\text{poly}}(X) \otimes \mathcal{B}(\mathcal{O}_X) \rightarrow \mathcal{O}_X
\end{align*}
coming from the fact that $\mathcal{D}_{\text{poly}}(X)$ is affine locally a subcomplex of the Hochschild complex. This lifts to a map
\begin{center}
    \begin{tikzcd}
Q\mathcal{D}_{\text{poly}}(X)\otimes \mathcal{P} \arrow[d] \arrow[r, dashed]       & \mathcal{P} \arrow[d, "\simeq", two heads] \\
\mathcal{D}_{\text{poly}}(X)\otimes \mathcal{B}(\mathcal{O}_X) \arrow[r] & \mathcal{O}_X                             
\end{tikzcd}
\end{center}
and it is clear from the proof of Theorem \ref{thm6} that this map corresponds to the evaluation map
\begin{align*}
    Q\mathcal{H}om_{\mathcal{O}_X\otimes \mathcal{O}_X}(\mathcal{P},\mathcal{P}) \otimes \mathcal{P} \rightarrow \mathcal{P}.
\end{align*}
It therefore makes $Q\mathcal{D}_{\text{poly}}(X)$ into a center of $\tilde{\mathcal{O}}_X$. In particular, this equips the sheaf of polydifferential operators with a new $\mathbb{E}_2$-algebra structure in the $\infty$-category of dg sheaves on $X$. 
\subsection{Comparison to the classical homotopy Gerstenhaber algebra structure on polydifferential operators}
For a separated quasi-compact smooth finite type scheme $X$ over $\field{k}$, Tamarkin's proof of Deligne's Conjecture equips $\mathcal{D}_{\text{poly}}(X)$ with a $\ger_{\infty}$-algebra structure coming from the canonical $\braces$-algebra structure, and thus a Gerstenhaber algebra structure in the $\field{k}$-linear derived 1-category on $X$. On the other hand, exhibiting $\mathcal{D}_{\text{poly}}(X)$ as a center of $\tilde{\mathcal{O}}_X$ equips it with an $\mathbb{E}_2$-algebra structure in $\text{Sh}_{\infty}(X)$, and therefore another Gerstenhaber algebra structure in the derived 1-category, which is just the homotopy category of $\text{Sh}_{\infty}(X)$. We want to compare these two Gerstenhaber algebra structures. \\ \par 
To this end, note that by Corollary \ref{cor7}, the multiplication in the center Gerstenhaber algebra structure is given equivalently by the convolution product or the composition product, and the bracket is induced by any filling of the appropriate square in the action category. \\ \par
By \cite{Y}, we have an isomorphism of sheaves
\begin{align*}
    \mathcal{D}_{\textnormal{poly}}(X) \cong \mathcal{H}\textnormal{om}^{\textnormal{cont}}_{\mathcal{O}_{X\times_{\field{k}} X}}(\widehat{\mathcal{B}}(X),\mathcal{O}_X)
\end{align*}
where $\widehat{\mathcal{B}}_n(X) = \mathcal{O}_{\mathfrak{X}^{n+2}}$ is the complete bar complex with $\mathfrak{X}^n$ the formal completion of $X^n$ along the $n$-fold diagonal. From this presentation it is easy to compute the convolution and composition product. Note that for $U = \spec(A)\subseteq X$, we have
\begin{align*}
    \Gamma_U(\mathcal{H}\textnormal{om}^{\textnormal{cont}}_{\mathcal{O}_{X\times_{\field{k}} X}}(\widehat{\mathcal{B}}(X),\mathcal{O}_X)) \cong \map_{A\otimes A}^{\textnormal{cont}}(\widehat{B}(A),A)
\end{align*}
for $\widehat{B}_n(A)$ the adic completion of $B_n(A)$ at the kernel of the multiplication map $B_n(A) \rightarrow A$. In particular, the flat resolution
\begin{align*}
    \widehat{\mathcal{B}}(X) \rightarrow \mathcal{O}_X
\end{align*}
admits a section $s: \mathcal{O}_X \rightarrow \widehat{\mathcal{B}}_0(X)$ that is glued together from the sections of the resolutions $B(A) \rightarrow A$. We can hence build a diagonal 
\begin{align*}
    \Delta: \widehat{\mathcal{B}}(X) \rightarrow \mathcal{O}_X \xrightarrow{\cong} \mathcal{O}_X \overset{a}{\otimes}_{\mathcal{O}_X} \mathcal{O}_X \xrightarrow{s\otimes s}  \widehat{\mathcal{B}}(X)\overset{a}{\otimes}_{\mathcal{O}_X}  \widehat{\mathcal{B}}(X).
\end{align*}
\begin{lemma}\label{lem4}
    The convolution product on $\mathcal{H}\textnormal{om}^{\textnormal{cont}}_{\mathcal{O}_{X\times_{\field{k}} X}}(\widehat{\mathcal{B}}(X),\mathcal{O}_X)$ is homotopic to the cup product on $\mathcal{D}_{\textnormal{poly}}(X)$, which locally agrees with the classical cup product on Hochschild cochains. 
\end{lemma}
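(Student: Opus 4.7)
\emph{Proof proposal.} My strategy is a two-step reduction: first localize the statement to affine opens using Theorem \ref{thm5}, and then invoke Corollary \ref{cor9} to identify the convolution product with the classical cup product at the level of Hochschild cochains.

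First I would use that for each affine open $U = \spec(A) \subseteq X$, the lax symmetric monoidal functor $\mathbb{R}\Gamma_U$ preserves $\mathbb{E}_2$-algebras, and by Theorem \ref{thm5} sends $\mathfrak{Z}(\tilde{\mathcal{O}}_X)$ to $\mathfrak{Z}(\tilde{A})$. Combining this with the identification $\mathcal{D}_{\textnormal{poly}}(X) \simeq \mathfrak{Z}(\tilde{\mathcal{O}}_X)(\mathfrak{a})$ from Theorem \ref{thm6} and with the isomorphism $\mathcal{D}_{\textnormal{poly}}(X) \cong \mathcal{H}om^{\textnormal{cont}}_{\mathcal{O}_{X \times_{\field{k}} X}}(\widehat{\mathcal{B}}(X),\mathcal{O}_X)$, the convolution product on $\mathcal{H}om^{\textnormal{cont}}(\widehat{\mathcal{B}}(X), \mathcal{O}_X)$ restricts on sections over $U$ to the convolution product on $\mathfrak{Z}(\tilde{A})$, pulled back through the natural inclusion $D_{\textnormal{poly}}(A) \hookrightarrow C^{\ast}(A, A)$.

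Next I would invoke Corollary \ref{cor9}, which identifies the convolution product on $\mathfrak{Z}(\tilde{A})$ with the classical cup product on $C^{\ast}(A, A)$ up to an explicit chain homotopy built from brace-type operations. Since $D_{\textnormal{poly}}(A)$ is a subalgebra of $C^{\ast}(A, A)$ for both the cup product and the brace operations (which act by composition of local differential operators), this homotopy restricts to a chain homotopy on polydifferential operators. By the naturality of these algebraic formulas in the commutative algebra $A$, they glue across affine opens to produce a sheaf-level chain homotopy between the convolution product on $\mathcal{H}om^{\textnormal{cont}}(\widehat{\mathcal{B}}(X), \mathcal{O}_X)$ and the sheafified classical cup product on $\mathcal{D}_{\textnormal{poly}}(X)$.

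The main technical obstacle will be verifying that the convolution product computed from the specific diagonal $\Delta$ constructed above, which factors through $\mathcal{O}_X$ via the section $s$ and hence vanishes on positive-degree cochains at the chain level, nevertheless represents the abstract convolution product of the center up to the claimed homotopy. This requires comparing $\Delta$ to the standard deconcatenation comultiplication on the bar resolution, which produces the cup product directly, and showing that these two lifts of the unit isomorphism $\mathcal{O}_X \simeq \mathcal{O}_X \overset{a}{\otimes}_{\mathcal{O}_X} \mathcal{O}_X$ become chain-homotopic after passing to a cofibrant resolution of $\widehat{\mathcal{B}}(X)$. Managing this comparison together with the naturality across affine opens so that the sheaf-level homotopy patches together coherently will be the most delicate part of the argument.
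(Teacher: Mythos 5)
You have correctly located the crux of the lemma --- the diagonal $\Delta$ built from the section $s$ kills all positive-degree components of $\widehat{\mathcal{B}}(X)$, so the convolution product it defines is not visibly the cup product, and the whole content of the statement is a comparison between $\Delta$ and the deconcatenation comultiplication on the bar complex. But your proposal stops at naming this as ``the most delicate part'' without resolving it, and that unresolved step \emph{is} the proof. The paper's argument consists precisely of writing down the two comultiplications in local coordinates and exhibiting an explicit degree-one map
\begin{align*}
    H(a_0 \otimes \dots \otimes a_{n+1}) = \sum_{i=0}^{n+1} (1\otimes a_0 \otimes \dots \otimes a_{i-1} \otimes 1) \otimes_A (1\otimes a_i \otimes \dots \otimes a_{n+1})
\end{align*}
interpolating between them; since this formula is natural in $A$, it glues over affine opens to a global homotopy of the two products. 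Without producing such a homotopy (or an argument guaranteeing one exists at the level of the specific complex $\widehat{\mathcal{B}}(X)$), the proof is incomplete.

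The reduction in your first two paragraphs also does not substitute for this. Theorem \ref{thm5} and Corollary \ref{cor9} concern the abstract $\mathbb{E}_2$-structure on the center and identify the \emph{universal} convolution product (realized via the standard bar diagonal) with the cup product; they say nothing about the particular chain-level operation defined by composing with $\Delta$. One could try to argue abstractly that any two realizations of the convolution product are homotopic because the morphism object is final, but that requires first verifying that the $\Delta$-convolution is compatible with the evaluation action --- which again amounts to comparing $\Delta$ with the standard diagonal. Moreover, the paper needs the homotopy in a form explicit enough to restrict along $\iota_{2,3}$ and $\iota_{1,4}$ in Corollary \ref{thm9}, so the concrete formula is not optional. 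In short: right diagnosis of the obstacle, but the argument you would need to supply to overcome it is exactly the paper's proof, and it is absent from the proposal.
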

\begin{proof}
    The diagonal is zero on $\widehat{\mathcal{B}}_n(X)$ for $n>0$, and for $n=0$ it is given locally by the formula
    \begin{align*}
        a_0 \otimes a_1 \mapsto (1\otimes 1) \otimes_A (1 \otimes a_0a_1).
    \end{align*}
    The diagonal on $B(A)$ coming from its universal property is given for $n=0$ by 
    \begin{align*}
        a_0 \otimes a_1 \mapsto (a_0 \otimes 1) \otimes_A (1\otimes a_1),
    \end{align*}
    and for $n>0$ by 
    \begin{align*}
        a_0 \otimes a_1 \otimes \dots \otimes a_n \otimes a_{n+1} \mapsto \sum_{i = 0}^n (a_0 \otimes a_1 \otimes \dots \otimes a_i \otimes 1) \otimes_A (1 \otimes a_{i+1} \otimes \dots \otimes a_n \otimes a_{n+1}).
    \end{align*}
    Locally on $B(A)$, a homotopy between these two maps is given by 
    \begin{align*}
        H: B(A) &\rightarrow (B(A) \otimes_A B(A))[1]\\
        a_0 \otimes a_1 \otimes \dots \otimes a_n \otimes a_{n+1} &\mapsto \sum_{i=0}^{n+1} (1\otimes a_0 \otimes \dots \otimes a_{i-1} \otimes 1) \otimes_A (1\otimes a_i \otimes \dots \otimes a_{n+1}).
    \end{align*}
    Recall that the convolution product of two continuous maps $f$ and $g$ is given by 
    \begin{align*}
        \widehat{\mathcal{B}}(X) \xrightarrow{\Delta}  \widehat{\mathcal{B}}(X) \overset{a}{\otimes}_{\mathcal{O}} \widehat{\mathcal{B}}(X) \xrightarrow{f\otimes g} \mathcal{O}_X\overset{a}{\otimes}_{\mathcal{O}_X} \mathcal{O}_X \rightarrow \mathcal{O}_X.
    \end{align*}
    Locally, it suffices to consider the restriction to $B(A)$. We can then use the above homotopy $H$ to obtain a homotopy between the cup product and the above formula for the convolution product with our new diagonal. Inspecting the formula for $H$ we see that these glue together to yield a global homotopy between the global convolution product and the cup product.
\end{proof}
We already know that the local circle products coming from the $\braces$-algebra structure glue together to give a homtopy for the square
\begin{center}
    \begin{tikzcd}
\mathcal{D}_{\textnormal{poly}}(X)^{\overset{a}{\otimes} 4} \arrow[d, "(\smile \otimes \smile) \circ(\textnormal{id}\otimes \tau \otimes \textnormal{id})"'] \arrow[r, "\smile \otimes \smile"] & \mathcal{D}_{\textnormal{poly}}(X)^{\overset{a}{\otimes} 2} \arrow[d, "\smile"] \\
\mathcal{D}_{\textnormal{poly}}(X)^{\overset{a}{\otimes} 2} \arrow[r, "\smile"']                                                                                                                & \mathcal{D}_{\textnormal{poly}}(X)                                
\end{tikzcd}
\end{center}
analogous to the affine case. This implies the following:
\begin{corollary}\label{thm9}
    The Gerstenhaber structure on $\mathcal{D}_{\textnormal{poly}}(X)$ in the $\field{k}$-linear derived 1-category coming from the center is isomorphic to the signed classical one coming from the $\braces$-algebra structure.
\end{corollary}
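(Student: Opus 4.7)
The plan is to mirror the proof of Corollary \ref{cor9} in the global setting, exploiting the fact that the $\braces$-algebra structure on $\mathcal{D}_{\textnormal{poly}}(X)$ is by construction the gluing of the local $\braces$-structures on the affine Hochschild complexes. By Corollary \ref{cor7} applied to $\tilde{\mathcal{O}}_X \in \alg_{\mathbb{E}_1}(\text{Sh}_{\infty}(X))$, the $\mathbb{E}_2$-algebra structure on the center $\mathfrak{Z}(\tilde{\mathcal{O}}_X)$ is determined (up to contractible choice) by two multiplications on its underlying object and a filling of the compatibility square, and under the identification of Theorem \ref{thm6} these become a convolution product, a composition product, and a 2-simplex living in $\text{Sh}_{\infty}(X)$.

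First I would identify the cup product. By Lemma \ref{lem4} the convolution product on $\mathcal{H}\textnormal{om}^{\textnormal{cont}}_{\mathcal{O}_{X\times_{\field{k}} X}}(\widehat{\mathcal{B}}(X),\mathcal{O}_X) \cong \mathcal{D}_{\textnormal{poly}}(X)$ is homotopic in $\text{Sh}_{\infty}(X)$ to the classical cup product, and passing to cohomology via the functor $\mathbb{H}^{\ast}(X,-)$ yields the classical cup product on Hochschild cohomology of $X$. By Corollary \ref{cor7} the composition product gives the same multiplication on cohomology, so either one serves to verify the product half of the claim.

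Second, I would produce the bracket. Because the circle product $f\{g\}$ on a local Hochschild complex was used in Corollary \ref{cor9} to build the homotopy $h$ filling the compatibility square (\ref{dig2}) in $\mathcal{C}^{\otimes}$, and since the circle products on the local complexes $\hom_{\field{k}}(A^{\otimes \ast},A)$ restrict to operations on the subcomplexes $D_{\textnormal{poly}}(\spec(A))$ and glue to the globally defined $\braces$-operations on $\mathcal{D}_{\textnormal{poly}}(X)$, they assemble into a global homotopy $h: \mathcal{D}_{\textnormal{poly}}(X)^{\overset{a}{\otimes} 4} \to \mathcal{D}_{\textnormal{poly}}(X)$ for the corresponding square in $\text{Sh}_{\infty}(X)$. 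Since Corollary \ref{cor7} guarantees a contractible space of such fillings, this particular homotopy represents the bracket. Applying the formula from Corollary \ref{cor5} sheaf-wise, the bracket on cohomology of $\mathcal{D}_{\textnormal{poly}}(X)$ computed from the center structure equals the classical signed Gerstenhaber bracket locally on each affine open, and hence globally via the local-to-global spectral sequence.

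The main obstacle I anticipate is bookkeeping rather than substance: one must verify that the particular homotopy $h$ from Corollary \ref{cor1} (built from the circle product) is a bona fide morphism of presheaves on $\text{Aff}(X)$ and that it actually fills the appropriate 2-simplex in $\text{Sh}_{\infty}(X)$ after passing through the rectification equivalence of Theorem \ref{thm3} and the zig-zag of Proposition \ref{prop4} relating $\mathfrak{Z}(\tilde{\mathcal{O}}_X)(\mathfrak{a})$ to $\mathcal{D}_{\textnormal{poly}}(X)$. Once this compatibility is checked, the remainder of the argument is a direct translation of Corollary \ref{cor9}, replacing the dg model category $\ch(A^e)$ with $\lmod_{\iota_{\ast}(\mathcal{O}_X\otimes \mathcal{O}_X)}(\affpsh{X})$ and invoking Corollary \ref{cor10} globally to identify the induced Gerstenhaber algebra in the derived 1-category with the one produced from the $\braces$-structure via Tamarkin's map.
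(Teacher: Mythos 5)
Your proposal is correct and follows essentially the same route as the paper: identify the product via Lemma \ref{lem4}, then glue the local circle products into a global homotopy filling the compatibility square of Corollary \ref{cor7} and read off the bracket via Corollary \ref{cor5}. The only superfluous step is the appeal to a local-to-global spectral sequence at the end --- since the glued homotopy $h$ is already a globally defined morphism of presheaf complexes, the identification of the brackets takes place directly in the $\field{k}$-linear derived 1-category of sheaves, with no passage to hypercohomology required.
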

\begin{proof}
   By the above Lemma \ref{lem4}, the center product is given by the usual signed cup product in the derived 1-category. Again using Corollary \ref{cor7} and the fact that the global circle product induces a homotopy
   \begin{align*}
       h: \mathcal{D}_{\text{poly}}(X)^{\overset{a}{\otimes} 4} &\rightarrow \mathcal{D}_{\text{poly}}(X)[1]\\
       \mathcal{D}_{\text{poly}}(\spec(A))^{\otimes 4} \ni f_1 \otimes g_1 \otimes f_2 \otimes g_2 &\mapsto (-1)^{|f_1|+|f_2|+|f_2||g_1|-1} f_1 \smile f_2\{g_1\} \smile g_2
   \end{align*}
   in the above square, we see that the bracket determined by Corollary \ref{cor5} of the center Gerstenhaber algebra structure is equal to the classical Gerstenhaber bracket on polydifferential operators.
\end{proof}

\appendix
\section{The endomorphism $\infty$-category}\label{appendixA}
In this section we show that our endomorphism $\infty$-category $\mathcal{C}_{\mathfrak{a}}^{\otimes} \times_{{\mathcal{C}_{\mathfrak{m}}}} {\mathcal{C}_{\mathfrak{m}}}_{/M}$ defined in Definition \ref{def1} is equivalent to Lurie's endomorphism $\infty$-category as defined in \cite[Definition 4.7.1.1]{HA}. In particular, this will imply that $\mathcal{C}_{\mathfrak{a}}^{\otimes} \times_{\mathcal{C}_{\mathfrak{m}}} (\mathcal{C}_{\mathfrak{m}})_{/M}$ is the underlying $\infty$-category of a monoidal $\infty$-category.\\ \par
Lurie's definition is given in terms of planar $\infty$-operads as defined in \cite[Section 4.1.3]{HA}. In particular it is given for a map $p: \mathscr{M}^{\circledast} \rightarrow \Delta^1 \times N(\mathbb{\Delta})^{\text{op}}$ exhibiting $\mathscr{M} := \mathscr{M}^{\circledast}_{0,[0]}$ as left tensored over $\mathcal{C}^{\circledast}:= \mathscr{M}^{\circledast}\times_{\Delta^1} \{1\}$ in the planar sense as in \cite[Variant 4.2.2.11]{HA}, and an element $M\in \mathscr{M}$. 
\begin{definition}[\cite{HA}, Definition 4.7.1.1]\label{def4}
Let $p: \mathscr{M}^{\circledast} \rightarrow \Delta^1 \times N(\mathbb{\Delta})^{\text{op}}$ be a map exhibiting $\mathscr{M}$ as left tensored over the planar $\infty$-operad $\mathcal{C}^{\circledast}$. An enriched morphism of $\mathscr{M}$ is a span 
\begin{align*}
    M \xleftarrow{\alpha} X \xrightarrow{\beta} N
\end{align*}
such that 
\begin{itemize}
    \item $p(\alpha)$ is the map $(0,[1]) \rightarrow (0, [0])$ given by $[0] \cong \{0\} \hookrightarrow [1]$ in $\mathbb{\Delta}$,
    \item $\beta$ is inert and $p(\beta)$ is the map $(0,[1]) \rightarrow (0, [0])$ given by $[0] \cong \{1\} \hookrightarrow [1]$ in $\mathbb{\Delta}$.
\end{itemize}
Denote by $\text{EnMor}(\mathscr{M}^{\circledast})$ the full subcategory of $\text{Fun}_{\Delta^1 \times N(\mathbb{\Delta})^{\text{op}}}(\Lambda_0^2,\mathscr{M}^{\circledast})$ spanned by the enriched morphisms. There is a pair of forgetful functors $\text{EnMor}(\mathscr{M}^{\circledast})\rightarrow \mathscr{M}$ sending a span $M \leftarrow X \rightarrow N$ to $M$ and $N$ respectively. The \textbf{endomorphism $\infty$-category} of $M\in \mathscr{M}$ is the fiber product
\begin{align*}
    \mathcal{C}[M]_{\text{Lurie}} := \{M\} \times_{\mathscr{M}} \text{EnMor}(\mathscr{M}^{\circledast}) \times_{\mathscr{M}} \{M\}.
\end{align*}
\end{definition}
Instead, we start from a coCartesian fibration of $\infty$-operads $q: \mathcal{C}^{\otimes} \rightarrow \lm$, exhibiting $\mathcal{C}_{\mathfrak{m}}$ as left tensored over the monoidal $\infty$-category $\mathcal{C}^{\otimes}_{\mathfrak{a}}$. Hence, we first need to construct from $q$ an $\infty$-category $\mathscr{M}^{\circledast}$ and a map $p: \mathscr{M}^{\circledast} \rightarrow \Delta^1 \times N(\mathbb{\Delta})^{\text{op}}$ as above.\\ \par

Construct the $\infty$-category $\mathscr{M}^{\circledast}$ as the fiber product
    \begin{center}
        \begin{tikzcd}
\mathscr{M}^{\circledast} :=\mathcal{C}^{\otimes}\times_{\lm}(N(\mathbb{\Delta}^{\text{op}})\times\Delta^1) \arrow[d] \arrow[r] & \mathcal{C}^{\otimes} \arrow[d, "q"] \\
N(\mathbb{\Delta}^{\text{op}})\times \Delta^1 \arrow[r, "\gamma"']                                 & \lm                                 
\end{tikzcd}
    \end{center}
    where $\gamma$ is defined in \cite[Remark 4.2.2.8]{HA}. In particular, $\mathscr{M}^{\circledast}$ comes equipped with a coCartesian fibration $p: \mathscr{M}^{\circledast} \rightarrow N(\mathbb{\Delta}^{\text{op}}) \times \Delta^1$. We have
    \begin{align*}
        \mathscr{M}^{\circledast}_{[0],0} = \mathcal{C}^{\otimes} \times_{\lm} \{[0],0\} \simeq \mathcal{C}_{\mathfrak{m}},
    \end{align*}
    since the functor $\text{LCut}: N(\mathbb{\Delta}^{\text{op}})\rightarrow \lm$ defined in \cite[Construction 4.2.2.6]{HA} sends $[0]$ to $(\langle 1\rangle, \{1\}) = \mathfrak{m}$. Consider the diagram
    \begin{center}
       \begin{tikzcd}
\mathscr{M}^{\circledast}\times_{\Delta^1}\{1\} \arrow[d] \arrow[r]  & \mathcal{C}^{\otimes}\times_{\lm}(N(\mathbb{\Delta}^{\text{op}})\times\Delta^1) \arrow[d] \arrow[r] & \mathcal{C}^{\otimes} \arrow[d, "q"] \\
N(\mathbb{\Delta}^{\text{op}})\times \{1\} \arrow[r, hook] \arrow[d] & N(\mathbb{\Delta}^{\text{op}})\times \Delta^1 \arrow[r, "\gamma"'] \arrow[d]                        & \lm                                  \\
\{1\} \arrow[r, hook]                                                & \Delta^1                                                                                           &                                     
\end{tikzcd}.
    \end{center}
    Then the upper right hand side square is a pullback by definition, the lower left hand side square is a pullback and the left hand side rectangle is a pullback, again by definition. By the pasting law, the upper left hand side square is a pullback, and hence, again by the pasting law, the large upper rectangle is a pullback. The lower horizontal arrow of this rectangle agrees with the map $\text{Cut}: N(\mathbb{\Delta}^{\text{op}}) \rightarrow \lm$ defined in \cite[Construction 4.1.2.9]{HA}, so 
    \begin{align*}
        \mathscr{M}^{\circledast} \times_{\Delta^1}\{1\} \simeq \mathcal{C}^{\otimes}\times_{\lm} N(\mathbb{\Delta}^{\text{op}}).
    \end{align*}
    But in the diagram
    \begin{center}
        \begin{tikzcd}
\mathcal{C}_{\mathfrak{a}}^{\otimes}\times_{\ass} N(\mathbb{\Delta}^{\text{op}}) \arrow[r] \arrow[d] & \mathcal{C}^{\otimes}_{\mathfrak{a}} \arrow[r] \arrow[d] & \mathcal{C}^{\otimes} \arrow[d, "q"] \\
N(\mathbb{\Delta}^{\text{op}}) \arrow[r, "\text{Cut}"']                                               & \ass \arrow[r, hook]                                     & \lm                                 
\end{tikzcd}
    \end{center}
    both squares are pullbacks, so the rectangle is as well, and we get
    \begin{align*}
        \mathscr{M}^{\circledast} \times_{\Delta^1} \{1\} \simeq \mathcal{C}_{\mathfrak{a}}^{\otimes} \times_{\ass} N(\mathbb{\Delta^{\text{op}}}),
    \end{align*}
    which is the $\mathbb{A}_{\infty}$-monoidal $\infty$-category corresponding to the monoidal $\infty$-category $\mathcal{C}^{\otimes}_{\mathfrak{a}}$. Call this $\mathbb{A}_{\infty}$-monoidal category $\mathcal{C}_{\mathfrak{a}}^{\circledast}$. Then $p$ exhibits ${\mathcal{C}_{\mathfrak{m}}}$ as left-tensored over $\mathcal{C}_{\mathfrak{a}}^{\circledast}$ in the planar sense. We are now able to state
\begin{proposition}
    Let $q: \mathcal{C}^{\otimes} \rightarrow \lm$ be a coCartesian fibration of $\infty$-operads. Let $p: {\mathcal{C}_{\mathfrak{m}}}^{\circledast} \rightarrow N(\mathbb{\Delta}^{\textnormal{op}})\times \Delta^1$ as constructed above. Then the $\infty$-category $\mathcal{C}_{\mathfrak{a}}[M]$ from Definition \ref{def1} is equivalent to $\mathcal{C}[M]_{\textnormal{Lurie}}$ as in Definition \ref{def4}.
\end{proposition}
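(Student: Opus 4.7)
The plan is to identify both $\infty$-categories with a common pullback by unwinding Lurie's definition through the pullback presentation of $\mathscr{M}^{\circledast}$, and then verify the comparison is an equivalence by fibering both over $\mathcal{C}_{\mathfrak{a}}$ and checking fibers.

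First I would unpack Definition \ref{def4} in terms of $\mathcal{C}^{\otimes}$. Since $\mathscr{M}^{\circledast} = \mathcal{C}^{\otimes} \times_{\lm} (N(\mathbb{\Delta}^{\textnormal{op}}) \times \Delta^1)$, mapping into $\mathscr{M}^{\circledast}$ over $N(\mathbb{\Delta}^{\textnormal{op}}) \times \Delta^1$ is equivalent to mapping into $\mathcal{C}^{\otimes}$ over $\lm$ via $\gamma$. The two specified morphisms in the base, namely the images under $\gamma$ of $(0,[1]) \to (0,[0])$ along $\{0\} \hookrightarrow [1]$ and $\{1\} \hookrightarrow [1]$, become in $\lm$ the active action map $\mu: \{\mathfrak{a},\mathfrak{m}\} \to \mathfrak{m}$ and the inert projection $\rho_{\mathfrak{m}}: \{\mathfrak{a},\mathfrak{m}\} \to \mathfrak{m}$, respectively. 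Hence $\mathcal{C}[M]_{\textnormal{Lurie}}$ is equivalent to the full subcategory of $\textnormal{Fun}_{\lm}(\Lambda^2_0,\mathcal{C}^{\otimes})$ spanned by spans $M \xleftarrow{\alpha} X \xrightarrow{\beta} M$ lying over the base diagram $\mathfrak{m} \xleftarrow{\mu} \{\mathfrak{a},\mathfrak{m}\} \xrightarrow{\rho_{\mathfrak{m}}} \mathfrak{m}$ with $\beta$ inert.

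Next I would construct a comparison functor $\Phi: \mathcal{C}_{\mathfrak{a}}[M] \to \mathcal{C}[M]_{\textnormal{Lurie}}$. On objects, given $(A, f: A \otimes M \to M) \in \mathcal{C}_{\mathfrak{a}}[M]$, choose the inert lift $\tilde{\rho}: (A,M) \to M$ of $\rho_{\mathfrak{m}}$ and a coCartesian lift $\tilde{\mu}: (A,M) \to A \otimes M$ of $\mu$; then $\Phi(A,f)$ is the span $M \xleftarrow{f \circ \tilde{\mu}} (A,M) \xrightarrow{\tilde{\rho}} M$. To promote this to a functor I would apply Lurie's straightening for coCartesian morphisms: the assignment $(A,M) \mapsto A \otimes M$ assembles into the tensoring functor $\otimes: \mathcal{C}_{\mathfrak{a}} \times \{M\} \to \mathcal{C}_{\mathfrak{m}}$, and the two inert/coCartesian structure morphisms assemble into natural transformations over $\lm$ out of the constant diagram at $(A,M)$, yielding the desired map of $\infty$-categories.

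To verify $\Phi$ is an equivalence I would show both sides sit as right fibrations over $\mathcal{C}_{\mathfrak{a}}$ and that $\Phi$ induces equivalences on all fibers. The projection $\mathcal{C}_{\mathfrak{a}}[M] = \mathcal{C}_{\mathfrak{a}} \times_{\mathcal{C}_{\mathfrak{m}}} (\mathcal{C}_{\mathfrak{m}})_{/M} \to \mathcal{C}_{\mathfrak{a}}$ is a right fibration (being the base change of the right fibration $(\mathcal{C}_{\mathfrak{m}})_{/M} \to \mathcal{C}_{\mathfrak{m}}$), with fiber over $A$ computing $\textnormal{Map}_{\mathcal{C}_{\mathfrak{m}}}(A \otimes M, M)$ as in the proof of Proposition \ref{prop6}. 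On the Lurie side, the functor to $\mathcal{C}_{\mathfrak{a}}$ sends a span to the coCartesian projection of its middle object $X$ to its $\mathfrak{a}$-component; its fiber over $A$ parametrizes spans whose middle vertex is constrained to lie over $(A,M) \in \mathcal{C}_{\mathfrak{a}} \times \mathcal{C}_{\mathfrak{m}}$ (with $\beta$ the inert lift), and the remaining data $\alpha$ is a morphism $(A,M) \to M$ over $\mu$, which by the universal property of the coCartesian lift $\tilde{\mu}: (A,M) \to A \otimes M$ is classified by $\textnormal{Map}_{\mathcal{C}_{\mathfrak{m}}}(A \otimes M, M)$. The comparison $\Phi$ matches these descriptions on the nose, so the conclusion follows from \cite[Proposition 2.1.3.4]{HTT}.

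The main obstacle is the rigorous construction of $\Phi$ as a functor, since the pointwise recipe involves choices of coCartesian lifts that must be coherently packaged. I would handle this by working universally: let $\mathcal{E} \subseteq \textnormal{Fun}(\Delta^1, \mathcal{C}^{\otimes})$ denote the full subcategory of coCartesian morphisms lying over $\mu: \{\mathfrak{a},\mathfrak{m}\} \to \mathfrak{m}$, which by \cite[Proposition 2.4.1.5]{HTT} projects by a trivial fibration to $\mathcal{C}^{\otimes}_{\{\mathfrak{a},\mathfrak{m}\}} \simeq \mathcal{C}_{\mathfrak{a}} \times \mathcal{C}_{\mathfrak{m}}$; a section over $\mathcal{C}_{\mathfrak{a}} \times \{M\}$ together with the analogous section for the inert projection provides the universal data needed to define $\Phi$. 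The remaining verifications reduce to unpacking mapping spaces in slice and overcategories.
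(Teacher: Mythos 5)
Your proposal is correct and follows essentially the same route as the paper: both unwind Lurie's definition through the pullback presentation of $\mathscr{M}^{\circledast}$, identify the two base morphisms with the action map $\{\mathfrak{a},\mathfrak{m}\}\to\mathfrak{m}$ and the inert projection, and use the universal property of coCartesian lifts to trade the span data for a pair $(A, A\otimes M\to M)$. Your additional scaffolding (packaging the comparison as a map of right fibrations over $\mathcal{C}_{\mathfrak{a}}$ and checking fibers) is a more careful version of the identification the paper states directly, and is sound.
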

\begin{proof}
    Under $\gamma: N(\mathbb{\Delta}^{\text{op}})\times \Delta^1 \rightarrow \lm$, the map
    \begin{align*}
        a: ([0],0) \rightarrow ([1],0)
    \end{align*}
    sending the point in $[0]$ to $0\in [1]$ maps to 
    \begin{align*}
        \text{LCut}(a): (\langle 2\rangle,\{2\}) &\rightarrow (\langle 1\rangle, \{1\})\\
        1 &\mapsto 1\\
        2 &\mapsto 1.
    \end{align*}
    Interpreting $(\langle 2\rangle,\{2\})$ as $(\mathfrak{a},\mathfrak{m})$ and $(\langle 1\rangle,\{1\})$ as $\mathfrak{m}$, this map corresponds to the unique element
    \begin{align*}
        \phi \in \text{Mul}_{\mathscr{LM}}(\{\mathfrak{a},\mathfrak{m}\},\mathfrak{m}).
    \end{align*}
    Similarly, the map
    \begin{align*}
        b: ([0],0) \rightarrow ([1],0)
    \end{align*}
    sending the point in $[0]$ to $1\in [1]$ maps to 
    \begin{align*}
    \text{LCut}(b): (\langle 2\rangle, \{2\}) &\rightarrow (\langle 1\rangle, \{1\})\\
    1 &\mapsto \ast \\
    2 &\mapsto 1.
    \end{align*}
    This map corresponds to the unique element
    \begin{align*}
        \psi \in \text{Mul}_{\mathscr{LM}}(\{\mathfrak{m}\},\mathfrak{m}).
    \end{align*}
    Therefore, to give an enriched morphism of ${\mathcal{C}_{\mathfrak{m}}}$ is equivalent to giving a diagram
    \begin{align*}
        M \xleftarrow{\alpha} X \xrightarrow{\beta} N
    \end{align*}
    in $\mathcal{C}^{\otimes}$ such that 
    \begin{enumerate}
        \item $q(\alpha) = \text{LCut}(a)$,
        \item $q(\beta) = \text{LCut}(b)$, and
        \item $\beta$ is inert, i.e. $q$-coCartesian.
    \end{enumerate}
    Unpacking this, $M$ and $N$ are objects in ${\mathcal{C}_{\mathfrak{m}}}$, and $X=(C,M')$ is an object in $\mathcal{C}^{\otimes}_{(\mathfrak{a},\mathfrak{m})}$, and $\mathcal{C}^{\otimes}_{(\mathfrak{a},\mathfrak{m})}\simeq \mathcal{C}_{\mathfrak{a}} \times \mathcal{C}_{\mathfrak{m}}$ by \cite[Proposition 2.1.2.12 (b)]{HA} and taking $T=(\mathfrak{a},\mathfrak{m})\in \lm_{\langle 2\rangle}$. The map $\alpha: (C,M') \rightarrow M$ and $\beta: (C,M') \rightarrow N$ are morphisms in $\mathcal{C}^{\otimes}$ lifting $\phi$ and $\psi$ respectively. Since $q$ is coCartesian, there is a $q$-coCartesian lift for $\phi$ and $X=(C,M')$, namely the map $(C,M') \rightarrow C\otimes M'$. Hence, the data of $\alpha$ is equivalent to a map $C\otimes M' \rightarrow M$ in ${\mathcal{C}_{\mathfrak{m}}}$. Similarly, there is $q$-coCartesian lift for $\psi$ and $X=(C,M')$, namely the map $(C,M') \rightarrow M'$. Hence the data of $\beta$ is equivalent to a map $M'\rightarrow N$ in ${\mathcal{C}_{\mathfrak{m}}}$, and since $\beta$ is $q$-coCartesian as well, this map is an equivalence. Hence, the $\infty$-category $\mathcal{C}[M]_{\textnormal{Lurie}}$ is equivalent to the $\infty$-category $\mathcal{C}_{\mathfrak{a}}\times_{{\mathcal{C}_{\mathfrak{m}}}} {\mathcal{C}_{\mathfrak{m}}}_{/M}$ with objects given by pairs $(C\in \mathcal{C}_{\mathfrak{a}}, C\otimes M\rightarrow M \in \mathcal{C}_{\mathfrak{m}})$.
\end{proof}
\bibliographystyle{alpha}
\inputencoding{utf8}    
\bibliography{main}
\end{document}